\documentclass{amsart}
\usepackage{amsfonts}
\usepackage{amssymb}
\usepackage{graphicx}
\usepackage{amscd}
\usepackage{hyperref}
\usepackage{boxedminipage}

\setcounter{MaxMatrixCols}{10}

\newtheorem{theorem}{Theorem}[section]

\newtheorem{lemma}[theorem]{Lemma}
\newtheorem{proposition}[theorem]{Proposition}
\newtheorem{axiom}[theorem]{Condition}
\newtheorem{remark}[theorem]{Remark}

\newtheorem{definition}[theorem]{Definition}

\addtolength{\topmargin}{-1cm} 
\addtolength{\oddsidemargin}{-1.2cm} 
\addtolength{\evensidemargin}{-1.2cm}
\addtolength{\textwidth}{2.4cm} 
\addtolength{\textheight}{2cm}
\allowdisplaybreaks

\begin{document}
\title[Infinitely Degenerate Quasilinear Equations ]{Hypoellipticity for
Infinitely Degenerate Quasilinear Equations and the Dirichlet Problem }
\author[Rios]{Cristian Rios}
\address{University of Calgary\\
Calgary, Alberta\\
crios@ucalgary.ca}
\author[Sawyer]{Eric T. Sawyer}
\address{McMaster University\\
Hamilton, Ontario\\
sawyer@mcmaster.ca}
\author[Wheeden]{Richard L. Wheeden}
\address{Rutgers University\\
New Brunswick, N.J.\\
wheeden@math.rutgers.edu}
\date{\today }
\keywords{quasilinear equations, degenerate elliptic equations, a priori
estimates, Dirichlet problem, existence and uniqueness, regularity}
\subjclass[2000]{35J70, 35A05, 35H10, 35J60, 35B45, 35B30, 35B65, 35B50,
35D05, 35D10}

\begin{abstract}
{In \cite{RSaW1}, we considered a class of infinitely degenerate quasilinear
equations of the form 
\begin{equation*}
\mathop{\rm div}\mathcal{\mathcal{A}}\left( x,w\right) \nabla w+\vec{\gamma}%
\left( x,w\right) \cdot \nabla w+f\left( x,w\right) =0
\end{equation*}
and derived \textit{a priori} bounds for high order derivatives }$D^{\alpha
}w $ of their {solutions in terms of }$w$ and $\nabla w$. We now show that
it is possible to obtain bounds just in terms of $w$ for a further {subclass
of such equations, and we apply the resulting estimates to prove that
continuous weak solutions are necessarily smooth. We also obtain existence,
uniqueness and interior $\mathcal{C}^{\infty }$-regularity of solutions for
the corresponding Dirichlet problem with continuous boundary data.}
\end{abstract}

\maketitle

\section{{Introduction\label{firstsection}}}

{It is well-known (see \cite{GiTr}) that if }$A${\ is \emph{elliptic}, and }$%
A${\ and }$b${\ are smooth functions of their arguments, then quasilinear
operators in divergence form 
\begin{equation*}
\mathcal{Q}w=\mathop{\rm div}A\left( x,w,\nabla w\right) +b\left( x,w,\nabla
w\right)
\end{equation*}%
are hypoelliptic: any weak solution $w$ of $\mathcal{Q}w=0$ is smooth. When $%
\mathcal{Q}$ is \emph{subelliptic} - i.e., when ellipticity fails only to
finite order - then hypoellipticity still holds if $\mathcal{Q}$ is \emph{%
linear} (see e.g. \cite{Treves}). When $\mathcal{Q}$ is linear but fails to
be subelliptic, the situation is more delicate. For example, Fedi\u{\i}
showed in \cite{Fe} that the two-dimensional operator 
\begin{equation}
\partial _{x}^{2}+k\left( x\right) \partial _{y}^{2}  \label{fedi}
\end{equation}%
is hypoelliptic if $k$ is smooth and positive for all $x\neq 0$. In this
case $k$ is allowed to vanish at any rate at $x=0$. However, by \cite{KuStr}%
, $\partial _{x}^{2}+k\left( x\right) \partial _{y}^{2}+\partial _{z}^{2}$
is hypoelliptic in $\mathbb{R}^{3}$ if and only if $\lim_{x\rightarrow
0}x\log k\left( x\right) =0$. A quasilinear version of operators of the form
(\ref{fedi}) arises when one considers two-dimensional Monge-Amp\`{e}re
equations 
\begin{equation}
u_{ss}u_{tt}-u_{st}^{2}=k\left( s,t\right) ,\qquad \left( s,t\right) \in 
\widetilde{\Omega }\subset \mathbb{R}^{2},  \label{MA2D}
\end{equation}%
together with the classical partial Legendre transformation $\left(
x,y\right) =T\left( s,t\right) $ given by 
\begin{equation}
\left\{ 
\begin{array}{lll}
x & = & s \\ 
y & = & u_{t}%
\end{array}%
\right. .  \label{PLT2D}
\end{equation}%
Indeed, assuming that $T$ is invertible, (\ref{MA2D}) and (\ref{PLT2D}) lead
to the two-dimensional quasilinear equation 
\begin{equation}
\partial _{x}^{2}w+\partial _{y}\left\{ k\left( x,w\left( x,y\right) \right)
\partial _{y}w\right\} =0,\;\;\;\;\;\left( x,y\right) \in \Omega =T(%
\widetilde{\Omega }),  \label{qfedi}
\end{equation}%
satisfied in the weak sense by $w\left( x,y\right) =t$. In \cite{SaW} and 
\cite{SaW2}, two of the authors extended Fedi\u{\i}'s two-dimensional
regularity result for linear equations to certain solutions $w$ of (\ref%
{qfedi}) obtained through the transformation (\ref{PLT2D}) from a solution
of the Monge-Amp\`{e}re equation (\ref{MA2D}). The coefficient $k$
considered there is assumed to satisfy 
\begin{equation}
\left\vert k_{t}\left( s,t\right) \right\vert \leq C\,k\left( s,t\right) ^{%
\frac{3}{2}},\qquad \left( s,t\right) \in \widetilde{\Omega }.  \label{thw}
\end{equation}%
Thus $k$ is required to become more independent of its second variable as $k$
degenerates. Notice that the coefficient $k\,$ in (\ref{fedi}) is
independent of the second variable, and then (\ref{thw}) is automatically
true. The main result in \cite{SaW2} establishes that degenerate
two-dimensional Monge-Amp\`{e}re equations (\ref{MA2D}) with smooth
right-hand side $k$ satisfying (\ref{thw}) are hypoelliptic. This was the
first known hypoellipticity result for infinitely degenerate Monge-Amp\`{e}%
re equations. More general equations than (\ref{qfedi}) are also treated in
the papers above, including the equation for prescribed Gaussian curvature. }

\section{Description of the results}

In the present work, we improve the two-dimensional results above by
lowering the exponent $\frac{3}{2}$ in (\ref{thw}) to the optimal exponent $%
1 $; this optimallity is shown in Section 3 below. We also extend the theory
of regularity for degenerate quasilinear equations of the type treated in 
\cite{SaW} and \cite{SaW2} to any dimension $n\geq 2$ and to more general
quasilinear problems. In this process we have to deal with several
fundamental difficulties associated with higher dimensions and the more
general structure of the equations. {We consider quasilinear equations of
the divergence form 
\begin{equation}
\mathcal{Q}w=\mathop{\rm div}\mathcal{A}\left( x,w\right) \nabla w+\vec{%
\gamma}\left( x,w\right) \cdot \nabla w+f\left( x,w\right) =0\qquad \text{in 
}\Omega ,  \label{equation}
\end{equation}%
where $\Omega $ is an open bounded connected subset of $\mathbb{R}^{n}$ and
the matrix $\mathcal{A}$, vector function }${\vec{\gamma}}$ {and scalar
function $f$ are smooth functions of their arguments. }Here we adopt the
vector notation $\vec{u}=\left( u^{1},u^{2},\dots ,u^{n}\right) $; $\nabla w$
denotes the gradient $\nabla w=\left( \partial _{1}w,\partial _{2}w,\dots
,\partial _{n}w\right) ^{\prime }$ where $\partial _{i}=\frac{\partial }{%
\partial x_{i}}$ is the $i^{th}$ partial derivative; and the divergence
operator applied to a vector function $\vec{u}$ is given by $\mathop{\rm div}%
\vec{u}=\partial _{1}u^{1}+\partial _{2}u^{2}+\dots +\partial _{n}u^{n}$. {\
In our applications, (\ref{equation}) will sometimes be satisfied in the
strong sense, i.e. in the pointwise sense for }$\mathcal{C}^{2}$ functions $%
w $, and other times in the {weak sense; see Section \ref%
{Subsubsection-weak-solution} in the Appendix for a precise definition. Note
that in the case $n=2$, equation (\ref{qfedi}) is included among equations
of the form $\mathcal{Q}w\left( x_{1},x_{2}\right) =0$ by choosing }${\vec{%
\gamma}}${$=\vec{0}$, $f=0$ and $\mathcal{A}\left( x_{1},x_{2},z\right) $ to
be the diagonal matrix $\mathop{\rm diag}\left( 1,k\left(
x_{1},x_{2},z\right) \right) $ with $k$ independent of $x_{2}$. However,
equation (\ref{equation}) does not include systems obtained from the
Monge-Amp\`{e}re equation by the partial Legendre transform introduced in 
\cite{RSaW} for higher dimensions, and the treatment of such systems when
ellipticity fails to infinite order remains a challenging open problem in
dimensions bigger than two.}

{In Section \ref{hypo}, under structural restrictions on $\mathcal{A}$ and }$%
{\vec{\gamma}}${\ which are similar to (\ref{thw}) (although weaker, see
Conditions \ref{hyp1} and \ref{hyp2}), we first obtain local a priori bounds
for the Lipschitz norm of smooth solutions in terms of their $L^{\infty }$
norm and the parameters inherent to the equation. This result together with
the main theorem in \cite{RSaW1} (see Theorem \ref{quasi} below) provides a
priori control of \emph{all} derivatives of a smooth solution in terms of
the supremum norm of the solution. }

{In Section \ref{section-Proof}, we apply the a priori estimates together
with an approximation scheme to prove that continuous weak solutions are
smooth, and to establish existence, uniqueness, and regularity of solutions
of the Dirichlet problem. To do so, we use a class of }custom-built {\
barrier functions, a maximum principle and a comparison principle adapted to
our class of equations (see Sections \ref{barriersect}, \ref{maxpsection}
and \ref{compsect} in the Appendix).}

The method used to construct the barriers in Lemma \ref{barrier} takes into
account only the modulus of continuity of solutions on the boundary. This
generalizes most known barrier constructions, which usually require higher
regularity of solutions on the boundary.

{As already mentioned, one of our main results, Theorem \ref{application}
below, states that under certain hypotheses on the coefficients $\mathcal{A}$
, every \emph{continuous} weak solution $w$ of (\ref{equation}) is
infinitely differentiable, and all of its derivatives are locally controlled
by $\left\Vert w\right\Vert _{L^{\infty }}$. The conditions imposed on the
coefficients allow them to vanish to infinite order, so the quasilinear
operator $\mathcal{Q}$ in (\ref{equation}) is not in general uniformly
elliptic or even subelliptic. This is the first known hypoellipticity result
for infinitely degenerate quasilinear equations in $n$ dimensions.}

We now state special cases of the main Theorems \ref{DP} and \ref%
{application}.\ We include these simpler versions to illustrate the
principal features of our results without the technical assumptions of the
general case. {A \emph{domain} will always mean an open connected set. }

\begin{theorem}[Dirichlet problem]
Let $\Omega $ {be a strongly convex domain in $\mathbb{R}^{n}$ containing
the origin. }Let $k^{i}\left( x,z\right) $, $i=2,\dots ,n,$ be smooth
nonnegative functions in $\Omega \times \mathbb{R}$ such that 
\begin{equation*}
k^{i}\left( x,z\right) >0\qquad \text{if}\quad x_{j}\neq 0\quad \text{for
some }j\neq i
\end{equation*}%
(this means that $k^{i}\left( x,z\right) $ may vanish only for those $\left(
x,z\right) $ so that $x$ lies on the $i^{th}$-coordinate axis), and such
that for some $B>0,$ 
\begin{equation*}
\left\vert \frac{\partial }{\partial z}k^{i}\left( x,z\right) \right\vert
\leq B~k^{\ast }\left( x,z\right) \qquad \text{for all\quad }\left(
x,z\right) \in \Omega \times \mathbb{R},
\end{equation*}%
where $k^{\ast }=\min_{i=2,\dots ,n}k^{i}$. Then, {for any continuous
function }$\varphi $ on $\partial \Omega $, there exists a unique continuous
strong solution $w$ to the Dirichlet problem%
\begin{equation*}
\left\{ 
\begin{array}{rcll}
\frac{\partial ^{2}}{\partial x_{1}^{2}}w\left( x\right) +\sum_{i=2}^{n}%
\frac{\partial }{\partial x_{i}}k^{i}\left( x,w\left( x\right) \right) \frac{%
\partial }{\partial x_{i}}w\left( x\right) & = & 0\quad & \text{in }\Omega ,
\\ 
w & = & \varphi & \text{on }\partial \Omega ,%
\end{array}%
\right.
\end{equation*}%
i.e., there exists a unique $w$ that is both a strong solution of the
differential equation in $\Omega $ and continuous in $\overline{\Omega }$
with boundary values ${\varphi }$. Moreover, this solution $w\in \mathcal{C}%
^{0}\left( \overline{\Omega }\right) \bigcap \mathcal{C}^{\infty }\left(
\Omega \right) $.
\end{theorem}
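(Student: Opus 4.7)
The plan is to verify this theorem as a direct application of the two main theorems of the paper, Theorems \ref{DP} and \ref{application}, once we recast the scalar equation in the divergence form (\ref{equation}) and check the structural hypotheses.

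First I would set $\mathcal{A}(x,z)=\mathop{\rm diag}(1,k^{2}(x,z),\dots,k^{n}(x,z))$, $\vec{\gamma}\equiv \vec 0$, $f\equiv 0$. Then the differential equation in the statement becomes exactly $\mathcal{Q}w=\mathop{\rm div}\mathcal{A}(x,w)\nabla w=0$. The next step is to check that Conditions \ref{hyp1} and \ref{hyp2} hold for this $\mathcal{A}$. Nonnegativity of $\mathcal{A}$ follows because the first diagonal entry is $1$ and the remaining entries $k^{i}\geq 0$. The positivity assumption $k^{i}(x,z)>0$ whenever $x_{j}\neq 0$ for some $j\neq i$ means that the only points where some $k^{i}$ can vanish lie on the $i^{\text{th}}$ coordinate axis, and hence $k^{\ast}=\min_{i} k^{i}$ can vanish only at the origin. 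Since the leading coefficient in the $x_{1}$ direction is identically $1$, the operator is strictly elliptic in the $x_{1}$-direction everywhere, a fact that will be used both in the barrier construction and in the hypoellipticity step. The structural bound $|\partial_{z}k^{i}|\leq B\,k^{\ast}$ is precisely the control needed to translate the restrictions of Conditions \ref{hyp1}--\ref{hyp2} on the $z$-derivatives of $\mathcal{A}$ by the relevant floor function; since $\vec{\gamma}=0$ and $f=0$, the hypotheses on those terms are vacuous.

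Next, to apply the Dirichlet theorem \ref{DP}, one needs to build a barrier at every boundary point for every prescribed modulus of continuity of $\varphi$. Strong convexity of $\Omega$ provides, at each $x_{0}\in\partial\Omega$, a supporting hyperplane whose defining linear function has a second-order separating paraboloid from $\overline{\Omega}$. Feeding this geometric data into Lemma \ref{barrier} and exploiting that the $\partial_{x_{1}}^{2}$ part of $\mathcal{Q}$ is nondegenerate, one produces upper and lower barriers matching $\varphi$ at $x_{0}$ with the correct modulus of continuity; the remaining terms $\partial_{x_{i}}(k^{i}\partial_{x_{i}}w)$ contribute only nonpositively to $\mathcal{Q}$ applied to concave radial test functions, so they do not obstruct the barrier inequality. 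With barriers available at every boundary point, Theorem \ref{DP} yields a continuous strong solution $w$ of the Dirichlet problem, and uniqueness follows from the comparison principle of Section \ref{compsect} applied to two solutions with the same continuous boundary data.

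Finally, to upgrade $w$ from continuous strong solution to $\mathcal{C}^{\infty}(\Omega)$, I would invoke Theorem \ref{application}, whose hypotheses are again implied by the structural bound $|\partial_{z}k^{i}|\leq B\,k^{\ast}$ together with the nondegeneracy of $\mathcal{A}$ in the $x_{1}$-direction; this gives local bounds on all derivatives of $w$ in terms of $\|w\|_{L^{\infty}}$, hence $w\in\mathcal{C}^{\infty}(\Omega)$. The main obstacle I expect is the barrier construction: one must verify that Lemma \ref{barrier} genuinely applies at boundary points where $\partial\Omega$ meets one of the coordinate axes (where some $k^{i}$ degenerate), and that the strong convexity of $\Omega$, together with the uniform ellipticity in the $x_{1}$-direction, suffices for the barrier inequality independently of how fast the $k^{i}$ vanish; everything else is a matter of unpacking definitions and quoting the general theorems.
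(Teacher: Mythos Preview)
Your core strategy---set $\mathcal{A}=\mathop{\rm diag}(1,k^{2},\dots,k^{n})$, $\vec{\gamma}=0$, $f=0$, verify the structural Conditions~\ref{hyp1}, \ref{hellipp}, \ref{hyp2}, and invoke Theorem~\ref{DP}---is exactly what the paper intends: this theorem is presented as a special case of Theorem~\ref{DP}, with no separate proof. However, two parts of your proposal are redundant and suggest a misreading of Theorem~\ref{DP}. First, you do not need to ``build a barrier at every boundary point'' or feed anything into Lemma~\ref{barrier}: that lemma is used \emph{inside} the proof of Theorem~\ref{DP}, not by its user. Once the hypotheses (\ref{appc01})--(\ref{appc05}) of Theorem~\ref{DP} are checked, the theorem applies as a black box. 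Second, Theorem~\ref{DP} already concludes $w\in\mathcal{C}^{0}(\overline{\Omega})\cap\mathcal{C}^{\infty}(\Omega)$ and, since $\vec{\gamma}\equiv 0$, uniqueness; so invoking Theorem~\ref{application} afterwards is unnecessary. Your ``main obstacle'' about barriers at degenerate boundary points is therefore a non-issue.

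What does deserve more care than you give it is the verification of property~(\ref{ax1c3}) in Condition~\ref{hyp1}. You need, for every $x\in\Omega$ and small $\varepsilon>0$, a box $\mathcal{R}$ with $x\in\frac{1}{3}\mathcal{R}$ such that $k^{i}>0$ on the $i$-wrap $\mathcal{T}_{i}(\mathcal{R})$ for every $i$. Since $k^{i}(y,z)>0$ whenever $y_{j}\neq 0$ for some $j\neq i$, and the faces of $\mathcal{T}_{i}(\mathcal{R})$ are of the form $\{y_{j}=\tilde{x}_{j}\pm r^{j}\}$ with $j\neq i$, it suffices to choose the center $\tilde{x}$ and half-sidelengths $r^{j}$ so that $\tilde{x}_{j}\pm r^{j}\neq 0$ for all $j$; this is always possible. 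The remaining hypotheses are routine: (\ref{appc02}) is immediate for a diagonal matrix, (\ref{appc07})--(\ref{appc05}) and (\ref{appc06}) for $\vec{\gamma}$ are vacuous, and for (\ref{appc03}) the bound $|\partial_{z}k^{i}|\leq Bk^{\ast}$ gives (\ref{xtra}) directly, while (\ref{xxtra}) follows from (\ref{xtra}) by Remark~\ref{diagcase} since $\mathcal{A}$ is diagonal.
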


\begin{theorem}[Regularity of solutions]
{\ Let $\Omega \subset \mathbb{R}^{n}$ be a bounded domain containing the
origin, and suppose that $k^{i}\left( x,z\right) $, }$i=2,\dots ,n,${\ are
as in Theorem 2.1. Then any }continuous weak solution{\ $w$ of}%
\begin{equation*}
\frac{\partial ^{2}}{\partial x_{1}^{2}}w\left( x\right) +\sum_{i=2}^{n}%
\frac{\partial }{\partial x_{i}}k^{i}\left( x,w\left( x\right) \right) \frac{%
\partial }{\partial x_{i}}w\left( x\right) =0{\quad }\text{in }\Omega {\ }
\end{equation*}%
is also a strong solution, and {satisfies $w\in \mathcal{C}^{\infty }\left(
\Omega \right) $. }
\end{theorem}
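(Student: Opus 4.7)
The plan is to reduce $\mathcal{C}^{\infty }$-regularity to a local solvability-plus-uniqueness scheme, coupling the Dirichlet existence result (Theorem \ref{DP}) with a comparison principle for our class of equations. Since regularity is a local property, it suffices to prove that $w$ is smooth on a neighborhood of each point $x_{0}\in \Omega $.

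Fix such an $x_{0}$ and choose $r>0$ so small that $\overline{B_{r}(x_{0})}\subset \Omega $. Set $\mathcal{B}:=B_{r}(x_{0})$; this ball is strongly convex, and on $\mathcal{B}\times \mathbb{R}$ the coefficients $k^{i}$ still satisfy the hypotheses of the previous theorem, since these are inherited from $\Omega $: the zero set of $k^{i}$ remains contained in the $i$-th coordinate axis, and the differential inequality $|\partial _{z}k^{i}|\leq B\,k^{\ast }$ holds pointwise. Because $w\in \mathcal{C}^{0}(\Omega )$, the restriction $\varphi :=w|_{\partial \mathcal{B}}$ is continuous. Applying the Dirichlet existence theorem on $\mathcal{B}$ with this boundary datum produces a function $W\in \mathcal{C}^{0}(\overline{\mathcal{B}})\cap \mathcal{C}^{\infty }(\mathcal{B})$ that is a strong (hence also a continuous weak) solution of the equation in $\mathcal{B}$ and agrees with $w$ on $\partial \mathcal{B}$.

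The final step is to identify $W$ with $w$. Both $W$ and $w$ are continuous weak solutions of the same equation in $\mathcal{B}$ sharing the boundary values $\varphi $ on $\partial \mathcal{B}$, so I would invoke the comparison principle developed in the Appendix for this class of equations to conclude $W\equiv w$ on $\overline{\mathcal{B}}$. Consequently $w=W\in \mathcal{C}^{\infty }(\mathcal{B})$, and since $x_{0}$ was arbitrary, $w\in \mathcal{C}^{\infty }(\Omega )$; reversing the integration by parts in the weak formulation then shows that $w$ also satisfies the equation pointwise, i.e.\ is a strong solution.

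The main obstacle is the comparison step. Because the coefficients $k^{i}(x,w)$ depend on $w$, the equation is genuinely quasilinear, and because they may vanish to infinite order on the coordinate axes, no standard linear or subelliptic comparison result applies. The structural inequality $|\partial _{z}k^{i}|\leq B\,k^{\ast }$ is precisely what is needed so that, after subtracting the two weak equations for $W$ and $w$, the quasilinear remainder involving $W-w$ is controlled by a constant multiple of the degenerate principal part; this is the form in which a maximum-principle argument can be pushed through despite the infinite degeneracy. Everything else, including the existence, continuity and $\mathcal{C}^{\infty }$-smoothness of $W$, is furnished by Theorem \ref{DP} and the a priori estimates of \cite{RSaW1}.
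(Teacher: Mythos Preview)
Your proposal is correct and mirrors the paper's argument essentially step for step: the paper proves the general Theorem \ref{application} by choosing, for each interior point, a ball $\Phi$ compactly contained in $\Omega$, invoking Theorem \ref{DP} to produce a $\mathcal{C}^{\infty}(\Phi)\cap\mathcal{C}^{0}(\overline{\Phi})$ solution of the Dirichlet problem with boundary data $w|_{\partial\Phi}$, and then using the comparison principle Lemma \ref{comparison} (which relies precisely on the super subordination condition $|\partial_z k^i|\le B\,k^{\ast}$) to conclude that this smooth solution coincides with $w$. Your identification of the comparison step as the crux, and of the structural inequality on $\partial_z k^i$ as what makes it go through, is exactly right.
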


\subsection{A priori estimates\label{statements}}

For $\tilde{x}\in \mathbb{R}^{n}$ and $\vec{r}\in \mathbb{R}_{+}^{n},$ we
denote by $\mathcal{R}\left( \tilde{x},\vec{r}\right) $ the box centered at $%
\tilde{x}$ with edges parallel to the coordinate axes and half-edgelengths
given by $\vec{r}$, i.e., 
\begin{equation}
\mathcal{R}\left( \tilde{x},\vec{r}\right) =\left[ \tilde{x}_{1}-r^{1},%
\tilde{x}_{1}+r^{1}\right] \times \cdots \times \left[ \tilde{x}_{n}-r^{n},%
\tilde{x}_{n}+r^{n}\right] .  \label{rectangulo}
\end{equation}%
When $\tilde{x}$ is the origin we will just write $\mathcal{R}\left( \vec{r}%
\right) $ for $\mathcal{R}\left( 0,\vec{r}\right) $ and we also adopt the
summation notation $\mathcal{R}\left( \tilde{x},\vec{r}\right) =\tilde{x}+%
\mathcal{R}\left( \vec{r}\right) $. When $\vec{r}$ and $\tilde{x}$ are fixed
or clear from context, we will omit them and simply write $\mathcal{R}$ for $%
\mathcal{R}\left( \tilde{x},\vec{r}\right) $. For any positive constant $%
\gamma $, $\gamma \mathcal{R}\left( \tilde{x},\vec{r}\right) $ will denote
the box centered at $\tilde{x}$ with half-edgelengths given by $\gamma \vec{r%
}$, i.e.,%
\begin{equation}
\gamma \mathcal{R}\left( \tilde{x},\vec{r}\right) =\mathcal{R}\left( \tilde{x%
},\gamma \vec{r}\right) =\tilde{x}+\mathcal{R}\left( \gamma \vec{r}\right) .
\label{rectanguloG}
\end{equation}%
We define the \emph{i-wrap} $\mathcal{T}_{i}\left( \mathcal{R}\right) $ of
the box $\mathcal{R}$ as the set of faces of $\partial \mathcal{R}$
containing the direction $\vec{e}_{i}=\left( \delta _{ij}\right) _{j=1,\dots
,n}$: 
\begin{equation*}
\mathcal{T}_{i}\left( \mathcal{R}\right) =\overline{\partial \mathcal{R}%
\backslash \left\{ y:\left\vert y_{i}-\tilde{x}_{i}\right\vert
=r^{i}\right\} }.
\end{equation*}%
The following figure illustrates $i$-wraps in $\mathbb{R}^{3}$ :

\begin{center}
\includegraphics[width=5.7in]{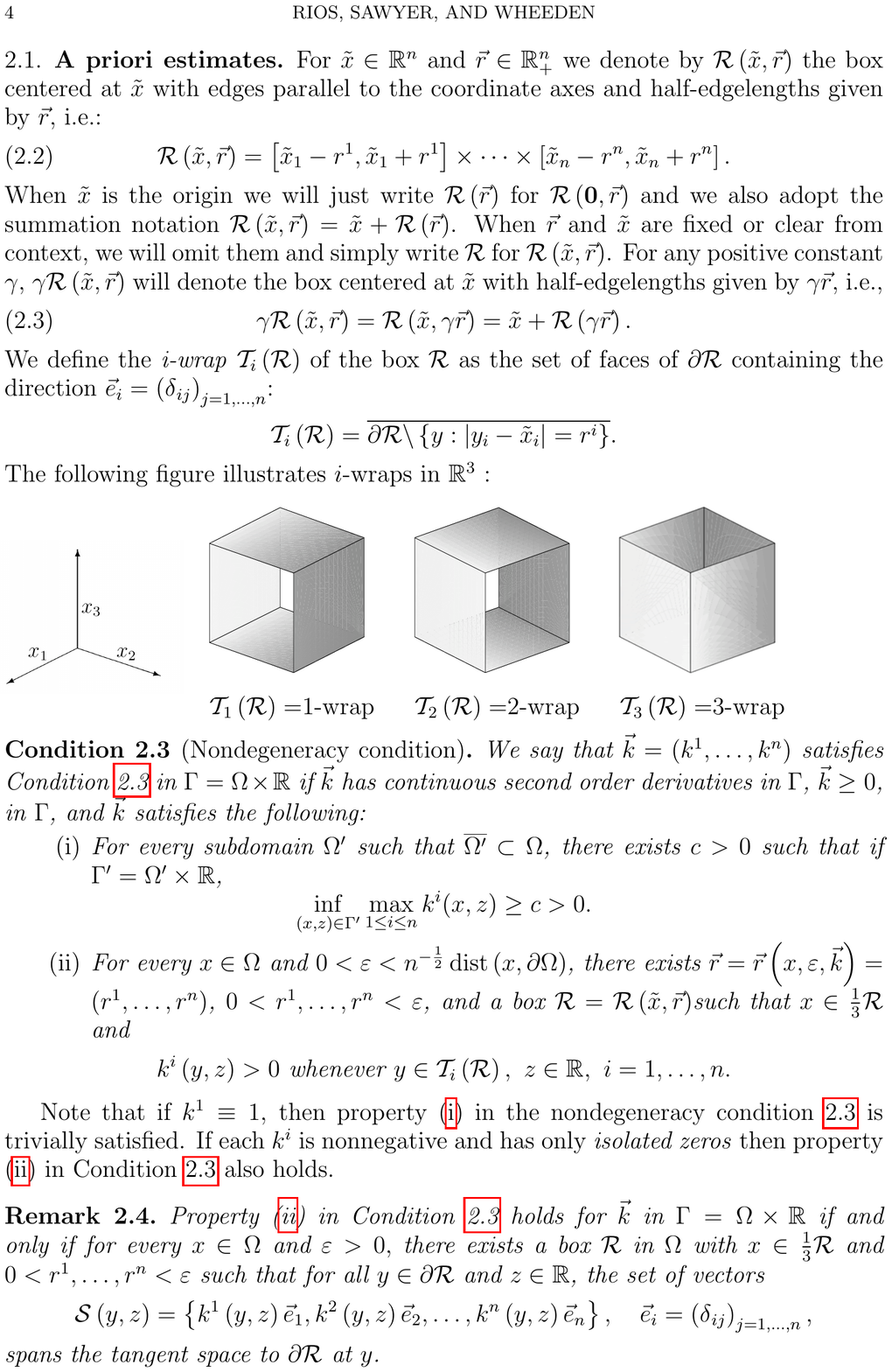}
\end{center}

We will use the notation $\Gamma =\Omega \times \mathbb{R}$ unless specified
to the contrary.

\begin{axiom}[Nondegeneracy condition]
\label{hyp1}We say that $\vec{k}=\left( k^{1},\dots ,k^{n}\right) $
satisfies Condition \ref{hyp1} in $\Gamma$ if $\vec{k}$ has continuous
second order derivatives in $\Gamma $, $\vec{k}\geq 0 $ in $\Gamma $, and $%
\vec{k}$ satisfies the following:

\begin{enumerate}
\item \label{ax1c2}For every subdomain $\Omega ^{\prime }$ with $\overline{%
\Omega ^{\prime }}\subset \Omega $, there exists $c>0$ such that if $\Gamma
^{\prime }=\Omega ^{\prime }\times \mathbb{R}$, then 
\begin{equation*}
\inf_{\left( x,z\right) \in \Gamma ^{\prime }}\max_{1\leq i\leq
n}k^{i}(x,z)\geq c>0.
\end{equation*}

\item \label{ax1c3}For all $x\in \Omega $ and $0<\varepsilon <n^{-\frac{1}{ 2%
}}\mathop{\rm dist}\left( x,\partial \Omega \right) $, there exists $\vec{r}
=\vec{r}\left( x,\varepsilon ,\vec{k}\right) =\left( r^{1},\dots
,r^{n}\right) $, $0<r^{1},\dots ,r^{n}<\varepsilon $, and a box $\mathcal{R}%
= \mathcal{R}\left( \tilde{x},\vec{r}\right)$ so that $x\in \frac{1}{3} 
\mathcal{R}$ and 
\begin{equation*}
k^{i}\left( y,z\right) >0\text{ whenever }y\in \mathcal{T}_{i}\left( 
\mathcal{R}\right) ,~z\in \mathbb{R},~i=1,\dots ,n.
\end{equation*}
The restriction on the size of $\epsilon$ ensures that $\mathcal{R} \subset
\Omega$.
\end{enumerate}
\end{axiom}

Note that if $k^{1}\equiv 1$, then property (\ref{ax1c2}) in Condition \ref%
{hyp1} is trivially satisfied. If each $k^{i}$ is nonnegative and has only 
\emph{isolated zeros}, then property (\ref{ax1c3}) in Condition \ref{hyp1}
holds.

\begin{remark}
\label{spans}Property (\ref{ax1c3}) in Condition \ref{hyp1} holds for $\vec{%
k }$ in $\Gamma =\Omega \times \mathbb{R}$ if and only if for every $x\in
\Omega $ and $\varepsilon >0,$ there exists a box $\mathcal{R}$ in $\Omega $
with $x\in \frac{1}{3}\mathcal{R}$ and $0<r^{1},\dots ,r^{n}<\varepsilon $
such that for all $y\in \partial \mathcal{R}$ and $z\in \mathbb{R}$, the set
of vectors 
\begin{equation*}
\mathcal{S}\left( y,z\right) =\left\{ k^{1}\left( y,z\right) \vec{e}
_{1},k^{2}\left( y,z\right) \vec{e}_{2},\dots ,k^{n}\left( y,z\right) \vec{e}
_{n}\right\} ,\quad \vec{e}_{i}=\left( \delta _{ij}\right) _{j=1,\dots ,n},
\end{equation*}
spans the tangent space to $\partial \mathcal{R}$ at $y$.
\end{remark}

{A structural condition that we impose on the matrix $\mathcal{A}$ in (\ref%
{equation}) is that }it is equivalent to a diagonal matrix in the following
sense:

\begin{axiom}[Diagonal condition]
\label{hellipp} For $\vec{k}$ as in Condition \ref{hyp1}, we assume that for
some $\Lambda \geq 1$, the matrix $\mathcal{A}$ satisfies 
\begin{equation}
\sum_{i=1}^{n}k^{i}\left( x,z\right) \xi _{i}^{2}\leq \xi ^{t}\mathcal{A}
\left( x,z\right) \xi \leq \Lambda \sum_{i=1}^{n}k^{i}\left( x,z\right) \xi
_{i}^{2}  \label{hellip}
\end{equation}
for all $\xi \in \mathbb{R}^{n}$ and $\left( x,z\right) \in \Gamma =\Omega
\times \mathbb{R}$.
\end{axiom}

\begin{remark}
Because of Condition \ref{hellipp} or Remark \ref{spans}, we can state
property (\ref{ax1c3}) in the nondegeneracy Condition \ref{hyp1} in terms of
the matrix $\mathcal{A}$ as follows: For every $x\in \Omega $ and $%
\varepsilon $ with $0<\varepsilon <n^{-\frac{1}{2}}\mathop{\rm dist}\left(
x,\partial \Omega \right) $, there exist $\vec{r}=\left( r^{1},\dots
,r^{n}\right) $ with $0<r^{1},\dots ,r^{n}<\varepsilon $ and a box $\mathcal{%
\ R}=\mathcal{R}\left( \tilde{x},\vec{r}\right) $ such that $x\in \frac{1}{3}
\mathcal{R}$ and for every $y\in \partial \mathcal{R}$ and nonzero $\vec{v}
\left( y\right) $ \emph{tangent} to $\partial \mathcal{R}$ at $y$, 
\begin{equation}
\vec{v}\left( y\right) \cdot \mathcal{A}\left( y,z\right) \vec{v}\left(
y\right) >0\qquad \text{for all }z\in \mathbb{R}\text{.}  \label{lidAk}
\end{equation}
\end{remark}

{Recall that a domain }$\Omega ${\ is an open connected set. $\Omega
^{\prime }$ will always denote a domain with compact closure in $\Omega$;
this will be abbreviated }$\Omega ^{\prime }\Subset \Omega ${. }

\begin{definition}[Subunit type]
{\label{subunitt}}We say that a vector field $G=\sum_{i=1}^{n}\gamma
^{i}\left( x,z\right) \frac{\partial }{\partial x_{i}}$ with bounded
coefficients $\gamma ^{i}$ is of \emph{subunit type} with respect to $%
\mathcal{A}$ {in $\Gamma =\Omega \times \mathbb{R}$ if for every $\Omega
^{\prime }\Subset \Omega $ and $M_{0}\geq 1,$ there is a constant $B_{\gamma
}=B_{\gamma }\left( \Omega ^{\prime },M_{0}\right) >0$ such that 
\begin{equation*}
\left( \sum_{i=1}^{n}\gamma ^{i}\left( x,z\right) \xi _{i}\right) ^{2}\leq
B_{\gamma }^{2}~\xi ^{t}\mathcal{A}\left( x,z\right) \xi \quad \text{for all 
}\left( x,z\right) \in \Gamma _{M_{0}}^{\prime }=\Omega ^{\prime }\times %
\left[ -M_{0},M_{0}\right], \xi \in \mathbb{R}^n.
\end{equation*}
\ }
\end{definition}

We will impose further conditions on $\mathcal{A}$. To motivate them, we
recall the classical inequality of Wirtinger type on a domain $\Phi \subset 
\mathbb{R}^{n+1}:$ if $k$ is nonnegative with bounded second derivatives on $%
\Phi$, then 
\begin{equation}
\left\vert \nabla _{\!\!x,z}k\left( x,z\right) \right\vert \leq C\left\{
\left\Vert \nabla _{\!\!x,z}^{2}k\right\Vert _{L^{\infty }\left( \Phi
\right) }^{\frac{1}{2}}+\left( \mathop{\rm dist}\left( \left( x,z\right)
,\partial \Phi \right) \right) ^{-\frac{1}{2}}\right\} \sqrt{k\left(
x,z\right) }  \label{genWirt}
\end{equation}
for all $\left( x,z\right) \in \Phi $ (see e.g. the appendix in \cite{SaW}).

{Inequality (\ref{genWirt}) is crucial in our calculations, and although it
has an analogue for nonnegative diagonal matrices, it does not extend to
general matrix functions.}

\begin{definition}[Subordinate matrix]
\label{submat}{We say that $\mathcal{A}$ is \emph{subordinate} in $\Gamma
=\Omega \times \mathbb{R}$ if for every $\Omega ^{\prime }\Subset \Omega $
and $M_{0}\geq 1$, there exists $B_{\mathcal{A}}=B_{\mathcal{A}}\left(
\Omega ^{\prime },M_{0}\right) >0$ such that 
\begin{equation}
\sum_{i=1}^{n}\left\vert \partial_i\mathcal{A}\left( x,z\right) \xi
\right\vert ^{2}+\left\vert \partial_z\mathcal{A}\left( x,z\right) \xi
\right\vert ^{2}\leq B_{\mathcal{A}}^{2}~\xi ^{t}\mathcal{A}\left(
x,z\right) \xi  \label{wirtm}
\end{equation}
}for all $\xi \in \mathbb{R}^{n}$, $\left( x,z\right) \in \Gamma
_{M_{0}}^{\prime }$, {where $\Gamma_{M_0}^{\prime }=\Omega ^{\prime }\times %
\left[ -M_{0},M_{0}\right] $. }
\end{definition}

{We always consider locally bounded solutions $w$, i.e. $\left\Vert
w\right\Vert _{L^{\infty }\left( \Omega ^{\prime }\right) }<\infty $ for all
subdomains $\Omega ^{\prime }\Subset \Omega $. Thus, we deal only with a
solution $w$ whose graph on $\Omega ^{\prime }$ is contained in a \emph{%
bounded} connected set 
\begin{equation}
\Gamma _{M_{0}}^{\prime }=\Omega ^{\prime }\times \left[ -M_{0},M_{0}\right]
\Subset \Gamma =\Omega \times \mathbb{R}  \label{domain}
\end{equation}%
for some $M_{0}=M_{0}\left( w,\Omega ^{\prime }\right) <\infty $. For
convenience we also assume $M_{0}\geq 1$.\ }

{To obtain our main results, we will use the following a priori estimates
obtained in \cite{RSaW1} for the class of equations (\ref{equation} ). \ }

\begin{theorem}[Theorem 1.8 in~%
\cite{RSaW1}%
]
{\ \label{quasi}Let $\Omega $ be a bounded domain in $\mathbb{R}^{n}$ and $%
\Gamma =\Omega \times \mathbb{R}$. Let $\vec{k}\left( x,z\right) \in 
\mathcal{C}^{2}\left( \Gamma \right) $ and $\mathcal{A}\left( x,z\right) $, $%
f\left( x,z\right) $ and $\vec{\gamma}\left( x,z\right) \in \mathcal{C}
^{\infty }\left( \Gamma \right) $. Suppose that}

\begin{enumerate}
\item {$\mathcal{A\,}$satisfies (\ref{hellip}), where $\vec{k}\left(
x,z\right) $ satisfies the nondegeneracy }Condition{\ \ref{hyp1} in $\Gamma $
,}

\item {$\mathcal{A\,}$is subordinate in $\Gamma $ (Definition \ref{submat}),}

\item {\ $\vec{\gamma}$ is of subunit type with respect to }$\mathcal{A}${\
in $\Gamma $ (Definition \ref{subunitt}). }
\end{enumerate}

{Then for every smooth solution $w$ of (\ref{equation}) in $\Omega $,
integer $N\geq 0$ and subdomains $\Omega ^{\prime \prime }\Subset \Omega
^{\prime }\Subset \Omega ,$ there exists a constant }$\mathcal{C}
_{\left\Vert w\right\Vert _{L^{\infty }\left( \Omega ^{\prime }\right)
},\left\Vert \nabla w\right\Vert _{L^{\infty }\left( \Omega ^{\prime
}\right) },N}=${\ 
\begin{equation*}
\mathcal{C}_{\left\Vert w\right\Vert _{L^{\infty }\left( \Omega ^{\prime
}\right) },\left\Vert \nabla w\right\Vert _{L^{\infty }\left( \Omega
^{\prime }\right) },N}\left( n,B,\vec{k},\Lambda ,\left\Vert \mathcal{A}
\right\Vert _{\mathcal{C}^{N+2}\left( \tilde{\Gamma}\right) },\left\Vert
f\right\Vert _{\mathcal{C}^{N+1}\left( \tilde{\Gamma}\right) },\left\Vert 
\vec{\gamma}\right\Vert _{\mathcal{C}^{N+1}\left( \tilde{\Gamma}\right)
},\Omega ,\Omega ^{\prime },\Omega ^{\prime \prime }\right)
\end{equation*}
such that 
\begin{equation}
\sum_{\left\vert \vec{\alpha}\right\vert \leq N}\left\Vert D^{\alpha
}w\right\Vert _{L^{\infty }\left( \Omega ^{\prime \prime }\right) }\leq 
\mathcal{C}_{\left\Vert w\right\Vert _{L^{\infty }\left( \Omega ^{\prime
}\right) },\left\Vert \nabla w\right\Vert _{L^{\infty }\left( \Omega
^{\prime }\right) },N}.  \label{control}
\end{equation}
Here $\tilde{\Gamma}=\Omega ^{\prime }\times \left[ -2\left\Vert
w\right\Vert _{L^{\infty }\left( \Omega ^{\prime }\right) },2\left\Vert
w\right\Vert _{L^{\infty }\left( \Omega ^{\prime }\right) }\right] $, and $B$
denotes $B_\gamma$, $B_{\mathcal{A}}$. }
\end{theorem}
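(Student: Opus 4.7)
The plan is to prove (\ref{control}) by induction on $N$. For $N=0$ and $N=1$ the bound is part of the hypothesis, since $\|w\|_{L^{\infty}(\Omega')}$ and $\|\nabla w\|_{L^{\infty}(\Omega')}$ appear on the right-hand side. Assuming control of $D^{\alpha}w$ for $|\alpha|\le N$ on a nested family of subdomains $\Omega''\Subset\Omega^{\dagger}\Subset\Omega'$, I would differentiate (\ref{equation}) in a coordinate direction $x_k$ so that $u=\partial_k w$ satisfies a linear degenerate equation whose principal part is still $\mathop{\rm div}(\mathcal{A}(x,w)\nabla\cdot)$, and whose drift and source terms depend smoothly on $(x,w,\nabla w)$. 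The subordinate condition (\ref{wirtm}) guarantees that the new drift is again of subunit type with respect to $\mathcal{A}$, so the inductive step reduces to a regularity estimate for an auxiliary linear degenerate equation with admissible coefficients.

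For this auxiliary linear problem I would derive Caccioppoli-type inequalities against cutoff functions supported in the nondegeneracy boxes $\mathcal{R}=\mathcal{R}(\tilde x,\vec r)$ furnished by Condition \ref{hyp1}(ii). The key analytic tools are the Wirtinger-type bound (\ref{genWirt}) applied componentwise to $\vec k$, and the subordinate matrix condition (\ref{wirtm}), both of which convert first derivatives of the coefficients into factors that can be absorbed into the energy form $\xi^{t}\mathcal{A}(x,z)\xi$. Together with the product and chain rules, these produce weighted $L^{2}$-estimates for $\nabla u$ in terms of $\|u\|_{L^{\infty}}$ and quantities already controlled at the previous inductive level.

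The principal obstacle is the passage from weighted energy bounds to $L^{\infty}$ bounds. Infinite degeneracy precludes any fixed gain in Sobolev embedding, so a classical one-shot Moser iteration does not close. To circumvent this I would exploit the geometry of Condition \ref{hyp1}(ii): on each wrap $\mathcal{T}_{i}(\mathcal{R})$ the matrix $\mathcal{A}$ is tangentially positive definite by (\ref{lidAk}), so the equation behaves as a uniformly elliptic one along $\partial\mathcal{R}$, and a comparison/maximum-principle argument transfers boundary bounds inward. Covering $\Omega''$ by finitely many such boxes and chaining estimates across overlaps yields an $L^{\infty}$ bound on $\nabla u$, hence on $D^{\alpha}w$ for $|\alpha|=N+1$. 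Iterating the procedure and bookkeeping the dependence on $\|\mathcal{A}\|_{\mathcal{C}^{N+2}}$, $\|\vec\gamma\|_{\mathcal{C}^{N+1}}$, $\|f\|_{\mathcal{C}^{N+1}}$, $B$, $\Lambda$, and the nondegeneracy parameters produces the constant $\mathcal{C}$ in (\ref{control}).
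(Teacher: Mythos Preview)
This theorem is not proved in the present paper: it is quoted verbatim as Theorem~1.8 of \cite{RSaW1} and used as a black box. The only information the paper gives about its proof is the sentence immediately following Theorem~\ref{RSaW2}: ``One of the core ingredients needed to derive Theorem~\ref{quasi} is the interpolation inequality given in Lemma~\ref{lemma-interpolation}, proved in \cite{RSaW1}.'' So there is no in-paper proof to compare against, but that sentence already flags where your sketch diverges from the actual argument.

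Your outline is accurate up through the Caccioppoli/energy estimates: differentiating the equation, using subordination (\ref{wirtm}) to keep the commutator terms subunit, and running weighted energy inequalities on the boxes $\mathcal{R}$ is exactly the machinery recorded in Lemmas~\ref{Moser}, \ref{minkgrad}, and \ref{secondcacc}. The gap is in your proposed passage from energy to $L^{\infty}$. You suggest a ``comparison/maximum-principle argument'' that ``transfers boundary bounds inward'' from the wraps $\mathcal{T}_i(\mathcal{R})$. This does not work as stated: the differentiated equation for $u=\partial_k w$ has a right-hand side depending on $\nabla^2 w$ (see (\ref{nonhom}) and Lemma~\ref{secondcacc}), so there is no clean supersolution/subsolution structure to which a maximum principle for $u$ or its higher derivatives applies; and tangential ellipticity on $\partial\mathcal{R}$ gives you control only along $\partial\mathcal{R}$, not a one-sided bound in the interior.

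What actually closes the iteration in \cite{RSaW1} is the interpolation inequality of Lemma~\ref{lemma-interpolation}: it bounds the ``bad'' boundary-layer term $\int |\nabla_{\!\sqrt{\mathcal{A}},w}\zeta|^{2}|u^{\beta}|^{2}$ by an $L^{p}$ norm with $1<p<2$ plus an arbitrarily small multiple of the energy $\int \zeta^{2}|\nabla_{\!\sqrt{\mathcal{A}},w}u^{\beta}|^{2}$. That $p<2$ gain, not a comparison argument, is what lets a Moser-type iteration run despite the infinite degeneracy; you can see it used exactly this way in the proof of Theorem~\ref{apriorial} in Section~\ref{hypo} (cf.\ (\ref{tttc})). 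Your sketch should replace the maximum-principle step by an appeal to this interpolation lemma.
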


{Our main application of these a priori estimates is the hypoellipticity
result stated in Theorem \ref{application} for (infinitely degenerate)
quasilinear equations of the form (\ref{equation}). In it, as in the special
two-dimensional case contained in \cite{SaW}, we will assume extra
conditions on the coefficients, namely, that the \emph{nonlinear} and the 
\emph{\ infinitely degenerate} characters do not occur simultaneously in the
sense described below. We denote}%
\begin{equation}
{k^{\ast }\left( x,z\right) =\min_{i=1,\dots ,n}\left\{ k^{i}\left(
x,z\right) \right\} .}  \label{kstar}
\end{equation}

\begin{axiom}[Super Subordination condition]
{\label{hyp2}We say that $\mathcal{A}$ satisfies the super subordination
condition in $\Gamma =\Omega \times \mathbb{R}$} if {for every $\Omega
^{\prime }\Subset \Omega $ and $M_{0}\geq 1,$ there exist constants $B_{ 
\mathcal{A}}=B_{\mathcal{A}}\left( \Omega ^{\prime },M_{0}\right) $ and }$%
B_{ \mathcal{A}}^{\prime }=B_{\mathcal{A}}^{\prime }\left( \Omega ^{\prime
},M_{0}\right) ${\ such that if }$\left( x,z\right) \in \Gamma
_{M_{0}}^{\prime }$ and $~\xi \in \mathbb{R}^{n},$ then{\ 
\begin{eqnarray}
\left\vert \partial _{z}\mathcal{A}\left( x,z\right) \xi \right\vert ^{2}
&\leq &B_{\mathcal{A}}^{2}~k^{\ast }\left( x,z\right) \,\xi ^{t}\mathcal{A}
\left( x,z\right) \xi ,  \label{xtra} \\
\sum_{i=1}^{n}\left\vert \partial _{i}\partial _{z}\mathcal{A}\left(
x,z\right) \xi \right\vert ^{2}+\left\vert \partial _{z}^{2}\mathcal{A}
\left( x,z\right) \xi \right\vert ^{2} &\leq &\left( B_{\mathcal{A}}^{\prime
}\right) ^{2}\,\xi ^{t}\mathcal{A}\left( x,z\right) \xi .  \label{xxtra}
\end{eqnarray}
\indent%
\noindent If $\mathcal{A}$ is diagonal, then condition (\ref{xxtra}) follows
from (\ref{xtra}) by Wirtinger's inequality: see Remark \ref{diagcase}. }

We say $\vec{\gamma}\left( x,z\right) $ satisfies the super subordination
condition if {for all }$\left( x,z\right) \in \Gamma _{M_{0}}^{\prime }$ and 
$\xi \in \mathbb{R}^{n},$ {\ 
\begin{equation}
\left\vert \partial _{z}\vec{\gamma}\left( x,z\right) \cdot \xi \right\vert
^{2}\leq B_{\gamma }^{2}~k^{\ast }\left( x,z\right) \xi ^{t}\mathcal{A}%
\left( x,z\right) \xi ,  \label{gxtra}
\end{equation}%
for some $B_{\gamma }=B_{\gamma }\left( \Omega ^{\prime },M_{0}\right) $.}
\end{axiom}

{The extra vanishing }condition{\ (\ref{xtra}) on $\partial _{z}\mathcal{A}$
is a stronger form of the part of (\ref{wirtm}) involving $\partial _{z} 
\mathcal{A}$. In the two-dimensional diagonal case $\mathcal{A}= 
\mathop{\rm
diag}\left( 1,k\right) $, inequality (\ref{wirtm}) always holds for any $%
\mathcal{C}^{2}$ nonnegative $k\left( x,z\right) $, and it takes the form (%
\ref{genWirt}), while the more restrictive (\ref{xtra}) with $k^{\ast }=k$
takes the form 
\begin{equation}
\left\vert \partial _{z}k\left( x,z\right) \right\vert \leq B~k\left(
x,z\right)  \label{thw2}
\end{equation}
(compare (\ref{thw})), which does not hold in general for nonnegative $%
k\left( x,z\right) $. On the other hand, if $f\left( x\right) $ is any
smooth nonnegative function in $\mathbb{R}^{n}$ and $h\left( x,z\right) $ is
a nonnegative Lipschitz function in $\mathbb{R}^{n+1}$, then 
\begin{equation*}
k\left( x,z\right) =f\left( x\right) \left[ 1+h\left( x,z\right) \right]
\end{equation*}
satisfies (\ref{thw2}). Indeed, we have the following lemma; see Section 6.4
in the appendix of \cite{SaW} for details. }

\begin{lemma}[\protect\cite{SaW}]
{\ \label{chaz}Let $k(x,z)$ be a smooth nonnegative function in a bounded
region $T \subset \mathbb{R}^{n}\times \mathbb{R}$, and assume that for some 
$\gamma, B \geq 1$, 
\begin{equation}
\left\vert \partial _{z}k(x,z)\right\vert \leq B\, k\left( x,z\right)
^{\gamma }.  \label{xwirt}
\end{equation}%
Then, for every $\left( x_{0},z_{0}\right) \in T $, there exists a smooth
function $f(x)\geq 0$ and a Lipschitz function $h(x,z)$, with Lipschitz
constant depending only on $B$, $\left\Vert k\right\Vert _{L^{\infty}(T)}$
and $T$, such that 
\begin{equation}
k\left( x,z\right) =f\left( x\right) \left( 1+f\left( x\right) ^{\gamma
-1}h\left( x,z\right) \right)  \label{charat}
\end{equation}%
for all $\left( x,z\right) $ in a neighborhood of $\left( x_{0},z_{0}\right) 
$. Moreover, $h\left( x_{0},z_{0}\right) =0$. In particular, 
\begin{equation*}
C^{-1}k\left( x,z^{\prime }\right) \leq k\left( x,z\right) \leq Ck\left(
x,z^{\prime }\right) ,\qquad \left( x,z\right) ,\,\left( x,z^{\prime
}\right) \in T ,
\end{equation*}
where $C=C\left( B,\mathop{\rm diam}T \right) $. Conversely, if $h\left(
x,z\right) $ is smooth and $f\left( x\right) \,$is a nonnegative smooth
function such that $f\left( x\right) ^{\gamma }$ is smooth, then $k\left(
x,z\right) $ given by (\ref{charat}) is smooth and satisfies (\ref{xwirt})
for some $B=B\left( h,f,\mathop{\rm
  diam}T \right) $.}
\end{lemma}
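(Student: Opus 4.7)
The plan is to build $f$ and $h$ by integrating the hypothesis (\ref{xwirt}) along the $z$-axis through $(x_0, z_0)$, taking $f(x) := k(x, z_0)$, which is automatically smooth and nonnegative.

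For $\gamma = 1$, Gronwall's inequality applied to the ODE $|\partial_z k(x, \cdot)| \le B k(x, \cdot)$ gives $k(x, z) = f(x) e^{A(x, z)}$ wherever $f(x) > 0$, with $|A(x, z)| \le B|z - z_0|$. Moreover $|\partial_z k| \le B k$ forces $k(x, \cdot) \equiv 0$ on each $x$-slice on which $f$ vanishes, so $A$ extends continuously by zero across $\{f = 0\}$, and $h := e^A - 1$ delivers the factorization. For $\gamma > 1$, the identity $\partial_z (k^{1-\gamma}) = (1-\gamma) k^{-\gamma} \partial_z k$ is bounded by $B(\gamma - 1)$ wherever $k > 0$, so integrating gives
\begin{equation*}
k(x, z)^{1-\gamma} - f(x)^{1-\gamma} = \Psi(x, z), \qquad |\Psi(x, z)| \le B(\gamma - 1)|z - z_0|,
\end{equation*}
which I rearrange as $k = f \cdot [1 + f^{\gamma - 1} \Psi]^{-1/(\gamma - 1)}$. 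Writing $(1+u)^{-1/(\gamma-1)} = 1 + u\, G(u)$ with $G$ smooth near $u = 0$, and restricting to a neighborhood small enough that $f^{\gamma-1} |\Psi| < 1/2$, I set $h(x, z) := \Psi(x, z)\, G(f(x)^{\gamma-1} \Psi(x, z))$. Then $h(x, z_0) \equiv 0$ because $\Psi(\cdot, z_0) \equiv 0$, and $k = f(1 + f^{\gamma-1} h)$ follows.

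The main technical point will be to establish Lipschitz regularity of $h$, with the claimed dependence, across the set $\{f = 0\}$. Where $f > 0$, this is direct from the Lipschitz regularity of $\Psi$ (inherited from its definition as an antiderivative of a bounded function) and the smoothness of $G$. On $\{f = 0\}$, the differential inequality together with $\gamma \ge 1$ forces, via ODE uniqueness, that $k(x, \cdot) \equiv 0$ on the whole $x$-slice (for $\gamma = 1$ by Gronwall; for $\gamma > 1$ because $k^{1-\gamma}$ would otherwise have to jump from finite to infinite values with a bounded $z$-derivative). Hence the factorization reduces to the trivial identity $0 = 0$ on $\{f = 0\} \times \mathbb{R}$, and one extends $h$ there by zero; the resulting Lipschitz constant depends only on $B$, $\gamma$, $\|k\|_{L^\infty(T)}$ and $\mathop{\rm diam} T$. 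The global comparability $C^{-1} k(x, z') \le k(x, z) \le C k(x, z')$ on $T$ is then immediate from the integrated inequality (either $|\log(k/k(\cdot,z'))| \le B|z - z'|$ for $\gamma = 1$, or the bound on $k^{1-\gamma} - k(\cdot, z')^{1-\gamma}$ for $\gamma > 1$) applied with $z, z'$ ranging over $T$.

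Finally, the converse is a direct computation: from $k = f(1 + f^{\gamma-1} h)$ with $f^\gamma$ smooth and $h$ smooth, one has $\partial_z k = f^\gamma \partial_z h$; on a small enough neighborhood $k \ge f/2$, hence $|\partial_z k| = f^\gamma |\partial_z h| \le 2^\gamma \|\partial_z h\|_{L^\infty} k^\gamma$, which is (\ref{xwirt}) with $B = B(h, f, \mathop{\rm diam} T)$.
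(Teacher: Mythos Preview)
The paper does not prove this lemma; it is quoted from \cite{SaW} with the remark ``see Section 6.4 in the appendix of \cite{SaW} for details.'' Your approach---set $f(x)=k(x,z_0)$ and integrate the differential inequality (\ref{xwirt}) in $z$, using Gronwall for $\gamma=1$ and the bounded derivative of $k^{1-\gamma}$ for $\gamma>1$---is the natural one and is essentially what is done in \cite{SaW}. The factorization, the comparability $C^{-1}k(x,z')\le k(x,z)\le Ck(x,z')$, and the converse computation are all handled correctly.

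One point deserves care. Your discussion of extending $h$ by zero across $\{f=0\}$ and obtaining a Lipschitz function in $(x,z)$ is not right as stated: with your choice $f(x)=k(x,z_0)$ one has $h=(k-f)/f^{\gamma}$ on $\{f>0\}$, and there is no reason for this to extend Lipschitz-continuously in $x$ across $\{f=0\}$ (consider $k(x,z)=x^{2}(1+xz)$ with $\gamma=3/2$, where $h=\operatorname{sign}(x)\,z$). The stated dependence of the Lipschitz constant only on $B$, $\Vert k\Vert_{L^{\infty}(T)}$ and $T$, with no control on $\nabla_{x}k$, already signals that the intended regularity is Lipschitz in $z$ for each fixed $x$; that is exactly what your integration-in-$z$ argument delivers, and it is all that is used in the paper (namely, that $k(x,\cdot)$ is either identically zero or strictly positive). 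So drop the extension-by-zero claim and state the $z$-Lipschitz bound directly.
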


\begin{remark}
\label{diagcase} As noted earlier, if $\mathcal{A}$ is diagonal then the
second extra vanishing condition (\ref{xxtra}) follows from the first one (%
\ref{xtra}) by (\ref{genWirt}). Indeed, it is enough to prove this for a
scalar function $k\left( x,z\right) $. If (\ref{thw2}) holds, then 
\begin{equation*}
\tilde{k}\left( x,z\right) : =\partial _{z}k\left( x,z\right) +B~k\left(
x,z\right) \geq 0,
\end{equation*}
so by (\ref{genWirt}), {for all }$\left( x,z\right) \in \Gamma
_{M_{0}}^{\prime }$, 
\begin{equation*}
\left\vert \nabla _{\!\!x,z}\tilde{k}\left( x,z\right) \right\vert \leq
C\left\{ \left\Vert \nabla _{\!\!x,z}^{2}\tilde{k}\right\Vert _{L^{\infty
}\left( \Gamma _{2M_{0}}^{\prime \prime }\right) }^{\frac{1}{2}}+\left( %
\mathop{\rm dist}\left( \left( x,z\right) ,\partial \Gamma _{2M_{0}}^{\prime
\prime }\right) \right) ^{-\frac{1}{2}}\right\} \sqrt{\tilde{k}\left(
x,z\right), }
\end{equation*}
where $\Gamma _{2M_{0}}^{\prime \prime }=\Omega ^{\prime \prime }\times %
\left[ -2M_{0},2M_{0}\right] $ with $\Omega ^{\prime }\Subset \Omega
^{\prime \prime }\Subset \Omega $. Hence from (\ref{thw2}) we obtain 
\begin{equation*}
\sum_{i=1}^{n}\left\vert \partial _{i}\partial _{z}k\right\vert
^{2}+\left\vert \partial _{z}^{2}k\right\vert ^{2}\leq \tilde{B}%
^{2}\,k\left( x,z\right)
\end{equation*}
for a suitable constant $\tilde{B}$.
\end{remark}

{Under the extra assumptions in }the super subordination Condition{\ \ref%
{hyp2}, we will obtain interior a priori control of all derivatives
(including first order ones) of smooth solutions in terms of the supremum
norm of the solutions: }

\begin{theorem}[A priori estimate]
{\ \label{apriorial}Let $\Omega \subset \mathbb{R}^{n}$ be a bounded domain.
Suppose that $\Gamma $, $\mathcal{A}\left( x,z\right) $, $f\left( x,z\right) 
$, $\vec{\gamma}\left( x,z\right) $ and $\vec{k}\left( x,z\right) $ are as
in Theorem \ref{quasi}, and also that $\mathcal{A}$ and $\vec{\gamma}$
satisfy }the super subordination Condition{\ \ref{hyp2} in $\Gamma $. If $w$
is a smooth solution of (\ref{equation}) in $\Omega $, then for any
nonnegative integer $N$ and subdomain $\Omega ^{\prime }\Subset \Omega $,
there exists a constant }$\mathcal{C}_{\left\Vert w\right\Vert _{L^{\infty
}\left( \Omega ^{\prime }\right) },N}=${\ 
\begin{equation*}
\mathcal{C}_{\left\Vert w\right\Vert _{L^{\infty }\left( \Omega ^{\prime
}\right) },N}\left( n,B,\vec{k},\Lambda ,\left\Vert \mathcal{A}\right\Vert _{%
\mathcal{C}^{N+3}\left( \tilde{\Gamma}\right) },\left\Vert f\right\Vert _{%
\mathcal{C}^{N+3}\left( \tilde{\Gamma}\right) },\left\Vert \vec{\gamma}%
\right\Vert _{\mathcal{C}^{N+3}\left( \tilde{\Gamma}\right) },\Omega ,\Omega
^{\prime }\right)
\end{equation*}%
such that 
\begin{equation*}
\sum_{\left\vert \vec{\alpha}\right\vert \leq N}\left\Vert D^{\vec{\alpha}%
}w\right\Vert _{L^{\infty }\left( \Omega ^{\prime }\right) }\leq \mathcal{C}%
_{\left\Vert w\right\Vert _{L^{\infty }\left( \Omega ^{\prime }\right) },N}.
\end{equation*}%
Here $\tilde{\Gamma}=\Omega ^{\prime }\times \left[ -2\left\Vert
w\right\Vert _{L^{\infty }\left( \Omega ^{\prime }\right) },2\left\Vert
w\right\Vert _{L^{\infty }\left( \Omega ^{\prime }\right) }\right] $ and $B$
denotes $B_{\gamma },B_{\mathcal{A}},B_{\mathcal{A}^{\prime }}$. }
\end{theorem}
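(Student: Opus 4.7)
The plan has two steps: first reduce the a priori estimate for all derivatives to a Lipschitz estimate, and then prove the Lipschitz bound by a Bernstein-type argument that crucially exploits the super subordination Condition \ref{hyp2}.

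\textbf{Reduction step.} Theorem \ref{quasi} already controls $\sum_{|\vec\alpha|\le N}\Vert D^{\vec\alpha} w\Vert_{L^\infty(\Omega'')}$ by a constant depending on \emph{both} $\Vert w\Vert_{L^\infty(\Omega')}$ and $\Vert\nabla w\Vert_{L^\infty(\Omega')}$. So it is enough to show that, for nested subdomains $\Omega''\Subset\Omega^{*}\Subset\Omega'\Subset\Omega$,
\begin{equation*}
\Vert\nabla w\Vert_{L^\infty(\Omega^{*})}\leq L\bigl(n,B,\vec k,\Lambda,\Vert w\Vert_{L^\infty(\Omega')},\Vert\mathcal{A}\Vert_{\mathcal C^{3}(\tilde\Gamma)},\Vert f\Vert_{\mathcal C^{3}(\tilde\Gamma)},\Vert\vec\gamma\Vert_{\mathcal C^{3}(\tilde\Gamma)},\Omega',\Omega^{*}\bigr).
\end{equation*}
Substituting this bound into Theorem \ref{quasi} applied to the pair $(\Omega^{*},\Omega'')$ yields the theorem for every $N\ge 0$. (The cases $N=0,1$ are the $L^\infty$ hypothesis and the Lipschitz bound itself.)

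\textbf{Bernstein argument for the Lipschitz bound.} Choose a cutoff $\eta\in\mathcal C_c^\infty(\Omega')$ with $\eta\equiv 1$ on $\Omega^{*}$, and consider the auxiliary function
\begin{equation*}
\Phi(x)=\eta(x)^{2}\,|\nabla w(x)|^{2}+\lambda\,w(x)^{2},
\end{equation*}
with $\lambda>0$ to be fixed. Let $x_{0}$ be an interior maximum point of $\Phi$ in $\Omega'$. At $x_{0}$ one has $\nabla\Phi(x_{0})=0$ and $D^{2}\Phi(x_{0})\leq 0$, so contracting with the nonnegative matrix $\mathcal{A}(x_{0},w(x_{0}))$ yields $0\geq\mathcal{A}:D^{2}\Phi(x_{0})$. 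Expanding and using $\mathcal Qw=0$ to eliminate third derivatives of $w$ (differentiate the equation once in $x_\ell$ and multiply by $w_\ell$), I expect to extract the positive "good" term
\begin{equation*}
2\eta^{2}\sum_{\ell=1}^{n}(\nabla w_\ell)^{t}\mathcal{A}(\nabla w_\ell)+2\lambda\,(\nabla w)^{t}\mathcal{A}\,\nabla w,
\end{equation*}
plus harmless terms bounded in $\Vert w\Vert_{L^\infty}$, plus a collection of "bad" terms that are the main obstacle.

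\textbf{The main obstacle: cubic bad terms and the role of super subordination.} Differentiating $\mathcal{A}(x,w)$ and $\vec\gamma(x,w)$ in $w$ produces terms of the form $\partial_{z}\mathcal{A}\cdot\nabla w$ and $\partial_z\vec\gamma\cdot\nabla w$ multiplied by $\nabla^{2}w$ and by another factor of $\nabla w$; after summing in $\ell$ these are \emph{cubic} in $\nabla w$ and coupled to $\nabla^{2}w$. The ordinary subordination of Definition \ref{submat} and Definition \ref{subunitt} bounds $|\partial_{z}\mathcal{A}\xi|$ by $\sqrt{\xi^{t}\mathcal{A}\xi}$ only, which is insufficient: one copy of $|\nabla w|$ cannot be absorbed by the good term. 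Here Condition \ref{hyp2} rescues the argument: inequalities (\ref{xtra}) and (\ref{gxtra}) yield
\begin{equation*}
|\partial_{z}\mathcal{A}\,\xi|\leq B\sqrt{k^{*}(x,z)}\,\sqrt{\xi^{t}\mathcal{A}\xi},\qquad |\partial_{z}\vec\gamma\cdot\xi|\leq B\sqrt{k^{*}(x,z)}\,\sqrt{\xi^{t}\mathcal{A}\xi},
\end{equation*}
so each of these cubic bad terms acquires an extra factor $\sqrt{k^{*}}$. Since (\ref{hellip}) gives $(\nabla w)^{t}\mathcal{A}\,\nabla w\geq k^{*}|\nabla w|^{2}$ only in a trivial direction, the decisive point is that the extra $\sqrt{k^{*}}$ combined with Cauchy–Schwarz converts the cubic term into $\varepsilon(\text{good term})+C_{\varepsilon}\cdot(\text{degenerate quadratic form in }\nabla w)$, which can then be absorbed into the good term or into $\lambda(\nabla w)^{t}\mathcal A\nabla w$ by choosing $\lambda$ large. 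The analogous bad terms generated when one then differentiates the $\partial_z\mathcal{A}$ factors (which appear at second order in the expansion of $\mathcal A:D^{2}\Phi$) are controlled by the second super subordination bound (\ref{xxtra}). For diagonal $\mathcal{A}$, Remark \ref{diagcase} confirms this is automatic from (\ref{xtra}), but in the matrix case (\ref{xxtra}) is a genuine extra hypothesis.

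\textbf{Conclusion.} After all bad terms are absorbed into the good ones, the inequality $0\geq\mathcal A:D^{2}\Phi(x_{0})$ reduces to
\begin{equation*}
0\geq c\,\eta^{2}(x_0)\sum_{\ell}(\nabla w_\ell)^{t}\mathcal A\,\nabla w_\ell+\lambda\,(\nabla w)^{t}\mathcal A\,\nabla w\big|_{x_0}-C\bigl(1+\Phi(x_{0})\bigr),
\end{equation*}
for constants $c>0$ and $C=C(n,B,\vec k,\Lambda,\Vert w\Vert_{L^\infty(\Omega')},\lambda,\Vert\mathcal A\Vert_{\mathcal C^{3}},\Vert f\Vert_{\mathcal C^{3}},\Vert\vec\gamma\Vert_{\mathcal C^{3}},\Omega,\Omega',\Omega^{*})$. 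This forces $\Phi(x_{0})\leq C'$ and hence $\Vert\nabla w\Vert_{L^\infty(\Omega^{*})}\leq L$, completing the reduction and the proof. The technical heart of the argument is the careful bookkeeping of the absorption inequalities; the \emph{conceptual} heart is that the super subordination Condition \ref{hyp2} is exactly the pointwise pairing of the nonlinear ($z$-)derivatives of $\mathcal{A},\vec\gamma$ with the degeneration $k^{*}$ that is needed for a Bernstein argument to close in the infinitely degenerate setting.
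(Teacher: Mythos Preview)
Your reduction step is exactly right and matches the paper: by Theorem \ref{quasi} it suffices to bound $\Vert\nabla w\Vert_{L^\infty}$ locally in terms of $\Vert w\Vert_{L^\infty}$. The gap is in the Bernstein argument you propose for that bound.

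The difficulty is that your good terms are the \emph{degenerate} quadratic forms $\eta^{2}\sum_{\ell}(\nabla w_{\ell})^{t}\mathcal{A}\,\nabla w_{\ell}$ and $\lambda(\nabla w)^{t}\mathcal{A}\,\nabla w$, while several of the bad terms are of size $C|\nabla w|^{2}$ with the \emph{full} Euclidean gradient. Concretely, expanding $\mathcal{A}:D^{2}\Phi$ produces the cutoff contribution $|\nabla w|^{2}\,\mathcal{A}:D^{2}(\eta^{2})$ and, after Cauchy--Schwarz on the cross term $4\eta\sum_{\ell}w_{\ell}\,\nabla\eta\cdot\mathcal{A}\,\nabla w_{\ell}$, a remainder $C_{\varepsilon}|\nabla w|^{2}\,(\nabla\eta)^{t}\mathcal{A}\,\nabla\eta$. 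For a generic cutoff $\eta\in\mathcal{C}^{\infty}_{c}(\Omega')$ these are bounded by $C|\nabla w|^{2}\sim C\Phi(x_{0})$, and since $(\nabla w)^{t}\mathcal{A}\,\nabla w$ can be arbitrarily small compared to $|\nabla w|^{2}$ at a degenerate point, your displayed inequality does \emph{not} force $\Phi(x_{0})\le C'$. The super subordination Condition \ref{hyp2} helps with the $z$-derivative terms exactly as you describe, but it does nothing for these cutoff terms; the argument as written does not close.

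The paper's proof (Section 5) avoids pointwise maximum principles entirely and works with integral estimates, where the degeneracy can be handled by structure that is unavailable pointwise. First, Section \ref{apriori} establishes $\nabla w\in L^{p}_{\mathrm{loc}}$ for every $p$ (Theorem \ref{extraintegrability}) via a Moser-type iteration on $w_{j}^{\beta}$, using Lemma \ref{Moser} and crucially the \emph{adapted} cutoffs of Definition \ref{rhos}: their $i$-th derivatives are supported where $k^{j}>0$ for $j\neq i$, so Lemma \ref{minkgrad} converts $|\nabla_{\!\sqrt{\mathcal{A}},w}\zeta|^{2}|\nabla\varphi|^{2}$ into $C|\nabla\zeta|^{2}|\nabla_{\!\sqrt{\mathcal{A}},w}\varphi|^{2}$, exchanging a degenerate factor on the cutoff for one on the solution. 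The absorption of the remaining lower-order terms uses the one-dimensional Sobolev inequality (\ref{spoinc}) in the nondegenerate direction $x_{1}$, taking $r^{1}$ small. Second, Section 5.2 bounds $\Vert w_{ij}\Vert_{L^{\beta}(2\mathcal{R})}$ for some $\beta>n$ by the same machinery together with Lemma \ref{secondcacc} and the interpolation Lemma \ref{lemma-interpolation}; Sobolev embedding then gives $\nabla w\in L^{\infty}_{\mathrm{loc}}$. Two further rescalings ($w\mapsto Mw$ to make $B_{\mathcal{A}}$ small, and $w\mapsto w/N$ to make $M_{0}$ small) are needed to justify the absorptions. None of these devices---the adapted cutoffs, Lemma \ref{minkgrad}, the $x_{1}$-Poincar\'e smallness, the interpolation lemma, or the preliminary $L^{p}$ gradient bounds---has a pointwise analogue, which is why the paper proceeds through energy estimates rather than a Bernstein maximum.
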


\begin{remark}
{\ A special case of Theorem \ref{apriorial} is established in Theorem 2.4
in \cite{SaW}, namely when $n=2$, $\vec{k}\left( x_{1},x_{2},z\right)
=\left( 1,k\left( x_{1},z\right) \right) $ is independent of $x_{2}$ with 
\begin{equation}
\left\vert \partial _{z}k\left( x_{1},z\right) \right\vert \leq Ck\left(
x_{1},z\right) ^{\frac{3}{2}},  \label{thww}
\end{equation}
and $f=0$, $\vec{\gamma}=0$. Also, much more is required there of the
solution $w$, namely $w$ must satisfy (see (2.22) in \cite{SaW}) 
\begin{equation*}
\begin{array}{rrl}
1+\left( \partial _{x_{1}}w\left( x_{1},x_{2}\right) \right) ^{2} & \leq & 
C~\partial _{x_{2}}w\left( x_{1},x_{2}\right) , \\ 
k\left( x_{1},w\left( x_{1},x_{2}\right) \right) ~\partial _{x_{2}}w\left(
x_{1},x_{2}\right) & \leq & C.%
\end{array}%
\end{equation*}
These restrictions are removed in Theorem \ref{apriorial}, in which we also
generalize the result to higher dimensions and allow lower order terms. }%
\newline
\indent%
{An important improvement found in Theorem \ref{apriorial} relative to
Theorem 2.4 in \cite{SaW} is the reduction of the power $3/2$ in (\ref{thww}%
) to the sharp power $1$. See Section \ref{section-sharp}.}
\end{remark}

The next lemma shows that if only the first part of Condition \ref{hyp2}
holds, then the bilinear form induced by $\mathcal{A}\left( x,z\right) $ is
equivalent to one which is independent of $z$ in any set on which $z$ is
bounded.

\begin{lemma}
\label{admisall}{Let $\mathcal{A}=\left( a_{ij}\left( x,z\right) \right)
_{i,j=1,\dots ,n}$ be a smooth symmetric matrix satisfying (\ref{hellip}) in 
$\Gamma $ and such that for every $\Omega ^{\prime }\Subset \Omega $ and $%
M_{0}\geq 1,$ there exists $\tilde{B}=\tilde{B}_{\mathcal{A}}\left( M_{0}, %
\mathop{\rm diam}\Omega ,\mathop{\rm dist}\left( \Omega ^{\prime },\partial
\Omega \right) \right) $ such that 
\begin{equation}
\left\vert \partial _{z}\mathcal{A}\left( x,z\right) \xi \right\vert
^{2}\leq \tilde{B}^{2}k^{\ast }\left( x,z\right) \,\xi ^{t}\mathcal{A}\left(
x,z\right) \xi ,\qquad \left( x,z\right) \in \Gamma _{M_{0}}^{\prime },~\xi
\in \mathbb{R}^{n}.  \label{extratwo}
\end{equation}
Then there exists $\mathcal{C}=\mathcal{C}\left( \tilde{B},M_{0}\right) $
such that for all }$\left( x,z\right) ,\left( x,\tilde{z}\right) \in \Gamma
_{M_{0}}^{\prime }$ and $\,\xi \in \mathbb{R}^{n},$ 
\begin{equation*}
\mathcal{C}^{-1}\,\xi ^{t}\mathcal{A}\left( x,z\right) \xi \leq \xi ^{t}%
\mathcal{A}\left( x,\tilde{z}\right) \xi \leq \mathcal{C}\,\xi ^{t}\mathcal{%
A }\left( x,z\right) \xi .
\end{equation*}
Moreover, for all $i=1,\cdots ,n$ and $\left( x,z\right) ,\left( x,\tilde{z}
\right) \in \Gamma _{M_{0}}^{\prime },$ 
\begin{equation}
\mathcal{\tilde{C}}^{-1}k^{i}\left( x,z\right) \leq k^{i}\left( x,\tilde{z}
\right) \leq \mathcal{\tilde{C}}\,k^{i}\left( x,z\right),  \label{chaak}
\end{equation}
where $\mathcal{\tilde{C}}=\mathcal{\tilde{C}}\left( \mathcal{C},\Lambda
\right) $.
\end{lemma}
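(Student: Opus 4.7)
The plan is to reduce everything to a one-dimensional differential inequality in the variable $z$, with $x$ and $\xi$ held fixed. To that end I would set
\[
\phi(z) := \xi^{t}\mathcal{A}(x,z)\xi, \qquad z\in[-M_0,M_0].
\]
By the lower bound in (\ref{hellip}) together with $\vec{k}\geq 0$, $\phi$ is nonnegative, and by smoothness of $\mathcal{A}$ it is $\mathcal{C}^1$ in $z$ with $\phi'(z)=\xi^{t}\partial_{z}\mathcal{A}(x,z)\xi$. A single Cauchy--Schwarz step gives $|\phi'(z)|\leq |\xi|\,|\partial_{z}\mathcal{A}(x,z)\xi|$.

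The key estimate is then to bound $|\phi'|$ by $\phi$ itself. Hypothesis (\ref{extratwo}) gives $|\partial_{z}\mathcal{A}(x,z)\xi|^{2}\leq \tilde{B}^{2}k^{\ast}(x,z)\phi(z)$, while the lower bound in (\ref{hellip}) combined with the definition of $k^{\ast}$ yields $k^{\ast}(x,z)|\xi|^{2}\leq \sum_{i} k^{i}(x,z)\xi_{i}^{2}\leq \phi(z)$. Multiplying,
\[
|\phi'(z)|^{2}\leq |\xi|^{2}\tilde{B}^{2}k^{\ast}(x,z)\phi(z)\leq \tilde{B}^{2}\phi(z)^{2},
\]
so $|\phi'(z)|\leq \tilde{B}\,\phi(z)$ pointwise on $[-M_0,M_0]$.

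From this differential inequality I would deduce the first claim by observing that $\phi(z)e^{-\tilde{B}z}$ is nonincreasing and $\phi(z)e^{\tilde{B}z}$ is nondecreasing, which after a brief integration between $z$ and $\tilde{z}$ gives $\phi(z)\leq e^{\tilde{B}|z-\tilde{z}|}\phi(\tilde{z})\leq e^{2\tilde{B}M_0}\phi(\tilde{z})$. Symmetry in $z$ and $\tilde{z}$ then supplies the two-sided bound with $\mathcal{C}=e^{2\tilde{B}M_0}$; this exponential form automatically handles the degenerate situation $\phi\equiv 0$ without any special case analysis.

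For (\ref{chaak}) I would specialize to $\xi=\vec{e}_{i}$. Then (\ref{hellip}) forces $k^{i}(x,z)\leq \vec{e}_{i}^{\,t}\mathcal{A}(x,z)\vec{e}_{i}\leq \Lambda k^{i}(x,z)$, so applying the equivalence just established to $\vec{e}_{i}^{\,t}\mathcal{A}(x,\cdot)\vec{e}_{i}$ sandwiches $k^{i}(x,z)$ and $k^{i}(x,\tilde{z})$ within a factor of $\mathcal{C}\Lambda$, and we take $\tilde{\mathcal{C}}=\mathcal{C}\Lambda$. The only real subtlety is ensuring the argument is robust when $\phi$ vanishes somewhere, and that is handled automatically by writing the conclusion multiplicatively via the monotonicity of $\phi(z)e^{\pm\tilde{B}z}$ rather than dividing by $\phi$.
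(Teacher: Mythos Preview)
Your proof is correct and follows essentially the same route as the paper: define the quadratic form $\phi(z)=\xi^{t}\mathcal{A}(x,z)\xi$, combine Cauchy--Schwarz with hypothesis (\ref{extratwo}) and the lower bound in (\ref{hellip}) to obtain $|\phi'|\leq \tilde{B}\phi$, and then integrate. The only cosmetic difference is that the paper cites Gronwall's inequality by name for the last step, whereas you spell it out via the monotonicity of $\phi(z)e^{\pm\tilde{B}z}$; the derivation of (\ref{chaak}) by specializing $\xi=\vec{e}_i$ and using (\ref{hellip}) is also what the paper has in mind.
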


\proof%
{By (\ref{extratwo}), for all $\xi \in \mathbb{R}^{n}$, the function $%
h\left( x,z,\xi \right) =\xi ^{t}\mathcal{A}\left( x,z\right) \,\xi $
satisfies 
\begin{eqnarray*}
\left\vert \partial _{z}h\left( x,z,\xi \right) \right\vert ^{2}
&=&\left\vert \xi ^{t}\mathcal{A}_{z}\left( x,z\right) \,\xi \right\vert ^{2}
\\
&\leq &\left\vert \xi \right\vert ^{2}\left\vert \mathcal{A}_{z}\left(
x,z\right) \,\xi \right\vert ^{2} \\
&\leq &\left\vert \xi \right\vert ^{2}\tilde{B}^{2}k^{\ast }\left(
x,z\right) \,\xi ^{t}\mathcal{A}\left( x,z\right) \xi \\
&=&\left\vert \xi \right\vert ^{2}\tilde{B}^{2}k^{\ast }\left( x,z\right)
h\left( x,z,\xi \right) ,
\end{eqnarray*}
for all $\left( x,z\right) \in \Gamma _{M_{0}}^{\prime }=\Omega ^{\prime
}\times \left[ -M_{0},M_{0}\right] $ and$~\xi \in \mathbb{R}^{n}$. Since
from (\ref{hellip}) we have 
\begin{equation*}
k^{\ast }\left( x,z\right) \left\vert \xi \right\vert
^{2}=\sum_{i=1}^{n}k^{\ast }\left( x,z\right) \xi _{i}^{2}\leq
\sum_{i=1}^{n}k^{i}\left( x,z\right) \xi _{i}^{2}\leq \xi ^{t}\mathcal{A}
\left( x,z\right) \xi =h\left( x,z,\xi \right) ,
\end{equation*}
we obtain 
\begin{equation*}
\left\vert \partial _{z}h\left( x,z,\xi \right) \right\vert \leq \tilde{B}
\,h\left( x,z,\xi \right) .
\end{equation*}
By Gronwall's inequality it follows that for some $C=C\left( \tilde{B}
,M_{0}\right) ,$ 
\begin{equation*}
C^{-1}\,\xi ^{t}\mathcal{A}\left( x,z\right) \xi \leq \xi ^{t}\mathcal{A}
\left( x,\tilde{z}\right) \xi \leq C\,\xi ^{t}\mathcal{A}\left( x,z\right)
\xi ,\qquad \left( x,z\right) ,\left( x,\tilde{z}\right) \in \Gamma
_{M_{0}}^{\prime },\,\xi \in \mathbb{R}^{n}.
\end{equation*}
Inequality (\ref{chaak}) then follows immediately from (\ref{hellip}). } 
\endproof%

\subsection{Hypoellipticity Main Results}

{Our main results Theorems \ref{DP} and \ref{application} are obtained as
applications of the a priori estimates above. Theorem \ref{application}
establishes smoothness of weak solutions, and Theorem \ref{DP} deals with
existence, uniqueness, and regularity of strong solutions to the Dirichlet
problem. The concept of weak solutions of our infinitely degenerate
operators is similar to that of classical weak solutions for accretive
operators, defined via the associated Hilbert space. We denote by }$H_{%
\mathcal{X}}^{1,2}\left( \Omega \right) ${\ the Hilbert space on }$\Omega ${%
\ induced by the quadratic form }$\mathcal{X}\left( x,\xi \right)
=\sum_{i=1}^{n}k^{i}\left( x,0\right) \xi _{i}^{2}$ (see Appendix, Section %
\ref{Section-Weak-Sol}). {We say that }$w$ is a weak solution of (\ref%
{equation}) in $\Omega $ if $w\in H_{\mathcal{X}}^{1,2}\left( \Omega \right)
\bigcap L^{\infty }\left( \Omega \right) $ and {\ } 
\begin{equation*}
\int \mathcal{A}\left( x,w\right) \nabla w\cdot \nabla \varphi ~dx-\int
\varphi \vec{\gamma}\left( x,w\right) \cdot \nabla w~dx-\int f\left(
x,w\right) \varphi ~dx=0
\end{equation*}%
for all $\varphi \in Lip_{0}(\Omega )$. See Section \ref{deg-sob-section} of
the Appendix for a detailed discussion of the degenerate Sobolev spaces $H_{%
\mathcal{X}}^{1,2}\left( \Omega \right) $ and the meaning of $\nabla w$ if $%
w\in H_{\mathcal{X}}^{1,2}\left( \Omega \right) $.

Some extra conditions are required in order to make our approximation scheme
work.

\begin{definition}[Strongly convex domain]
{\ \label{strcv} A convex domain $\Phi \subset \mathbb{R}^{n}$ with $%
\partial \Phi \in \mathcal{C}^{2}$ is called \emph{\ strongly convex} (with
convex character $\lambda_0 $) if there exists $\lambda_0 >0$ such that 
\begin{equation*}
\inf_{p\in \partial \Omega }\min_{1\leq i\leq n-1}\lambda ^{i}\left(
p\right) =\lambda_0 >0,
\end{equation*}
where $\lambda ^{1}\left( p\right) ,\dots ,\lambda ^{n-1}\left( p\right) $
denote the principal curvatures (see \cite{GiTr} p.354) of $\partial \Phi $
at a point $p\in \partial \Phi $. }
\end{definition}

{Theorem \ref{application} is obtained from the following existence and
uniqueness theorem for the Dirichlet problem.}

\begin{theorem}[Dirichlet problem]
\bigskip \label{DP}{Let $\widetilde{\Omega }\subset \mathbb{R}^{n}$ be a
bounded open set and let }$\Omega \Subset \widetilde{\Omega }${\ be a
strongly convex domain. For }$\Gamma =\widetilde{\Omega }\times \mathbb{R},$
suppose{\ $\vec{k}\in \mathcal{C}^{2}\left( \Gamma \right) $ and $\mathcal{A}%
\left( x,z\right) $, $f\left( x,z\right) $, $\vec{\gamma}\left( x,z\right)
\in \mathcal{C}^{\infty }\left( \Gamma \right) $ are such that }

\begin{enumerate}
\item {\ \label{appc01}$\vec{k}\left( x,z\right) $ satisfies }the
nondegeneracy Condition{\ \ref{hyp1} in $\Gamma $, }

\item {\ \label{appc02}$\mathcal{A\,}$satisfies the diagonal }Condition{\ %
\ref{hellipp} in $\Gamma $, }

\item {\ \label{appc07}$f\left( x,z\right) \mathop{\rm sign}z\leq 0\text{
and }f_{z}\left( x,z\right) \leq 0\text{ in }\Gamma $, }

\item {\ \label{appc04}$\vec{\gamma}$ }is of subunit type with respect to $%
\mathcal{A}${\ in $\Gamma $, }

\item {\ }\label{appc06}$\vec{\gamma}$ has compact support in $\Omega $ in
the $x$ variable, locally in the $z$ variable, i.e., for all $M_{0}>0$ there
exists open $\Omega ^{\prime }\Subset \Omega $ such that $\vec{\gamma}
\left( x,z\right) =0$ if $\left( x,z\right) \in \left( \Omega \backslash 
\overline{ \Omega ^{\prime }}\right) \times \left[ -M_{0},M_{0}\right] $,

\item {\ }\label{appc03}{$\mathcal{A}$ satisfies }the super subordination
Condition{\ \ref{hyp2} in $\Gamma $}$,${\ }

\item {\ \label{appc05}$\vec{\gamma}$ satisfies }the super subordination
Condition{\ \ref{hyp2} in $\Gamma $}$.${\ }
\end{enumerate}
\end{theorem}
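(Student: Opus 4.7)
The strategy is to construct $w$ as the locally smooth limit of solutions $w_{\varepsilon}$ to uniformly elliptic approximations of (\ref{equation}), using Theorem~\ref{apriorial} for interior compactness and custom-built barriers (Lemma~\ref{barrier}) for boundary control. First, I regularize by setting $\mathcal{A}_{\varepsilon}=\mathcal{A}+\varepsilon I$, which corresponds to $\vec{k}_{\varepsilon}=\vec{k}+\varepsilon\vec{1}$. Since $\partial_{z}\mathcal{A}_{\varepsilon}=\partial_{z}\mathcal{A}$ and $\xi^{t}\mathcal{A}_{\varepsilon}\xi\geq\xi^{t}\mathcal{A}\xi$ while $k^{\ast}_{\varepsilon}\geq k^{\ast}$, the diagonal condition~\ref{hellipp}, the super subordination condition~\ref{hyp2}, and the subunit-type condition on $\vec{\gamma}$ all hold for $\mathcal{A}_{\varepsilon}$ with constants uniform in $\varepsilon$. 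For each $\varepsilon>0$, classical quasilinear theory (Leray--Schauder continuity combined with Schauder estimates, using smoothness of the coefficients and uniform ellipticity) produces $w_{\varepsilon}\in \mathcal{C}^{\infty}(\Omega)\cap \mathcal{C}^{0}(\overline{\Omega})$ solving $\mathcal{Q}_{\varepsilon}w_{\varepsilon}=0$ in $\Omega$ with $w_{\varepsilon}=\varphi$ on $\partial\Omega$ (approximating $\varphi$ by smooth $\varphi_{j}\to\varphi$ uniformly if required). Hypothesis~(\ref{appc07}), together with the maximum principle from Section~\ref{maxpsection}, yields the uniform bound $\|w_{\varepsilon}\|_{L^{\infty}(\Omega)}\leq \|\varphi\|_{L^{\infty}(\partial\Omega)}=:M_{0}$.

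With this uniform $L^{\infty}$ bound in hand, Theorem~\ref{apriorial} applies to each $w_{\varepsilon}$: for every integer $N\geq 0$ and every $\Omega'\Subset\Omega$,
\[
\sum_{|\vec{\alpha}|\leq N}\|D^{\vec{\alpha}}w_{\varepsilon}\|_{L^{\infty}(\Omega')}\leq \mathcal{C}_{M_{0},N},
\]
with $\mathcal{C}_{M_{0},N}$ depending only on the structural data and \emph{not} on $\varepsilon$, because the super subordination constants and the nondegeneracy data for $\vec{k}_{\varepsilon}$ are controlled by those for $\vec{k}$. A diagonal Arzel\`a--Ascoli extraction then yields a subsequence $w_{\varepsilon_{k}}\to w$ in $\mathcal{C}^{\infty}_{\mathrm{loc}}(\Omega)$, and the limit $w$ is a smooth strong solution of $\mathcal{Q}w=0$ in $\Omega$ with $\|w\|_{L^{\infty}(\Omega)}\leq M_{0}$.

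To show $w\in \mathcal{C}^{0}(\overline{\Omega})$ with $w|_{\partial\Omega}=\varphi$, I apply the barriers of Lemma~\ref{barrier}: strong convexity of $\Omega$ provides at each boundary point $p$ a supporting hyperplane from which upper and lower barriers are built whose size is controlled solely by the modulus of continuity of $\varphi$ at $p$. Hypothesis~(\ref{appc06}) (compact support of $\vec{\gamma}$ away from $\partial\Omega$) together with conditions~(\ref{appc02}) and~(\ref{appc03}) ensures these barriers are supersolutions, respectively subsolutions, of each $\mathcal{Q}_{\varepsilon}$ uniformly in $\varepsilon$. The comparison principle of Section~\ref{compsect}, which applies by virtue of $f_{z}\leq 0$ together with the structural hypotheses, sandwiches every $w_{\varepsilon}$ between these barriers, yielding uniform equicontinuity of $\{w_{\varepsilon}\}$ at each point of $\partial\Omega$. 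Passing to the limit gives $w\in \mathcal{C}^{0}(\overline{\Omega})$ with $w=\varphi$ on $\partial\Omega$. Finally, uniqueness follows by applying the same comparison principle to any two continuous strong solutions agreeing on $\partial\Omega$.

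The main obstacle is the boundary-continuity step. Classical barriers require $\varphi\in \mathcal{C}^{1,\alpha}$ and exploit uniform ellipticity; here one must design barriers that depend only on the modulus of continuity of $\varphi$ and whose supersolution property is stable under the vanishing of $k^{\ast}$. It is precisely this that forces the restriction to strongly convex $\Omega$, the sign/monotonicity conditions on $f$, the compact support condition~(\ref{appc06}) on $\vec{\gamma}$, and the super subordination structure of $\mathcal{A}$ and $\vec{\gamma}$; the verification that Lemma~\ref{barrier} actually produces barriers with these properties is the technical heart of the proof.
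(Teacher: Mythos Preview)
Your outline matches the paper's proof closely: the paper also regularizes to $\mathcal{A}_{\varepsilon}=\mathcal{A}+\varepsilon I$, solves the elliptic Dirichlet problem for each $\varepsilon$, uses the maximum principle for a uniform $L^{\infty}$ bound, applies Theorem~\ref{apriorial} for $\varepsilon$-uniform interior estimates, extracts a $\mathcal{C}^{\infty}_{\mathrm{loc}}$ limit by Arzel\`a--Ascoli, and verifies boundary continuity via Lemma~\ref{barrier} and Lemma~\ref{comparison} (using hypothesis~(\ref{appc06}) to kill $\vec{\gamma}$ near $\partial\Omega$). Two points need correction.

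First, your appeal to ``classical quasilinear theory'' for the $\varepsilon$-problem glosses over a point the paper handles explicitly: to invoke Theorem~15.19 of \cite{GiTr} one needs structure constants that are \emph{finite} and independent of $z\in\mathbb{R}$, but $\|\mathcal{A}\|$, $\|\vec{\gamma}\|$, $\|f\|$ need not be bounded on $\Omega\times\mathbb{R}$. The paper therefore first truncates in $z$ via $\chi_{M_0}$ (see (\ref{chi_M}) and Proposition~\ref{proptouse}), solves $\mathcal{Q}^{M_0}_{\varepsilon}w^{\varepsilon}=0$, and then observes that the maximum principle forces $|w^{\varepsilon}|\leq M_0$, so the truncation is inactive and $w^{\varepsilon}$ actually solves $(\mathcal{Q}+\varepsilon\Delta)w^{\varepsilon}=0$. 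You should insert this step.

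Second, your final sentence overreaches. The comparison principle available here (Lemma~\ref{comparison}) is stated only for operators of the form $\mathop{\rm div}\mathcal{A}(x,w)\nabla w+f(x,w)$, with no drift term $\vec{\gamma}\cdot\nabla w$; this is exactly why the paper asserts uniqueness \emph{only when $\vec{\gamma}\equiv 0$}. In the presence of $\vec{\gamma}$, the comparison argument does not go through (the subunit hypothesis gives no sign control on $\vec{\gamma}\cdot\nabla(w_1-w_0)$), so you cannot claim uniqueness in general.
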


\noindent 
\it%
{Then given any continuous function }$\varphi $ on $\partial \Omega$, there
exists a strong solution {$w$ to the Dirichlet problem} 
\begin{equation}
\left\{ 
\begin{array}{rcll}
\mathop{\rm div}\mathcal{A}\left( x,w\right) \nabla w+\vec{\gamma}\left(
x,w\right) \cdot \nabla w+f\left( x,w\right) & = & 0\quad & \text{in }\Omega
\\ 
w & = & \varphi & \text{on }\partial \Omega ,%
\end{array}%
\right.  \label{dirchlet}
\end{equation}%
{i.e., there exists} $w$ which is continuous in $\overline{\Omega }$, equal
to $\varphi $ on $\partial \Omega $, and a strong solution of the
differential equation in $\Omega $. {Moreover, $w\in \mathcal{C}^{0}\left( 
\overline{\Omega }\right) $}$\bigcap \mathcal{C}^{\infty }\left( \Omega
\right) $, and {for any nonnegative integer $N$ and subdomain $\Omega
^{\prime }\Subset $}${\Omega }${, there exists a constant $\mathcal{C}_{N}=$ 
\begin{equation*}
\mathcal{C}_{N}\left( n,B,\vec{k},\Lambda ,\left\Vert \varphi \right\Vert
_{L^{\infty }\left( \partial \Omega \right) },\left\Vert \mathcal{A}%
\right\Vert _{\mathcal{C}^{N+2}\left( \tilde{\Gamma}\right) },\left\Vert
f\right\Vert _{\mathcal{C}^{N+1}\left( \tilde{\Gamma}\right) },\left\Vert 
\vec{\gamma}\right\Vert _{\mathcal{C}^{N+1}\left( \tilde{\Gamma}\right)
},\lambda _{0},\Omega ,\Omega ^{\prime }\right)
\end{equation*}%
such that }$\sum_{\left\vert \vec{\alpha}\right\vert \leq N}\left\Vert D^{%
\vec{\alpha}}w\right\Vert _{L^{\infty }\left( \Omega ^{\prime }\right) }\leq 
\mathcal{C}_{N}$. H{ere $\tilde{\Gamma}=\Omega \times \left[ -2\left\Vert
\varphi \right\Vert _{L^{\infty }\left( \partial \Omega \right)
},2\left\Vert \varphi \right\Vert _{L^{\infty }\left( \partial \Omega
\right) }\right] $, $B$ denotes the various constants in Condition \ref{hyp2}%
, and }${\lambda _{0}}${$=\lambda _{0}\left( \mathcal{A},\Omega \right) $ is
the convex character of }$\partial \Omega ${. }\newline
Moreover, {if }$\vec{\gamma}\equiv 0$, then the solution $w$ is unique. 
\rm%

An important consequence of Theorem \ref{DP} in the case $\vec{\gamma}\equiv
0$ is the following interior regularity result:

\begin{theorem}[Regularity of solutions]
{\ \label{application}Let $\Omega \subset \mathbb{R}^{n}$ be a bounded
domain and $\Gamma =\Omega \times \mathbb{R}$. Suppose that $\vec{k}\in 
\mathcal{C}^{2}\left( \Gamma \right) $, that $\mathcal{A}\left( x,z\right) $%
, $f\left( x,z\right) \in \mathcal{C}^{\infty }\left( \Gamma \right) $ and
satisfy (\ref{appc01}),(\ref{appc02}), (\ref{appc07}), (\ref{appc03}) from
Theorem \ref{DP}, and that }$\mathcal{A}$ satisfies the super subordination
Condition{\ \ref{hyp2}} in $\Gamma ${. Then any weak solution $w$ of} 
\begin{equation*}
\mathop{\rm div}\mathcal{A}\left( x,w\right) \nabla w+f\left( x,w\right)
=0\qquad \text{in }\Omega
\end{equation*}%
{which is \emph{continuous} in $\Omega $ is also a strong solution and
satisfies $w\in \mathcal{C}^{\infty }\left( \Omega \right) $. Moreover, for
any nonnegative integer $N$ and subdomain $\Omega ^{\prime \prime }\Subset
\Omega ^{\prime }\Subset \Omega $, there exists a constant 
\begin{equation*}
\mathcal{C}_{N}=\mathcal{C}_{N}\left( \left\Vert w\right\Vert _{L^{\infty
}\left( \Omega ^{\prime }\right) },n,B,\vec{k},\Lambda ,\left\Vert \mathcal{A%
}\right\Vert _{\mathcal{C}^{N+2}\left( \tilde{\Gamma}\right) },\left\Vert
f\right\Vert _{\mathcal{C}^{N+1}\left( \tilde{\Gamma}\right) },\Omega
,\Omega ^{\prime },\Omega ^{\prime \prime }\right)
\end{equation*}%
such that }$\sum_{\left\vert \vec{\alpha}\right\vert \leq N}\left\Vert D^{%
\vec{\alpha}}w\right\Vert _{L^{\infty }\left( \Omega ^{\prime \prime
}\right) }\leq \mathcal{C}_{N}${. Here $\tilde{\Gamma}=\Omega ^{\prime
}\times \left[ -2\left\Vert w\right\Vert _{L^{\infty }\left( \Omega ^{\prime
}\right) },2\left\Vert w\right\Vert _{L^{\infty }\left( \Omega ^{\prime
}\right) }\right] $ and $B$ denotes the relevant constants in Condition \ref%
{hyp2}.}
\end{theorem}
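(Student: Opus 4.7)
The plan is to reduce to the Dirichlet problem of Theorem \ref{DP} on small strongly convex subdomains and then exploit uniqueness there to identify $w$ with the smooth solution produced. Since smoothness is a local property, it suffices to show $w\in\mathcal{C}^{\infty}$ in a neighborhood of each $x_{0}\in\Omega$. Fix such $x_{0}$ and pick a ball $B$ with $x_{0}\in B\Subset\Omega$; balls are strongly convex, and all the structural hypotheses on $\vec{k}$, $\mathcal{A}$, and $f$ stated in Theorem \ref{application} are inherited on any $\widetilde{\Omega}$ satisfying $\overline{B}\subset\widetilde{\Omega}\Subset\Omega$. Taking $\vec{\gamma}\equiv 0$ (which is trivially of subunit type, has compact $x$-support in the sense of (\ref{appc06}), and satisfies the super subordination condition), every hypothesis of Theorem \ref{DP} holds for the pair $B\Subset\widetilde{\Omega}$.

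Since $w$ is continuous on $\Omega$, the restriction $\varphi:=w|_{\partial B}$ is continuous on $\partial B$. Theorem \ref{DP} then yields a strong solution $\widetilde{w}\in\mathcal{C}^{0}(\overline{B})\cap\mathcal{C}^{\infty}(B)$ of the Dirichlet problem on $B$ with boundary data $\varphi$, and in fact, since $\vec{\gamma}\equiv 0$, $\widetilde{w}$ is the \emph{unique} such strong solution. I would then identify $w$ with $\widetilde{w}$ in $B$ as follows: both $w$ and $\widetilde{w}$ are bounded weak solutions of the same divergence-form equation in $B$, both are continuous on $\overline{B}$, and both equal $\varphi$ on $\partial B$. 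Because $\vec{\gamma}\equiv 0$ and $f_{z}\leq 0$, the weak comparison principle developed in Section \ref{compsect} of the Appendix applies to the pair $(w,\widetilde{w})$ and forces $w=\widetilde{w}$ pointwise in $B$. Consequently $w\in\mathcal{C}^{\infty}(B)$ and satisfies the equation pointwise there; since $x_{0}$ was arbitrary, $w\in\mathcal{C}^{\infty}(\Omega)$ and is a strong solution.

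The quantitative derivative estimate on $\Omega^{\prime\prime}\Subset\Omega^{\prime}\Subset\Omega$ is then a direct consequence of applying Theorem \ref{apriorial} to the now-smooth solution $w$ on a domain slightly larger than $\Omega^{\prime\prime}$: the Super Subordination Condition \ref{hyp2} is precisely what allows the dependence on $\|\nabla w\|_{L^{\infty}(\Omega^{\prime})}$ in the estimate of Theorem \ref{quasi} to be removed, leaving a bound expressible in terms of $\|w\|_{L^{\infty}(\Omega^{\prime})}$ and the structural data. The main obstacle in this plan is the weak-comparison step used to conclude $w=\widetilde{w}$: one must verify that a continuous weak solution really does share its pointwise boundary values with $\widetilde{w}$ in the trace sense of the degenerate Sobolev space $H^{1,2}_{\mathcal{X}}(B)$, and that the comparison principle of Section \ref{compsect} is robust enough to handle the \emph{quasilinear} coupling in which the coefficient matrix $\mathcal{A}(x,\cdot)$ depends on the unknown. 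The $z$-uniform equivalence $\mathcal{A}(x,w)\sim\mathcal{A}(x,\widetilde{w})$ supplied by Lemma \ref{admisall}, which rests on the super subordination bound (\ref{xtra}), is what is expected to make the latter feasible.
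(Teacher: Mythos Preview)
Your proposal is correct and matches the paper's proof essentially line for line: localize to a ball (strongly convex), invoke Theorem~\ref{DP} with $\vec{\gamma}\equiv 0$ to produce a smooth solution with boundary data $w|_{\partial B}$, and then apply the comparison principle of Lemma~\ref{comparison} to identify it with $w$. The concern you flag about the comparison step is already handled by the formulation of Lemma~\ref{comparison}, which is stated precisely for pairs $w_{0},w_{1}\in\bigl(H_{\mathcal{X}}^{1,2}(\Omega)\cup\mathcal{C}^{\infty}(\Omega)\bigr)\cap\mathcal{C}^{0}(\overline{\Omega})$ and for the quasilinear operator $\mathcal{P}(w)=\mathop{\rm div}\mathcal{A}(x,w)\nabla w+f(x,w)$, with the $z$-dependence controlled via (\ref{xtra}) exactly as you anticipate.
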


{Note that in case $n=2,$ }the super subordination Condition{\ \ref{hyp2}
reduces to (\ref{thw2}) if $\mathcal{A}=\mathop{\rm diag}\left( 1,k\right) $
and $\vec{\gamma}=0$. Moreover, in this case, whether $\vec{\gamma}=0$ or
not, }the nondegeneracy Condition{\ \ref{hyp1} means that given $\left(
x_{1},x_{2}\right) \in \Omega $ and $\varepsilon >0$, there exist $%
0<r^{1},r^{2}<\varepsilon $ such that $k\left( x_{1}\pm r^{1},\xi
_{2},z\right) >0$ if $\left\vert \xi _{2}-x_{2}\right\vert \leq r^{2}$. \ }

{As an example in case $n\geq 2$, we consider a diagonal matrix 
\begin{equation*}
\mathcal{A}\left( x,z\right) =\left( 
\begin{array}{cccc}
1 & 0 & \cdots & 0 \\ 
0 & k^{2}\left( x,z\right) & \cdots & 0 \\ 
\vdots & \vdots & \ddots & \vdots \\ 
0 & 0 & \cdots & k^{n}\left( x,z\right)%
\end{array}%
\right) ,
\end{equation*}%
where $k^{i}$ are smooth nonnegative functions satisfying }the nondegeneracy
Condition{\ \ref{hyp1}, and such that 
\begin{equation*}
\left\vert k_{z}^{i}\left( x,z\right) \right\vert \leq B\,k^{\ast }\left(
x,z\right) ,\qquad i=1,\dots ,n,\quad \left( x,z\right) \in \Gamma ,
\end{equation*}%
with $k^{\ast }=\min \left( k^{1},\dots ,k^{n}\right) $. Then $\mathcal{A}$
satisfies the hypotheses of Theorem \ref{application}. In particular, if $%
k\left( x,z\right) $ is nonnegative and satisfies $\left\vert
k_{z}\right\vert =O\left( k\right) $ as $k\rightarrow 0$, then $\mathcal{A}=%
\mathop{\rm diag}\left( 1,k,\dots ,k\right) $ is an admissible matrix for
Theorem \ref{application} provided property (\ref{ax1c3}) of }the
nondegeneracy Condition{\ \ref{hyp1} holds. }

\begin{remark}
{\ As a consequence of the previous observations in the case $\mathcal{A}= %
\mathop{\rm diag}\left( 1,k^{2},\dots ,k^{n}\right) $, we \emph{partially}
recover Fedi\u{\i}'s two-dimensional result \cite{Fe} that $\partial
_{x_{1}}^{2}+k\left( x_{1}\right) \partial _{x_{2}}^{2}$ is hypoelliptic if $%
k$ is smooth and positive for all $x_{1}\neq 0$. Indeed, since $k\left(
x_{1}\right) $ is independent of the $z$ variable it automatically satisfies 
$\left\vert k_{z}\right\vert =O\left( k\right) $ as $k\rightarrow 0$. We
only partially recover Fedi\u{\i}'s result because our theorem applies only
to continuous weak solutions. }
\end{remark}

We {also obtain a partial extension (namely, for \emph{continuous} weak
solutions) to higher dimensions of Fedi\u{\i}'s result:}

\begin{theorem}
\label{fediihigh} {Let $k^{i}\left( x_{1},\dots ,x_{n}\right) $, $i=2,\dots
,n$,$\,$be smooth functions in $\mathbb{R}^{n}$ such that $k^{i}$ is
independent of the $i^{\text{th}}$ variable, i.e., 
\begin{equation*}
k^{i}\left( x_{1},\dots ,x_{n}\right) =k^{i}\left( \hat{x}^{i}\right) ,\quad 
\text{with }\hat{x}^{i}=\left( x_{1},\dots ,x_{i-1},x_{i+1},\dots
,x_{n}\right) ,
\end{equation*}%
and $k^{i}\left( x\right) >0$ if $\hat{x}^{i}\neq 0$. Then any continuous
weak solution of 
\begin{equation*}
\left\{ \partial _{x_{1}}^{2}+k^{2}\left( x\right) \partial
_{x_{2}}^{2}+\dots +k^{n}\left( x\right) \partial _{x_{n}}^{2}\right\} w=0
\end{equation*}%
in $\mathbb{R}^{n}\,$is smooth everywhere. }
\end{theorem}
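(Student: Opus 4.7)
My plan is to deduce Theorem~\ref{fediihigh} from Theorem~\ref{application} applied locally on a bounded ball around each point of $\mathbb{R}^{n}$. Given $x_{0}\in\mathbb{R}^{n}$, fix any bounded open ball $\Omega\ni x_{0}$; I will verify that on $\Omega$ every hypothesis of Theorem~\ref{application} is satisfied. Since $k^{i}$ is independent of the $i$-th variable, $\partial_{x_{i}}\!\left(k^{i}(x)\partial_{x_{i}}w\right)=k^{i}(x)\partial_{x_{i}}^{2}w$, so the equation rewrites in divergence form as $\mathop{\rm div}\mathcal{A}(x)\nabla w=0$ with $\mathcal{A}=\mathop{\rm diag}(1,k^{2},\dots,k^{n})$, $\vec{\gamma}\equiv 0$, and $f\equiv 0$, placing it in the form \eqref{equation}.

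Most of the required structural hypotheses are immediate. Setting $\vec{k}=(1,k^{2},\dots,k^{n})$ and $\Lambda=1$, the diagonal Condition~\ref{hellipp} holds by inspection; the sign hypothesis (\ref{appc07}) on $f$ is vacuous; and since $\partial_{z}\mathcal{A}\equiv 0$ (the $k^{i}$ depend only on $x$), both inequalities \eqref{xtra} and \eqref{xxtra} of the super subordination Condition~\ref{hyp2} hold trivially. Property (\ref{ax1c2}) of the nondegeneracy Condition~\ref{hyp1} holds with $c=1$ because $k^{1}\equiv 1$. The only substantive item is property (\ref{ax1c3}), which is where the hypothesis that each $k^{i}$ vanishes only on its $i$-th coordinate axis is used.

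Denote the $i$-th coordinate axis by $\ell_{i}=\{y\in\mathbb{R}^{n}:y_{j}=0\text{ for all }j\neq i\}$; this is exactly the zero set of $k^{i}$. For $i=1$ there is nothing to check since $k^{1}\equiv 1$. For $i\in\{2,\dots,n\}$, given $x\in\Omega$ and small $\varepsilon>0$, I need a box $\mathcal{R}(\tilde{x},\vec{r})$ with $x\in\tfrac{1}{3}\mathcal{R}$, each $r^{j}<\varepsilon$, and $\mathcal{T}_{i}(\mathcal{R})\cap\ell_{i}=\emptyset$. If $\hat{x}^{i}\neq 0$, then $x\notin\ell_{i}$, so I take $\tilde{x}=x$ and pick $r^{j}<\min(|x_{j}|,\varepsilon)$ for some $j\neq i$ with $x_{j}\neq 0$ (other $r^{j'}<\varepsilon$ arbitrary); the whole box then misses $\ell_{i}$. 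If $x\in\ell_{i}$, I take $\tilde{x}=x$, so $\tilde{x}_{j}=0$ for all $j\neq i$. Then $\ell_{i}$ passes through the box along $\vec{e}_{i}$ and exits only through the two ``end'' faces $\{y_{i}=\tilde{x}_{i}\pm r^{i}\}$, which are removed from $\mathcal{T}_{i}(\mathcal{R})$ by its very definition. Any $y\in\mathcal{T}_{i}(\mathcal{R})$ has $|y_{j_{0}}-\tilde{x}_{j_{0}}|=r^{j_{0}}$ for some $j_{0}\neq i$, hence $y_{j_{0}}=\pm r^{j_{0}}\neq 0$, so $\hat{y}^{i}\neq 0$ and $k^{i}(y)>0$.

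With all hypotheses verified on the bounded ball $\Omega$, Theorem~\ref{application} yields $w\in\mathcal{C}^{\infty}(\Omega)$, so in particular $w$ is smooth at $x_{0}$; since $x_{0}$ was arbitrary, $w\in\mathcal{C}^{\infty}(\mathbb{R}^{n})$. The one nontrivial step is the verification of (\ref{ax1c3}) at points lying on a degenerate axis; the key geometric observation is that by \emph{centering} the box on the axis, the axis can be made to pierce only the two end-cap faces, which are precisely the faces excluded from the $i$-wrap $\mathcal{T}_{i}$.
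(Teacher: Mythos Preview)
Your proof is correct and follows exactly the route the paper intends: Theorem~\ref{fediihigh} is stated without an explicit proof in the paper, but it is presented as an immediate application of Theorem~\ref{application}, just as the analogous two-dimensional Theorem~\ref{striking} is proved there. Your verification of the hypotheses is the right one, and the key geometric point you isolate --- that centering the box on $x$ forces the axis $\ell_i$ to exit only through the two faces excluded from $\mathcal{T}_i(\mathcal{R})$ --- is precisely what makes property~(\ref{ax1c3}) hold.

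One small expository point: property~(\ref{ax1c3}) asks for a \emph{single} box $\mathcal{R}$ such that $k^i>0$ on $\mathcal{T}_i(\mathcal{R})$ for \emph{every} $i=1,\dots,n$ simultaneously, whereas your write-up reads as though you build a possibly different box for each $i$. This is easily repaired, since you always take $\tilde{x}=x$: simply require $r^j<\min(|x_j|,\varepsilon)$ for every $j$ with $x_j\neq 0$ and $r^j<\varepsilon$ otherwise. Then for each $i\geq 2$ with $\hat{x}^i\neq 0$ the entire box misses $\ell_i$, and for each $i$ with $\hat{x}^i=0$ your $i$-wrap argument applies verbatim; hence this one box works for all $i$ at once.
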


\begin{remark}
{\ It is shown in \cite{KuStr} that if $k\left( x_{1}\right) $ is smooth and
positive for all $x_{1}\neq 0$, then $\partial _{x_{1}}^{2}+k\left(
x_{1}\right) \partial _{x_{2}}^{2}+\partial _{x_{3}}^{2}$ is hypoelliptic in 
$\mathbb{R}^{3}$ if and only if $\lim_{x_{1}\rightarrow 0}x_{1}\log k\left(
x_{1}\right) =0$. We are not able to recover this result since $k\left(
x_{1}\right) $ vanishes identically at all points of the form $(x_1,x_2,x_3)
= (0, x_2,x_3)$, and so the hypothesis of our theorem is not met. Also, our
solutions are required to be continuous. On the other hand, from Theorem \ref%
{fediihigh}, we see that if $k\left( x_{1},x_{3}\right) $ is smooth and
positive for all $\left( x_{1},x_{3}\right) \neq \left( 0,0\right) $, then
every weak solution of $\left\{ \partial _{x_{1}}^{2}+k\left(
x_{1},x_{3}\right) \partial _{x_{2}}^{2}+\partial _{x_{3}}^{2}\right\} w=0$
is smooth in the interior of the domain of continuity of $w$ in $\mathbb{R}%
^{3}$. }
\end{remark}

{The following is a striking consequence of Theorem \ref{application} in $%
\mathbb{R}^{2}$:}

\begin{theorem}
\label{striking} \label{RSaW3}{\ If $k\left( x_{1},x_{2},z\right) \,$is
smooth, nonnegative, satisfies 
\begin{equation}
\left\vert \partial _{z}k\right\vert =O\left( k\right) ,  \label{thw1}
\end{equation}%
and $k\left( \cdot ,\cdot ,0\right) $ \emph{does not vanish identically on
any horizontal line segment} in $\Omega $, then any continuous weak solution 
$w$ of 
\begin{equation}
\partial _{x_{1}}^{2}w+\partial _{x_{2}}k\left( x_{1},x_{2},w\left(
x_{1},x_{2}\right) \right) \partial _{x_{2}}w=0,\;\;\;\;\;\left(
x_{1},x_{2}\right) \in \Omega ,  \label{maeasy}
\end{equation}%
is smooth in $\Omega $.}
\end{theorem}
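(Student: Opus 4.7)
The plan is to recognize (\ref{maeasy}) as a special case of (\ref{equation}) with $\mathcal{A}(x,z) = \mathop{\rm diag}(1, k(x_1,x_2,z))$, $\vec{\gamma} \equiv 0$ and $f \equiv 0$, and then apply Theorem \ref{application}. The whole proof amounts to verifying the hypotheses of that theorem, and essentially the only nontrivial point is the geometric half of the nondegeneracy Condition \ref{hyp1}; everything else is structural.

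Indeed, the diagonal Condition \ref{hellipp} holds with $\Lambda = 1$ since $\xi^t \mathcal{A}\xi = \xi_1^2 + k\xi_2^2 = k^1\xi_1^2 + k^2 \xi_2^2$; the sign hypothesis (\ref{appc07}) is trivial since $f \equiv 0$; and the super subordination Condition \ref{hyp2} reduces in this diagonal setting to (\ref{thw2}), namely $|\partial_z k| \leq B k$, which is exactly the hypothesis (\ref{thw1}) on any bounded cylinder $\Gamma_{M_0}' = \Omega' \times [-M_0, M_0]$ (after absorbing $\sup_{\Gamma_{M_0}'} k$ into the constant). Inequality (\ref{xxtra}) then follows from (\ref{xtra}) via the Wirtinger-type bound (\ref{genWirt}), exactly as in Remark \ref{diagcase}. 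Property (\ref{ax1c2}) of Condition \ref{hyp1} is automatic since $k^1 \equiv 1$.

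The one real issue is property (\ref{ax1c3}) of Condition \ref{hyp1}: given $x^* = (x_1^*, x_2^*) \in \Omega$ and $\epsilon > 0$, I must produce a box $\mathcal{R}(\tilde{x}, (r^1, r^2))$ with $r^i < \epsilon$, $x^* \in \frac{1}{3}\mathcal{R}$, on whose two vertical edges $\{y_1 = \tilde{x}_1 \pm r^1\} \cap \mathcal{R}$ the function $k$ is strictly positive for every $z$. The starting observation is that, because $|\partial_z k| \leq B k$ on the relevant bounded slab, Gronwall (the same computation as in Lemma \ref{admisall}) gives $e^{-B|z|} k(y,0) \leq k(y,z) \leq e^{B|z|} k(y,0)$, so the zero set of $k$ in $x$ is independent of $z$ and it suffices to arrange $k(\cdot,\cdot,0) > 0$ on the chosen edges. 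The hypothesis that $k(\cdot,\cdot,0)$ does not vanish identically on any horizontal segment, combined with continuity of $k$, tells us that for each fixed height $x_2$ the open set $U_{x_2} = \{y_1 : k(y_1, x_2, 0) > 0\}$ is dense in the corresponding $x_1$-slice of $\Omega$. Fixing $r^1 \in (0, \epsilon)$, the translates $U_{x_2^*} + r^1$ and $U_{x_2^*} - r^1$ are open and dense, so their intersection meets the interval $(x_1^* - r^1/3, x_1^* + r^1/3)$; any $\tilde{x}_1$ in that intersection satisfies $k(\tilde{x}_1 \pm r^1, x_2^*, 0) > 0$, and continuity produces a $\delta > 0$ such that $k(\tilde{x}_1 \pm r^1, y_2, 0) > 0$ for $|y_2 - x_2^*| \leq \delta$. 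Setting $\tilde{x}_2 = x_2^*$ and $r^2 < \min(\epsilon, \delta)$ then yields the required box, and the Gronwall bound extends positivity to every $z$.

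The hard part is this final geometric construction: it genuinely requires both the horizontal-segment hypothesis on $k(\cdot,\cdot,0)$ and the Gronwall-type decoupling $|\partial_z k| = O(k)$, which together force the zero set of $k$ in $x$ to be $z$-independent and to have dense complement on each horizontal slice. Once Condition \ref{hyp1}(\ref{ax1c3}) is verified, Theorem \ref{application} immediately produces $w \in \mathcal{C}^\infty(\Omega)$ with no further work.
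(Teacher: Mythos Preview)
Your proposal is correct and follows essentially the same route as the paper: both reduce to verifying the hypotheses of Theorem~\ref{application} for $\mathcal{A}=\mathop{\rm diag}(1,k)$, $\vec{\gamma}=0$, $f=0$, with the only substantive point being Condition~\ref{hyp1}(\ref{ax1c3}). The paper invokes Lemma~\ref{chaz} (where you use Gronwall/Lemma~\ref{admisall}) for the $z$-independence of the zero set, and is somewhat informal about locating the symmetric edge points, whereas your density argument via intersecting the translates $U_{x_2^*}\pm r^1$ makes this step more explicit; but the underlying idea is the same.
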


{%
\proof%
We will prove that the hypotheses of Theorem \ref{application} are satisfied
by $\vec{k}=\left( 1,k\left( x_{1},x_{2},z\right) \right) $ and $\mathcal{A}%
=\left( 
\begin{array}{ll}
1 & 0 \\ 
0 & k%
\end{array}%
\right) $. Since $k^{1}\equiv 1$, property (\ref{ax1c2}) in }the
nondegeneracy Condition{\ \ref{hyp1} is trivially satisfied. Since $k\left(
\cdot ,\cdot ,0\right) $ is nonnegative, continuous and does not vanish
identically on any horizontal segment, given $\varepsilon >0$ and $\left(
x_{1},x_{2}\right) \in \Omega $, there exist $\bar{x}_{1}<x_{1}<\bar{x}%
_{1}^{\prime }$ with $\left\vert x_{1}-\bar{x}_{1}\right\vert =\left\vert
x_{1}-\bar{x}_{1}^{\prime }\right\vert =r^{1}<\varepsilon $, such that $%
k\left( \bar{x}_{1},x_{2},0\right) >0$ and $k\left( \bar{x}_{1}^{\prime
},x_{2},0\right) >0$. From (\ref{thw1}) and Lemma \ref{chaz} it follows that 
$k\left( x_{1},x_{2},z\right) $ is either identically zero as a function of $%
z$ or strictly positive in $z$, and hence $k\left( \bar{x}%
_{1},x_{2},z\right) >0$ and $k\left( \bar{x}_{1}^{\prime },x_{2},z^{\prime
}\right) >0$. Then property (\ref{ax1c3}) in }the nondegeneracy Condition{\ %
\ref{hyp1} follows from the continuity of $k$ with respect to the second
variable and therefore $\vec{k}$ satisfies hypothesis (\ref{appc01}) in
Theorem \ref{application}. Since (\ref{thw1}) holds, $\mathcal{A}$ satisfies
(\ref{xtra}). Since $\mathcal{A}$ is diagonal, (\ref{xxtra}) then follows
from Remark \ref{diagcase}. Thus $\mathcal{A}$ is super subordinate{, so
hypotheses (\ref{appc02}) and (\ref{appc04}) in Theorem \ref{application}
are satisfied. Since $f=0$, hypothesis (\ref{appc07}) of Theorem \ref%
{application} is trivially satisfied. 
\endproof%
} }

{From Theorem \ref{striking} and the techniques used in \cite{SaW}, we can
derive an extension of Theorem 2.1 in \cite{SaW}, which is a regularity
result for convex solutions to Monge-Amp\`{e}re equations. Consider a
smooth, bounded, strongly convex domain $\Phi \subset \mathbb{R}^{2}$. Given
a convex function $u\in \mathcal{C}^{1}\left( \Phi \right) $, following \cite%
{SaW} we set 
\begin{equation*}
\omega _{-}\left( s\right) =\omega _{-}\left( s,\Phi ,u\right)
=\inf_{t:\left( s,t\right) \in \Phi }u_{t}\left( s,t\right) \quad \text{and}%
\quad \omega _{+}\left( s\right) =\omega _{+}\left( s,\Phi ,u\right)
=\sup_{t:\left( s,t\right) \in \Phi }u_{t}\left( s,t\right)
\end{equation*}%
for any $s$ lying in the projection of $\Phi $ onto the $s$ -axis. Let 
\begin{equation*}
\mathtt{I}_{s}=\left\{ \omega :\omega _{-}\left( s\right) <\omega <\omega
_{+}\left( s\right) \right\} \quad \text{if $\omega _{-}\left( s\right)
<\omega _{+}\left( s\right) $ and $\mathtt{I}_{s}=\emptyset $ otherwise, and}
\end{equation*}%
\begin{equation*}
\Phi _{u}=\left\{ \left( s,t\right) \in \Phi :u_{t}(s,t)\in \mathtt{I}%
_{s}\right\} .
\end{equation*}%
Note that}

\begin{itemize}
\item $u\left( s,t \right) $ is affine in the $t$ variable if and only if $%
\mathtt{I}_{s}=\emptyset $.

\item If $u\in \mathcal{C}^{1}\left( \overline{\Phi }\right) $ is not affine
in the $t$ variable for any fixed $s$, {then }$\Phi _{u}$ is an open
connected set. Indeed, to see that $\Phi _{u}$ is open, let $\left(
s,t\right) \in \Phi _{u}$. Then $\omega _{-}(s)<u_{t}\left( s,t\right)
<\omega _{+}(s)$, and since $u\in \mathcal{C}^{1}(\overline{\Phi })$, the
functions $\omega _{-},\omega _{+}$ and $u_{t}$ are continuous. Hence, for $%
(s^{\prime },t^{\prime })$ near $(s,t)$, $\omega _{-}(s^{\prime
})<u_{t}\left( s^{\prime },t^{\prime }\right) <\omega _{+}(s^{\prime })$.
Therefore $\Phi _{u}$ is open. To see that $\Phi _{u}$ is connected it
suffices to show it is pathwise connected. This follows from the fact that
the midpoint $(\omega _{-}(s)+\omega _{+}(s))/2$ of I$_{s}$ is a continuous
function of $s$. Note the arguments above only use the continuity of $u_{t}$%
. See \cite{SaW} for further discussion about when $\Phi _{u}$ is connected.

\item Even if $u$ is not affine in the $t$ variable for any fixed $s$, the
set 
\begin{equation*}
\Pi _{u}=\left\{ \left( s,t\right) \in \Phi _{u}:u_{t}\left( s,t\right)
=\omega _{-}\left( s\right) ~\text{or}~u_{t}\left( s,t\right) =\omega
_{+}\left( s\right) \right\}
\end{equation*}%
may be non-empty. This exceptional set $\Pi _{u}$ is composed by what are
called \textquotedblleft Pogorelov segments\textquotedblright\ in \cite{SaW}.
\end{itemize}

\begin{theorem}
\label{RSaW2} Suppose $k\left( s,t\right) \,$is smooth, nonnegative,
satisfies $\left\vert \partial _{t}k\right\vert =O\left( k\right) $, and $%
k\left( \cdot ,0\right) $ \emph{does not vanish identically on any
horizontal line segment} in $\Phi $, where $\Phi $ is as above. If $u\in 
\mathcal{C}^{1}\left( \overline{\Phi }\right) $ is a convex solution of the
Monge-Amp\`{e}re boundary value problem 
\begin{eqnarray*}
\det \left( 
\begin{array}{cc}
u_{ss} & u_{st} \\ 
u_{ts} & u_{tt}%
\end{array}
\right) &=&k\left( s,t\right) ,\;\;\;\;\;\left( s,t\right) \in \Phi , \\
u &=&\phi \left( s,t\right) ,\qquad \left( s,t\right) \in \partial \Phi ,
\end{eqnarray*}
where $\phi $ is smooth on $\partial \Phi $, then $u$ is smooth in $\Phi
_{u}.$
\end{theorem}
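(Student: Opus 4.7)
The plan is to reduce Theorem~\ref{RSaW2} to Theorem~\ref{striking} via the classical partial Legendre transform, following the strategy used to prove Theorem~2.1 in \cite{SaW} but invoking Theorem~\ref{striking} in place of the earlier result, which required the stronger power $3/2$ in the bound on $|\partial_t k|$.

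First I would apply the map $T(s,t) := (s, u_t(s,t))$ to $\Phi_u$. As reviewed before the statement, $\Phi_u$ is an open connected set, and its defining strict inequalities $\omega_-(s) < u_t(s,t) < \omega_+(s)$ are arranged so that (following \cite{SaW}) $T$ is a homeomorphism of $\Phi_u$ onto an open connected set $\Omega := T(\Phi_u) \subset \mathbb{R}^2$. The inverse has the form $T^{-1}(x_1,x_2) = (x_1, w(x_1,x_2))$ for a continuous function $w:\Omega\to\mathbb{R}$, and a direct calculation (given in \cite{SaW}; compare the derivation of (\ref{qfedi}) from (\ref{MA2D})--(\ref{PLT2D})) shows that $w$ is a continuous weak solution of
\begin{equation*}
\partial_{x_1}^2 w + \partial_{x_2}\bigl(k(x_1, w(x_1,x_2))\,\partial_{x_2} w\bigr) = 0 \quad\text{in } \Omega.
\end{equation*}

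Next I would apply Theorem~\ref{striking} to $w$ with coefficient $\tilde{k}(x_1,x_2,z) := k(x_1,z)$, which is independent of $x_2$. The bound $|\partial_z \tilde{k}| = O(\tilde{k})$ follows at once from $|\partial_t k| = O(k)$. The non-vanishing hypothesis requires that for each horizontal segment $I\times\{y_0\}\subset\Omega$, $k(\cdot,0)\not\equiv 0$ on $I$; if this failed, then since $I$ lies in the $s$-projection of $\Phi_u\subset\Phi$ and $\Phi$ is open, for any $s_0\in I$ I could find $t_0$ and $\delta>0$ with $(s_0-\delta, s_0+\delta)\times\{t_0\}\subset\Phi$ and $(s_0-\delta,s_0+\delta)\subset I$, producing a horizontal segment of $\Phi$ on which $k(\cdot,0)\equiv 0$ and contradicting the assumed non-vanishing of $k(\cdot,0)$ on horizontal segments of $\Phi$. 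Theorem~\ref{striking} then yields $w\in\mathcal{C}^\infty(\Omega)$.

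Finally, I would transfer the regularity back to $u$: from $u_t(s,w(s,y))=y$ and smoothness of $w$ (together with $w_{x_2}>0$ on $\Omega$, which reflects the strict monotonicity of $u_t$ in $t$ on $\Phi_u$ built into the definition of $T$) I obtain that $u_{tt} = 1/(w_{x_2}\circ T)$ and $u_{st} = -(w_{x_1}/w_{x_2})\circ T$ are smooth on $\Phi_u$; the Monge-Amp\`ere equation then expresses $u_{ss} = (k+u_{st}^2)/u_{tt}$ as a smooth function, and a standard bootstrap gives $u\in\mathcal{C}^\infty(\Phi_u)$. The genuinely new ingredient in the proof is the invocation of Theorem~\ref{striking} with the sharp exponent~$1$; the remaining steps---particularly the justification of the weak formulation of the quasilinear equation for $w$ when $u$ is only $\mathcal{C}^1$---are already carried out in \cite{SaW} and can be cited directly, so the main obstacle is locating and adapting those arguments rather than producing any new analytical input.
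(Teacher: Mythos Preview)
Your proposal is correct and follows exactly the approach the paper indicates: the paper does not give a detailed proof of Theorem~\ref{RSaW2} but states just before it that the result follows ``from Theorem~\ref{striking} and the techniques used in \cite{SaW},'' which is precisely the partial Legendre reduction you describe. Your write-up in fact supplies more detail than the paper itself, including the verification that the non-vanishing hypothesis on $k(\cdot,0)$ transfers from $\Phi$ to $\Omega$, which is correct.
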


{To obtain our main results, we follow the approach in \cite{SaW} for
two-dimensional equations, although our objectives are more general. We
consider equations in any dimension at least two, our equations may include
a first order drift term and a zero order term, and our notion of solution
is more general since we only require continuity instead of Lipschitz
continuity. To prove our main hypoellipticity result, Theorem \ref%
{application}, we use an approximation argument based on the a priori
estimates in Theorem \ref{quasi} and on the construction in Lemma \ref%
{barrier} of new custom-built barriers.} One of the core ingredients needed
to derive Theorem \ref{quasi} is the interpolation inequality given in Lemma %
\ref{lemma-interpolation}, proved in \cite{RSaW1}.

\section{Sharpness\label{section-sharp}}

Our results are sharp in the sense that the power $1$ in the super
subordination Condition \ref{hyp2} cannot be decreased. Indeed, we will now
show that for any $\varepsilon >0,$ there exists a nonnegative smooth
function $k=k\left( x,y,z\right) $ in $\mathbb{R}^{2}\times \mathbb{R}$
which is not identically zero on any horizontal segment in $\mathbb{R}^{2}$
(moreover $k>0$ unless $x=z=0$) and satisfies 
\begin{equation*}
\left\vert \partial _{z}k\left( x,y,z\right) \right\vert \leq C~\left[
k\left( x,y,z\right) \right] ^{1-\varepsilon }\qquad \text{in }\Omega \times 
\mathbb{R},
\end{equation*}%
and that there is a continuous weak solution $w$ of {%
\begin{equation}
\partial _{x}^{2}w+\partial _{y}k\left( x,y,w\left( x,y\right) \right)
\partial _{y}w=0  \label{equation-x}
\end{equation}%
in any neighborhood $\Omega $ of the origin, \emph{but $w$ is not smooth} in 
}$\Omega $. This example is derived by applying the partial Legendre
transform to non-smooth solutions of the Monge-Amp\'{e}re equation which are
suitable powers of the distance function to the origin.

Given $\varepsilon >0,$ let $m$ be a positive integer such that 
\begin{equation}
\frac{1}{4m-2}<\varepsilon .  \label{meps}
\end{equation}%
Consider the function $w=w\left( x,y\right) $ defined implicitly by the
equation 
\begin{equation}
F(x,y,w)=0\quad \text{where $F(x,y,z)=z\left( |x|^{2}+|z|^{2}\right) ^{m-%
\frac{1}{2}}+y$.}  \label{eq-implicit}
\end{equation}

\begin{center}
\includegraphics[width=3.5in]{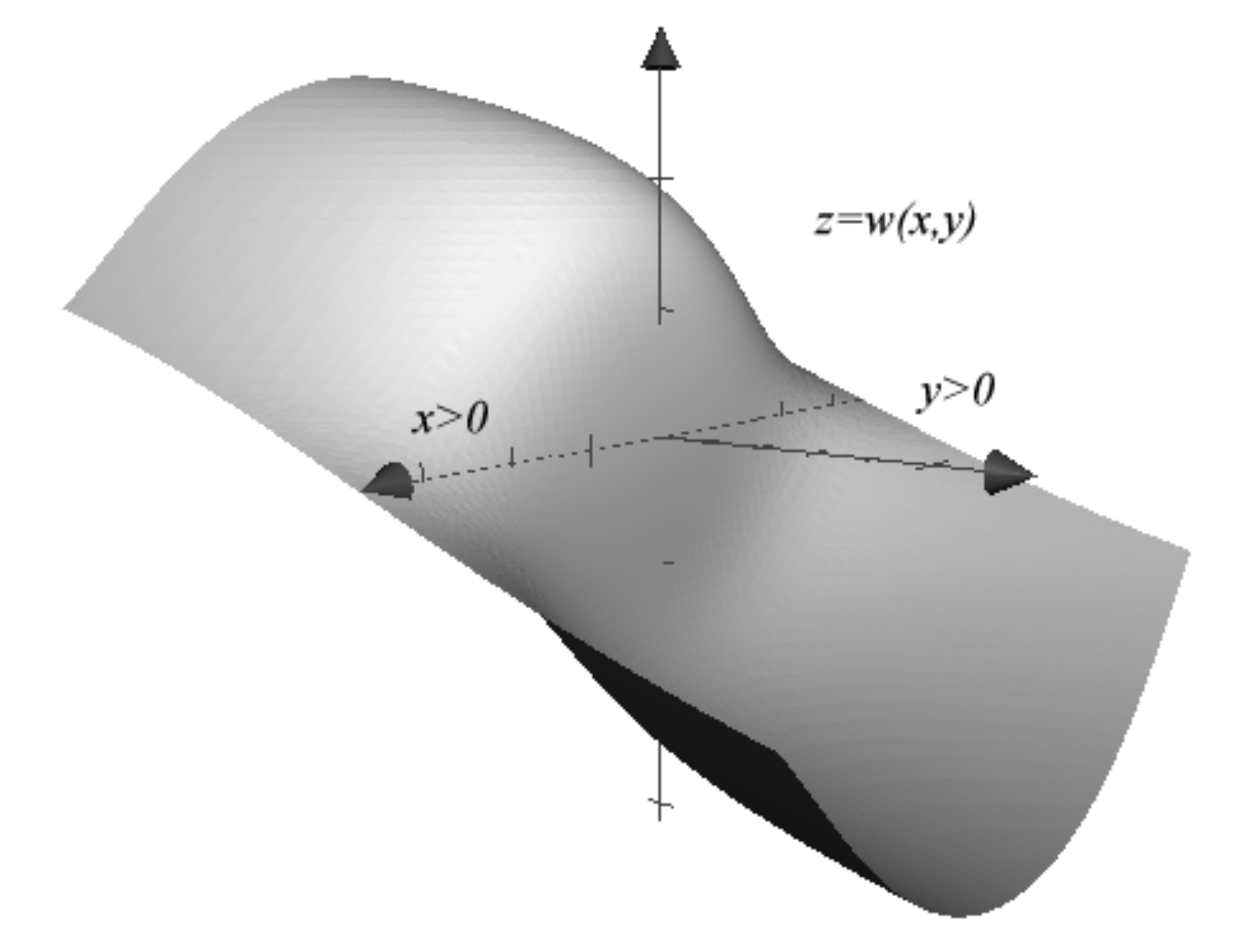}
\end{center}

Since $F(0,0,0)=0$ and $F_{z}(x,y,z)=(|x|^{2}+|z|^{2})^{m-\frac{3}{2}%
}(x^{2}+2mz^{2})$, it follows from the implicit function theorem that $%
w=w(x,y)$ is well-defined by (\ref{eq-implicit}) and smooth in $\mathbb{R}%
^{2}\setminus \{(0,0)\}$. Also, $w$ extends continuously to $(x,y)=(0,0)$
with $w(0,0)=0$, and thus for any neighborhood $\Omega $ of the origin, $%
w\in \mathcal{C}^{0}\left( \overline{\Omega }\right) \bigcap \mathcal{C}%
^{\infty }\left( \Omega \backslash \left\{ \left( 0,0\right) \right\}
\right) $. Moreover, $w(x,y)=0$ if and only if $y=0$.

Now let 
\begin{equation*}
f\left( x,y\right) =x\left( |x|^{2}+|w(x,y)|^{2}\right) ^{m-\frac{1}{2}%
}\qquad \left( x,y\right) \in \Omega .
\end{equation*}%
Then $f(0,y)=0$, $f(x,0)=x|x|^{2m-1}$ and $f(x,y)=-\frac{xy}{w(x,y)}$ if $%
w(x,y)\neq 0$. By direct computation, if $(x,y)\neq (0,0)$, 
\begin{equation}
\left\{ 
\begin{array}{rcl}
f_{x}\left( x,y\right) & = & -2m\left( x^{2}+w^{2}\right) ^{2m-1}w_{y}\left(
x,y\right) \\ 
f_{y}\left( x,y\right) & = & w_{x}\left( x,y\right) .%
\end{array}%
\right.  \label{eq-CR}
\end{equation}%
Thus, if $k\left( x,y,z\right) =k\left( x,z\right) $ is the smooth function
in $\mathbb{R}^{2}\times \mathbb{R}$ defined by 
\begin{equation*}
k\left( x,z\right) =2m\left( x^{2}+z^{2}\right) ^{2m-1},
\end{equation*}%
then by using the formulas $F_{x}=(2m-1)xz(|x|^{2}+|z|^{2})^{m-\frac{3}{2}}$
and $F_{y}=1$, we have 
\begin{eqnarray*}
\left\vert w_{x}\right\vert ^{2} &=&\left\vert -\frac{\left( 2m-1\right) xw}{%
x^{2}+2mw^{2}}\right\vert ^{2}\leq \left\vert \frac{\left( 2m-1\right) }{2}%
\frac{x^{2}+w^{2}}{x^{2}+2mw^{2}}\right\vert ^{2}\leq m^{2}, \\
k\left\vert w_{y}\right\vert ^{2} &=&\frac{2m\left( x^{2}+w^{2}\right)
^{2m-1}}{\left( x^{2}+w^{2}\right) ^{2m-3}\left( x^{2}+2mw^{2}\right) ^{2}}
\\
&=&2m\frac{\left( x^{2}+w^{2}\right) ^{2}}{\left( x^{2}+2mw^{2}\right) ^{2}}%
\leq 2m.
\end{eqnarray*}%
In particular, this implies that%
\begin{equation*}
\int_{\Omega }(\left\vert w_{x}\right\vert ^{2}+k\left\vert w_{y}\right\vert
^{2})~dxdy\leq 3m^{2}\left\vert \Omega \right\vert .
\end{equation*}%
that is, $w\in H_{\mathcal{X}}^{1,2}\left( \Omega \right) \bigcap L^{\infty
}\left( \Omega \right) $, where $\mathcal{X}\left( x,y,w,\xi _{1},\xi
_{2}\right) =\xi _{1}^{2}+k\xi _{2}^{2}$. From (\ref{eq-CR}) it follows that 
$w\left( x,y\right) $ is a continuous weak solution of the quasilinear
equation (\ref{equation-x}). Moreover, as a function of $(x,y)$, $k\left(
x,y,z\right) $ does not vanish on any horizontal line segment, and it
satisfies 
\begin{eqnarray}
\left\vert \partial _{z}k\left( x,z\right) \right\vert &=&4m\left(
2m-1\right) \left\vert z\right\vert \left( x^{2}+z^{2}\right) ^{2m-2}  \notag
\\
&\leq &C_{m}\left( |x|^{2}+|z|^{2}\right) ^{2m-\frac{3}{2}}=C_{m}k\left(
x,z\right) ^{1-\frac{1}{4m-2}}\leq C_{m}k^{1-\varepsilon },  \label{xtra-k}
\end{eqnarray}%
where we used the inequality $\left\vert z\right\vert \leq \sqrt{x^{2}+z^{2}}
$ and the bound (\ref{meps}) for $m$.

On the other hand, from (\ref{eq-implicit}), noting that $y$ and $w(x,y)$
have the same sign, we have 
\begin{equation*}
w\left( 0,y\right) =\left( \mathop{\rm sign}y\right) \left\vert y\right\vert
^{\frac{1}{2m}}.
\end{equation*}
Hence $w\left( x,y\right) \not\in \mathcal{C}^{\frac{1}{2m}+ \delta
}(\Omega) $ for any $\delta >0$. In particular, $w$ is not smooth in $\Omega 
$. {%
\endproof%
}

\section{Preliminaries}

\subsection{{Notation\label{notation}}}

{\ Throughout the paper, $C$ will denote a constant that may change from
line to line but that is independent of any significant quantities. In
general, $C$ may depend on the dimension $n$, $\vec{k},$ and the fixed
cutoff functions defined below. We will use calligraphic $\mathcal{C}$ to
denote a function of one or more variables, increasing in each variable
separately, that may also change from line to line, but that is independent
of any significant parameters except its variables. }

{We denote by $\left\Vert f\right\Vert _{L^{\infty }\left( \Omega \right) }$
the essential supremum of $|f|$ in $\Omega $\ and by $\left\Vert
f\right\Vert _{L^{p}\left( \Omega \right) }$ the $L^{p}$-norm of $f\,$in $%
\Omega $: 
\begin{equation*}
\left\Vert f\right\Vert _{L^{p}\left( \Omega \right) }=\left( \int_{\Omega
}\left\vert f\right\vert ^{p}dx\right) ^{\frac{1}{p}},\quad 1\le p <\infty.
\end{equation*}
When $\Omega =\mathbb{R}^{n}$ we omit mentioning the set. The collection of
real-valued functions in $\Omega $ with $m\,$continuous (but not necessarily
bounded) derivatives in $\Omega $ will be denoted $\mathcal{C}^{m}\left(
\Omega \right) $, and for $f\in \mathcal{C}^{m}\left( \Omega \right) $ we
let 
\begin{equation}
\left\Vert f\right\Vert _{\mathcal{C}^{m}\left( \Omega \right)
}=\sum_{i=0}^{m}\sum_{\left\vert \vec{\alpha}\right\vert =i}\left\Vert
\partial ^{\vec{\alpha}}f\right\Vert _{L^{\infty }\left( \Omega \right) },
\label{cmnorm}
\end{equation}
where $\partial^{\vec{\alpha}}=\left(\partial _{1}\right) ^{\alpha
_{1}}\cdots \left( \partial _{n}\right) ^{\alpha _{n}}$} for $\vec{\alpha}%
=\left( \alpha _{1},\dots ,\alpha _{n}\right) $.

{Let $\Gamma _{M_{0}}^{\prime }\subset \Gamma $ be the domains in $\mathbb{R}%
^{n}\times \mathbb{R}$ given in (\ref{domain}) and let $k^{i}\left(
x,z\right) \in \mathcal{C}^{2}\left( \Gamma \right) $ be nonnegative, $%
i=1,\dots ,n$. From now on, let $\vec{k}(x,z)=\left( k^{1}(x,z),\dots
,k^{n}(x,z)\right) $ satisfy the nondegeneracy Condition~ \ref{hyp1} with 
\begin{equation}
k^{1}\left( x,z\right) =1\qquad \text{for all }\left( x,z\right) \in \Gamma ,
\label{unifbdk1}
\end{equation}%
which clearly implies property (\ref{ax1c2}) in }Condition{\ \ref{hyp1}.
Since our theorems are local, this assumption causes no loss of generality. }

\subsection{Boxes around points}

{Given $x\in \Omega $, we will consider rectangular neighborhoods }$\mathcal{%
\ R}${\ of $x$ of the form described in Section \ref{statements}, with $x\in 
\frac{1}{3}\mathcal{R}$. The maximum sidelength }$R${\ of }$\mathcal{R}${\
will be chosen so that $2\mathcal{R}\subset \Omega $ and possibly even
smaller to allow the absorption of various terms involving $R$ as a factor.
We will always assume that }$\mathcal{R}${\ satisfies property (\ref{ax1c3})
of }the nondegeneracy Condition{\ \ref{hyp1}. }Since{\ $\vec{k}$ is
continuous, given such $\mathcal{R}$ and $M_{0}>0,$ there exist positive
numbers $\delta ^{i}=\delta ^{i}\left( \vec{k},\mathcal{R},M_{0}\right) $
such that $\delta ^{i}<\frac{1}{2}r^{i}$, where $r^{i}$ denotes the $i^{th}$
half-sidelength of $\mathcal{R}$, $i=1,\dots ,n$, and } 
\begin{eqnarray}
k^{i}\left( y,z\right) &>&0\qquad \text{whenever }z\in \left[ -M_{0},M_{0}%
\right] \text{ and}  \label{lidrfat} \\
y &\in &\mathcal{T}_{i}\left( \mathcal{R},\delta ^{i}\right) =\left\{ Y\in 
\mathbb{R}^{n}:\mathop{\rm dist}\left( Y,\mathcal{T}_{i}\left( \mathcal{R}%
\right) \right) \leq \delta ^{i}\right\} .  \notag
\end{eqnarray}

\begin{remark}
\label{noM0} Under the hypotheses of Theorem \ref{application}, or more
precisely, if $\mathcal{A}$ satisfies (\ref{xtra}) and (\ref{hellip}), the
parameters $\delta ^{i}$ above can be taken independent of $M_{0}$. Indeed,
( \ref{xtra}) and Lemma \ref{admisall} imply that for fixed $x,\xi \in 
\mathbb{R} ^{n}$, the function $\xi ^{t}\mathcal{A}\left( x,z \right) \xi $
is either identically zero in $z$ or strictly positive in $z$. Therefore, (%
\ref{hellip}) implies a similar property for each $k^{i}\left( x,z\right) $.
\end{remark}

\subsection{A class of adapted cutoff functions}

A \emph{cutoff function} is any nonnegative smooth function with compact
support, i.e., $\varphi \,$is a cutoff function if $\varphi \in \mathcal{C}
_{0}^{\infty }\left( \mathbb{R}^{n}\right) $ and $\varphi \geq 0$.

We now define a special class of cutoff functions around $x$ which are
adapted to our operator as in \cite{SaW2} (see also \cite{Fe}). The main
property of these functions is that they are supported in a (small enough)
neighborhood of $x$, while their derivatives are supported \emph{away} from $%
x$, essentially where $\vec{k}$ has positive components.

\begin{definition}[Supporting relation]
\label{rhost}Given two cutoff functions $\zeta $, $\xi \in \mathcal{C}
_{0}^{\infty }\left( \Omega \right) $, we say that $\zeta $ \emph{supports} $%
\xi $ and denote it $\xi \succeq \zeta $ or $\zeta \preceq \xi $ if $\xi =1 $
on a neighborhood of $\mathop{\rm support}\left( \zeta \right) $. Note in
particular that if $\xi \succeq \zeta $ then $\zeta \,\xi =\zeta $ and $%
\left\Vert \zeta \right\Vert _{L^{\infty }}\xi \geq \zeta $.
\end{definition}

\begin{definition}[Special cutoff functions]
\label{rhos}Let $x\in \Omega $, $M_{0}\geq 1$, $\mathcal{R}=\tilde{x}+ \big(%
\left[-r^{1},r^{1}\right] \times ...\times \left[ -r^{n},r^{n}\right]\big) $
be a rectangular box with $x\in \frac{1}{3}\mathcal{R}$, and $\delta
^{i}=\delta^{i}( \vec{k},\mathcal{R},M_{0}) >0$ be as in (\ref{lidrfat}).
Let 
\begin{equation*}
\eta _{i},\phi _{i},\zeta _{i},\theta _{i}\in \mathcal{C}_{0}^{\infty
}\left( \tilde{x}_{i}+\left( -2r^{i},2r^{i}\right) \right) ,\qquad 1\leq
i\leq n,
\end{equation*}
be functions of one variable which satisfy the following:

\begin{enumerate}
\item \label{one}$0\leq \eta _{i},$ $\phi _{i},~\zeta _{i},~\theta _{i}\leq
1 $

\item \label{intervals}$\eta _{i}$, $\phi _{i}$ and$~\zeta _{i}$ are equal
to $1$ in $\tilde{x}_{i}+\left[ -\left( r^{i}-\delta ^{i}\right)
,r^{i}-\delta ^{i}\right] $ and vanish \newline
outside $\tilde{x}_{i}+\left[ -\left( r^{i}+\delta ^{i}\right) ,r^{i}+\delta
^{i}\right] $; and for all integers $m\geq 1$ and some universal constants $%
c_{m}\geq 1$, 
\begin{equation}
\left( \frac{1}{c_{m}\delta ^{i}}\right) ^{m}\leq \left\Vert \frac{d^{m}}{
dt^{m}}\eta _{i}\right\Vert _{L^{\infty }\left( \mathbb{R}\right)
}+\left\Vert \frac{d^{m}}{dt^{m}}\phi _{i}\right\Vert _{L^{\infty }\left( 
\mathbb{R}\right) }+\left\Vert \frac{d^{m}}{dt^{m}}\zeta _{i}\right\Vert
_{L^{\infty }\left( \mathbb{R}\right) }\leq \left( \frac{c_{m}}{\delta ^{i}}
\right) ^{m}.  \label{intervalsconstants}
\end{equation}

\item $\eta _{i}\preceq \phi _{i}\preceq \zeta _{i}$ and $\eta _{i}\leq \phi
_{i}\leq \zeta _{i}$

\item \label{includd}Let $J^{i}\,$be the smallest set of the form $J^{i}= 
\tilde{x}_{i}+ \left(\left[ -b^{i},-a^{i}\right] \bigcup \left[ a^{i},b^{i}%
\right] \right) \subset \mathbb{R}$ ,with $0<a^{i}<b^{i}$, such that 
\begin{equation*}
\mathop{\rm support}\left( \eta _{i}^{\prime }\right) \bigcup \mathop{\rm
support}\left( \phi _{i}^{\prime }\right) \bigcup \mathop{\rm support}\left(
\zeta _{i}^{\prime }\right) \subset J^{i}.
\end{equation*}
Note by (\ref{intervals}) that $J^{i}\subset \tilde{x}_{i}+ \left(\left[
-r^{i}-\delta ^{i},-r^{i}+\delta ^{i}\right] \bigcup \left[
r^{i}-\delta^{i},r^{i}+\delta ^{i}\right]\right)$. Let $\theta _{i}\equiv 1$
on $J^{i}$, so that $\theta _{i}\equiv 1$ on the supports of $\eta
_{i}^{\prime }$, $\phi _{i}^{\prime }$ and $\zeta _{i}^{\prime }$, i.e., $%
\eta _{i}^{\prime }\preceq \theta _{i},\,\phi _{i}^{\prime }\preceq \theta
_{i},$ and $\zeta _{i}^{\prime }\preceq \theta _{i}$.

\item {\ \label{five}$\mathop{\rm support}\left( \theta _{i}\right) \subset 
\tilde{x}_{i}+\left(\left[ -r^{i}-2\delta ^{i},-r^{i}+2\delta ^{i}\right]
\bigcup \left[ r^{i}-2\delta ^{i},r^{i}+2\delta ^{i}\right] \right)$; in
particular, $\tilde{x}_{i}\notin \mathop{\rm
support}\left( \theta _{i}\right) $. Moreover, we assume that }for all
integers $m\geq 1${\ and for some universal constants $C_{m}>0,$} {\ 
\begin{equation}
\left( \frac{1}{C_{m}\delta ^{i}}\right) ^{m}\leq \left\Vert \frac{d^{m}}{%
dt^{m}}\theta _{i}\right\Vert _{L^{\infty }\left( \mathbb{R}\right) }\leq
\left( \frac{C_{m}}{\delta ^{i}}\right) ^{m}.  \label{fiveconstants}
\end{equation}
}

\item \label{six}$\left\vert \eta _{i}^{\prime }\right\vert $, $\left\vert
\phi _{i}^{\prime }\right\vert $ and $\left\vert \zeta _{i}^{\prime
}\right\vert $ are smooth functions (see the discussion following the figure
below). \newline
\end{enumerate}
\end{definition}

Finally, let 
\begin{equation*}
\begin{array}{ll}
\eta \left( x\right) =\prod_{i=1}^{n}\eta _{i}\left( x_{i}\right) ,\qquad & 
\phi \left( x\right) =\prod_{i=1}^{n}\phi _{i}\left( x_{i}\right) , \\ 
\zeta \left( x\right) =\prod_{i=1}^{n}\zeta _{i}\left( x_{i}\right) , & 
\varrho _{i}\left( x\right) =\theta _{i}\left( x_{i}\right) \prod_{j\neq
i}\zeta _{j}\left( x_{j}\right) ,%
\end{array}%
\end{equation*}%
and let $\xi ,\chi \in \mathcal{C}_{0}^{\infty }\left( 2\mathcal{R}\right) $
satisfy 
\begin{eqnarray*}
\xi &=&\chi =1\quad \text{in }\mathcal{R} \\
\zeta &\preceq &\xi \preceq \chi ,~\quad \zeta \leq \xi \leq \chi ,\quad 
\text{and} \\
\varrho _{i} &\preceq &\xi ,\quad i=1,\dots ,n.
\end{eqnarray*}

The figure below will serve as a reminder of the order between some of the
cutoff functions:

\begin{center}
\includegraphics[width=4.04in]{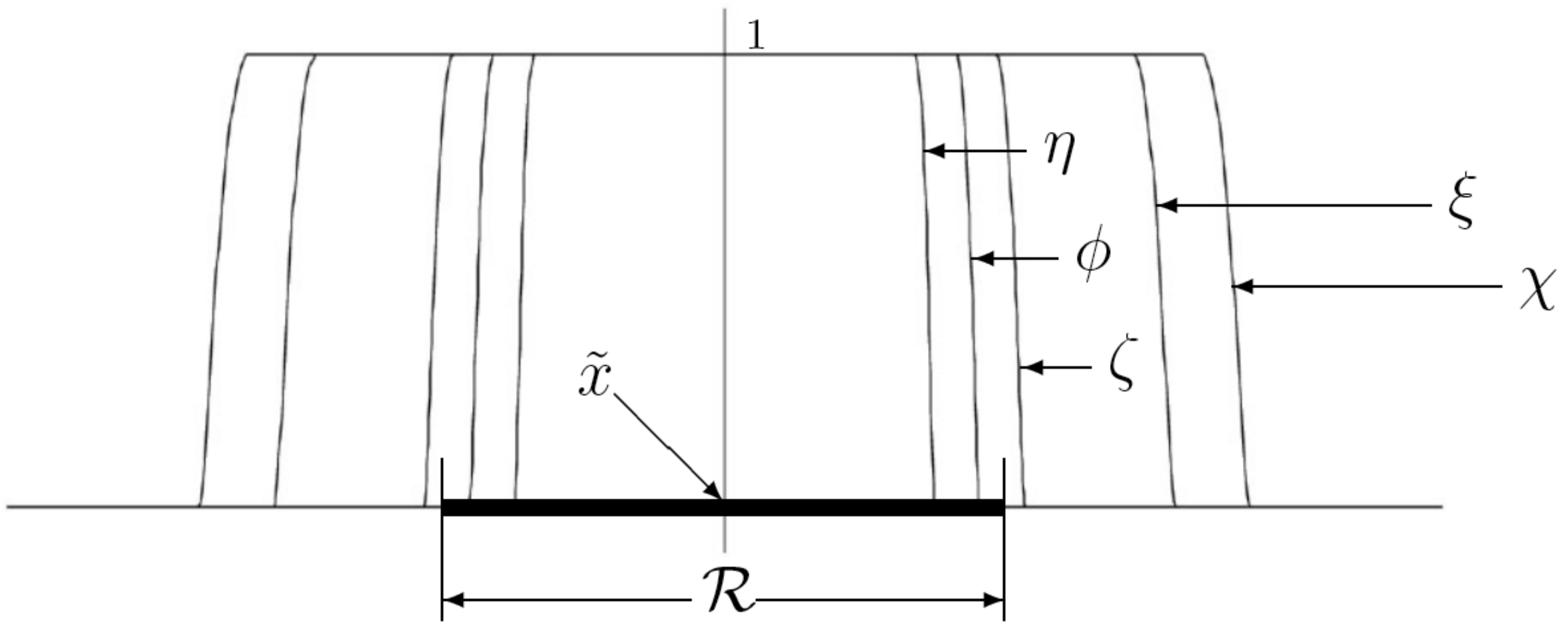}
\end{center}

Property (\ref{six}) above is easily satisfied by assuming (in addition to
properties (\ref{one}) to (\ref{includd})) that $\eta _{i}$, $\phi _{i}$ and 
$\zeta _{i}$ are smooth, non-decreasing in $\left( -\infty ,\tilde{x}%
_{i}\right) $ and non-increasing in $\left( \tilde{x}_{i},\infty \right) $.
Indeed, under such conditions and since these functions are constant on a
neighborhood of $\tilde{x}_{i}$, their derivatives are of the form $\eta
_{i}^{\prime }=\left( \eta _{i}^{\prime }\right) ^{+}-\left( \eta
_{i}^{\prime }\right) ^{-}$, $\phi _{i}^{\prime }=\left( \phi _{i}^{\prime
}\right) ^{+}-\left( \phi _{i}^{\prime }\right) ^{-}$ and $\zeta
_{i}^{\prime }=\left( \zeta _{i}^{\prime }\right) ^{+}-\left( \zeta
_{i}^{\prime }\right) ^{-}$ where all these functions are compactly
supported, $\left( \eta _{i}^{\prime }\right) ^{+}$, $\left( \phi
_{i}^{\prime }\right) ^{+}$ and $\left( \zeta _{i}^{\prime }\right) ^{+}$
are smooth, supported in $\left( -\infty ,\tilde{x}_{i}\right) $ and
nonnegative, while $\left( \eta _{i}^{\prime }\right) ^{-}$, $\left( \phi
_{i}^{\prime }\right) ^{-}$ and $\left( \zeta _{i}^{\prime }\right) ^{-}$
are smooth, supported in $\left( \tilde{x}_{i},\infty \right) $ and
nonnegative. It follows that $\left\vert \eta _{i}^{\prime }\right\vert
=\left( \eta _{i}^{\prime }\right) ^{+}+\left( \eta _{i}^{\prime }\right)
^{-}$, $\left\vert \phi _{i}^{\prime }\right\vert =\left( \phi _{i}^{\prime
}\right) ^{+}+\left( \phi _{i}^{\prime }\right) ^{-}$ and $\left\vert \zeta
_{i}^{\prime }\right\vert =\left( \zeta _{i}^{\prime }\right) ^{+}+\left(
\zeta _{i}^{\prime }\right) ^{-}$ are smooth functions.

It will be convenient to set 
\begin{eqnarray}
\qquad A^{6} &=&1+\left\Vert \nabla \eta \right\Vert _{L^{\infty
}}^{6}+\left\Vert \nabla \phi \right\Vert _{L^{\infty }}^{6}+\left\Vert
\nabla \zeta \right\Vert _{L^{\infty }}^{6}+\left\Vert \nabla \varrho
_{1}\right\Vert _{L^{\infty }}^{6}+\dots +\left\Vert \nabla \varrho
_{n}\right\Vert _{L^{\infty }}^{6}  \label{defA} \\
&&+\left\Vert \nabla ^{2}\eta \right\Vert _{L^{\infty }}^{3}+\left\Vert
\nabla ^{2}\phi \right\Vert _{L^{\infty }}^{3}+\left\Vert \nabla ^{2}\zeta
\right\Vert _{L^{\infty }}^{3}  \notag \\
&&+\left\Vert \nabla ^{3}\eta \right\Vert _{L^{\infty }}^{2}+\left\Vert
\nabla ^{3}\phi \right\Vert _{L^{\infty }}^{2}+\left\Vert \nabla ^{3}\zeta
\right\Vert _{L^{\infty }}^{2}  \notag
\end{eqnarray}
in order to collect constants in front of the lower order terms in what
follows.

\begin{remark}
\label{AdependsonR}Note that $A$ depends only on $\vec{k}$, $\mathcal{R}$
and $M_{0}$. Let $\delta ^{\ast }=\delta ^{\ast }\left( \vec{k}, \mathcal{R}%
,M_{0}\right) =\min_{i=1,\dots ,n}\delta ^{i}$ where $\delta ^{i}=\delta
^{i}\left( \vec{k},\mathcal{R},M_{0}\right) $ are as in (\ref{lidrfat}).
Then from (\ref{defA}),(\ref{intervalsconstants}) and (\ref{fiveconstants}), 
\begin{equation*}
A\approx \left( \delta ^{\ast }\right) ^{-1}.
\end{equation*}
\end{remark}

The main property of the cutoff functions $\eta $, $\phi $ and $\zeta $
above is that their $i^{th}$ partial derivative, $i=1,\dots ,n$, is
supported in the set $\mathcal{K}_{i}=\bigcap_{\ell \neq i,~\left\vert
z\right\vert \leq M_{0}}\left\{ x:k^{\ell }\left( x,z\right) >0\right\} $.
Indeed, let $\sigma \left( x\right) =\prod_{\ell =1}^{n}\sigma _{\ell
}\left( x_{\ell }\right) $ with $\eta _{\ell }\leq \sigma _{\ell }\leq \zeta
_{\ell }$, and fix $z\in \left[ -M_{0},M_{0}\right] $. It follows from
Definition \ref{rhos} (\ref{includd}) and (\ref{five}) that $\sigma _{\ell
}^{\prime }\preceq \theta _{\ell }$, i.e., $\mathop{\rm
supp}\left( \sigma _{\ell }^{\prime }\right) \subset \left\{ \theta _{\ell
}=1\right\} $. Hence, 
\begin{equation*}
\mathop{\rm supp}\left( \sigma _{i}^{\prime }\right) \subset \mathop{\rm
supp}\left( \theta _{i}\right) \subset \tilde{x}_{i}+\left( \left[
-r^{i}-2\delta ^{i},-r^{i}+2\delta ^{i}\right] \bigcup \left[ r^{i}-2\delta
^{i},r^{i}+2\delta ^{i}\right] \right) .
\end{equation*}%
Hence, since $\mathop{\rm supp}(\sigma _{\ell })\subset \tilde{x}_{\ell }+%
\left[ -r^{\ell }-\delta ^{\ell },r^{\ell }+\delta ^{\ell }\right] $, we
have $\mathop{\rm supp}\left( \partial _{i}\sigma \right) \subset %
\mathop{\rm supp}\left( \varrho _{i}\right) $. Now set $\mathrm{I}_{\ell }=%
\left[ -r^{\ell }-\delta ^{\ell },r^{\ell }+\delta ^{\ell }\right] $ and $%
\mathcal{R}_{i}=\mathcal{R}_{i}^{+}\bigcup \mathcal{R}_{i}^{-}$, where 
\begin{equation}
\begin{array}{l}
\mathcal{R}_{i}^{+}=\tilde{x}+\left\{ \left( \prod_{\ell <i}\mathrm{I}_{\ell
}\right) \times \left[ r^{i}-2\delta ^{i},r^{i}+2\delta ^{i}\right] \times
\left( \prod_{\ell >i}\mathrm{I}_{\ell }\right) \right\} , \\ 
\mathcal{R}_{i}^{-}=\tilde{x}+\left\{ \left( \prod_{\ell <i}\mathrm{I}_{\ell
}\right) \times \left[ -(r^{i}+2\delta ^{i}),-(r^{i}-2\delta ^{i})\right]
\times \left( \prod_{\ell >i}\mathrm{I}_{\ell }\right) \right\} .%
\end{array}
\label{rss}
\end{equation}%
From (\ref{lidrfat}) it follows that 
\begin{equation*}
\mathop{\rm supp}\left( \partial _{i}\sigma \right) \subset \mathop{\rm supp}%
\left( \varrho _{i}\right) \subset \mathcal{R}_{i}\subset \mathcal{K}_{i},
\end{equation*}%
as wanted. The figure below represents the sets $\mathcal{R}_{i}$ when $n=3$.

\begin{center}
\includegraphics[width=5.56in]{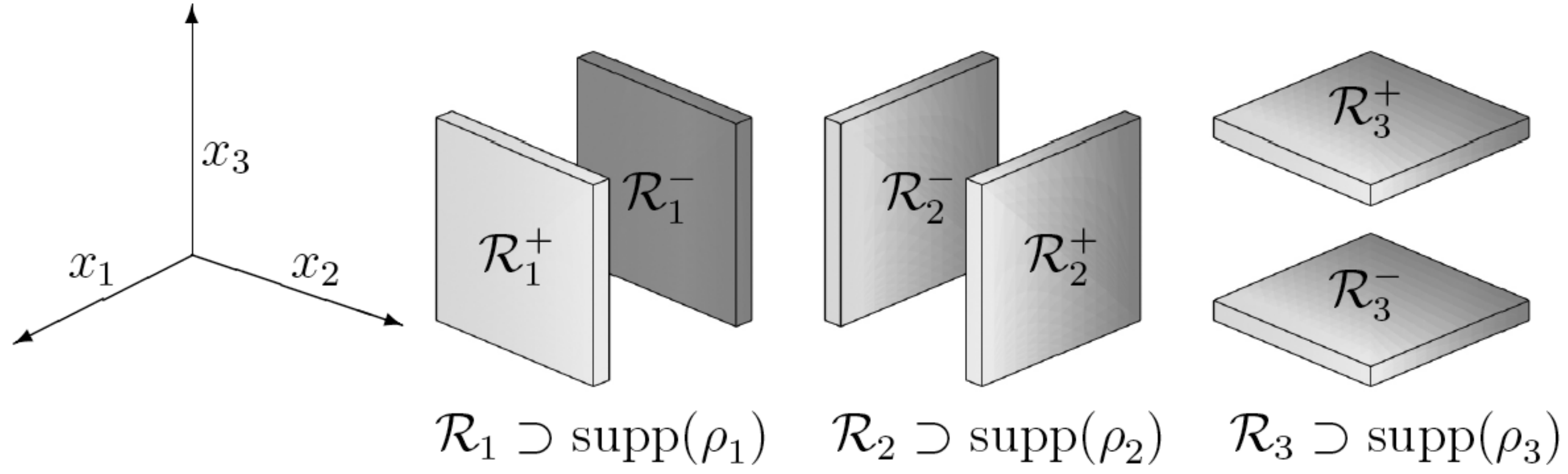}
\end{center}

Since $\mathop{\rm supp}\left( \varrho _{i}\right) $ is compact and $%
\mathcal{K}_{i}$ is open, it follows from (\ref{chaak}) that there exists $%
\tilde{C}_{1}=\tilde{C}_{1}\left( \vec{k},\mathcal{R},M_0\right) >0$ such
that 
\begin{equation*}
\varrho _{i}\left( x\right) \min_{\ell \neq i} k^{\ell }(x,z)\geq \frac{1 }{%
\tilde{C}_{1}}\varrho _{i}\left( x\right),\qquad i=1,\dots ,n,\quad \hspace{%
-0.0943pc}x\in \Omega \quad\hspace{-0.0943pc} z\in \left[ -M_{0},M_{0}\right]%
.
\end{equation*}
Since $\vec{k}$ is bounded in any compact set, it follows from Condition \ref%
{hyp1} that there exists $C_{1}=C_{1}\left( \vec{k},\mathcal{R},M_{0}\right) 
$ such that for $1\leq i\leq n,$ 
\begin{equation}
\varrho _{i}\left( x\right) \,k^{i}(x,z)\,\leq C_{1}\,\varrho _{i}\left(
x\right) \,\min_{1\leq j\leq n}k^{j}(x,z),\quad \hspace{-0.0939pc}\left(
x,z\right) \in \Omega \times \left[ -M_{0},M_{0}\right] .  \label{mink}
\end{equation}

We will often want to show that a certain term is small by applying the
one-dimensional Sobolev inequality in the $x_{1}$-variable, i.e., by
applying the estimate 
\begin{equation}
\left\Vert \varphi \right\Vert _{L^{2}\left( \mathbb{R}^{n}\right) }\,\leq
Cr^{1}\left\Vert \partial _{1}\varphi \right\Vert _{L^{2}\left( \mathbb{R}%
^{n}\right) }\,,  \label{spoinc0}
\end{equation}%
where $\varphi $ is a function with compact support in $2\mathcal{R}$ and $C$
is a universal constant. Then by (\ref{hellip}), (\ref{unifbdk1}), and the
definition in (\ref{gradk}) below, we have 
\begin{equation}
\left\Vert \varphi \right\Vert _{L^{2}\left( \mathbb{R}^{n}\right) }\,\leq
Cr^{1}\left\Vert \nabla _{\!\!\mathcal{A},w}\,\varphi \right\Vert
_{L^{2}\left( \mathbb{R}^{n}\right) }\qquad\text{if support$(\varphi)
\subset 2\mathcal{R}.$}  \label{spoinc}
\end{equation}%
The constant factor $r^{1}$ which appears here will often be chosen small to
help in absorption arguments, but it is important to observe that since $%
A\geq \left( \delta ^{1}\right) ^{-1}\geq 3\left( r^{1}\right) ^{-1}$, we
must ensure that a term to be shown small because it contains an $r^{1}$
factor \emph{does not also include a factor of positive powers of }$A$.

For simplicity, we will often restrict our calculations to the case when the
center $\tilde{x}$ of $\mathcal{R}$ is the origin.

\subsection{{Auxiliary Results\label{prelressection}}}

Given a weak solution $w\in H_{\mathcal{X}}^{1,2}\left(\Omega\right) \bigcap
L_\infty \left(\Omega \right) $ to (\ref{equation}) , we denote $\mathbf{A}%
\left( x\right) =\mathcal{A}\left( x,w\left( x\right) \right) $. It is
convenient to define the \emph{linear} operator 
\begin{equation}
\mathcal{L}_{w}=\mathop{\rm div}\mathbf{A}\left( x\right) \nabla = %
\mathop{\rm div}\mathcal{A}\left( x,w\left( x\right) \right) \nabla
,\;\;\;\;\;x\in \Omega .  \label{operatorw}
\end{equation}
Given $\varphi \in H_{\mathcal{X}}^{1,2}\left( \Omega \right) $, we{\ denote
by }$\nabla _{\!\!\sqrt{\mathcal{A}},w}\varphi \,${\ the }$\sqrt{\mathcal{A}}
${-gradient of }$\varphi, $ formally defined by 
\begin{equation}
\nabla _{\!\!\sqrt{\mathcal{A}},w}\varphi =\sqrt{\mathcal{A}\left( x,w\left(
x\right) \right) }\nabla {\varphi .}  \label{gradk}
\end{equation}
See the Appendix for a discussion of the meaning of $\nabla\varphi$ in case $%
\varphi$ is not smooth.

{We now list four useful lemmas obtained in \cite{RSaW1}.}

\begin{lemma}
{\ \label{Moser}Let $u\in \mathcal{C}^{\infty }\left( 2\mathcal{R}\right) $, 
$\psi $ be a nonnegative cutoff function supported in $2\mathcal{R},$ and $%
\beta \in \mathbb{N}$. Then 
\begin{align*}
\int_{2\mathcal{R}}\left\vert \psi \nabla _{\!\!\sqrt{\mathcal{A}}%
,w}\,u^{\beta }\right\vert ^{2}& \leq \frac{2\beta ^{2}}{2\beta -1}%
\left\vert \int_{2\mathcal{R}}\left( \psi \mathcal{L}_{w}u\right) \left(
\psi u^{2\beta -1}\right) \right\vert \\
& +\left( \frac{4\beta }{2\beta -1}\right) ^{2}\int_{2\mathcal{R}}\left\vert
\nabla _{\!\!\sqrt{\mathcal{A}},w}\,\psi \right\vert ^{2}\left\vert u^{\beta
}\right\vert ^{2},
\end{align*}%
where $\mathcal{L}_{w}$ is the linear operator (\ref{operatorw}), and $%
\nabla _{\!\!\sqrt{\mathcal{A}},w}$ is given by (\ref{gradk}). }
\end{lemma}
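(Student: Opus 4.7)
The plan is to carry out a standard Caccioppoli--Moser computation: use $\psi^2 u^{2\beta-1}$ as a test function against $\mathcal{L}_w u$, integrate by parts, identify the resulting quadratic term as a multiple of $\int\psi^2|\nabla_{\!\!\sqrt{\mathcal{A}},w}u^\beta|^2$, and then handle the cross term by Cauchy--Schwarz in the $\sqrt{\mathcal{A}}$ inner product combined with a weighted Young inequality to absorb a fraction of the gradient energy back to the left-hand side.

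First I would record the identity obtained by integration by parts. Since $\psi\in\mathcal{C}^\infty_0(2\mathcal{R})$ and $u\in\mathcal{C}^\infty(2\mathcal{R})$, the tester $\psi^2 u^{2\beta-1}$ is a compactly supported smooth function on $2\mathcal{R}$, so
\[
\int_{2\mathcal{R}} \psi^2 u^{2\beta-1}\,\mathcal{L}_w u = -\int_{2\mathcal{R}}\nabla(\psi^2 u^{2\beta-1})\cdot \mathbf{A}\nabla u = -2\int \psi u^{2\beta-1}\,\nabla\psi\cdot\mathbf{A}\nabla u \;-\; (2\beta-1)\int \psi^2 u^{2\beta-2}\,\nabla u\cdot\mathbf{A}\nabla u.
\]
Using the chain rule $\nabla u^\beta=\beta u^{\beta-1}\nabla u$ one has $u^{2\beta-2}\nabla u\cdot\mathbf{A}\nabla u=\beta^{-2}|\nabla_{\!\!\sqrt{\mathcal{A}},w}u^\beta|^2$, and factoring $u^{2\beta-1}\nabla u=\beta^{-1}u^\beta\nabla u^\beta$ rewrites the cross term as $-\tfrac{2}{\beta}\int \psi u^\beta\,\nabla_{\!\!\sqrt{\mathcal{A}},w}\psi\cdot\nabla_{\!\!\sqrt{\mathcal{A}},w}u^\beta$. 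Rearranging gives the identity
\[
\tfrac{2\beta-1}{\beta^2}\int \psi^2 |\nabla_{\!\!\sqrt{\mathcal{A}},w}u^\beta|^2 \;=\; -\int (\psi\,\mathcal{L}_w u)(\psi u^{2\beta-1}) \;-\; \tfrac{2}{\beta}\int \psi u^\beta\,\nabla_{\!\!\sqrt{\mathcal{A}},w}\psi\cdot\nabla_{\!\!\sqrt{\mathcal{A}},w}u^\beta.
\]

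Second I would bound the cross integral. Because $\mathbf{A}$ is positive semi-definite (via \eqref{hellip}), the pointwise Cauchy--Schwarz inequality in the bilinear form $\xi^t\mathbf{A}\xi$ applies to $\nabla_{\!\!\sqrt{\mathcal{A}},w}\psi\cdot\nabla_{\!\!\sqrt{\mathcal{A}},w}u^\beta$. Combined with the weighted Young inequality $|ab|\leq \tfrac{1}{4t}a^2+t\,b^2$ for a parameter $t>0$ to be chosen, this yields
\[
\tfrac{2}{\beta}\left|\int \psi u^\beta\,\nabla_{\!\!\sqrt{\mathcal{A}},w}\psi\cdot\nabla_{\!\!\sqrt{\mathcal{A}},w}u^\beta\right| \leq \tfrac{1}{2\beta t}\int |u^\beta|^2|\nabla_{\!\!\sqrt{\mathcal{A}},w}\psi|^2 + \tfrac{2t}{\beta}\int \psi^2|\nabla_{\!\!\sqrt{\mathcal{A}},w}u^\beta|^2.
\]
Choosing $t$ proportional to $(2\beta-1)/\beta$ makes $\tfrac{2t}{\beta}$ strictly smaller than $\tfrac{2\beta-1}{\beta^2}$, so the last summand can be absorbed into the left-hand side of the identity above.

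Finally I would absorb and divide. After absorption, multiplying through by the reciprocal of the leftover coefficient $\tfrac{2\beta-1}{\beta^2}-\tfrac{2t}{\beta}$ produces an inequality of the form
\[
\int\psi^2|\nabla_{\!\!\sqrt{\mathcal{A}},w}u^\beta|^2 \leq C_1(\beta)\left|\int (\psi\mathcal{L}_w u)(\psi u^{2\beta-1})\right| + C_2(\beta)\int |u^\beta|^2|\nabla_{\!\!\sqrt{\mathcal{A}},w}\psi|^2,
\]
and a book-keeping of the coefficients with the specific choice of $t$ produces the stated constants $C_1=\tfrac{2\beta^2}{2\beta-1}$ and $C_2=\bigl(\tfrac{4\beta}{2\beta-1}\bigr)^2$. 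The argument is elementary and uses no structural hypothesis beyond the positive semi-definiteness of $\mathcal{A}$; the only real task is the constant-chasing at the absorption step, which is the place where one has to be careful but not where any genuine difficulty lies.
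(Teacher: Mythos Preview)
Your argument is correct and is exactly the standard Caccioppoli--Moser computation one expects here: test with $\psi^{2}u^{2\beta-1}$, integrate by parts, rewrite via $\nabla u^{\beta}=\beta u^{\beta-1}\nabla u$, and absorb the cross term with Cauchy--Schwarz plus a weighted Young inequality. The paper does not actually prove this lemma; it is quoted from \cite{RSaW1}, and the proof there follows the same route you outline. One minor remark on the bookkeeping: with the natural choice $t=\tfrac{2\beta-1}{4\beta}$ your method in fact yields the sharper constant $\bigl(\tfrac{2\beta}{2\beta-1}\bigr)^{2}$ in front of the second term, which is smaller than the stated $\bigl(\tfrac{4\beta}{2\beta-1}\bigr)^{2}$, so the inequality as written certainly holds.
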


\begin{lemma}
\label{minkgrad} Let ${\vec{k}}$ satisfy Condition \ref{hyp1} and $\mathcal{R%
}$ be a box satisfying property (\ref{ax1c3}) in Condition \ref{hyp1}. For
any smooth function $\varphi $ and smooth cutoff function $\psi $ of the
form $\psi =\prod_{i=1}^{n}\psi _{i}\left( x_{i}\right) $, where $\eta
_{i}\leq \psi _{i}\leq \zeta _{i}$ for all $i$ (see Definition (\ref{rhos}%
)), we have 
\begin{equation*}
\left\vert \nabla _{\!\!\sqrt{\mathcal{A}},w}\psi \right\vert ^{2}\left\vert
\nabla \varphi \right\vert ^{2}\leq C_{1}\Lambda \left\vert \nabla \psi
\right\vert ^{2}\left\vert \nabla _{\!\!\sqrt{\mathcal{A}},w}\varphi
\right\vert ^{2},
\end{equation*}
with $\Lambda,$ $C_{1}=C_{1}\left( \vec{k},\mathcal{R},M_{0}\right) $ and $%
\nabla _{\!\!\sqrt{\mathcal{A}},w}$ as in (\ref{hellip}), (\ref{mink}) and (%
\ref{gradk}).
\end{lemma}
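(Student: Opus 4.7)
The plan is to unpack both $\sqrt{\mathcal{A}}$-gradients componentwise using the equivalence (\ref{hellip}), and then exploit the crucial geometric fact that $\partial_i\psi$ is supported where $k^i$ is controlled by $\min_j k^j$ via (\ref{mink}). Concretely, by the upper bound in (\ref{hellip}) applied with $\xi=\nabla\psi$,
\begin{equation*}
\bigl|\nabla_{\!\!\sqrt{\mathcal{A}},w}\psi\bigr|^{2}\,|\nabla\varphi|^{2}
\;\le\;\Lambda\sum_{i=1}^{n}k^{i}(x,w(x))\,(\partial_{i}\psi)^{2}\sum_{j=1}^{n}(\partial_{j}\varphi)^{2}
\;=\;\Lambda\sum_{i,j}k^{i}(\partial_{i}\psi)^{2}(\partial_{j}\varphi)^{2}.
\end{equation*}

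Next I would verify that $\mathop{\rm supp}(\partial_{i}\psi)\subset\mathop{\rm supp}(\varrho_{i})$. Because $\eta_{i}\le\psi_{i}\le\zeta_{i}$ and both $\eta_{i}$ and $\zeta_{i}$ equal $1$ on $\tilde x_{i}+[-(r^{i}-\delta^{i}),r^{i}-\delta^{i}]$ and vanish outside $\tilde x_{i}+[-(r^{i}+\delta^{i}),r^{i}+\delta^{i}]$, the sandwich forces $\psi_{i}$ to have the same flat interior and the same outer support, so $\psi_{i}'$ is supported in the two shells where $\theta_{i}\equiv1$ (property (\ref{includd}) of Definition \ref{rhos}). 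Since for $j\ne i$ one has $\mathop{\rm supp}(\psi_{j})\subset\mathop{\rm supp}(\zeta_{j})$, the product formula for $\partial_{i}\psi$ together with $\varrho_{i}=\theta_{i}\prod_{j\ne i}\zeta_{j}$ yields the desired inclusion. Consequently, (\ref{mink}) applies pointwise on $\mathop{\rm supp}(\partial_{i}\psi)$, giving $k^{i}(x,w(x))\le C_{1}\min_{\ell}k^{\ell}(x,w(x))\le C_{1}k^{j}(x,w(x))$ for every $j$.

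Inserting this into the double sum above,
\begin{equation*}
\sum_{i,j}k^{i}(\partial_{i}\psi)^{2}(\partial_{j}\varphi)^{2}\;\le\;C_{1}\sum_{i,j}(\partial_{i}\psi)^{2}\,k^{j}(\partial_{j}\varphi)^{2}\;=\;C_{1}\,|\nabla\psi|^{2}\sum_{j}k^{j}(\partial_{j}\varphi)^{2},
\end{equation*}
and invoking the lower bound in (\ref{hellip}) with $\xi=\nabla\varphi$ produces $\sum_{j}k^{j}(\partial_{j}\varphi)^{2}\le|\nabla_{\!\!\sqrt{\mathcal{A}},w}\varphi|^{2}$. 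Combining these chains gives
\begin{equation*}
\bigl|\nabla_{\!\!\sqrt{\mathcal{A}},w}\psi\bigr|^{2}\,|\nabla\varphi|^{2}\;\le\;C_{1}\Lambda\,|\nabla\psi|^{2}\,\bigl|\nabla_{\!\!\sqrt{\mathcal{A}},w}\varphi\bigr|^{2},
\end{equation*}
as desired.

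The only step that requires care is the support identification in the middle paragraph: one must exploit that being sandwiched by $\eta_{i}\le\psi_{i}\le\zeta_{i}$ transfers the relevant support and flatness properties to $\psi_{i}$ (so that $\psi_{i}'\preceq\theta_{i}$, even though $\psi_{i}$ itself is not one of the distinguished cutoffs of Definition \ref{rhos}). Once that geometric fact is in hand, the rest is a direct comparison of quadratic forms via (\ref{hellip}) and (\ref{mink}), with no analytic subtleties.
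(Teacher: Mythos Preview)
Your proof is correct and follows exactly the approach implicit in the paper's setup: the paper does not give a self-contained proof of this lemma (it is cited from \cite{RSaW1}), but the argument you give is precisely the one the paper's framework is designed to produce, namely combining the diagonal comparison (\ref{hellip}) with the key support property $\mathop{\rm supp}(\partial_i\psi)\subset\mathop{\rm supp}(\varrho_i)$ established in the discussion preceding (\ref{mink}). Your verification of that support inclusion via the sandwich $\eta_i\le\psi_i\le\zeta_i$ matches the paper's own reasoning for the generic cutoff $\sigma$ in that passage.
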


\begin{lemma}
\label{lemma-interpolation} Suppose that $\zeta $, $\chi $ are cutoff
functions as in Definition \ref{rhos}, and $\mathcal{R}$ is a rectangle with 
$2\mathcal{R} \subset \Omega $ which satisfies property (\ref{ax1c2}) of
Condition \ref{hyp1}. Then for each $q>n$, there exists $1<p<2$ such that
for all $u\in \mathcal{C}_{0}^{1}\left( 2\mathcal{R}\right) $ satisfying $%
u\preceq \chi $ (see Definition \ref{rhost}), all $\beta \in \mathbb{N}$,
and all $0<\epsilon \le 1$, 
\begin{equation*}
\int_{2\mathcal{R}}\left\vert \nabla _{\!\!\sqrt{\mathcal{A}},w}\, \zeta
\right\vert ^{2}\left\vert u^{\beta }\right\vert ^{2}\leq \varepsilon ^{-1}%
\mathcal{C} \left( n,A,q,\mathcal{K},\mathcal{R}, C_{1},\Lambda \right)
\left\Vert u^{\beta }\right\Vert _{L^{p}}^{2}+\varepsilon \int_{2\mathcal{R}%
}\left\vert \zeta \nabla _{\!\!\sqrt{\mathcal{A}},w}\,u^{\beta }\right\vert
^{2}\,,
\end{equation*}
where $\mathcal{K}=\left\Vert \nabla \chi \mathbf{A}\right\Vert _{L^{q}}$,
and $C_1$ is as in Lemma \ref{minkgrad}.
\end{lemma}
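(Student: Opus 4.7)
The plan is to combine integration by parts with a degenerate Sobolev-type interpolation whose constants reflect the hypotheses $q>n$ and $\mathcal{K}=\|\mathcal{A}\nabla\chi\|_{L^{q}}<\infty$. Writing $v=u^{\beta}$ and starting from the identity $|\nabla_{\!\!\sqrt{\mathcal{A}},w}\zeta|^{2}=\nabla\zeta\cdot\mathcal{A}\nabla\zeta$, I would integrate by parts once on the compactly supported vector field $v^{2}\mathcal{A}\nabla\zeta$ to obtain
\begin{equation*}
\int_{2\mathcal{R}}v^{2}|\nabla_{\!\!\sqrt{\mathcal{A}},w}\zeta|^{2}\,dx
=-2\int_{2\mathcal{R}}\zeta\, v\,(\nabla_{\!\!\sqrt{\mathcal{A}},w}\zeta)\cdot(\nabla_{\!\!\sqrt{\mathcal{A}},w}v)\,dx
-\int_{2\mathcal{R}}\zeta\, v^{2}\mathop{\rm div}(\mathcal{A}\nabla\zeta)\,dx\equiv J_{1}+J_{2}.
\end{equation*}
Cauchy--Schwarz with a small parameter handles $J_{1}$, producing a controlled multiple of $\varepsilon\int\zeta^{2}|\nabla_{\!\!\sqrt{\mathcal{A}},w}v|^{2}$ together with a leftover term of the form of the left-hand side that is absorbed at the end.

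The heart of the argument is the bound on $J_{2}$. The support relations $\zeta\preceq\xi\preceq\chi$ combined with $u\preceq\chi$ give $v=v\chi$ and $\chi\equiv 1$ on a neighborhood of $\mathop{\rm supp}\nabla\zeta$. I would integrate by parts a second time in $J_{2}$ to shift derivatives off $\mathcal{A}\nabla\zeta$ and onto $v^{2}$ and $\zeta$. Terms containing only derivatives of $\zeta$ and of $\mathcal{A}$ evaluated on $\mathop{\rm supp}\nabla\zeta$ are controlled pointwise by Lemma \ref{minkgrad} together with the bounds of Definition \ref{rhos}. The remaining terms pair $\mathcal{A}\nabla\chi$ with combinations of $v$, $\nabla v$ and $\nabla\zeta$; on these, H\"older's inequality with exponents $(q,q')$, where $q'=q/(q-1)$, extracts a factor of $\mathcal{K}$ multiplied by an $L^{q'}$ norm of the remaining combination.

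To convert the $L^{q'}$ norms into the advertised $\varepsilon^{-1}\mathcal{C}\|v\|_{L^{p}}^{2}+\varepsilon\int_{2\mathcal{R}}\zeta^{2}|\nabla_{\!\!\sqrt{\mathcal{A}},w}v|^{2}$ form, I would invoke a Gagliardo--Nirenberg type interpolation
\begin{equation*}
\|v\|_{L^{r}}\le C\,\|v\|_{L^{p}}^{\theta}\,\|\chi\nabla_{\!\!\sqrt{\mathcal{A}},w}v\|_{L^{2}}^{1-\theta},\qquad \theta\in(0,1),
\end{equation*}
for an exponent $r$ determined by $q$ and an exponent $1<p<2$ depending on $(n,q)$. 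This interpolation is available precisely because, although $\mathcal{A}$ may degenerate to infinite order inside $\mathop{\rm supp}\zeta$, the product $\mathcal{A}\nabla\chi$ lives on $\mathop{\rm supp}\nabla\chi$ (disjoint from $\mathop{\rm supp}\zeta$) and satisfies $\|\mathcal{A}\nabla\chi\|_{L^{q}}=\mathcal{K}$ with $q>n$; this brings Morrey's theorem into play on the dual side and yields an embedding with constants depending only on $(n,q,\mathcal{K},\mathcal{R})$. Young's inequality with conjugate exponents $1/\theta$ and $1/(1-\theta)$ then splits the resulting product into the desired two-term form, and combining with the estimate for $J_{1}$ and absorbing back into the left-hand side yields the claim.

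The main obstacle is establishing this Gagliardo--Nirenberg interpolation in the degenerate setting with constants of the stated form, and pinning down the precise admissible range of $p\in(1,2)$ as a function of $(n,q)$. Once this step is settled, the dependence of the constant $\mathcal{C}$ on $(n,A,q,\mathcal{K},\mathcal{R},C_{1},\Lambda)$ is tracked by bookkeeping through Lemma \ref{minkgrad}, the gradient bounds on $\zeta$ and $\chi$ in Definition \ref{rhos}, and the Sobolev embedding constant; the final absorption is routine.
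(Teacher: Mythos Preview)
The paper does not prove this lemma; it is quoted from \cite{RSaW1}. So I can only assess your argument on its own merits, and there are two genuine gaps.

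First, you have misread the constant $\mathcal{K}$. In the statement, $\mathcal{K}=\|\nabla\,\chi\mathbf{A}\|_{L^{q}}$ means $\|\nabla(\chi\mathbf{A})\|_{L^{q}}$, i.e.\ the $L^{q}$ norm of the derivatives of the matrix $\chi\mathbf{A}(x)=\chi(x)\mathcal{A}(x,w(x))$; see the passage after (\ref{tta2}) where the authors estimate $\|\chi\nabla\mathbf{A}\|_{L^{q}}$ in terms of $\|\nabla w\|_{L^{q}}$. It is \emph{not} $\|(\nabla\chi)\mathbf{A}\|_{L^{q}}$. This matters because $\nabla\mathbf{A}$ contains $\nabla w$, which is not known to be bounded; that is exactly why only an $L^{q}$ bound is assumed. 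Your entire mechanism for bringing $\mathcal{K}$ into play --- pairing $\mathcal{A}\nabla\chi$ with $v,\nabla v,\nabla\zeta$ via H\"older --- collapses, since $\mathop{\rm supp}\nabla\chi$ is disjoint from $\mathop{\rm supp}\zeta$ and no such terms arise from your integrations by parts. The quantity $\nabla(\chi\mathbf{A})$, by contrast, lives on all of $2\mathcal{R}$, and enters when a derivative falls on $\mathbf{A}$ inside $J_{2}=\int\zeta v^{2}\mathop{\rm div}(\mathbf{A}\nabla\zeta)$.

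Second, and more seriously, the degenerate Gagliardo--Nirenberg inequality you invoke,
\[
\|v\|_{L^{r}}\le C\,\|v\|_{L^{p}}^{\theta}\,\|\chi\nabla_{\!\!\sqrt{\mathcal{A}},w}v\|_{L^{2}}^{1-\theta},
\]
is simply false in this setting. Since $\mathcal{A}$ may vanish to infinite order on the interior of $\mathop{\rm supp}\zeta$, the degenerate gradient $\nabla_{\!\!\sqrt{\mathcal{A}},w}v$ gives no Sobolev gain there; the hypothesis $q>n$ on $\mathcal{K}$ provides no ellipticity information whatsoever about $\mathcal{A}$ on $\mathop{\rm supp}\zeta$. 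What you are missing is the one structural feature of the special cutoffs in Definition~\ref{rhos} that makes the lemma true: $\nabla\zeta$ is supported in the slabs $\mathcal{R}_{i}$ of (\ref{rss}), on which $k^{j}$ is bounded \emph{below} for every $j\ne i$, and (\ref{mink}) forces $k^{i}\le C_{1}\min_{j}k^{j}$ there as well. Thus the weight $|\nabla_{\!\!\sqrt{\mathcal{A}},w}\zeta|^{2}$ lives only where the operator is genuinely elliptic, and one can run a classical Sobolev/interpolation argument on each slab (this is where $q>n$ and the exponent $p\in(1,2)$ enter, via the Sobolev conjugate), with $\mathcal{K}$ controlling the $L^{q}$ size of $\nabla\mathbf{A}$ that appears when derivatives hit the coefficients. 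Your outline never localizes to $\mathop{\rm supp}\nabla\zeta$ and never uses (\ref{mink}) or the constant $C_{1}$, even though $C_{1}$ appears explicitly in the conclusion.
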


\begin{lemma}
{\ \label{secondcacc}} Suppose that $w$ is a smooth solution of (\ref%
{equation}) in $2\mathcal{R}\subset \Omega ^{\prime }$. Then for $\beta \in 
\mathbb{N}$ and any $0<\alpha \leq 1$, 
\begin{equation*}
\sum_{i,j=1}^{n}\left\vert \int_{2\mathcal{R}}\zeta ^{2}\left( \mathcal{L}
_{w}w_{ij}\right) w_{ij}^{2\beta -1}\right\vert
\end{equation*}
\begin{eqnarray*}
&\leq &2\sum_{i,j=1}^{n}\left\vert \int_{2\mathcal{R}}\left( \nabla
w_{j}\right) \cdot \mathbf{A}_{i}\nabla \zeta ^{2}w_{ij}^{2\beta
-1}\right\vert +2\sum_{i,j=1}^{n}\left\vert \int_{2\mathcal{R}}w_{i}\left(
\nabla w_{j}\right) \cdot \mathbf{A}_{z}\nabla \zeta ^{2}w_{ij}^{2\beta
-1}\right\vert \\
&&+\sum_{i,j=1}^{n}\left\vert \int_{2\mathcal{R}}w_{ij}\left( \nabla
w\right) \cdot \mathbf{A}_{z}\nabla \zeta ^{2}w_{ij}^{2\beta -1}\right\vert
+ \frac{\mathcal{C}_{0}}{\alpha }\int_{2\mathcal{R}}\zeta ^{2}\left\vert
\nabla ^{2}w\right\vert ^{2\beta } \\
&&+\alpha \sum_{i,j=1}^{n}\int_{2\mathcal{R}}\zeta ^{2}\left\vert \vec{\gamma%
}\cdot \nabla w_{ij}^{\beta }\right\vert ^{2}+\alpha \int_{2\mathcal{R}%
}\zeta ^{2}\left\vert \vec{\gamma}_{z}\cdot \nabla w\right\vert
^{2}\left\vert \nabla ^{2}w\right\vert ^{2\beta } \\
&&+2\sum_{i,j=1}^{n}\left\vert \int_{2\mathcal{R}}\zeta ^{2}\left( \left(
\partial _{j}\vec{\gamma}\right) \cdot \nabla w_{i}\right) w_{ij}^{2\beta
-1}\right\vert +2\sum_{i,j=1}^{n}\left\vert \int_{2\mathcal{R}}w_{i}\left(
\nabla w\right) \cdot \mathbf{A}_{jz}\nabla \zeta ^{2}w_{ij}^{2\beta
-1}\right\vert \\
&&+\sum_{i,j=1}^{n}\left\vert \int_{2\mathcal{R}}w_{i}w_{j}\left( \nabla
w\right) \cdot \mathbf{A}_{zz}\nabla \zeta ^{2}w_{ij}^{2\beta -1}\right\vert
+\mathcal{C}_{0}\int_{2\mathcal{R}}\zeta ^{2}\left\vert \nabla w\right\vert
^{6\beta }+\mathcal{C}_{0},
\end{eqnarray*}
{where $\vec{\gamma}=\vec{\gamma}\left( x,w\right) $, $f=f\left( x,w\right)$%
, ${\mathbf{A}}=\mathcal{A}\left( x,w\right) $, $\mathbf{A}_{i}=\mathcal{A}%
_{i}\left( x,w\right) $, etc., and 
\begin{equation*}
\mathcal{C}_{0}=C\left\{ \left\Vert \mathcal{A}\right\Vert _{\mathcal{C}%
^{3}\left( \tilde{\Gamma}\right) }+\left\Vert f\right\Vert _{\mathcal{C}
^{2}\left( \tilde{\Gamma}\right) }+\left\Vert \vec{\gamma}\right\Vert _{%
\mathcal{C}^{2}\left( \tilde{\Gamma}\right) }+1\right\} .
\end{equation*}
}
\end{lemma}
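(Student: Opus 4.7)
The plan is to derive the pointwise identity obtained by applying $\partial_i\partial_j$ to the equation $\mathcal{Q}w=0$ and then test against $\zeta^2 w_{ij}^{2\beta-1}$. Writing $\mathbf{A}=\mathcal{A}(x,w)$, $\mathbf{A}_k=\partial_{x_k}\mathcal{A}$, $\mathbf{A}_z=\partial_z\mathcal{A}$ and similarly for $\vec\gamma$ and $f$, I first differentiate the equation once in $x_i$ and use that $\partial_i\partial_k(a_{k\ell}w_\ell)=\mathcal{L}_w w_i+\partial_k[(\mathbf{A}_i+w_i\mathbf{A}_z)\nabla w]_k$. Differentiating again in $x_j$ and commuting $\partial_j$ past $\mathcal{L}_w$ by the same kind of computation yields
\[
\mathcal{L}_w w_{ij}=-\partial_k\bigl[(\mathbf{A}_j+w_j\mathbf{A}_z)\nabla w_i\bigr]_k-\partial_k\partial_j\bigl[(\mathbf{A}_i+w_i\mathbf{A}_z)\nabla w\bigr]_k-\partial_i\partial_j(\vec\gamma\cdot\nabla w)-\partial_i\partial_j f,
\]
an identity that holds in the strong sense since $w$ is smooth.

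Next I multiply this identity by $\zeta^2 w_{ij}^{2\beta-1}$, integrate over $2\mathcal{R}$, and integrate by parts on each divergence-form piece on the right. The outer $\partial_k$ transferred onto $\zeta^2$ generates exactly the three $\nabla\zeta^2$ terms in the statement (with $\mathbf{A}_i$, $w_i\mathbf{A}_z$, and $w_{ij}\mathbf{A}_z$ contributions), while the second divergence produces, after carrying the extra $\partial_j$ through the factors in brackets, the two $\nabla\zeta^2$ terms built from $\mathbf{A}_{jz}$ and $\mathbf{A}_{zz}$. The outer $\partial_k$ also hits $w_{ij}^{2\beta-1}$, producing $(2\beta-1)w_{ij}^{2\beta-2}w_{ijk}$; I rewrite $w_{ij}^{\beta-1}w_{ijk}=\beta^{-1}\partial_k w_{ij}^\beta$ and use Cauchy--Schwarz with a parameter $\alpha$ to split each such term into a piece bounded by $\alpha\int\zeta^2|\nabla_{\sqrt{\mathcal A},w}w_{ij}^\beta|^2$ (absorbed on the left in the enveloping argument; here it is rolled into $\mathcal{C}_0/\alpha\int\zeta^2|\nabla^2 w|^{2\beta}$ together with its multiplier) and a lower-order piece of the form $\mathcal{C}_0\alpha^{-1}\int\zeta^2|\nabla^2 w|^{2\beta}$.

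The nondivergence right-hand side arising from $\vec\gamma$ is the delicate piece. Expanding $\partial_i\partial_j(\vec\gamma\cdot\nabla w)$ gives a principal term $\vec\gamma\cdot\nabla w_{ij}$, mixed terms $(\partial_j\vec\gamma+w_j\vec\gamma_z)\cdot\nabla w_i$ and its $i\leftrightarrow j$ analogue, and a term $\vec\gamma_z\cdot\nabla w\, w_{ij}$ plus purely lower-order products of derivatives of $\vec\gamma$ with $\nabla w$, $w_{ij}$. For the leading $\int\zeta^2 w_{ij}^{2\beta-1}\,\vec\gamma\cdot\nabla w_{ij}=\tfrac1{2\beta}\int\zeta^2\,\vec\gamma\cdot\nabla w_{ij}^{2\beta}$, I apply Cauchy--Schwarz pairing $\zeta\vec\gamma\cdot\nabla w_{ij}^\beta$ with $\zeta w_{ij}^\beta$ to produce the $\alpha\sum\int\zeta^2|\vec\gamma\cdot\nabla w_{ij}^\beta|^2$ term together with an $\mathcal{C}_0\alpha^{-1}\int\zeta^2|\nabla^2 w|^{2\beta}$ remainder; the $\vec\gamma_z\cdot\nabla w$ factor is treated the same way to generate the $\alpha\int\zeta^2|\vec\gamma_z\cdot\nabla w|^2|\nabla^2 w|^{2\beta}$ term. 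The $(\partial_j\vec\gamma)\cdot\nabla w_i$ piece is kept explicitly and contributes exactly the stated term.

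All remaining terms are genuinely ``lower order'': they are polynomial products $(\text{bounded coefficient})\cdot|\nabla w|^a|\nabla^2 w|^b$ with $a+b\le 2$, multiplied by $w_{ij}^{2\beta-1}$. I distribute the exponents by Young's inequality with exponents chosen so that each summand is controlled by a constant multiple of $\zeta^2|\nabla^2 w|^{2\beta}$, $\zeta^2|\nabla w|^{6\beta}$, or just $1$ (using that $\mathcal{A},\vec\gamma,f$ and their derivatives are bounded in $\mathcal{C}^3(\tilde\Gamma)$, $\mathcal{C}^2(\tilde\Gamma)$); these are absorbed into the $\mathcal{C}_0/\alpha\int\zeta^2|\nabla^2 w|^{2\beta}$, $\mathcal{C}_0\int\zeta^2|\nabla w|^{6\beta}$, and $\mathcal{C}_0$ terms respectively. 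The $\partial_i\partial_j f$ contribution is handled identically. The main obstacle throughout is the careful bookkeeping: making the $\nabla\zeta^2$ terms land in exactly the prescribed algebraic form (in particular the precise splitting between $\mathbf{A}_i$, $\mathbf{A}_z$, $\mathbf{A}_{jz}$ and $\mathbf{A}_{zz}$ that records how many of the two derivatives hit the coefficients versus the arguments of $\mathbf{A}$), and segregating the $\vec\gamma$-contributions so that the dangerous first-order drift appears only in combinations $|\vec\gamma\cdot\nabla w_{ij}^\beta|^2$ and $|\vec\gamma_z\cdot\nabla w|^2$, since these are the only expressions later controlled by the subunit-type and super-subordination conditions on $\vec\gamma$.
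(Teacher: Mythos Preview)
Your overall strategy---differentiate the equation twice, multiply by $\zeta^2 w_{ij}^{2\beta-1}$, integrate by parts on the divergence pieces, and sort the $\vec\gamma$- and $f$-contributions by Young/Cauchy--Schwarz---is exactly the approach behind the result (the paper itself does not prove the lemma but points to Lemma~5.6 of \cite{RSaW1}, noting only that two of the $\vec\gamma$-terms are reorganized). So the architecture is right.

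There is, however, a genuine misreading that creates a spurious difficulty in your argument. In the five displayed terms containing ``$\nabla\zeta^2 w_{ij}^{2\beta-1}$'', the symbol $\nabla$ acts on the whole product: these are $\nabla\bigl(\zeta^2 w_{ij}^{2\beta-1}\bigr)$, not $(\nabla\zeta^2)\,w_{ij}^{2\beta-1}$. This is confirmed by how the terms are used immediately afterward in the proof of Theorem~\ref{apriorial}, where for instance term~$I$ is split as
\[
\Bigl|\textstyle\int \zeta\, w_{ij}^{2\beta-1}(\nabla w_j)\cdot\mathbf{A}_i\nabla\zeta\Bigr|
+\Bigl|\textstyle\int \zeta^2 w_{ij}^{\beta-1}(\nabla w_j)\cdot\mathbf{A}_i\nabla w_{ij}^{\beta}\Bigr|,
\]
i.e.\ the $\nabla w_{ij}^{2\beta-1}$ piece is still inside the stated term. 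With the correct reading, after a single integration by parts the five $\mathbf{A}_i$, $\mathbf{A}_z$, $\mathbf{A}_{jz}$, $\mathbf{A}_{zz}$ terms are simply \emph{kept intact}; nothing involving $\nabla w_{ij}^{\beta}$ has to be absorbed at this stage.

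Your attempt to handle ``the outer $\partial_k$ hitting $w_{ij}^{2\beta-1}$'' by Cauchy--Schwarz, producing a piece $\alpha\int\zeta^2|\nabla_{\sqrt{\mathcal A},w}w_{ij}^\beta|^2$ that is ``rolled into $\mathcal{C}_0\alpha^{-1}\int\zeta^2|\nabla^2 w|^{2\beta}$'', cannot work: the former contains third derivatives of $w$ and is not controlled by any of the terms on the right-hand side of the lemma. This step is both impossible and unnecessary once the notation is read correctly. Fix the parsing and your outline goes through; the remaining bookkeeping for the $\vec\gamma$- and $f$-pieces is as you describe.
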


\begin{remark}
Lemma \ref{secondcacc} is a slightly different version of Lemma 5.6 in \cite%
{RSaW1} and readily follows from its proof. Indeed the fourth term on the
right side of the conclusion of the original lemma is replaced, via
straightforward changes in the proof, by the sixth and seventh terms on the
right side above.
\end{remark}

The following result is used in the proof of Theorem \ref{DP}.

\begin{lemma}
\label{concave-majorant}Given $u$ continuous on $\left[ 0,1\right] $, there
exists $w\in \mathcal{C}^0\left( \left[ 0,1\right] \right) \bigcap \mathcal{C%
}^{2}\left( \left( 0,1\right] \right) $ such that $w$ is concave, strictly
increasing, $w\left( x\right) \geq u\left( x\right) $ for all $x\in \left[
0,1\right] $, and $w\left( 0\right) =u\left( 0\right) $.
\end{lemma}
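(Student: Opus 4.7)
My plan is to reduce the construction of $w$ to that of a smooth concave majorant for the modulus of continuity of $u$ at the origin. Define
\[
\omega(r) = \sup_{0 \le y \le r} \bigl(u(y)-u(0)\bigr)^{+}, \qquad r \in [0,1].
\]
Continuity of $u$ on $[0,1]$ shows that $\omega$ is continuous and nondecreasing on $[0,1]$ with $\omega(0)=0$ and $\omega(x) \ge u(x) - u(0)$. Once I produce a nondecreasing concave function $\tilde{\omega}$ on $[0,1]$, of class $C^{2}$ on $(0,1]$, with $\tilde{\omega}(0)=0$ and $\tilde{\omega} \ge \omega$, the function $w(x) := u(0) + \tilde{\omega}(x) + \varepsilon x$ for any $\varepsilon > 0$ will be concave, $C^{2}$ on $(0,1]$, continuous on $[0,1]$, strictly increasing (the linear term upgrades ``nondecreasing'' to ``strictly increasing''), satisfy $w \ge u$, and have $w(0) = u(0)$.

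To build $\tilde{\omega}$ I would first pass to the \emph{least concave majorant} $\hat{\omega}$ of $\omega$ on $[0,1]$, i.e.\ the pointwise infimum of all concave functions on $[0,1]$ dominating $\omega$ (an infimum of concave functions is concave, by averaging the concavity inequalities and using $\inf_i(a_i+b_i) \ge \inf_i a_i + \inf_i b_i$). Standard arguments then give that $\hat{\omega}$ is continuous, concave, nondecreasing, and $\ge \omega$. The equality $\hat{\omega}(0)=0$ follows by testing against the affine majorants $\varepsilon + (\omega(1)/\delta)\,x$, where $\delta > 0$ is chosen so that $\omega \le \varepsilon$ on $[0,\delta]$. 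I then extend $\hat{\omega}$ to $[0,3/2]$ by setting $\hat{\omega}(t) = \hat{\omega}(1)$ for $t \in [1,3/2]$; monotonicity guarantees that the extension is still concave and nondecreasing.

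The main obstacle, and the core of the argument, is smoothing $\hat{\omega}$ while simultaneously preserving concavity, monotonicity, and the boundary value $\hat{\omega}(0)=0$. An ordinary additive mollification with a fixed-width kernel cannot work at the origin, because any such kernel perturbs the value there. Instead I will use a \emph{multiplicative} mollification: fix a nonnegative $\rho \in C_c^{\infty}([-1/2,0])$ with $\int \rho = 1$, and set
\[
\tilde{\omega}(x) = \int_{-1/2}^{0} \hat{\omega}\bigl(x(1-y)\bigr)\,\rho(y)\,dy, \qquad x \in (0,1].
\]
Since the sampling window $[x,3x/2]$ scales with $x$ and stays away from $0$, differentiation under the integral gives $\tilde{\omega} \in C^{\infty}((0,1])$. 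Concavity is preserved because, for each fixed $y$, the map $x \mapsto \hat{\omega}(x(1-y))$ is the composition of the concave $\hat{\omega}$ with an affine map in $x$, and a positive weighted average of concave functions is concave; the same averaging shows $\tilde{\omega}$ is nondecreasing. Since $1-y \ge 1$ on the support of $\rho$, monotonicity of $\hat{\omega}$ forces $\hat{\omega}(x(1-y)) \ge \hat{\omega}(x) \ge \omega(x)$, so $\tilde{\omega} \ge \omega$. Finally, uniform continuity of $\hat{\omega}$ near $0$ yields $\tilde{\omega}(x) \to 0$ as $x \to 0^{+}$, and then $w$ defined in the first paragraph has all the properties claimed in the lemma.
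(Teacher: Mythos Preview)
Your argument is correct. The change of variables $s = x(1-y)$ turns the multiplicative mollification into $\tilde\omega(x) = \int_0^{3/2}\hat\omega(s)\,x^{-1}\rho(1-s/x)\,ds$, where all the $x$-dependence sits in the smooth compactly supported kernel; this is what justifies $\tilde\omega\in C^\infty((0,1])$ even though $\hat\omega$ is merely concave. The claim that $\hat\omega$ is nondecreasing also checks out: the constant $\omega(1)$ is a concave majorant, so $\hat\omega\le\omega(1)$, while $\hat\omega(1)\ge\omega(1)$; hence $\hat\omega(1)=\omega(1)$ is the maximum of $\hat\omega$, and a concave function attaining its maximum at the right endpoint is nondecreasing.

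Your route differs from the paper's. You impose concavity \emph{first} (least concave majorant) and then smooth by a scale-invariant mollification, which automatically preserves concavity and monotonicity. The paper does the opposite: it smooths first by a variable-bandwidth convolution $\tilde u\ast\eta_{x/2}(2x)$, and only afterwards forces concavity by splitting the second derivative into its positive and negative parts and replacing the convex contribution by its chord. Both approaches share the essential idea that the smoothing window must shrink with $x$ to preserve the value at $0$, but your ordering avoids the somewhat delicate integral decomposition in the paper and makes concavity of the final function immediate.
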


\begin{proof}
Let $\tilde{u}\left( x\right) =\max_{t\in \left[ 0,x\right] } u\left(
t\right) $ for $x\in \left[ 0,1\right] $, and $\tilde{u}\left( x\right)
=\max_{t\in \left[ 0,1\right] }u\left( t\right) $ for $x>1$. Since $%
\max_{t\in \left[ 0,x\right] }u\left( t\right) $ is nondecreasing, $\tilde{u}%
\left( x\right) $ is nondecreasing in $\left[ 0,\infty \right) $, $\tilde{u}%
\left( x\right) \geq u\left( x\right) $ in $\left[ 0,1\right] $, and $\tilde{%
u}\left( 0\right) =u\left( 0\right) $.

Next, for a smooth nonnegative function $\eta $ with compact support in $%
\left[ -1,1\right] $ and $\int \eta =1$, set $\eta _{\varepsilon }\left(
x\right) =\frac{1}{\varepsilon }\eta \left( \frac{x}{\varepsilon }\right) $
and let $v\left( x\right) =\tilde{u}\ast \eta _{\frac{x}{2}}\left( 2x\right) 
$ for $x>0$, and $v\left( 0\right) =u\left( 0\right) $. Then $v\left(
x\right) \geq \tilde{u}\left( x\right) $ for all $x,$ and $v\in \mathcal{C}%
^0\left( \left[ 0,\infty \right) \right) \bigcap \mathcal{C}^{\infty }\left(
\left( 0,\infty \right) \right) $.

Taking now $\tilde{v}\left( x\right) =\max_{t\in \left[ 0,x\right] }v\left(
t\right) +x$, we have that $\tilde{v}\in \mathcal{C}^0\left( \left[ 0,\infty
\right) \right) \bigcap \mathcal{C}^{\infty }\left( \left( 0,\infty \right)
\right) $, $\tilde{v}\left( x\right) \geq v\left( x\right) \geq \tilde{u}%
\left( x\right) $ for all $x$, $\tilde{v}\left( 0\right) =u\left( 0\right) $%
, and $\tilde{v}$ is strictly increasing.

By the fundamental theorem of calculus,%
\begin{equation*}
\tilde{v}\left( x\right) =\tilde{v}\left( 1\right) -\int_{x}^{1}\tilde{v}%
^{\prime }\left( t\right) ~dt=\tilde{v}\left( 1\right) -\left( 1-x\right) 
\tilde{v}^{\prime }\left( 1\right) +\int_{x}^{1}\int_{t}^{1}\tilde{v}%
^{\prime \prime }\left( s\right) ~ds~dt
\end{equation*}%
for all $x\in \left( 0,\infty \right) $. Let $\left[ \tilde{v}^{\prime
\prime }\left( s\right) \right] ^{+}=\max \left\{ \tilde{v}^{\prime \prime
}\left( s\right) ,0\right\} $ and $\left[ \tilde{v}^{\prime \prime }\left(
s\right) \right] ^{-}=\max \left\{ -\tilde{v}^{\prime \prime }\left(
s\right) ,0\right\} $. Then $\left[ \tilde{v}^{\prime \prime }\left(
s\right) \right] ^{\pm }$ are continuous in $\left( 0,\infty \right) $ and $%
\tilde{v}^{\prime \prime }\left( s\right) =\left[ \tilde{v}^{\prime \prime
}\left( s\right) \right] ^{+}-\left[ \tilde{v}^{\prime \prime }\left(
s\right) \right] ^{-}$. Since%
\begin{equation*}
\max_{x\in \left[ 0,1\right] }\left\vert \int_{x}^{1}\int_{t}^{1}\tilde{v}%
^{\prime \prime }\left( s\right) ~ds~dt\right\vert =\max_{x\in \left[ 0,1%
\right] }\left\vert \tilde{v}\left( x\right) -\tilde{v}\left( 1\right)
+\left( 1-x\right) \tilde{v}^{\prime }\left( 1\right) \right\vert <\infty ,
\end{equation*}%
it follows that the functions $w_{+}$ and $w_{-}$ defined by 
\begin{equation*}
w_{+}\left( x\right) =\int_{x}^{1}\int_{t}^{1}\left[ \tilde{v}^{\prime
\prime }\left( s\right) \right] ^{+}~ds~dt\qquad \text{and}\qquad
w_{-}\left( x\right) =\int_{x}^{1}\int_{t}^{1}\left[ \tilde{v}^{\prime
\prime }\left( s\right) \right] ^{-}~ds~dt
\end{equation*}%
are finite and belong to $\mathcal{C}^{2}\left( 0,1\right] $. Also note that 
$\tilde{v}\left( x\right) =\tilde{v}\left( 1\right) -\left( 1-x\right) 
\tilde{v}^{\prime }\left( 1\right) +w_{+}\left( x\right) -w_{-}\left(
x\right) $ . Moreover, since%
\begin{equation*}
\left( w_{\pm }\left( x\right) \right) ^{\prime \prime }=\left[ \tilde{v}%
^{\prime \prime }\left( x\right) \right] ^{\pm }\geq 0,
\end{equation*}%
it follows that $w_{\pm }$ are convex in $\left[ 0,1\right] $. In
particular, 
\begin{equation*}
w_{+}\left( x\right) \leq \left( 1-x\right) ~w_{+}\left( 0\right)
+x~w_{+}\left( 1\right) ,\qquad x\in \left[ 0,1\right] .
\end{equation*}%
We claim that the function 
\begin{equation*}
w\left( x\right) =\tilde{v}\left( 1\right) -\left( 1-x\right) \tilde{v}%
^{\prime }\left( 1\right) +\left( 1-x\right) ~w_{+}\left( 0\right)
+x~w_{+}\left( 1\right) -w_{-}\left( x\right)
\end{equation*}%
satisfies all the properties stated in the lemma. Indeed, it is clear that $%
w $ is continuous in $\left[ 0,1\right] $, $\mathcal{C}^{2}$ in $\left( 0,1%
\right] $, and $w\left( 0\right) =u\left( 0\right)$ since $w\left( 0\right)
= \tilde{v}(1) -\tilde{v}(0) + w_{+}(0) - w_{-}(0) = \tilde{v}(0) = u(0)$.
From the last inequality,%
\begin{equation*}
w\left( x\right) \geq \tilde{v}\left( 1\right) -\left( 1-x\right) \tilde{v}%
^{\prime }\left( 1\right) +w_{+}\left( x\right) -w_{-}\left( x\right) =%
\tilde{v}\left( x\right) \geq \tilde{u}\left( x\right) \geq u\left( x\right)
.
\end{equation*}%
Finally, since%
\begin{equation*}
w^{\prime \prime }\left( x\right) =-\left( w_{-}\left( x\right) \right)
^{\prime \prime }\leq 0
\end{equation*}%
we have that $w$ is concave, as required.
\end{proof}

\section{Proof of the {a priori estimates\label{hypo}}}

\subsection{{$L^{p}$ estimates for the gradient\label{apriori}}}

{In this section we prove higher integrability properties of $\nabla w$
(Theorem \ref{extraintegrability}) by using the extra hypotheses in
Condition \ref{hyp2}.}

\begin{lemma}
{\label{extraprev}Under the hypotheses of Theorem \ref{quasi}, for all
integers }$\beta \geq 1$, {every smooth solution $w$ of (\ref{equation}) in $%
\Omega $ satisfies } 
\begin{eqnarray*}
&&\sum_{j=1}^{n}\int_{2\mathcal{R}}\left\vert \nabla _{\!\!\sqrt{\mathcal{A}}
,w}\zeta \right\vert ^{2}\left\vert w_{j}\right\vert ^{2\beta } \\
&\leq &CC_{1}\Lambda \left( \left( A^{4}+A^2B^{2}\right) M_{0}^{2}+
A^2\left\Vert f\right\Vert _{L^{\infty }\left( \tilde{\Gamma}\right)
}\right) \sum_{j=1}^{n}\int_{2\mathcal{R}}\xi ^{2} w_{j}^{2\beta -2} \\
&&+\mathcal{C}\left( C_{1},\Lambda ,M_{0}\right) \sum_{j=1}^{n}\int_{2 
\mathcal{R}}\left\vert \nabla \zeta \right\vert ^{2}\left\vert \nabla _{\!\! 
\sqrt{\mathcal{A}},w}w_{j}^{\beta -1}\right\vert ^{2}.
\end{eqnarray*}
{Here $C_1$ as in Lemma \ref{minkgrad}, $\mathcal{R}$ is any box such that }$%
3\mathcal{R} \subset \Omega $ and $\mathcal{R}$ satisfies property (\ref%
{ax1c3}) in the nondegeneracy Condition{\ \ref{hyp1}, }$M_{0}=\left\Vert
w\right\Vert _{L^{\infty }\left( 2 \mathcal{R}\right) }${, }$\tilde{\Gamma}=2%
\mathcal{R}\times \left[ -M_{0},M_{0}\right] $, {and $B=B_{\gamma }\left( 2%
\mathcal{R},M_{0}\right) $ is as in Definition \ref{subunitt}. }
\end{lemma}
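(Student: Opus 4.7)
The natural strategy is an integration by parts in the $x_j$-variable, extracting $w_j = \partial_j w$ so that the boundedness $|w|\leq M_0$ can be exploited, combined with Lemma~\ref{minkgrad} to convert the degenerate gradient $|\nabla_{\!\!\sqrt{\mathcal{A}},w}\zeta|^2$ into the ordinary $|\nabla\zeta|^2$ appearing on the RHS.

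\emph{Step 1 (IBP in $x_j$).} For each $j$, since $\zeta$ has compact support in $2\mathcal{R}$, rewrite
\begin{equation*}
\int_{2\mathcal{R}}|\nabla_{\!\!\sqrt{\mathcal{A}},w}\zeta|^2 w_j^{2\beta}\,dx = \int_{2\mathcal{R}}|\nabla_{\!\!\sqrt{\mathcal{A}},w}\zeta|^2 w_j^{2\beta-1}\partial_j w\,dx
\end{equation*}
and integrate by parts in $x_j$ (boundary terms vanish). This produces
\begin{equation*}
I_1 = -\int \partial_j\bigl(|\nabla_{\!\!\sqrt{\mathcal{A}},w}\zeta|^2\bigr)w_j^{2\beta-1}w\,dx,\qquad I_2 = -(2\beta-1)\int |\nabla_{\!\!\sqrt{\mathcal{A}},w}\zeta|^2 w_j^{2\beta-2}w_{jj}\,w\,dx.
\end{equation*}

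\emph{Step 2 (Handling $I_2$).} Bound $|w|\leq M_0$ and $|w_{jj}|\leq |\nabla w_j|$ and apply Cauchy--Schwarz with absorption parameter $\varepsilon$, producing $\varepsilon\int|\nabla_{\!\!\sqrt{\mathcal{A}},w}\zeta|^2 w_j^{2\beta}$ (which is absorbed into the LHS) plus a term $C_\varepsilon M_0^2\int|\nabla_{\!\!\sqrt{\mathcal{A}},w}\zeta|^2 w_j^{2\beta-4}|\nabla w_j|^2$. For this latter term, use the identity $|\nabla w_j^{\beta-1}|^2 = (\beta-1)^2 w_j^{2\beta-4}|\nabla w_j|^2$ and apply Lemma~\ref{minkgrad} with $\psi=\zeta$ and $\varphi = w_j^{\beta-1}$ to get the pointwise bound
\begin{equation*}
|\nabla_{\!\!\sqrt{\mathcal{A}},w}\zeta|^2 w_j^{2\beta-4}|\nabla w_j|^2 \leq \tfrac{C_1\Lambda}{(\beta-1)^2}|\nabla\zeta|^2 |\nabla_{\!\!\sqrt{\mathcal{A}},w}w_j^{\beta-1}|^2,
\end{equation*}
which yields the second term in the RHS with coefficient $\mathcal{C}(C_1,\Lambda,M_0)$.

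\emph{Step 3 (Handling $I_1$) and main obstacle.} Expanding
\begin{equation*}
\partial_j\bigl(|\nabla_{\!\!\sqrt{\mathcal{A}},w}\zeta|^2\bigr) = (\partial_{x_j}\mathcal{A} + w_j\,\partial_z\mathcal{A})\nabla\zeta\cdot\nabla\zeta + 2\,\mathcal{A}\nabla(\partial_j\zeta)\cdot\nabla\zeta,
\end{equation*}
and applying the subordinate Condition~\ref{submat} (giving $|\partial_{x_j}\mathcal{A}\,\nabla\zeta|,\,|\partial_z\mathcal{A}\,\nabla\zeta|\leq B|\nabla_{\!\!\sqrt{\mathcal{A}},w}\zeta|$) together with $|\nabla\zeta|\lesssim A$, $|\nabla^2\zeta|\lesssim A^2$, the $(\partial_{x_j}\mathcal{A})$ and $\nabla^2\zeta$ contributions combine with $|w|\leq M_0$ and Cauchy--Schwarz to produce $M_0^2(A^4 + A^2 B^2)\int \xi^2 w_j^{2\beta-2}$ after absorption. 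The $w_j\,\partial_z\mathcal{A}$ subterm is the delicate one: it carries an extra factor of $w_j$, and a naive split pushes the $|w_j|$ power upward rather than downward. To close this, use the PDE $\mathcal{Q}w=0$ (i.e.\ $\text{div}(\mathcal{A}\nabla w)=-\vec{\gamma}\cdot\nabla w - f$) to replace one factor of $|w_j|^2$ by $\|f\|_{L^\infty}+|\vec{\gamma}\cdot\nabla w|$, invoking the subunit hypothesis on $\vec{\gamma}$ (giving $|\vec{\gamma}\cdot\nabla w|^2\leq B^2|\nabla_{\!\!\sqrt{\mathcal{A}},w}w|^2$) and the super-subordinate-style control on $|\nabla_{\!\!\sqrt{\mathcal{A}},w}w|$. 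Summing over $j$ yields the claimed coefficient structure $CC_1\Lambda((A^4+A^2B^2)M_0^2 + A^2\|f\|_{L^\infty})\sum_j\int\xi^2 w_j^{2\beta-2}$. The main technical challenge throughout is arranging each Cauchy--Schwarz split so that the absorbable residuals always carry the precise form $\varepsilon|\nabla_{\!\!\sqrt{\mathcal{A}},w}\zeta|^2 w_j^{2\beta}$ needed for absorption back into the LHS.
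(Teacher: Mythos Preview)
Your approach has a genuine gap, and it stems from the order in which you integrate by parts versus apply Lemma~\ref{minkgrad}. By integrating by parts first, you are forced to differentiate $|\nabla_{\!\!\sqrt{\mathcal{A}},w}\zeta|^2 = (\nabla\zeta)^t\mathcal{A}(x,w)\nabla\zeta$, and the chain-rule piece $w_j(\nabla\zeta)^t\mathcal{A}_z\nabla\zeta$ produces, after the only available estimate $|\mathcal{A}_z\nabla\zeta|\leq B_{\mathcal{A}}|\nabla_{\!\!\sqrt{\mathcal{A}},w}\zeta|$ and Cauchy--Schwarz, a residual of the form $C\,M_0^2 B_{\mathcal{A}}^2 A^2\int\xi^2|w_j|^{2\beta}$. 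This has power $2\beta$, not $2\beta-2$, and is \emph{not} bounded by the stated right-hand side. Your proposed fix, ``use the PDE to replace one factor of $|w_j|^2$ by $\|f\|_{\infty}+|\vec\gamma\cdot\nabla w|$,'' has no mechanism behind it: the equation controls $\mathop{\rm div}(\mathcal{A}\nabla w)$, not $|w_j|^2$, and your integrand carries no $|\nabla_{\!\!\sqrt{\mathcal{A}},w}w|^2$ factor that could be unpacked via integration by parts against the equation. (You also invoke super subordination, which this lemma does not assume, and you bring in $B_{\mathcal{A}}$ while the stated bound involves only $B=B_\gamma$.) Separately, for $\beta=1$ your $I_2$ becomes $-\int|\nabla_{\!\!\sqrt{\mathcal{A}},w}\zeta|^2 w_{jj}\,w$, a bare second-derivative term; since the right-hand side for $\beta=1$ reduces to a multiple of $\int\xi^2$, there is simply nowhere to put it.

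The paper's argument avoids all of this by reversing the order. Write
\[
|w_j|^{2\beta}=\bigl(\partial_j(w\,w_j^{\beta-1})-w\,\partial_j(w_j^{\beta-1})\bigr)^2
\le 2\bigl|\partial_j(w\,w_j^{\beta-1})\bigr|^2+2M_0^2\bigl|\partial_j(w_j^{\beta-1})\bigr|^2,
\]
and apply Lemma~\ref{minkgrad} \emph{before} any integration by parts, with $\varphi=w\,w_j^{\beta-1}$ (respectively $\varphi=w_j^{\beta-1}$). This swaps $|\nabla_{\!\!\sqrt{\mathcal{A}},w}\zeta|^2$ for $|\nabla\zeta|^2$ at the cost of upgrading the $j$-th partial of $\varphi$ to $|\nabla_{\!\!\sqrt{\mathcal{A}},w}\varphi|^2$. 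Expanding $\nabla_{\!\!\sqrt{\mathcal{A}},w}(w\,w_j^{\beta-1})$ by the product rule gives the crucial term $\int|\nabla\zeta|^2\,|\nabla_{\!\!\sqrt{\mathcal{A}},w}w|^2\,w_j^{2\beta-2}$, whose structure $(\nabla w)^t\mathcal{A}\nabla w$ is exactly what integrates by parts against the equation: $\int|\nabla\zeta|^2 w_j^{2\beta-2}(\nabla w)^t\mathcal{A}\nabla w=-\int w\,\mathop{\rm div}\bigl[|\nabla\zeta|^2 w_j^{2\beta-2}\mathcal{A}\nabla w\bigr]$, and now the $\mathop{\rm div}(\mathcal{A}\nabla w)$ piece is replaced by $-\vec\gamma\cdot\nabla w-f$. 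No derivative of $\mathcal{A}$ is ever taken, which is why only $B_\gamma$ appears.
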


{\ \vspace*{-0.2in}%
\proof%
By the product rule, 
\begin{eqnarray*}
\left\vert w_{j}\right\vert ^{2\beta } &=&\left( \partial _{j}\big(%
ww_{j}^{\beta -1}\big)-w\partial _{j}\big(w_{j}^{\beta -1}\big)\right) ^{2}
\\
&\leq &2\left( \partial _{j}\big(ww_{j}^{\beta -1}\big)\right)
^{2}+2w^{2}\left( \partial _{j}\big(w_{j}^{\beta -1}\big)\right) ^{2}.
\end{eqnarray*}%
Then, using that $w$ is bounded by }$M_{0}${\ and applying Lemma \ref%
{minkgrad} gives 
\begin{eqnarray}
&&\sum_{j=1}^{n}\int_{2\mathcal{R}}\left\vert \nabla _{\!\!\sqrt{\mathcal{A}}%
,w}\zeta \right\vert ^{2}\left\vert w_{j}\right\vert ^{2\beta }  \notag \\
&\leq &2\sum_{j=1}^{n}\int_{2\mathcal{R}}\left\vert \nabla _{\!\!\sqrt{%
\mathcal{A}},w}\zeta \right\vert ^{2}\left\vert \partial _{j}\big(%
ww_{j}^{\beta -1}\big)\right\vert ^{2}+2M_{0}^{2}\sum_{j=1}^{n}\int_{2%
\mathcal{R}}\left\vert \nabla _{\!\!\sqrt{\mathcal{A}},w}\zeta \right\vert
^{2}\left\vert \partial _{j}\big(w_{j}^{\beta -1}\big)\right\vert ^{2} 
\notag \\
&\leq &CC_{1}\Lambda \sum_{j=1}^{n}\int_{2\mathcal{R}}\left\vert \nabla
\zeta \right\vert ^{2}\left\vert \nabla _{\!\!\sqrt{\mathcal{A}},w}\big(%
ww_{j}^{\beta -1}\big)\right\vert ^{2}  \notag \\
&&+CC_{1}\Lambda M_{0}^{2}\sum_{j=1}^{n}\int_{2\mathcal{R}}\left\vert \nabla
\zeta \right\vert ^{2}\left\vert \nabla _{\!\!\sqrt{\mathcal{A}}%
,w}w_{j}^{\beta -1}\right\vert ^{2}  \notag \\
&\leq &CC_{1}\Lambda \sum_{j=1}^{n}\int_{2\mathcal{R}}\left\vert \nabla
\zeta \right\vert ^{2}\left\vert \nabla _{\!\!\sqrt{\mathcal{A}}%
,w}w\right\vert ^{2}w_{j}^{2\beta -2}  \label{xint003} \\
&&+\mathcal{C}\left( C_{1},\Lambda ,M_{0}\right) \sum_{j=1}^{n}\int_{2%
\mathcal{R}}\left\vert \nabla \zeta \right\vert ^{2}\left\vert \nabla _{\!\!%
\sqrt{\mathcal{A}},w}w_{j}^{\beta -1}\right\vert ^{2}.  \notag
\end{eqnarray}%
Integrating by parts in the first term on the right, and using the fact that 
$w\,$is a solution of (\ref{equation}), we obtain} 
\begin{equation*}
\sum_{j=1}^{n}\int_{2\mathcal{R}}\left\vert \nabla \zeta \right\vert
^{2}\left\vert \nabla _{\!\!\sqrt{\mathcal{A}},w}w\right\vert
^{2}w_{j}^{2\beta -2}=-\sum_{j=1}^{n}\int_{2\mathcal{R}}w\mathop{\rm div}%
\left[ \left\vert \nabla \zeta \right\vert ^{2}w_{j}^{2\beta -2}\mathcal{A}%
(x,w)\nabla w\right]
\end{equation*}%
\begin{equation*}
=-\sum_{j=1}^{n}\int_{2\mathcal{R}}ww_{j}^{2\beta -2}\left( \nabla
\left\vert \nabla \zeta \right\vert ^{2}\right) \cdot \mathcal{A}\left(
x,w\right) \nabla w
\end{equation*}%
\begin{equation*}
-\sum_{j=1}^{n}\int_{2\mathcal{R}}w\left\vert \nabla \zeta \right\vert
^{2}\left( \nabla \big(w_{j}^{2\beta -2}\big)\right) \cdot \mathcal{A}\left(
x,w\right) \nabla w
\end{equation*}%
\begin{equation*}
+\sum_{j=1}^{n}\int_{2\mathcal{R}}w\left\vert \nabla \zeta \right\vert
^{2}w_{j}^{2\beta -2}\big(\vec{\gamma}\left( x,w\right) \cdot \nabla
w+f\left( x,w\right) \big)
\end{equation*}%
\begin{equation}
=I+II+III.  \label{xint004}
\end{equation}%
In $I$, using the estimate 
\begin{equation*}
\left\vert \left( \nabla \left\vert \nabla \zeta \right\vert ^{2}\right)
\cdot \mathcal{A}\left( x,w\right) \nabla w\right\vert \leq \left\vert
\nabla _{\!\!\sqrt{\mathcal{A}},w}\left( \left\vert \nabla \zeta \right\vert
^{2}\right) \right\vert \,\left\vert \nabla _{\!\!\sqrt{\mathcal{A}}%
,w}w\right\vert \leq CA^{2}\left\vert \nabla \zeta \right\vert \left\vert
\nabla _{\!\!\sqrt{\mathcal{A}},w}w\right\vert ,
\end{equation*}%
we have 
\begin{eqnarray}
\left\vert I\right\vert &\leq &M_{0}\sum_{j=1}^{n}\int_{2\mathcal{R}%
}\left\vert w_{j}\right\vert ^{2\beta -2}\left\vert \left( \nabla \left\vert
\nabla \zeta \right\vert ^{2}\right) \cdot \mathcal{A}\left( x,w\right)
\nabla w\right\vert  \notag \\
&\leq &\dfrac{1}{4}\sum_{j=1}^{n}\int_{2\mathcal{R}}\left\vert
w_{j}\right\vert ^{2\beta -2}\left\vert \nabla \zeta \right\vert
^{2}\left\vert \nabla _{\!\!\sqrt{\mathcal{A}},w}w\right\vert
^{2}+CA^{4}M_{0}^{2}\sum_{j=1}^{n}\int_{2\mathcal{R}}\xi ^{2}\left\vert
w_{j}\right\vert ^{2\beta -2}.  \label{xint005}
\end{eqnarray}%
Using the identity $\nabla w_{j}^{2\beta -2}=2w_{j}^{\beta -1}\nabla
w_{j}^{\beta -1}$ in $II$, we obtain 
\begin{eqnarray}
\left\vert II\right\vert &\leq &2M_{0}\sum_{j=1}^{n}\int_{2\mathcal{R}%
}\left\vert \nabla \zeta \right\vert ^{2}\left\vert w_{j}\right\vert ^{\beta
-1}\left\vert \left( \nabla w_{j}^{\beta -1}\right) \cdot \mathcal{A}\left(
x,w\right) \nabla w\right\vert  \notag \\
&\leq &\frac{1}{4}\sum_{j=1}^{n}\int_{2\mathcal{R}}\left\vert \nabla \zeta
\right\vert ^{2}\left\vert \nabla _{\!\!\sqrt{\mathcal{A}},w}w\right\vert
^{2}w_{j}^{2\beta -2}+CM_{0}^{2}\sum_{j=1}^{n}\int_{2\mathcal{R}}\left\vert
\nabla \zeta \right\vert ^{2}\left\vert \nabla _{\!\!\sqrt{\mathcal{A}}%
,w}w_{j}^{\beta -1}\right\vert ^{2}.  \label{xint0055}
\end{eqnarray}%
{Finally, since }$\vec{\gamma}$ is subunit with respect to $\mathcal{A}$ in $%
\Gamma =\Omega \times \mathbb{R}$, we have $\left\vert \vec{\gamma}\left(
x,w\right) \cdot \nabla w\right\vert \leq B\left\vert \nabla _{\!\!\sqrt{%
\mathcal{A}},w}w\right\vert $ for all $x\in 2\mathcal{R}$, where {$B=B\left(
2\mathcal{R},M_{0}\right) $. Then} 
\begin{eqnarray}
\left\vert III\right\vert &\leq &M_{0}\sum_{j=1}^{n}\int_{2\mathcal{R}%
}\left\vert \nabla \zeta \right\vert ^{2}w_{j}^{2\beta -2}\big(\left\vert 
\vec{\gamma}\left( x,w\right) \cdot \nabla w\right\vert +\left\vert f\left(
x,w\right) \right\vert \big)  \notag \\
&\leq &\dfrac{1}{4}\sum_{j=1}^{n}\int_{2\mathcal{R}}\left\vert \nabla \zeta
\right\vert ^{2}\left\vert \nabla _{\!\!\sqrt{\mathcal{A}},w}w\right\vert
^{2}w_{j}^{2\beta -2}  \label{xint006} \\
&&+\left( CB^{2}M_{0}^{2}+\left\Vert f\right\Vert _{L^{\infty }\left( \tilde{%
\Gamma}\right) }\right) \sum_{j=1}^{n}\int_{2\mathcal{R}}\left\vert \nabla
\zeta \right\vert ^{2}w_{j}^{2\beta -2}.  \notag
\end{eqnarray}%
{\ \ Combining (\ref{xint004}), (\ref{xint005}), (\ref{xint0055}) and ( \ref%
{xint006}), and absorbing into the left yields} 
\begin{eqnarray*}
&&\sum_{j=1}^{n}\int_{2\mathcal{R}}\left\vert \nabla \zeta \right\vert
^{2}\left\vert \nabla _{\!\!\sqrt{\mathcal{A}},w}w\right\vert
^{2}w_{j}^{2\beta -2} \\
&\leq &CM_{0}^{2}\sum_{j=1}^{n}\int_{2\mathcal{R}}\left\vert \nabla \zeta
\right\vert ^{2}\left\vert \nabla _{\!\!\sqrt{\mathcal{A}},w}w_{j}^{\beta
-1}\right\vert ^{2} \\
&&+C\left( \left( A^{4}+A^{2}B^{2}\right) M_{0}^{2}+A^{2}\left\Vert
f\right\Vert _{L^{\infty }\left( \tilde{\Gamma}\right) }\right)
\sum_{j=1}^{n}\int_{2\mathcal{R}}\xi ^{2}w_{j}^{2\beta -2}.
\end{eqnarray*}%
Using this estimate in the first term on the right of ({\ref{xint003}}){\
finishes the proof of Lemma \ref{extraprev}. 
\endproof%
}

\begin{lemma}
{\ \label{extraprev2}Under the hypothesis of Theorem \ref{quasi}, if $w$ is
a smooth solution of (\ref{equation}) in $\Omega $, then for any $\beta \in 
\mathbb{N}$ and any box $\mathcal{R}\subset \Omega $ satisfying property (%
\ref{ax1c3}) in }the nondegeneracy Condition{\ \ref{hyp1}, 
\begin{eqnarray*}
&&\sum_{j=1}^n \int_{2\mathcal{R}}\zeta ^{2}\left\vert w_{j}\right\vert
^{2\beta }\left\vert \nabla _{\!\!\mathcal{A},w}w\right\vert ^{2} \\
&\leq &CB^{2}\sum_{j=1}^{n}\int_{2\mathcal{R}}\zeta ^{2}\left\vert \nabla
_{\!\!\mathcal{A},w}w_{j}^{\beta }\right\vert ^{2}+CB^{2} \int_{2\mathcal{R}%
}\left\vert \nabla _{\!\!\sqrt{\mathcal{A}}, w}\zeta \right\vert
^{2}\left\vert \nabla w\right\vert ^{2\beta }.
\end{eqnarray*}
Here $B=B_{\gamma }\left( 2\mathcal{R},M_{0}\right) $ is as in Definition %
\ref{subunitt} and $C$ depends on $M_0$ and $||f||_{L^\infty(\tilde{\Gamma}%
)} $. }
\end{lemma}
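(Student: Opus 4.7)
The plan is to multiply the equation $\mathcal{Q}w=0$ by the test function $\varphi=\zeta^{2}\,w\,w_{j}^{2\beta}$, which is smooth and compactly supported in $2\mathcal{R}$, and integrate by parts on the divergence term. Using the product rule together with the identity $\nabla w_{j}^{2\beta}=2w_{j}^{\beta}\nabla w_{j}^{\beta}$, one obtains for each $j$
\begin{align*}
\int_{2\mathcal{R}}\zeta^{2}w_{j}^{2\beta}|\nabla_{\!\!\sqrt{\mathcal{A}},w}w|^{2}
&=-2\int_{2\mathcal{R}} w\,\zeta\,w_{j}^{2\beta}\,\nabla\zeta\cdot\mathcal{A}\nabla w \\
&\quad -2\int_{2\mathcal{R}} w\,\zeta^{2}\,w_{j}^{\beta}\,\nabla w_{j}^{\beta}\cdot\mathcal{A}\nabla w \\
&\quad +\int_{2\mathcal{R}}\zeta^{2}w_{j}^{2\beta}\,w\,\bigl(\vec{\gamma}\cdot\nabla w+f\bigr).
\end{align*}
Each of the three terms on the right is controlled by Cauchy--Schwarz and Young's inequality, using $|w|\le M_{0}$ and the subunit hypothesis $|\vec{\gamma}\cdot\nabla w|\le B\,|\nabla_{\!\!\sqrt{\mathcal{A}},w}w|$ from Definition \ref{subunitt}. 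The $\nabla\zeta$-term yields an absorbable multiple of $\zeta^{2}|w_{j}|^{2\beta}|\nabla_{\!\!\sqrt{\mathcal{A}},w}w|^{2}$ plus $CM_{0}^{2}\int|\nabla_{\!\!\sqrt{\mathcal{A}},w}\zeta|^{2}|w_{j}|^{2\beta}$; the $\nabla w_{j}^{\beta}$-term yields an absorbable part plus $CM_{0}^{2}\int\zeta^{2}|\nabla_{\!\!\sqrt{\mathcal{A}},w}w_{j}^{\beta}|^{2}$; and the $\vec{\gamma}$ and $f$ pieces together yield an absorbable part plus $C(M_{0}^{2}B^{2}+M_{0}\|f\|_{L^{\infty}(\tilde{\Gamma})})\int\zeta^{2}|w_{j}|^{2\beta}$. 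Absorbing the three absorbable parts into the left leaves a residual proportional to $\int\zeta^{2}|w_{j}|^{2\beta}$ which does not appear in the conclusion.

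This residual is dispatched by the one-dimensional Sobolev inequality (\ref{spoinc0})--(\ref{spoinc}) applied to $u=\zeta\,w_{j}^{\beta}$ in the $x_{1}$-variable. Because $k^{1}\equiv 1$ by the normalization (\ref{unifbdk1}), the pointwise bound $|\partial_{1}u|^{2}\le|\nabla_{\!\!\sqrt{\mathcal{A}},w}u|^{2}$ holds for any such $u$, yielding
$$\int\zeta^{2}|w_{j}|^{2\beta}\le C(r^{1})^{2}\int|\nabla_{\!\!\sqrt{\mathcal{A}},w}\zeta|^{2}|w_{j}|^{2\beta}+C(r^{1})^{2}\int\zeta^{2}|\nabla_{\!\!\sqrt{\mathcal{A}},w}w_{j}^{\beta}|^{2}.$$
Since $r^{1}\le 1$ and, without loss of generality, $B\ge 1$, substituting this back absorbs the residual into the two desired terms on the right of the conclusion, with a combined constant of the form $CB^{2}$ where $C=C(M_{0},\|f\|_{L^{\infty}(\tilde{\Gamma})})$. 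Finally, summing over $j$ and using $|w_{j}|^{2\beta}\le|\nabla w|^{2\beta}$ yields the stated inequality.

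The main obstacle is the disposal of the lower-order residual $\int\zeta^{2}|w_{j}|^{2\beta}$ coming from the $\vec{\gamma}$- and $f$-terms: unlike the other two remainders, it carries no derivative of either $\zeta$ or $w_{j}^{\beta}$, so its absorption requires the one-dimensional Poincar\'{e}/Sobolev inequality in the nondegenerate direction $x_{1}$, which is available only because of the normalization $k^{1}\equiv 1$.
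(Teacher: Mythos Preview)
Your argument is correct and follows essentially the same approach as the paper: integrate by parts using the equation to produce the three terms, estimate each by Cauchy--Schwarz together with $|w|\le M_{0}$ and the subunit bound for $\vec{\gamma}$, absorb, and then eliminate the residual $\int\zeta^{2}|w_{j}|^{2\beta}$ via the one-dimensional Sobolev inequality (\ref{spoinc}) applied to $\zeta w_{j}^{\beta}$. The paper phrases the integration by parts as $-\int w\,\mathop{\rm div}[\zeta^{2}w_{j}^{2\beta}\mathcal{A}\nabla w]$ rather than as testing the equation against $\zeta^{2}ww_{j}^{2\beta}$, but this is the same computation.
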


{\vspace*{-0.2in}%
\proof%
Integrating by parts, we have 
\begin{eqnarray}
&&\int_{2\mathcal{R}}\zeta ^{2}w_{j}^{2\beta }\left\vert \nabla _{\!\!%
\mathcal{A},w}w\right\vert ^{2}  \notag \\
&=&-\int_{2\mathcal{R}}w\mathop{\rm div}\left[ \zeta ^{2}w_{j}^{2\beta }%
\mathcal{A}\left( x,w\right) \nabla w\right]  \notag \\
&=&-2\int_{2\mathcal{R}}w\zeta w_{j}^{2\beta }\left( \nabla \zeta \right)
\cdot \mathcal{A}\left( x,w\right) \nabla w  \notag \\
&&-2\int_{2\mathcal{R}}w\zeta ^{2}w_{j}^{\beta }\left( \nabla w_{j}^{\beta
}\right) \cdot \mathcal{A}\left( x,w\right) \nabla w  \notag \\
&&+\int_{2\mathcal{R}}w\zeta ^{2}w_{j}^{2\beta }\big(\vec{\gamma}(x,w)\cdot
\nabla w+f(x,w)\big)  \notag \\
&=&I+II+III.  \label{xxtp200}
\end{eqnarray}%
By Schwarz's' inequality and since }$\vec{\gamma}${\ }is of subunit type
with respect to $\mathcal{A},${\ 
\begin{eqnarray*}
\left\vert I\right\vert &\leq &CM_{0}^{2}\int_{2\mathcal{R}}\left\vert
\nabla _{\!\!\sqrt{\mathcal{A}},w}\zeta \right\vert ^{2}w_{j}^{2\beta }+%
\frac{1}{6}\int_{2\mathcal{R}}\zeta ^{2}\left\vert \nabla _{\!\!\sqrt{%
\mathcal{A}},w}w\right\vert ^{2}w_{j}^{2\beta } \\
\left\vert II\right\vert &\leq &CM_{0}^{2}\int_{2\mathcal{R}}\zeta
^{2}\left\vert \nabla _{\!\!\sqrt{\mathcal{A}},w}w_{j}^{\beta }\right\vert
^{2}+\frac{1}{6}\int_{2\mathcal{R}}\zeta ^{2}\left\vert \nabla _{\!\!\sqrt{%
\mathcal{A}},w}w\right\vert ^{2}w_{j}^{2\beta } \\
\left\vert III\right\vert &\leq &CB^{2}M_{0}^{2}\left( 1+||f||_{L^{\infty }(%
\tilde{\Gamma})}^{2}\right) \int_{2\mathcal{R}}\zeta ^{2}w_{j}^{2\beta }+%
\frac{1}{6}\int_{2\mathcal{R}}\zeta ^{2}\left\vert \nabla _{\!\!\sqrt{%
\mathcal{A}},w}w\right\vert ^{2}w_{j}^{2\beta }.
\end{eqnarray*}%
Applying these estimates to (\ref{xxtp200}) and absorbing into the left
gives 
\begin{eqnarray*}
&&\sum_{j=1}^{n}\int_{2\mathcal{R}}\zeta ^{2}w_{j}^{2\beta }\left\vert
\nabla _{\!\!\sqrt{\mathcal{A}},w}w\right\vert ^{2} \\
&\leq &C\sum_{j=1}^{n}\int_{2\mathcal{R}}\zeta ^{2}\left\vert \nabla _{\!\!%
\mathcal{A},w}w_{j}^{\beta }\right\vert ^{2}+C\sum_{j=1}^{n}\int_{2\mathcal{R%
}}\left\vert \nabla _{\!\!\sqrt{\mathcal{A}},w}\zeta \right\vert
^{2}w_{j}^{2\beta }+CB^{2}\sum_{j=1}^{n}\int_{2\mathcal{R}}\zeta
^{2}w_{j}^{2\beta }
\end{eqnarray*}%
with $C$ depending on $M_{0}$ and $||f||_{L^{\infty }(\tilde{\Gamma})}$. To
obtain the conclusion of the lemma, we apply the Sobolev inequality (\ref%
{spoinc}) to the last term on the right and note that 
\begin{equation*}
CB^{2}\int_{2\mathcal{R}}\left\vert \nabla _{\!\!\sqrt{\mathcal{A}},w}\big(%
\zeta w_{j}^{\beta }\big)\right\vert ^{2}
\end{equation*}%
is bounded by the sum of the first two terms on the right. 
\endproof%
}

\begin{theorem}
{\ \label{extraintegrability}Under the hypothesis of Theorem \ref{apriorial}
, if $w$ is a smooth solution of (\ref{equation}) in $\Omega $, then for all
integers $\beta \geq 1$ and every open }$\Omega ^{\prime }$ with $\Omega
^{\prime }\Subset \Omega ${, there exists a positive constant 
\begin{equation*}
\mathcal{C}_{\beta }=\mathcal{C}_{\beta }\left( M_0,n,B,\Lambda ,\vec{k}
,\left\Vert f\right\Vert _{\mathcal{C}^{1}\left( \Gamma ^{\prime }\right)
},\left\Vert \vec{\gamma}\right\Vert _{\mathcal{C}^{1}\left( \Gamma ^{\prime
}\right) }, \Omega,\mathop{\rm dist}\left( \Omega ^{\prime },\partial \Omega
\right) \right)
\end{equation*}
such that%
\begin{equation*}
\sum_{j=1}^{n}\int_{\Omega ^{\prime }} w_{j}^{2\beta
}+\sum_{j=1}^{n}\int_{\Omega ^{\prime }}\left\vert \nabla _{\!\!\sqrt{%
\mathcal{A}} ,w}w_{j}^{\beta }\right\vert ^{2}\leq \mathcal{C}_{\beta }.
\end{equation*}
Here }$M_{0}=\left\Vert w\right\Vert _{L^{\infty }\left( \Omega ^{\prime
}\right) }$, $B$ denotes the constants $B_{\mathcal{A}}\left( \Omega
^{\prime },M_{0}\right) ,B_{\gamma }\left( \Omega ^{\prime },M_{0}\right)$
in (\ref{xtra}) and (\ref{gxtra}), and $\Gamma ^{\prime }=\Omega ^{\prime
}\times \left[ -M_{0},M_{0}\right]$.
\end{theorem}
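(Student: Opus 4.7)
\proof[Proof plan]

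The plan is to induct on $\beta$, with the main engine being Lemma \ref{Moser} applied to $u = w_j$ and a carefully chosen cutoff $\psi = \zeta$ supported in a rectangle $\mathcal{R}$ that satisfies property (\ref{ax1c3}) of the nondegeneracy Condition \ref{hyp1}. For each $j$, Lemma \ref{Moser} produces
\begin{equation*}
\int_{2\mathcal{R}}|\zeta \nabla_{\sqrt{\mathcal{A}},w} w_j^\beta|^2 \leq C\left|\int_{2\mathcal{R}} (\zeta \mathcal{L}_w w_j)(\zeta w_j^{2\beta-1})\right| + C\int_{2\mathcal{R}}|\nabla_{\sqrt{\mathcal{A}},w}\zeta|^2 |w_j^\beta|^2,
\end{equation*}
and the inductive step consists in estimating both terms on the right using Lemmas \ref{extraprev}, \ref{extraprev2}, the super subordination Condition \ref{hyp2}, and the inductive hypothesis.

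The second term is immediate from Lemma \ref{extraprev}, which gives a bound by lower order quantities: $\int \xi^2 w_j^{2\beta-2}$ and $\int|\nabla\zeta|^2 |\nabla_{\sqrt{\mathcal{A}},w} w_j^{\beta-1}|^2$. The first is controlled by the inductive hypothesis at level $\beta-1$, and the second by that same hypothesis via $|\nabla\zeta|^2 \leq CC_1\Lambda |\nabla_{\sqrt{\mathcal{A}},w}\zeta|^2/k^\ast$ together with a standard partition of unity trick (or by applying the interpolation Lemma \ref{lemma-interpolation} with $u$ replaced by $w_j^{\beta-1}$). For the base case $\beta = 1$, one uses that $w$ is a solution together with the Sobolev inequality (\ref{spoinc}) applied in the $x_1$-direction (where $k^1 \equiv 1$) to obtain $\int w_j^2 \leq Cr_1^2 \int |\nabla_{\sqrt{\mathcal{A}},w} w_j|^2$, and then bounds the right hand side via a Caccioppoli-type argument on the differentiated equation.

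The first term, containing $\mathcal{L}_w w_j$, is the main obstacle. Differentiating the equation (\ref{equation}) in $x_j$ and using the chain rule (noting that $\partial_j \mathcal{A}(x,w) = \mathcal{A}_j + \mathcal{A}_z w_j$) yields
\begin{equation*}
\mathcal{L}_w w_j = -\operatorname{div}[(\mathcal{A}_j + \mathcal{A}_z w_j)\nabla w] - \partial_j\vec{\gamma}\cdot\nabla w - \vec{\gamma}\cdot\nabla w_j - \vec{\gamma}_z w_j\nabla w \cdot\vec{e}_j - \partial_j f - f_z w_j.
\end{equation*}
After multiplying by $\zeta^2 w_j^{2\beta-1}$ and integrating, one integrates by parts on the divergence terms so that at most one derivative falls on any factor. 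The problematic contributions are those involving $\mathcal{A}_z \nabla w$ and $\vec{\gamma}_z \nabla w$, which, because of the extra factors of $w_j$, produce terms that are superlinear in $\nabla w$ and cannot be handled by the ordinary subordinate conditions. Here the super subordination Condition \ref{hyp2} is essential: inequalities (\ref{xtra}) and (\ref{gxtra}) with $\xi = \nabla w$ yield $|\mathcal{A}_z \nabla w|^2 + |\vec{\gamma}_z\cdot\nabla w|^2 \leq CB^2 k^\ast |\nabla_{\sqrt{\mathcal{A}},w}w|^2$, and combining this with $k^\ast w_j^{2\beta} \leq w_j^{2\beta-2}\,|\nabla_{\sqrt{\mathcal{A}},w} w|^2$ (from (\ref{hellip})), the offending integrals are dominated by $\int \zeta^2 w_j^{2\beta}|\nabla_{\mathcal{A},w} w|^2$ up to absorbable pieces.

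To finish, apply Lemma \ref{extraprev2} to convert each such integral into $\int \zeta^2 |\nabla_{\mathcal{A},w} w_j^\beta|^2 + \int|\nabla_{\sqrt{\mathcal{A}},w}\zeta|^2 |\nabla w|^{2\beta}$. The first piece is absorbed into the left side of the Moser inequality (choosing $\alpha$, $\varepsilon$ small enough), and the second is reduced by Lemma \ref{extraprev} and the inductive hypothesis, exactly as for the easier Moser term. Summing over $j$, using (\ref{xxtra}) to control lower order coefficients and the $\mathcal{C}^1$ norms of $f$ and $\vec{\gamma}$ to bound the remaining terms, closes the induction and yields the stated bound with constant depending only on $M_0$, $n$, $B$, $\Lambda$, $\vec{k}$, the norms of $f$ and $\vec{\gamma}$, $\Omega$ and $\operatorname{dist}(\Omega',\partial\Omega)$. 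A standard covering argument extends the local estimate to $\Omega'$.

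\endproof
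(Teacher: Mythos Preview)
Your outline follows the paper's approach closely: Moser's inequality applied to $w_j$, differentiation of the equation, super subordination to handle the $\mathcal{A}_z\nabla w$ and $\vec{\gamma}_z\cdot\nabla w$ terms, Lemma~\ref{extraprev2} to convert $\int\zeta^2|\nabla w|^{2\beta}|\nabla_{\sqrt{\mathcal{A}},w}w|^2$ back to the good term, Lemma~\ref{extraprev} to lower the exponent, induction, and a covering argument. That is all correct in spirit.

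There is, however, a genuine gap at the absorption step. After you apply Lemma~\ref{extraprev2}, the term that must be absorbed into the left side of Moser's inequality carries a coefficient of order $C\beta B_{\mathcal{A}}^2 B^2$, \emph{not} a free small parameter. Indeed, tracing the Young--Schwarz splits, the absorbable piece has coefficient $C\beta\alpha$ while the piece fed into Lemma~\ref{extraprev2} has coefficient $C\beta B_{\mathcal{A}}^2/\alpha$; after Lemma~\ref{extraprev2} the latter becomes $C\beta B_{\mathcal{A}}^2 B^2/\alpha$ times the good term. The two conditions $C\beta\alpha\le\tfrac14$ and $C\beta B_{\mathcal{A}}^2 B^2/\alpha\le\tfrac14$ are compatible only when $B_{\mathcal{A}}^2 B^2$ is small relative to $1/\beta^2$, which is a structural constant you cannot choose. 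Your phrase ``choosing $\alpha$, $\varepsilon$ small enough'' does not close this loop.

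The paper handles this by a rescaling trick that you are missing: replace $w$ by $u=Mw$ for large $M$. Then $u$ solves the analogous equation with $\widetilde{\mathcal{A}}(x,q)=\mathcal{A}(x,q/M)$, and the super subordination constant for $\widetilde{\mathcal{A}}$ becomes $\widetilde{B}_{\widetilde{\mathcal{A}}}=B_{\mathcal{A}}/M$, while $B_\gamma$ is unchanged. Taking $M$ large makes $C\beta\widetilde{B}_{\widetilde{\mathcal{A}}}^2\widetilde{B}^2\le\tfrac12$, the absorption goes through for $u$, and the estimate for $w=u/M$ follows. Without this device (or an equivalent one) your induction does not close. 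A minor side remark: condition~(\ref{xxtra}) plays no role in this theorem; it enters only in the second-order estimates of Theorem~\ref{apriorial}.
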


{\vspace*{-0.2in}%
\proof%
Let $\mathcal{R}$ be a box satisfying property (\ref{ax1c3}) in }the
nondegeneracy Condition{\ \ref{hyp1} and such that $2\mathcal{R}\Subset
\Omega $. From Lemma \ref{Moser}, 
\begin{eqnarray}
&&\sum_{j=1}^{n}\int_{2\mathcal{R}}\left\vert \zeta \nabla _{\!\!\sqrt{%
\mathcal{A}},w}\,w_{j}^{\beta }\right\vert ^{2}  \notag \\
&\leq &C\beta \sum_{j=1}^{n}\left\vert \int_{2\mathcal{R}}\zeta ^{2}\left( 
\mathcal{L}_{w}w_{j}\right) \left( w_{j}^{2\beta -1}\right) \right\vert
+C\sum_{j=1}^{n}\int_{2\mathcal{R}}\left\vert \nabla _{\!\!\sqrt{\mathcal{A}}%
,w}\,\zeta \right\vert ^{2}w_{j}^{2\beta }.  \label{mmoo}
\end{eqnarray}%
Differentiating (\ref{equation}) with respect to the $j^{th}\,$variable, we
obtain 
\begin{equation}
-\mathcal{L}_{w}w_{j}=\mathop{\rm div}\left\{ \partial _{j}\mathbf{A}\left(
x\right) \right\} \nabla w+\left( \partial _{j}\vec{\gamma}\right) \cdot
\nabla w+\vec{\gamma}\cdot \nabla w_{j}+\partial _{j}f  \label{nonhom}
\end{equation}%
for all $j$, where $\mathcal{L}_{w}$ is the linear operator (\ref{operatorw}%
), $\vec{\gamma}=\vec{\gamma}\left( x,w\right) $ and $\partial
_{j}f=\partial \lbrack f(x,w)]$. Hence 
\begin{eqnarray*}
&&\sum_{j=1}^{n}\left\vert \int_{2\mathcal{R}}\zeta ^{2}\left( \mathcal{L}%
_{w}w_{j}\right) \left( w_{j}^{2\beta -1}\right) \right\vert \\
&=&\sum_{j=1}^{n}\left\vert \int_{2\mathcal{R}}\zeta ^{2}%
{\Big (}%
\mathop{\rm div}\left\{ \partial _{j}\mathbf{A}\left( x\right) \right\}
\nabla w+\left( \partial _{j}\vec{\gamma}\right) \cdot \nabla w+\vec{\gamma}%
\cdot \nabla w_{j}+\left( \partial _{j}f\right) 
{\Big )}%
\left( w_{j}^{2\beta -1}\right) \right\vert \\
&\leq &\sum_{j=1}^{n}\left\vert \int_{2\mathcal{R}}\zeta ^{2}%
{\Big (}%
\mathop{\rm div}\left\{ \partial _{j}\mathbf{A}\left( x\right) \right\}
\nabla w%
{\Big )}%
\left( w_{j}^{2\beta -1}\right) \right\vert \\
&&+\sum_{j=1}^{n}\left\vert \int_{2\mathcal{R}}\zeta ^{2}%
{\Big (}%
\left( \partial _{j}\vec{\gamma}\right) \cdot \nabla w+\left( \partial
_{j}f\right) 
{\Big )}%
\left( w_{j}^{2\beta -1}\right) \right\vert +\sum_{j=1}^{n}\left\vert
\sum_{i=1}^{n}\int_{2\mathcal{R}}\zeta ^{2}\gamma ^{i}w_{ij}w_{j}^{2\beta
-1}\right\vert
\end{eqnarray*}%
\begin{equation}
=I+II+III.  \label{extint00}
\end{equation}%
From (\ref{wirtm}), (\ref{xtra}) and the inequality 
\begin{equation*}
\sqrt{k^{\ast }\left( x,w\right) }\left\vert \partial _{j}u\right\vert \leq
\left\vert \nabla _{\!\!\sqrt{\mathcal{A}},w}u\right\vert ,\qquad 1\leq
j\leq n,
\end{equation*}%
where $u$ is any smooth function, we get (with $B_{\mathcal{A}}$ as in (\ref%
{xtra})) 
\begin{eqnarray}
\left\vert \left\{ \partial _{j}\mathbf{A}\left( x\right) \right\} \nabla
u\right\vert &=&\left\vert \left\{ \mathcal{A}_{j}+w_{j}\mathcal{A}%
_{z}\right\} \nabla u\right\vert  \notag \\
&\leq &CB_{\mathcal{A}}\left\vert \nabla _{\!\!\sqrt{\mathcal{A}}%
,w}u\right\vert +CB_{\mathcal{A}}\sqrt{k^{\ast }\left( x,w\right) }%
\left\vert w_{j}\right\vert \left\vert \nabla _{\!\!\sqrt{\mathcal{A}}%
,w}u\right\vert  \notag \\
&\leq &CB_{\mathcal{A}}\left\vert \nabla _{\!\!\sqrt{\mathcal{A}}%
,w}u\right\vert +CB_{\mathcal{A}}\left\vert \nabla _{\!\!\sqrt{\mathcal{A}}%
,w}w\right\vert \left\vert \nabla _{\!\!\sqrt{\mathcal{A}},w}u\right\vert .
\label{wirtX}
\end{eqnarray}%
Writing 
\begin{equation*}
\nabla \left( \zeta ^{2}w_{j}^{2\beta -1}\right) =\frac{2\beta -1}{\beta }%
\zeta ^{2}w_{j}^{\beta -1}\nabla w_{j}^{\beta }+w_{j}^{2\beta -1}\nabla
\zeta ^{2},
\end{equation*}%
integrating by parts, and using $\frac{2\beta -1}{\beta }\leq 2$ and (\ref%
{wirtX} ), we obtain 
\begin{eqnarray}
I &=&\sum_{j=1}^{n}\left\vert \int_{2\mathcal{R}}\left( \nabla w\right)
\cdot \left\{ \partial _{j}\mathbf{A}\left( x\right) \right\} \nabla \left(
\zeta ^{2}w_{j}^{2\beta -1}\right) \right\vert  \notag \\
&\leq &C\sum_{j=1}^{n}\left\vert \int_{2\mathcal{R}}\zeta ^{2}w_{j}^{\beta
-1}\left( \nabla w\right) \cdot \left\{ \partial _{j}\mathbf{A}\left(
x\right) \right\} \nabla w_{j}^{\beta }\right\vert  \notag \\
&&+\sum_{j=1}^{n}\left\vert \int_{2\mathcal{R}}w_{j}^{2\beta -1}\left(
\nabla w\right) \cdot \left\{ \partial _{j}\mathbf{A}\left( x\right)
\right\} \nabla \zeta ^{2}\right\vert  \notag \\
&\leq &CB_{\mathcal{A}}\sum_{j=1}^{n}\int_{2\mathcal{R}}\zeta ^{2}\left\vert
w_{j}\right\vert ^{\beta -1}\left\vert \nabla w\right\vert \left\vert \nabla
_{\!\!\sqrt{\mathcal{A}},w}w_{j}^{\beta }\right\vert \left\{ 1+\left\vert
\nabla _{\!\!\sqrt{\mathcal{A}},w}w\right\vert \right\}  \notag \\
&&+CB_{\mathcal{A}}\sum_{j=1}^{n}\int_{2\mathcal{R}}\zeta \left\vert
w_{j}\right\vert ^{2\beta -1}\left\vert \nabla w\right\vert \left\vert
\nabla _{\!\!\sqrt{\mathcal{A}},w}\zeta \right\vert \left\{ 1+\left\vert
\nabla _{\!\!\sqrt{\mathcal{A}},w}w\right\vert \right\}  \notag \\
&\leq &\alpha \sum_{j=1}^{n}\int_{2\mathcal{R}}\zeta ^{2}\left\vert \nabla
_{\!\!\sqrt{\mathcal{A}},w}w_{j}^{\beta }\right\vert ^{2}+C\int_{2\mathcal{R}%
}\left\vert \nabla w\right\vert ^{2\beta }\left\vert \nabla _{\!\!\sqrt{%
\mathcal{A}},w}\zeta \right\vert ^{2}  \label{xxiiI} \\
&&+\frac{CB_{\mathcal{A}}^{2}}{\alpha }\int_{2\mathcal{R}}\zeta
^{2}\left\vert \nabla w\right\vert ^{2\beta }+\frac{CB_{\mathcal{A}}^{2}}{%
\alpha }\int_{2\mathcal{R}}\zeta ^{2}\left\vert \nabla w\right\vert ^{2\beta
}\left\vert \nabla _{\!\!\sqrt{\mathcal{A}},w}w\right\vert ^{2}.  \notag
\end{eqnarray}%
}

From the chain rule, the super subordination Condition (\ref{gxtra}) for $%
\gamma ${, and Young's inequality, 
\begin{eqnarray}
II &\leq &\sum_{j=1}^{n}\int_{2\mathcal{R}}\zeta ^{2}\left( \left\vert
f_{j}\right\vert +\left\vert w_{j}\right\vert \left\vert f_{z}\right\vert
+\sum_{i=1}^{n}\left\vert \gamma _{j}^{i}\right\vert \left\vert
w_{i}\right\vert \right) \left\vert w_{j}\right\vert ^{2\beta -1}  \notag \\
&&+\sum_{j=1}^{n}\left\vert \int_{2\mathcal{R}}\sum_{i=1}^{n}\zeta
^{2}w_{i}\left( \gamma _{z}^{i}\right) w_{j}^{2\beta }\right\vert  \notag \\
&\leq &\mathcal{C}_{a}\int_{2\mathcal{R}}\zeta ^{2}\left( \left\vert \nabla
w\right\vert ^{2\beta -1}+\left\vert \nabla w\right\vert ^{2\beta }\right)
+C\sum_{j=1}^{n}\int_{2\mathcal{R}}\zeta ^{2}w_{j}^{2\beta }\left\vert
\nabla _{\!\!\sqrt{\mathcal{A}},w}w\right\vert ^{2}  \notag \\
&\leq &\mathcal{C}_{a}\int_{2\mathcal{R}}\zeta ^{2}\left\vert \nabla
w\right\vert ^{2\beta }+C\int_{2\mathcal{R}}\zeta ^{2}\left\vert \nabla
w\right\vert ^{2\beta }\left\vert \nabla _{\!\!\sqrt{\mathcal{A}}%
,w}w\right\vert ^{2}+\mathcal{C}_{a},  \label{xxiiII}
\end{eqnarray}%
where } $\mathcal{C}_{a}=\mathcal{C}_{a}\left( B_{\gamma },\Lambda
,\left\Vert f\right\Vert _{\mathcal{C}^{1}\left( \tilde{\Gamma}\right)
,}\left\Vert \vec{\gamma}\right\Vert _{\mathcal{C}^{1}\left( \tilde{\Gamma}%
\right) }\right) $ with $B_{\gamma }$ as in (\ref{gxtra}).

Since $\vec{\gamma}$ {\ }is of subunit type with respect to $\mathcal{A}$, 
\begin{eqnarray}
III &=&\frac{1}{\beta }\sum_{j=1}^{n}\left\vert \sum_{i=1}^{n}\int_{2%
\mathcal{R}}\left( \zeta \gamma ^{i}\partial _{i}\left( w_{j}^{\beta
}\right) \right) \left( \zeta w_{j}^{\beta }\right) \right\vert  \notag \\
&\leq &\alpha \sum_{j=1}^{n}\int_{2\mathcal{R}}\zeta ^{2}\left\vert \nabla
_{\!\!\sqrt{\mathcal{A}},w}w_{j}^{\beta }\right\vert ^{2}+\frac{B_{\gamma
}^{2}}{\alpha }\sum_{j=1}^{n}\int_{2\mathcal{R}}\zeta ^{2}w_{j}^{2\beta }.
\label{xxiiIII}
\end{eqnarray}%
Using (\ref{xxiiI}), (\ref{xxiiII}) and (\ref{xxiiIII}) in (\ref{extint00})
yields 
\begin{eqnarray*}
&&\sum_{j=1}^{n}\left\vert \int_{2\mathcal{R}}\zeta ^{2}\left( \mathcal{L}%
_{w}w_{j}\right) w_{j}^{2\beta -1}\right\vert \\
&\leq &C\alpha \sum_{j=1}^{n}\int_{2\mathcal{R}}\zeta ^{2}\left\vert \nabla
_{\!\!\sqrt{\mathcal{A}},w}w_{j}^{\beta }\right\vert ^{2}+C\int_{2\mathcal{R}%
}\left\vert \nabla w\right\vert ^{2\beta }\left\vert \nabla _{\!\!\sqrt{%
\mathcal{A}},w}\zeta \right\vert ^{2} \\
&&+\frac{\mathcal{C}_{a}B^{2}}{\alpha }\int_{2\mathcal{R}}\zeta
^{2}\left\vert \nabla w\right\vert ^{2\beta }+\frac{CB_{\mathcal{A}}^{2}}{%
\alpha }\int_{2\mathcal{R}}\zeta ^{2}\left\vert \nabla w\right\vert ^{2\beta
}\left\vert \nabla _{\!\!\sqrt{\mathcal{A}},w}w\right\vert ^{2}+\mathcal{C}%
_{a}.
\end{eqnarray*}%
Substituting on the right of (\ref{mmoo}) and absorbing into the left, we
get 
\begin{eqnarray}
\sum_{j=1}^{n}\int_{2\mathcal{R}}\zeta ^{2}\left\vert \nabla _{\!\!\sqrt{%
\mathcal{A}},w}w_{j}^{\beta }\right\vert ^{2} &\leq &C(\beta +1)\int_{2%
\mathcal{R}}\left\vert \nabla w\right\vert ^{2\beta }\left\vert \nabla _{\!\!%
\sqrt{\mathcal{A}},w}\zeta \right\vert ^{2}  \label{xint02} \\
&&+\,\mathcal{C}_{a}\beta B^{2}\int_{2\mathcal{R}}\zeta ^{2}\left\vert
\nabla w\right\vert ^{2\beta }  \notag \\
&&+\,C\beta B_{\mathcal{A}}^{2}\int_{2\mathcal{R}}\zeta ^{2}\left\vert
\nabla w\right\vert ^{2\beta }\left\vert \nabla _{\!\!\sqrt{\mathcal{A}}%
,w}w\right\vert ^{2}+\beta \mathcal{C}_{a}.  \notag
\end{eqnarray}%
Now we will further assume that for fixed constants $C$ and $\beta ,$ the
constant $B_{\mathcal{A}}$ is small enough so that 
\begin{equation}
C\beta B_{\mathcal{A}}^{2}B^{2}\leq \frac{1}{2}.  \label{double-xtra}
\end{equation}%
We will show later that this assumption causes no loss of generality.
Applying Lemma \ref{extraprev2} to the third term on the right of (\ref%
{xint02}), and absorbing into the left using (\ref{double-xtra}), we obtain 
\begin{eqnarray*}
\sum_{j=1}^{n}\int_{2\mathcal{R}}\zeta ^{2}\left\vert \nabla _{\!\!\sqrt{%
\mathcal{A}},w}w_{j}^{\beta }\right\vert ^{2} &\leq &C(\beta +1)\int_{2%
\mathcal{R}}\left\vert \nabla w\right\vert ^{2\beta }\left\vert \nabla _{\!\!%
\sqrt{\mathcal{A}},w}\zeta \right\vert ^{2}+\mathcal{C}_{a}\beta B^{2}\int_{2%
\mathcal{R}}\zeta ^{2}\left\vert \nabla w\right\vert ^{2\beta } \\
&&+C\beta B_{\mathcal{A}}^{2}B^{2}\int_{2\mathcal{R}}\left\vert \nabla _{\!\!%
\sqrt{\mathcal{A}},w}\zeta \right\vert ^{2}\left\vert \nabla w\right\vert
^{2\beta }+\beta \mathcal{C}_{a}
\end{eqnarray*}%
where $C$ depends on $M_{0}$ and $||f||_{L^{\infty }(\tilde{\Gamma})}$. In
turn, {applying the Sobolev inequality (\ref{spoinc}) to each $\zeta
w_{j}^{\beta }$ in the second term on the right, and applying Lemma \ref%
{extraprev} to the first and third terms on the right (and the similar term
arising from the Sobolev inequality), }we obtain 
\begin{eqnarray}
&&\sum_{j=1}^{n}\int_{2\mathcal{R}}\zeta ^{2}\left\vert \nabla _{\!\!\sqrt{%
\mathcal{A}},w}w_{j}^{\beta }\right\vert ^{2}  \notag \\
&\leq &\beta \mathcal{C}_{a}+\mathcal{C}_{b}\sum_{j=1}^{n}\int_{2\mathcal{R}%
}\left\vert \nabla \zeta \right\vert ^{2}\left\vert \nabla _{\!\!\sqrt{%
\mathcal{A}},w}w_{j}^{\beta -1}\right\vert ^{2}+\mathcal{C}%
_{b}\sum_{j=1}^{n}\int_{2\mathcal{R}}\xi ^{2}w_{j}^{2\beta -2},
\label{xint-induct}
\end{eqnarray}%
after absorbing into the left, by taking $r^{1}$ small enough depending on $%
\beta $, $B$, $\Lambda $, $\left\Vert f\right\Vert _{\mathcal{C}^{1}\left( 
\tilde{\Gamma}\right) }$ and $\left\Vert \vec{\gamma}\right\Vert _{\mathcal{C%
}^{1}\left( \tilde{\Gamma}\right) }$; here it is important to note that the
constants multiplying $r^{1}$ do not depend on the size of $\mathcal{R}$.
Also, 
\begin{equation*}
\mathcal{C}_{b}=\mathcal{C}\left( M_{0},\vec{k},\mathcal{R},\beta
,n,B,\Lambda ,\left\Vert f\right\Vert _{\mathcal{C}^{1}\left( \tilde{\Gamma}%
\right) },\left\Vert \vec{\gamma}\right\Vert _{\mathcal{C}^{1}\left( \tilde{%
\Gamma}\right) }\right) .
\end{equation*}

The conclusion of the theorem will now follow by induction from (\ref%
{xint-induct}) and the Sobolev inequality. Indeed, if $\beta =1,$ 
\begin{equation*}
\sum_{j=1}^{n}\int_{2\mathcal{R}}\zeta ^{2}\left\vert \nabla _{\!\!\sqrt{%
\mathcal{A}},w}w_{j}\right\vert ^{2}\leq \beta\mathcal{C}_{a}+\mathcal{C}%
_{b} \int_{2\mathcal{R}}\xi ^{2}\leq \mathcal{C}_{b}.
\end{equation*}
By the Sobolev inequality (\ref{spoinc}) {and Lemma \ref{extraprev}}, 
\begin{equation}
\sum_{j=1}^{n}\int_{2\mathcal{R}}\zeta ^{2} w_{j}^{2}+ \sum_{j=1}^{n}\int_{2%
\mathcal{R}}\zeta ^{2}\left\vert \nabla _{\!\! \sqrt{\mathcal{A}}%
,w}w_{j}\right\vert ^{2}\leq \mathcal{C}_{b}.  \label{xint-beta-0}
\end{equation}
Since $\Omega ^{\prime }\Subset \Omega $, there are a finite number of
rectangles $\left\{ \mathcal{R}_{i}\right\}$ satisfying (\ref{xint-beta-0})
and such that $\Omega ^{\prime }\subset \bigcup \mathcal{R} _{i}$. Choosing $%
\mathcal{C}_1$ as in Theorem \ref{extraintegrability} in case $\beta =1$, it
follows that{\ 
\begin{equation*}
\sum_{j=1}^{n}\int_{\Omega ^{\prime }} w_{j}^{2}+\sum_{j=1}^{n} \int_{\Omega
^{\prime }}\left\vert \nabla _{\!\!\sqrt{\mathcal{A} },w}w_{j}\right\vert
^{2}\leq \mathcal{C}_{1}.
\end{equation*}
}Suppose now that we have shown that {\ 
\begin{equation}
\sum_{j=1}^{n}\int_{\Omega ^{\prime }} w_{j}^{2\beta}+
\sum_{j=1}^{n}\int_{\Omega ^{\prime }}\left\vert \nabla _{\!\!\sqrt{\mathcal{%
A}},w}w_{j}^{\beta }\right\vert ^{2}\leq \mathcal{C}_{\beta }.
\label{xint-induct-M}
\end{equation}
for }$\beta =1,2,\dots ,M$. Given $x\in \Omega ^{\prime }$, let $\mathcal{R}$
be a{\ box satisfying property (\ref{ax1c3}) in }Condition{\ \ref{hyp1} and
such that $x\in \frac{1}{3}\mathcal{R}\subset 2\mathcal{R}\subset \Omega
^{\prime }\Subset \Omega $. }Then, from (\ref{xint-induct}), 
\begin{eqnarray*}
\sum_{j=1}^{n}\int_{2\mathcal{R}}\zeta ^{2}\left\vert \nabla _{\!\!\sqrt{%
\mathcal{A}},w}w_{j}^{M+1}\right\vert ^{2} \leq (M+1)\mathcal{C}_{a} &+& 
\mathcal{C} _{b}\sum_{j=1}^{n} \int_{2\mathcal{R}}\left\vert \nabla \zeta
\right\vert ^{2}\left\vert \nabla _{\!\!\sqrt{\mathcal{A}},w}w_{j}^{M}
\right\vert ^{2} \\
&+& \mathcal{C}_{b}\int_{2\mathcal{R}}\xi ^{2}\left\vert \nabla w\right\vert
^{2M}
\end{eqnarray*}
\begin{equation*}
\leq (M+1)\mathcal{C}_{a}+A^{2}\mathcal{C}_{b}\sum_{j=1}^{n}\int_{\Omega
^{\prime }}\left\vert \nabla _{\!\!\sqrt{\mathcal{A}},w}w_{j}^{M}
\right\vert ^{2}+ \mathcal{C}_{b}\int_{\Omega ^{\prime }}\left\vert \nabla
w\right\vert ^{2M} \leq \mathcal{C}_{M+1}.
\end{equation*}
An application of the Sobolev inequality,{\ Lemma \ref{extraprev}}, and a
covering argument complete the induction.

It remains to show that the extra assumption (\ref{double-xtra}) imposes no
loss of generality. For a constant $M\geq 1,$ let $u\left( x\right)
=Mw\left( x\right) $ and 
\begin{eqnarray*}
\widetilde{\mathcal{A}}\left( x,q\right) &=&\mathcal{A}\left( x,q/M\right)
,\qquad \qquad \widetilde{k^{\ast }}\left( x,q\right) =k^{\ast }\left(
x,q/M\right) , \\
\widetilde{\vec{\gamma}}\left( x,q\right) &=&\vec{\gamma}\left( x,q/M\right)
,\qquad \qquad \widetilde{f}\left( x,q\right) =Mf\left( x,q/M\right) .
\end{eqnarray*}%
Then $u$ satisfies 
\begin{eqnarray*}
&&\mathop{\rm div}\widetilde{\mathcal{A}}\left( x,u\right) \nabla u+%
\widetilde{\vec{\gamma}}\left( x,u\right) \cdot \nabla u+\widetilde{f}\left(
x,u\right) \\
&=&\mathop{\rm div}\widetilde{\mathcal{A}}\left( x,Mw\left( x\right) \right)
\nabla Mw\left( x\right) +\widetilde{\vec{\gamma}}\left( x,Mw\left( x\right)
\right) \cdot \nabla Mw\left( x\right) +\widetilde{f}\left( x,Mw\left(
x\right) \right) \\
&=&M\left[ \mathop{\rm div}\mathcal{A}\left( x,w\left( x\right) \right)
\nabla w\left( x\right) +\vec{\gamma}\left( x,w\left( x\right) \right) \cdot
\nabla w\left( x\right) +f\left( x,w\left( x\right) \right) \right] =0.
\end{eqnarray*}%
On the other hand, by (\ref{xtra}), 
\begin{eqnarray*}
\left\vert \partial _{q}\widetilde{\mathcal{A}}\left( x,q\right) \xi
\right\vert ^{2} &=&\left\vert \partial _{q}\mathcal{A}\left( x,q/M\right)
\xi \right\vert ^{2}=M^{-2}\left\vert \mathcal{A}_{z}\left( x,q/M\right) \xi
\right\vert ^{2} \\
&\leq & M^{-2}B_{\mathcal{A}}^{2}\,k^{\ast }\left( x,q/M\right) \,\xi ^{t}%
\mathcal{A}\left( x,q/M\right) \xi \\
&=&B_{\mathcal{A}^{2}}M^{-2} \widetilde{k^{\ast }}\left( x,q\right) \,\xi
^{t}\widetilde{\mathcal{A}}\left( x,q\right) \xi \\
&=&\left( \widetilde{B}_{\widetilde{\mathcal{A}}}\right) ^{2}\,\widetilde{%
k^{\ast }}\left( x,q\right) \,\xi ^{t}\widetilde{\mathcal{A}}\left(
x,q\right) \xi
\end{eqnarray*}%
with $\widetilde{B}_{\widetilde{\mathcal{A}}}=B_{\mathcal{A}}M^{-1}$. Since $%
\gamma $ is of subunit type (Definition \ref{subunitt}), 
\begin{equation*}
\left( \sum_{i=1}^{n}\widetilde{\gamma }^{i}\left( x,q\right) \xi
_{i}\right) ^{2}=\left( \sum_{i=1}^{n}\gamma ^{i}\left( x,q/M\right) \xi
_{i}\right) ^{2}\leq B_{\gamma }^{2}~\xi ^{t}\mathcal{A}\left( x,q/M\right)
\xi = B_{\gamma }^{2}~\xi ^{t}\widetilde{\mathcal{A}}\left( x,q\right)\xi.
\end{equation*}%
Also, by (\ref{gxtra}),%
\begin{eqnarray*}
\left\vert \partial _{q}\widetilde{\vec{\gamma}}\left( x,q\right) \cdot \xi
\right\vert ^{2} &=&M^{-2}\left\vert \vec{\gamma}_{z}\left( x,q/M\right)
\cdot \xi \right\vert ^{2}\leq M^{-2}B_{\gamma }^{2}~k^{\ast }\left(
x,q/M\right) \xi ^{t}\mathcal{A}\left( x,q/M\right) \xi \\
&=&\left( B_{\gamma }M^{-1}\right) ^{2}~\widetilde{k^{\ast }}\left(
x,q\right) \xi ^{t}\widetilde{\mathcal{A}}\left( x,q\right) \xi.
\end{eqnarray*}%
Hence $u\left( x\right) $ satisfies the equation 
\begin{equation*}
\mathop{\rm div}\widetilde{\mathcal{A}}\left( x,u\right) \nabla u+\widetilde{%
\vec{\gamma}}\left( x,u\right) \cdot \nabla u+\widetilde{f}\left( x,u\right)
=0
\end{equation*}%
with the corresponding constants in Condition (\ref{hyp2}), namely 
\begin{equation*}
\widetilde{B}_{\widetilde{\mathcal{A}}}= B_{\mathcal{A}}M^{-1}\qquad \text{%
and}\qquad \widetilde{B}_{\gamma }=B_{\gamma }.
\end{equation*}%
Hence, taking $M$ large enough and letting $\widetilde{B}=\max \{B_{\mathcal{%
A}}M^{-1},B_{\gamma }\}$, we have that 
\begin{equation*}
C\beta \left( \widetilde{B}_{\widetilde{\mathcal{A}}}\right) ^{2}\widetilde{B%
}^{2}=C\beta \left( \frac{B_{\mathcal{A}}}{M}\right) ^{2}\max \left\{ \left( 
\frac{B_{\mathcal{A}}}{M}\right) ^{2},B_{\gamma }^{2}\right\} \leq \frac{1}{2%
},
\end{equation*}%
so the extra assumption (\ref{double-xtra}) holds for the operator $%
\mathop{\rm div}\widetilde{\mathcal{A}}\left( x,\cdot \right) \nabla +%
\widetilde{\vec{\gamma}}\left( x,\cdot \right) \cdot \nabla +\widetilde{f}%
\left( x,\cdot \right) $. By the previous calculations, there is a constant 
\begin{equation*}
\widetilde{\mathcal{C}}_{\beta }=\widetilde{\mathcal{C}}_{\beta }\left( 
\widetilde{M_{0}},n,\widetilde{B},\Lambda ,\widetilde{\vec{k}},\left\Vert 
\widetilde{f}\right\Vert _{\mathcal{C}^{1}\left( \widetilde{\Gamma }^{\prime
}\right) },\left\Vert \widetilde{\vec{\gamma}}\right\Vert _{\mathcal{C}%
^{1}\left( \widetilde{\Gamma }^{\prime }\right) },\Omega ,\mathop{\rm dist}%
\left( \Omega ^{\prime },\partial \Omega \right) \right)
\end{equation*}%
such that 
\begin{equation*}
\sum_{j=1}^{n}\int_{\Omega ^{\prime }}u_{j}^{2\beta
}+\sum_{j=1}^{n}\int_{\Omega ^{\prime }}\left\vert \nabla _{\!\!\sqrt{%
\widetilde{\mathcal{A}}},u}u_{j}^{\beta }\right\vert ^{2}\leq \widetilde{%
\mathcal{C}}_{\beta };
\end{equation*}%
here $\widetilde{M_{0}}=\left\Vert u\right\Vert _{L^{\infty }\left( \Omega
^{\prime }\right) }=M\left\Vert w\right\Vert _{L^{\infty }\left( \Omega
^{\prime }\right) }$ {\ and} $\widetilde{\Gamma }^{\prime }=\Omega ^{\prime
}\times \left[ -\widetilde{M_{0}},\widetilde{M_{0}}\right] $. The general
result for $w$ follows from the identity $w=Mu$ and the definitions of $%
\widetilde{f}$ and $\widetilde{\vec{\gamma}}$ .{%
\endproof%
}

\subsection{Proof of Theorem \protect\ref{apriorial}}

{In this section we prove the a priori estimate in Theorem \ref{apriorial}
as a consequence of the higher integrability of $\nabla w$ established in
the previous section. Theorem \ref{apriorial} will be the main tool in the
proof of Theorem \ref{application}. }

{By Theorem \ref{quasi}, we only need to show that $\nabla w$ is locally
bounded in terms of appropriate parameters, i.e., we only need to show that
for every box $\mathcal{R}\subset 4\mathcal{R}\subset \Omega ^{\prime
}\Subset \Omega $, 
\begin{equation*}
\left\Vert \nabla w\right\Vert _{L^{\infty }\left( \mathcal{R}\right) }\leq 
\mathcal{C}\left( \left\Vert w\right\Vert _{L^{\infty }\left( \Omega
^{\prime }\right) },\mathcal{R},\vec{k},\mathcal{A},f,\vec{\gamma}\right) ;
\end{equation*}%
the dependence of }$\mathcal{C}${\ on its arguments will be made more
explicit as we proceed. By the Sobolev imbedding theorem, it is enough to
show that for some $\beta >n$, 
\begin{equation}
\left\Vert w_{ij}\right\Vert _{L^{\beta }\left( 2\mathcal{R}\right) }\leq 
\mathcal{C}_{\beta }\left( \left\Vert w\right\Vert _{L^{\infty }\left(
\Omega ^{\prime }\right) },\mathcal{R},\vec{k},\mathcal{A},f,\vec{\gamma}%
\right) ,\qquad 1\leq i,j\leq n.  \label{secondkbeta}
\end{equation}%
}

\vspace*{-0.2in}{Let $\mathcal{R}\subset {4\mathcal{R}\subset \Omega }%
^{\prime }$ be a box satisfying property (\ref{ax1c3}) in the nondegeneracy
Condition\ \ref{hyp1}. Since such boxes cover }$\Omega ^{\prime }$, there is
no loss of generality in adding this extra condition. {Applying the
Caccioppoli inequality for the ${\mathcal{A}}$-gradient, Lemma \ref{Moser},
to the smooth functions }$w_{ij}${$=\partial _{i}\partial _{j}w$, we get for 
$\beta \in \mathbb{N}$ that 
\begin{eqnarray*}
&&\sum_{i,j=1}^{n}\int_{2\mathcal{R}}\zeta ^{2}\left\vert \nabla _{\!\!\sqrt{%
\mathcal{A}},w}\,w_{ij}^{\beta }\right\vert ^{2} \\
&\leq &C\beta \sum_{i,j=1}^{n}\left\vert \int_{2\mathcal{R}}\zeta ^{2}\left( 
\mathcal{L}_{w}w_{ij}\right) w_{ij}^{2\beta -1}\right\vert
+C\sum_{i,j=1}^{n}\int_{2\mathcal{R}}\left\vert \nabla _{\!\!\sqrt{\mathcal{A%
}},w}\,\zeta \right\vert ^{2}w_{ij}^{2\beta }.
\end{eqnarray*}%
We estimate the first term on the right by Lemma \ref{secondcacc}, obtaining}
\begin{eqnarray*}
&&\sum_{i,j=1}^{n}\int_{2\mathcal{R}}\zeta ^{2}\left\vert \nabla _{\!\!\sqrt{%
\mathcal{A}},w}\,w_{ij}^{\beta }\right\vert ^{2} \\
&\leq &C\beta \sum_{i,j=1}^{n}\left\vert \int_{2\mathcal{R}}\left( \nabla
w_{j}\right) \cdot \mathbf{A}_{i}\nabla \zeta ^{2}w_{ij}^{2\beta
-1}\right\vert +C\beta \sum_{i,j=1}^{n}\left\vert \int_{2\mathcal{R}%
}w_{i}\left( \nabla w_{j}\right) \cdot \mathbf{A}_{z}\nabla \zeta
^{2}w_{ij}^{2\beta -1}\right\vert \\
&&+C\beta \sum_{i,j=1}^{n}\left\vert \int_{2\mathcal{R}}w_{ij}\left( \nabla
w\right) \cdot \mathbf{A}_{z}\nabla \zeta ^{2}w_{ij}^{2\beta -1}\right\vert +%
\frac{\mathcal{C}_{0}\beta }{\alpha }\int_{2\mathcal{R}}\zeta ^{2}\left\vert
\nabla ^{2}w\right\vert ^{2\beta } \\
&&+C\beta \alpha \sum_{i,j=1}^{n}\int_{2\mathcal{R}}\zeta ^{2}\left\vert 
\vec{\gamma}\cdot \nabla w_{ij}^{\beta }\right\vert ^{2}+C\beta \alpha
\int_{2\mathcal{R}}\zeta ^{2}\left\vert \vec{\gamma}_{z}\cdot \nabla
w\right\vert ^{2}\left\vert \nabla ^{2}w\right\vert ^{2\beta } \\
&&+C\beta \sum_{i,j=1}^{n}\left\vert \int_{2\mathcal{R}}\zeta ^{2}\left(
\left( \partial _{j}\vec{\gamma}\right) \cdot \nabla w_{i}\right)
w_{ij}^{2\beta -1}\right\vert +C\beta \sum_{i,j=1}^{n}\left\vert \int_{2%
\mathcal{R}}w_{i}\left( \nabla w\right) \cdot \mathbf{A}_{jz}\nabla \zeta
^{2}w_{ij}^{2\beta -1}\right\vert \\
&&+C\beta \sum_{i,j=1}^{n}\left\vert \int_{2\mathcal{R}}w_{i}w_{j}\left(
\nabla w\right) \cdot \mathbf{A}_{zz}\nabla \zeta ^{2}w_{ij}^{2\beta
-1}\right\vert +\mathcal{C}_{0}\beta \int_{2\mathcal{R}}\zeta ^{2}\left\vert
\nabla w\right\vert ^{6\beta } \\
&&+C\sum_{i,j=1}^{n}\int_{2\mathcal{R}}\left\vert \nabla _{\!\!\sqrt{%
\mathcal{A}},w}\,\zeta \right\vert ^{2}w_{ij}^{2\beta }+\mathcal{C}_{0}\beta
\end{eqnarray*}%
\vspace*{-0.2in}%
\begin{eqnarray}
&=&I+II+\cdots +VIII+IX+\mathcal{C}_{0}\beta \int_{2\mathcal{R}}\zeta
^{2}\left\vert \nabla w\right\vert ^{6\beta }  \label{snd-0} \\
&&+C\sum_{i,j=1}^{n}\int_{2\mathcal{R}}\left\vert \nabla _{\!\!\sqrt{%
\mathcal{A}},w}\,\zeta \right\vert ^{2}w_{ij}^{2\beta }+\mathcal{C}_{0}\beta
,  \notag
\end{eqnarray}%
where $0<\alpha <1$ is arbitrary, $\vec{\gamma}=\vec{\gamma}\left(
x,w\right) $, $f=f\left( x,w\right) $, ${\mathbf{A}}=\mathcal{A}\left(
x,w\right) $, $\mathbf{A}_{i}=\mathcal{A}_{i}\left( x,w\right) $, etc., and 
\begin{equation*}
\mathcal{C}_{0}=\mathcal{C}_{0}\left( \left\Vert \mathcal{A}\right\Vert _{%
\mathcal{C}^{3}\left( \Gamma ^{\prime }\right) },\left\Vert f\right\Vert _{%
\mathcal{C}^{2}\left( \Gamma ^{\prime }\right) },\left\Vert \vec{\gamma}%
\right\Vert _{\mathcal{C}^{2}\left( \Gamma ^{\prime }\right) }\right)
\end{equation*}%
with $\Gamma ^{\prime }=\Omega ^{\prime }\times \left[ -M_{0},M_{0}\right] $
and $M_{0}=\left\Vert w\right\Vert _{L^{\infty }\left( \Omega ^{\prime
}\right) }$.

Note that 
\begin{equation*}
\partial _{i}\mathbf{A}\left( x\right) =\partial _{i}\mathcal{A}\left(
x,w\right) =\mathcal{A}_{i}\left( x,w\right) +w_{i}\mathcal{A}_{z}\left(
x,w\right) =\mathbf{A}_{i}+w_{i}\mathbf{A}_{z}.
\end{equation*}%
{If $u$ is any smooth function, from (\ref{wirtm}), (\ref{xtra}), (\ref%
{xxtra}), (\ref{gxtra}) and since }$\vec{\gamma}${\ }is of subunit type with
respect to $\mathcal{A},$\ it follows that 
\begin{eqnarray}
\left\vert \mathbf{A}_{i} \nabla u\right\vert ^{2} &\leq &B_{\mathcal{A}%
}^{2}\left\vert \nabla _{\!\!\sqrt{\mathcal{A}}, w}u\right\vert ^{2},
\label{snd-1} \\
\left\vert w_{i}\mathbf{A}_{z} \nabla u\right\vert ^{2} &\leq &B_{\mathcal{A}%
}^{2}\left\vert w_{i}\right\vert ^{2}k^{\ast }\left( x,w\right)
\,\sum_{j=1}^{n}k^{j}\left( x,w\right) \,\left( \partial _{j}u\right) ^{2} 
\notag \\
&\leq &B_{\mathcal{A}}^{2}\left\vert \nabla _{\!\!\sqrt{\mathcal{A}},
w}w\right\vert ^{2}\left\vert \nabla _{\!\!\sqrt{\mathcal{A}},
w}u\right\vert ^{2},  \label{snd-2} \\
\sum_{i=1}^{n}\left\vert \mathbf{A}_{iz}\nabla u\right\vert ^{2}+\left\vert 
\mathbf{A}_{zz}\nabla u\right\vert ^{2} &\leq &\left( B_{\mathcal{A}%
}^{\prime }\right) ^{2}\left\vert \nabla _{\!\!\sqrt{\mathcal{A}},
w}u\right\vert ^{2},  \label{snd-3} \\
\left\vert \vec{\gamma}\cdot \nabla u\right\vert ^{2} &\leq &B_{\gamma
}^{2}\left\vert \nabla _{\!\!\sqrt{\mathcal{A}},w}u\right\vert ^{2}\qquad 
\text{and}  \label{snd-4} \\
\sum_{i=1}^{n}\left\vert \vec{\gamma}_{i}\cdot \nabla u\right\vert
^{2}+\left\vert \vec{\gamma}_{z}\cdot \nabla u\right\vert ^{2} &\leq
&B_{\gamma }^{2}k^{\ast }\left( x,w\right) \left\vert \nabla _{\!\!\sqrt{%
\mathcal{A} },w}u\right\vert ^{2}.  \label{snd-5}
\end{eqnarray}

We will now apply {these estimates together with the Schwarz and triangle
inequalities to treat each term of (\ref{snd-0}). } We will incorporate the
constants $B_{\mathcal{A}}, B_\gamma,$ etc. in our generic constant $C$.

By definition of $I$ and ({\ref{snd-1}}), 
\begin{eqnarray*}
I &\leq &C\beta \sum_{i,j=1}^{n}\left\vert \int_{2\mathcal{R}}\zeta
w_{ij}^{2\beta -1}\left( \nabla w_{j}\right) \cdot \mathbf{A}_{i}\left(
\nabla \zeta \right) \right\vert +C\beta \sum_{i,j=1}^{n}\left\vert \int_{2 
\mathcal{R}}\zeta ^{2}w_{ij}^{\beta -1}\left( \nabla w_{j}\right) \cdot 
\mathbf{A}_{i}\nabla w_{ij}^{\beta }\right\vert \\
&\leq &\alpha \sum_{i,j=1}^{n}\int_{2\mathcal{R}}\zeta ^{2}\left\vert \nabla
_{\!\!\sqrt{\mathcal{A}},w}w_{ij}^{\beta }\right\vert ^{2}+C\beta
\sum_{i,j=1}^{n}\int_{2\mathcal{R}}\left\vert \nabla _{\!\!\sqrt{\mathcal{A}}
,w}\zeta \right\vert ^{2} w_{ij}^{2\beta }+\frac{C\beta ^{2}}{\alpha }%
\sum_{i,j=1}^{n}\int_{2\mathcal{R}}\zeta ^{2} w_{ij}^{2\beta }.
\end{eqnarray*}
Similarly, using (\ref{snd-2}), 
\begin{eqnarray*}
II &\leq &C\beta \sum_{i,j=1}^{n}\left\vert \int_{2\mathcal{R}}\zeta
w_{i}w_{ij}^{2\beta -1}\left( \nabla w_{j}\right) \cdot \mathbf{A}%
_{z}\left(\nabla \zeta \right) \right\vert +C\beta
\sum_{i,j=1}^{n}\left\vert \int_{2 \mathcal{R}}\zeta ^{2}w_{ij}^{\beta
-1}w_{i}\left( \nabla w_{j}\right) \cdot \mathbf{A}_{z}\nabla w_{ij}^{\beta
}\right\vert \\
&\leq &\alpha \sum_{i,j=1}^{n}\int_{2\mathcal{R}}\zeta ^{2}\left\vert \nabla
_{\!\!\sqrt{\mathcal{A}},w}w_{ij}^{\beta }\right\vert ^{2}+C\beta
\sum_{i,j=1}^{n}\int_{2\mathcal{R}}\left\vert \nabla _{\!\!\sqrt{\mathcal{A}}
,w}\zeta \right\vert ^{2} w_{ij}^{2\beta } \\
&&+\frac{C\beta ^{2}}{\alpha }\sum_{i,j=1}^{n}\int_{2\mathcal{R}}\zeta
^{2}\left\vert \nabla _{\!\!\sqrt{\mathcal{A}},w}w\right\vert ^{2}
w_{ij}^{2\beta }.
\end{eqnarray*}
Treating $III$ analogously, we get 
\begin{eqnarray}
&&I+II+III+IV  \notag \\
&\leq &\alpha \sum_{i,j=1}^{n}\int_{2\mathcal{R}}\zeta ^{2}\left\vert \nabla
_{\!\!\sqrt{\mathcal{A}},w}w_{ij}^{\beta }\right\vert ^{2}+C\beta
\sum_{i,j=1}^{n}\int_{2\mathcal{R}}\left\vert \nabla _{\!\!\sqrt{\mathcal{A}}
,w}\zeta \right\vert ^{2}\left\vert w_{ij}\right\vert ^{2\beta }
\label{snd-I-IV} \\
&&+\frac{C\beta ^{2}}{\alpha }\sum_{i,j=1}^{n}\int_{2\mathcal{R}} \zeta ^{2}
w_{ij}^{2\beta }+\frac{C\beta ^{2}}{\alpha } \sum_{i,j=1}^{n}\int_{2\mathcal{%
R}}\zeta ^{2}\left\vert \nabla _{\!\! \sqrt{\mathcal{A}}S,w}w\right\vert
^{2} w_{ij}^{2\beta }.  \notag
\end{eqnarray}

By (\ref{snd-4}) and (\ref{snd-5}), 
\begin{equation}
V+VI\leq C\beta \alpha \sum_{i,j=1}^{n}\int_{2\mathcal{R}}\zeta
^{2}\left\vert \nabla _{\!\!\sqrt{\mathcal{A}},w}w_{ij}^{\beta }\right\vert
^{2}+C\beta \alpha \sum_{i,j=1}^{n}\int_{2\mathcal{R}}\zeta ^{2}\left\vert
\nabla _{\!\!\sqrt{\mathcal{A}},w}w\right\vert ^{2} w_{ij}^{2\beta }.
\label{snd-V-VI}
\end{equation}

Now, using the identity $\partial _{j}\vec{\gamma} =\vec{\gamma}_{j}+w_{j}%
\vec{\gamma}_{z}$, 
\begin{eqnarray*}
VII &\leq &C\beta \sum_{i,j=1}^{n}\left\vert \int_{2\mathcal{R}}\zeta
^{2}\left( \vec{\gamma}_{j}\cdot \nabla w_{i}\right) w_{ij}^{2\beta
-1}\right\vert +C\beta \sum_{i,j=1}^{n}\left\vert \int_{2\mathcal{R}}\zeta
^{2}w_{j}\left( \vec{\gamma}_{z}\cdot \nabla w_{i}\right) w_{ij}^{2\beta
-1}\right\vert \\
&=&VII_{1}+VII_{2}.
\end{eqnarray*}
We have 
\begin{eqnarray*}
VII_{1} &\leq &C\beta \sum_{i,j=1}^{n} \int_{2\mathcal{R}} \zeta^{2}
\left\Vert \vec{\gamma} \right\Vert_{C^1(2\mathcal{R})} w_{ij}^{2\beta} \\
&\leq &\mathcal{C}_0 \beta \sum_{i,j=1}^{n}\int_{2\mathcal{R}}\zeta ^{2}
w_{ij}^{2\beta }.
\end{eqnarray*}
Now, integrating by parts, 
\begin{eqnarray*}
VII_{2} &=&C\beta \sum_{i,j=1}^{n}\left\vert \int_{2\mathcal{R}}\nabla
w\cdot \partial _{i}\left( \zeta ^{2}\vec{\gamma}_{z}w_{j}w_{ij}^{2\beta
-1}\right) \right\vert \\
&\leq &C\beta \sum_{i,j=1}^{n}\left\vert \int_{2\mathcal{R}}\zeta w_{j}\zeta
_{i}w_{ij}^{2\beta -1}\left( \nabla w\right) \cdot \vec{\gamma}%
_{z}\right\vert +C\beta \sum_{i,j=1}^{n}\left\vert \int_{2\mathcal{R}}\zeta
^{2}w_{j}w_{ij}^{2\beta -1}\left( \nabla w\right) \cdot \left( \partial _{i}%
\vec{\gamma}_{z}\right) \right\vert \\
&&+C\beta \sum_{i,j=1}^{n}\left\vert \int_{2\mathcal{R}}\zeta ^{2}\left(
\nabla w\right) \cdot \vec{\gamma}_{z}w_{ij}^{2\beta }\right\vert +C\beta
\sum_{i,j=1}^{n}\left\vert \int_{2\mathcal{R}}\zeta ^{2}w_{j}w_{ij}^{\beta
-1}\left( \nabla w\right) \cdot \vec{\gamma}_{z}\left( \partial
_{i}w_{ij}^{\beta }\right) \right\vert \\
&=&VII_{2,1}+VII_{2,2}+VII_{2,3}+VII_{2,4}.
\end{eqnarray*}
Then 
\begin{eqnarray}
VII_{2,1} &=&C\beta \sum_{i,j=1}^{n}\left\vert \int_{2\mathcal{R}}\zeta
w_{j}w_{ij}^{2\beta -1}\zeta _{i}\vec{\gamma}_{z}\cdot \left( \nabla
w\right) \right\vert  \notag \\
&\leq &\frac{C\beta }{\alpha }\sum_{i,j=1}^{n}\int_{2\mathcal{R}}\zeta ^{2}
w_{ij}^{2\beta }+C\alpha \beta \sum_{i,j=1}^{n}\int_{2\mathcal{R}%
}w_{j}^{2}\zeta _{i}^{2} w_{ij}^{2\beta -2}\left\vert \vec{\gamma}_{z}\cdot
\nabla w\right\vert ^{2}.  \label{VII-2-1-0}
\end{eqnarray}
Assume for the moment that $\beta>1$. Then by using the super subordination
Condition \ref{hyp2} for $\gamma $, and Young's inequality in the form 
\begin{equation*}
\int \left\vert fg\right\vert \leq \frac{\beta -1}{\beta }\int \left\vert
f\right\vert ^{\frac{2\beta }{2\beta -2}}+\frac{1}{\beta}\int \left\vert
g\right\vert^{\beta },
\end{equation*}
the second term on the right of (\ref{VII-2-1-0}) is bounded by 
\begin{eqnarray*}
&& C\alpha \beta \sum_{i,j=1}^{n}\int_{2 \mathcal{R}}w_{j}^{2}
w_{ij}^{2\beta -2}\zeta _{i}^{2}k^{\ast }\left\vert \nabla _{\!\!\sqrt{%
\mathcal{A}},w}w \right\vert ^{2} \\
&\leq &C\alpha \beta \sum_{i,j=1}^{n}\int_{2\mathcal{R}} w_{ij}^{2\beta
-2}\left\vert \nabla _{\!\!\sqrt{\mathcal{A}},w} \zeta \right\vert
^{2}\left\vert \nabla w\right\vert ^{4} \\
&\leq &C\alpha \beta \sum_{i,j=1}^{n}\int_{2\mathcal{R}} w_{ij}^{2\beta
}\left\vert \nabla _{\!\!\sqrt{\mathcal{A}},w}\zeta \right\vert ^{2}+C\alpha
\beta A^{2} \sum_{i,j=1}^{n}\int_{2 \mathcal{R}}\xi ^{2}\left\vert \nabla
w\right\vert ^{4\beta },
\end{eqnarray*}
where we used that $\left\vert \nabla _{\!\!\sqrt{\mathcal{A}}, w}\zeta
\right\vert ^{2}\leq C A^{2}\xi ^{2}$. If on the other hand $\beta =1$, the
estimation of the second term in (\ref{VII-2-1-0}) is simpler; without using
Young's inequality, we can estimate it by just the second term above. Thus, 
\begin{eqnarray*}
VII_{2,1} &\leq &\frac{C\beta }{\alpha }\sum_{i,j=1}^{n}\int_{2\mathcal{R}
}\zeta ^{2} w_{ij}^{2\beta }+C\alpha \beta \sum_{i,j=1}^{n}\int_{2\mathcal{R}%
} w_{ij}^{2\beta} \left\vert \nabla _{\!\!\sqrt{\mathcal{A}},w}\zeta
\right\vert ^{2} \\
&&+C\alpha \beta A^{2} \sum_{i,j=1}^{n}\int_{2\mathcal{R}}\xi ^{2}\left\vert
\nabla w\right\vert ^{4\beta }.
\end{eqnarray*}
Similarly, 
\begin{eqnarray*}
VII_{2,2} &\leq &\frac{C\beta }{\alpha }\sum_{i,j=1}^{n}\int_{2\mathcal{R}
}\zeta ^{2} w_{ij}^{2\beta }+ C\alpha \beta \sum_{i,j=1}^{n}\int_{2\mathcal{R%
}}\zeta ^{2} w_{j}^{2\beta-2}\left\vert \nabla w\right\vert ^4 \\
&\leq &\frac{C\beta }{\alpha }\sum_{i,j=1}^{n}\int_{2\mathcal{R}}\zeta ^{2}
w_{ij}^{2\beta } +\mathcal{C}_0 \alpha \beta \int_{2\mathcal{R}}\zeta
^{2}\left\vert \nabla w\right\vert ^{4\beta },
\end{eqnarray*}
and 
\begin{equation*}
VII_{2,3}\leq \frac{C\beta }{\alpha }\sum_{i,j=1}^{n}\int_{2 \mathcal{R}%
}\zeta ^{2} w_{ij}^{2\beta }+ C\alpha \beta \sum_{i,j=1}^{n}\int_{2\mathcal{R%
}}\zeta ^{2}\left\vert \nabla _{\!\! \sqrt{\mathcal{A}},w}w\right\vert ^{2}
w_{ij}^{2\beta }.
\end{equation*}
Now, from (\ref{snd-5}), 
\begin{eqnarray*}
\left\vert \left( \nabla w\right) \cdot \vec{\gamma}_{z}\left( \partial
_{i}w_{ij}^{\beta }\right) \right\vert &\leq &B_{\gamma }\left\vert \nabla
_{\!\!\sqrt{\mathcal{A}},w}w\right\vert \sqrt{k^{\ast }\left( x,w\right) }
\left\vert \partial _{i}w_{ij}^{\beta }\right\vert \\
&\leq &B_{\gamma }\left\vert \nabla _{\!\!\sqrt{\mathcal{A}}, w}w\right\vert
\left\vert \nabla _{\!\!\sqrt{\mathcal{A}},w} w_{ij}^{\beta }\right\vert .
\end{eqnarray*}
Thus, 
\begin{eqnarray*}
VII_{2,4} &\leq &\frac{C\beta }{\alpha }\sum_{i,j=1}^{n} \int_{2\mathcal{R}%
}\zeta ^{2} w_{j}^{2} w_{ij}^{2\beta -2} \left\vert \nabla _{\!\!\sqrt{%
\mathcal{A}} ,w}w\right\vert ^{2}+C\alpha \beta \sum_{i,j=1}^{n}\int_{2%
\mathcal{R}}\zeta ^{2}\left\vert \nabla _{\!\!\sqrt{\mathcal{A}}%
,w}w_{ij}^{\beta } \right\vert ^{2} \\
&\leq &\frac{C\beta }{\alpha }\sum_{i,j=1}^{n}\int_{2\mathcal{\ R}}\zeta
^{2} w_{ij}^{2\beta } +\frac{\mathcal{C}_0\beta }{\alpha }\int_{2\mathcal{R}%
}\zeta ^{2}\left\vert \nabla w\right\vert ^{4\beta }+C\alpha \beta
\sum_{i,j=1}^{n}\int_{2\mathcal{\ R}}\zeta ^{2}\left\vert \nabla _{\!\!\sqrt{%
\mathcal{A}},w} w_{ij}^{\beta }\right\vert ^{2}.
\end{eqnarray*}
Assembling all these estimates, we obtain 
\begin{eqnarray}
VII &\leq &\frac{\mathcal{C}_0\beta }{\alpha } \sum_{i,j=1}^{n} \int_{2%
\mathcal{R}} \zeta ^{2} w_{ij}^{2\beta }+ C \alpha\beta
\sum_{i,j=1}^{n}\int_{2\mathcal{R}} w_{ij}^{2\beta } \left\vert \nabla _{\!\!%
\sqrt{\mathcal{A}},w}\zeta \right\vert ^{2}  \label{snd-VII} \\
&&+C\alpha \beta \sum_{i,j=1}^{n}\int_{2\mathcal{R}}\zeta ^{2}\left\vert
\nabla _{\!\!\sqrt{\mathcal{A}},w}w\right\vert ^{2} w_{ij}^{2\beta }  \notag
\\
&&+\frac{\mathcal{C}_0 A^{2}\beta }{\alpha }\int_{2\mathcal{R} }\xi
^{2}\left\vert \nabla w\right\vert ^{4\beta }+C\alpha \beta
\sum_{i,j=1}^{n}\int_{2\mathcal{R}}\zeta ^{2}\left\vert \nabla _{\!\! \sqrt{%
\mathcal{A}},w}w_{ij}^{\beta }\right\vert ^{2} .  \notag
\end{eqnarray}

By (\ref{snd-3}), 
\begin{eqnarray}
VIII &\leq &C\beta \sum_{i,j=1}^{n}\left\vert \int_{2\mathcal{R}}\zeta
w_{i}w_{ij}^{2\beta -1}\left( \nabla w\right) \cdot \mathbf{A}_{jz}\left(
\nabla \zeta \right) \right\vert  \notag \\
&&+C\beta \sum_{i,j=1}^{n}\left\vert \int_{2\mathcal{R}}\zeta
^{2}w_{i}\left( \nabla w\right) w_{ij}^{\beta -1}\cdot \mathbf{A}_{jz}\nabla
w_{ij}^{\beta }\right\vert  \notag \\
&\leq &C\beta \sum_{i,j=1}^{n}\int_{2\mathcal{R}} w_{ij}^{2\beta }\left\vert
\nabla _{\!\!\sqrt{\mathcal{A}},w}\zeta \right\vert ^{2}+C\alpha \beta
\sum_{i,j=1}^{n}\int_{2\mathcal{R}}\zeta ^{2}\left\vert \nabla _{\!\!\sqrt{%
\mathcal{A}},w}w_{ij}^{\beta }\right\vert ^{2}  \notag \\
&&+\frac{C\left( B_{\mathcal{A}}^{\prime }\right) ^{2}\beta }{\alpha }
\sum_{i,j=1}^{n}\int_{2\mathcal{R}}\zeta ^{2}w_{i}^{2} w_{ij}^{2\beta
-2}\left\vert \nabla w\right\vert ^{2}  \notag \\
&\leq &C\beta \sum_{i,j=1}^{n}\int_{2\mathcal{R}} w_{ij}^{2\beta }\left\vert
\nabla _{\!\!\sqrt{\mathcal{A}},w}\zeta \right\vert ^{2}+\frac{C\left( B_{%
\mathcal{A}}^{\prime }\right) ^{2}\beta }{\alpha }\sum_{i,j=1}^{n}\int_{2%
\mathcal{R}}\zeta ^{2} w_{ij}^{2\beta }  \label{snd-VIII} \\
&&+\frac{\mathcal{C}_0\left( B_{\mathcal{A}}^{\prime }\right) ^{2}\beta }{%
\alpha }\int_{2 \mathcal{R}}\zeta ^{2}\left\vert \nabla w\right\vert
^{4\beta }+C\alpha \beta \sum_{i,j=1}^{n}\int_{2\mathcal{R}}\zeta
^{2}\left\vert \nabla _{\!\!\sqrt{\mathcal{A}},w}w_{ij}^{\beta }\right\vert
^{2}.  \notag
\end{eqnarray}
Similarly, 
\begin{eqnarray}
IX &\leq &C\beta \sum_{i,j=1}^{n}\int_{2\mathcal{R}} w_{ij}^{2\beta
}\left\vert \nabla _{\!\!\sqrt{\mathcal{A}},w}\zeta \right\vert ^{2}+\frac{%
\mathcal{C}_0\left( B_{\mathcal{A}}^{\prime }\right) ^{2}\beta }{\alpha }%
\sum_{i,j=1}^{n}\int_{2\mathcal{R}}\zeta ^{2} w_{ij}^{2\beta }
\label{snd-IX} \\
&&+\frac{C\left( B_{\mathcal{A}}^{\prime }\right) ^{2}\beta }{\alpha }%
\int_{2 \mathcal{R}}\zeta ^{2}\left\vert \nabla w\right\vert ^{6\beta
}+C\alpha \beta \sum_{i,j=1}^{n}\int_{2\mathcal{R}}\zeta ^{2}\left\vert
\nabla _{\!\!\sqrt{\mathcal{A}},w}w_{ij}^{\beta }\right\vert ^{2}.  \notag
\end{eqnarray}
Using estimates (\ref{snd-I-IV}), (\ref{snd-V-VI}), (\ref{snd-VII}), (\ref%
{snd-VIII}) and (\ref{snd-IX}) in (\ref{snd-0}), and absorbing into the
left, we obtain 
\begin{eqnarray}
\sum_{i,j=1}^{n}\int_{2\mathcal{R}}\zeta ^{2}\left\vert \nabla _{\!\! \sqrt{%
\mathcal{A}},w}\,w_{ij}^{\beta }\right\vert ^{2} &\leq &C\beta
\sum_{i,j=1}^{n}\int_{2\mathcal{R}}\left\vert \nabla _{\!\!\sqrt{\mathcal{A}}
,w}\zeta \right\vert ^{2} w_{ij}^{2\beta }  \label{tta} \\
&&+\mathcal{C}_0 \beta ^{2}\sum_{i,j=1}^{n}\int_{2\mathcal{R}}\zeta ^{2}
w_{ij}^{2\beta }  \notag \\
&&+C\beta ^{2}\sum_{i,j=1}^{n}\int_{2\mathcal{R}}\zeta ^{2}\left\vert \nabla
_{\!\!\sqrt{\mathcal{A}},w}w\right\vert ^{2} w_{ij}^{2\beta }  \notag \\
&&+\mathcal{C}_0 \beta ^{2}\int_{2\mathcal{R}}\xi ^{2}\left\vert \nabla
w\right\vert ^{6\beta }+\mathcal{C}_0 A^{2}\beta .  \notag
\end{eqnarray}

{We now estimate the third term on the right of (\ref{tta}) proceeding as in
the proof of Lemma \ref{extraprev2}. Integrating by parts, 
\begin{eqnarray}
\sum_{i,j=1}^{n}\int_{2\mathcal{R}}\zeta ^{2}\left\vert \nabla _{\!\! \sqrt{%
\mathcal{A}},w}w\right\vert ^{2} w_{ij}^{2\beta } &=&-\sum_{i,j=1}^{n}\int_{2%
\mathcal{R}}w\mathop{\rm div} \left\{\zeta ^{2}w_{ij}^{2\beta }\mathcal{A}%
\left( x,w\right) \nabla w\right\}  \notag \\
&\leq &M_{0}\sum_{i,j=1}^{n}\left\vert \int_{2\mathcal{R}} w_{ij}^{2\beta
}\left( \nabla \zeta ^{2}\right) \cdot \mathcal{A} \left( x,w\right) \nabla
w\right\vert  \notag \\
&&+M_{0}\sum_{i,j=1}^{n}\left\vert \int_{2\mathcal{R}}\zeta ^{2}\left(
\nabla w_{ij}^{2\beta }\right) \cdot \mathcal{A}\left( x,w\right) \nabla
w\right\vert  \notag \\
&&+M_{0}\sum_{i,j=1}^{n}\int_{2\mathcal{R}}\zeta ^{2} w_{ij}^{2\beta
}\left\vert \vec{\gamma}\cdot \nabla w+f\right\vert .  \notag
\end{eqnarray}
By Schwarz's inequality, the identity }$\nabla w_{ij}^{2\beta
}=2w_{ij}^{\beta }\nabla w_{ij}^{\beta }${, and (\ref{snd-4}), we obtain
after absorbing into the left, 
\begin{eqnarray*}
&&\sum_{i,j=1}^{n}\int_{2\mathcal{R}}\zeta ^{2}\left\vert \nabla _{\!\! 
\sqrt{\mathcal{A}},w}w\right\vert ^{2} w_{ij}^{2\beta }\leq C(M_{0}^{2}+1)
\left( \left\Vert f\right\Vert _{L^{\infty }\left( \Gamma ^{\prime }\right)
}+B_{\gamma}^{2}\right) \sum_{i,j=1}^{n}\int_{2\mathcal{R}}\zeta ^{2}
w_{ij}^{2\beta } \\
&&+CM_{0}^{2}\sum_{i,j=1}^{n}\int_{2\mathcal{R}}\left\vert \nabla _{\!\! 
\sqrt{\mathcal{A}},w}\zeta \right\vert ^{2} w_{ij}^{2\beta}+
CM_{0}^{2}\sum_{i,j=1}^{n}\int_{2\mathcal{R}}\zeta ^{2}\left\vert \nabla
_{\!\!\sqrt{\mathcal{A}},w}\,w_{ij}^{\beta }\right\vert ^{2}.
\end{eqnarray*}
Using this on the right of (\ref{tta}) gives} 
\begin{eqnarray}
\sum_{i,j=1}^{n}\int_{2\mathcal{R}}\zeta ^{2}\left\vert \nabla _{\!\! \sqrt{%
\mathcal{A}},w}\,w_{ij}^{\beta }\right\vert ^{2} &\leq &C\beta ^{2}\left(
M_{0}^{2}+1\right) \sum_{i,j=1}^{n}\int_{2\mathcal{R}} \left\vert \nabla
_{\!\!\sqrt{\mathcal{A}},w}\zeta \right\vert ^{2} w_{ij}^{2\beta }
\label{tta1} \\
&&+ \mathcal{C}_1 \sum_{i,j=1}^{n}\int_{2\mathcal{R}}\zeta ^{2}
w_{ij}^{2\beta }  \notag \\
&&+C\beta ^{2}M_{0}^{2}\sum_{i,j=1}^{n}\int_{2\mathcal{R}}\zeta
^{2}\left\vert \nabla _{\!\!\sqrt{\mathcal{A}},w}\,w_{ij}^{\beta }
\right\vert ^{2}  \notag \\
&&+ \mathcal{C}_0 B^2\beta ^{2}\int_{2\mathcal{R}}\xi ^{2}\left\vert \nabla
w\right\vert ^{6\beta }+ \mathcal{C}_0 A^{2}\beta ,  \notag
\end{eqnarray}
with 
\begin{equation*}
\mathcal{C}_{1}=\mathcal{C}_{1}\left( M_0, B,\beta,\left\Vert \mathcal{A}
\right\Vert _{\mathcal{C}^{3}\left( \Gamma ^{\prime }\right) },\left\Vert
f\right\Vert _{\mathcal{C}^{2}\left( \Gamma ^{\prime }\right) },\left\Vert 
\vec{\gamma}\right\Vert _{\mathcal{C}^{2}\left( \Gamma ^{\prime }\right)
}\right) .
\end{equation*}
{By the Sobolev inequality (\ref{spoinc}) and the product rule, 
\begin{equation*}
\int_{2\mathcal{R}}\zeta ^{2} w_{ij}^{2\beta }\leq C\left( r^{1}\right)
^{2}\int_{2\mathcal{R}}\zeta ^{2}\left\vert \nabla _{\!\!\sqrt{\mathcal{A}}%
,w}w_{ij}^{\beta }\right\vert ^{2}+C\left( r^{1}\right) ^{2}\int_{2\mathcal{R%
}}\left\vert \nabla _{\!\!\sqrt{\mathcal{A}},w}\zeta \right\vert ^{2}\,
w_{ij}^{2\beta }.
\end{equation*}
Applying this to the second term on the right of (\ref{tta1}), and taking }$%
r^1$ small enough depending on $M_0, \beta $, $B$, $\left\Vert \mathcal{A}%
\right\Vert _{\mathcal{C}^{3}\left( \Gamma ^{\prime }\right) }$, $\left\Vert
f\right\Vert _{\mathcal{C}^{2}\left( \Gamma ^{\prime }\right) }$ and $%
\left\Vert \vec{\gamma}\right\Vert _{\mathcal{C}^{2}\left( \Gamma ^{\prime
}\right) }$ (in order to absorb the term resulting from the first term on
the right of the last estimate), we get 
\begin{equation*}
\sum_{i,j=1}^{n}\int_{2\mathcal{R}}\zeta ^{2}\left\vert \nabla _{\!\! \sqrt{%
\mathcal{A}},w}\,w_{ij}^{\beta }\right\vert ^{2} \leq \mathcal{C}
_{1}\sum_{i,j=1}^{n}\int_{2\mathcal{R}}\left\vert \nabla _{\!\!\sqrt{%
\mathcal{A} } ,w}\zeta \right\vert ^{2} w_{ij}^{2\beta }
\end{equation*}
\begin{equation*}
+C\beta ^{2}M_{0}^{2}\sum_{i,j=1}^{n}\int_{2\mathcal{R}}\zeta ^{2}\left\vert
\nabla _{\!\!\sqrt{\mathcal{A}},w}\,w_{ij}^{\beta }\right\vert ^{2} +%
\mathcal{C}_{0}B^{2}\beta ^{2}\int_{2\mathcal{R}}\xi ^{2}\left\vert \nabla
w\right\vert ^{6\beta }+\mathcal{C}_{0}A^{2}\beta .
\end{equation*}

{Now restrict }$1\leq \beta \leq n+1$, and assume that $M_{0}$ is small
enough so that 
\begin{equation}
C\beta ^{2}M_{0}^{2}\leq C\left( n+1\right) ^{2}M_{0}^{2}\leq 1/2.
\label{triple-xtra}
\end{equation}
Then the second term on the right above may be absorbed into the left side
to obtain 
\begin{eqnarray}
\sum_{i,j=1}^{n}\int_{2\mathcal{R}}\zeta ^{2}\left\vert \nabla _{\!\! \sqrt{%
\mathcal{A}},w}\,w_{ij}^{\beta }\right\vert ^{2} &\leq &\mathcal{C}
_{1}\sum_{i,j=1}^{n}\int_{2\mathcal{R}}\left\vert \nabla _{\!\!\sqrt{%
\mathcal{A}} ,w}\zeta \right\vert ^{2} w_{ij}^{2\beta }  \label{tta2} \\
&&+\mathcal{C}_{0}B^{2}\beta ^{2}\int_{2\mathcal{R}}\xi ^{2}\left\vert
\nabla w\right\vert ^{6\beta }+\mathcal{C}_{0}A^{2}\beta .  \notag
\end{eqnarray}
{For any $q\geq 1,$ 
\begin{equation*}
\left\Vert \chi \nabla \mathbf{A}\right\Vert _{L^{q}}\leq C\left( \int_{2 
\mathcal{R}}\sum_{i=1}^n \left\vert \mathbf{A}_{i}\right\vert
^{q}+\left\vert \nabla w\right\vert ^{q}\left\vert \mathbf{A}_{z}\right\vert
^{q}\right) ^{\frac{1}{ q}}\leq C\left\Vert \mathcal{A}\right\Vert _{%
\mathcal{C}^{1}\left( \Gamma ^{\prime }\right) }\left( 1+\left\Vert \nabla
w\right\Vert _{L^{q}\left( 2\mathcal{R}\right) }\right).
\end{equation*}
Then from Lemma \ref{lemma-interpolation} applied to the function $u = \xi
w_ij$, choosing $q=n+1$, there exists $1<p=p\left( n\right) <2$ such that 
\begin{eqnarray*}
&&\sum_{i,j=1}^{n}\int_{2\mathcal{R}}\left\vert \nabla _{\!\!\sqrt{\mathcal{A%
}},w}\,\zeta \right\vert ^{2} w_{ij}^{2\beta } = \sum_{i,j=1}^{n} \int_{2%
\mathcal{R}}\left\vert \nabla _{\!\!\sqrt{\mathcal{A}} ,w}\,\zeta
\right\vert ^{2} \left(\xi w_{ij}\right)^{2\beta } \\
&\leq& \varepsilon ^{-1}\mathcal{C}\left( n,A,\left\Vert \nabla w\right\Vert
_{L^{n+1}\left( 2\mathcal{R}\right) }\right) \sum_{i,j=1}^{n}\left\Vert \xi
w_{ij}^{\beta }\right\Vert _{L^{p}}^{2} +C\varepsilon \sum_{i,j=1}^{n}\int_{2%
\mathcal{R}}\zeta ^{2}\left\vert \nabla _{\!\!\sqrt{\mathcal{A}}%
,w}\,w_{ij}^{\beta }\right\vert ^{2}
\end{eqnarray*}
for all }$1\leq \beta \leq n+1$. {On the other hand, by Theorem \ref%
{extraintegrability}, 
\begin{equation}
\left\Vert \nabla w\right\Vert _{L^{n+1}\left( 2\mathcal{R}\right)
}+\left\Vert \nabla w\right\Vert _{L^{6\beta }\left( 2\mathcal{R}\right)
}\leq \mathcal{C}_{2}  \label{ttw}
\end{equation}
where }$\mathcal{C}_{2}=\mathcal{C}_{2}\left( M_0,n,B,\Lambda ,\vec{k}
,\left\Vert f\right\Vert _{\mathcal{C}^{1}\left( \Gamma ^{\prime }\right)
},\left\Vert \vec{\gamma}\right\Vert _{\mathcal{C}^{1}\left( \Gamma ^{\prime
}\right) }, \Omega ,\mathop{\rm dist}\left( \Omega ^{\prime },\partial
\Omega \right) \right) $. {Using these estimates in (\ref{tta2}) and
absorbing into the left gives} 
\begin{equation}
\sum_{i,j=1}^{n}\int_{2\mathcal{R}}\zeta ^{2}\left\vert \nabla _{\!\! \sqrt{%
\mathcal{A}},w}\,w_{ij}^{\beta }\right\vert ^{2}\leq \mathcal{C}
_{3}\sum_{i,j=1}^{n}\left\Vert \xi w_{ij}^{\beta }\right\Vert _{L^{p}}^{2}+ 
\mathcal{C}_{3}.  \label{tttc}
\end{equation}
{\ with 
\begin{equation*}
\mathcal{C}_{3}=\mathcal{C}_{3}\left( M_0, n,B,\Lambda ,\vec{k},\mathcal{R},
\left\Vert \mathcal{A}\right \Vert _{\mathcal{C}^{3}\left( \Gamma ^{\prime
}\right) },\left\Vert f\right\Vert _{\mathcal{C}^{2}\left( \Gamma ^{\prime
}\right) },\left\Vert \vec{\gamma}\right\Vert _{\mathcal{C} ^{2}\left(
\Gamma ^{\prime }\right) },\Omega ,\mathop{\rm dist}\left( \Omega ^{\prime
},\partial \Omega \right) \right) ,
\end{equation*}%
where we have used Remark \ref{AdependsonR} to substitute the dependence on }%
$A$ by dependence on $\vec{k}$, $\mathcal{R}$, $M_{0}$.

Choosing {$\beta =1$ in (\ref{tta2}) and applying Lemma \ref{minkgrad} and (%
\ref{ttw}) gives} 
\begin{eqnarray*}
\sum_{i,j=1}^{n}\int_{2\mathcal{R}}\zeta ^{2}\left\vert \nabla _{\!\! \sqrt{%
\mathcal{A}},w}\,w_{ij}\right\vert ^{2} &\leq &\mathcal{C}_{1}
\sum_{i,j=1}^{n}\int_{2\mathcal{R}}\left\vert \nabla _{\!\!\sqrt{\mathcal{A}}
,w}\zeta \right\vert ^{2} w_{ij}^{2}+\mathcal{C}_{0}B^{2} \int_{2\mathcal{R}%
}\xi ^{2}\left\vert \nabla w\right\vert ^{6}+\mathcal{C}_{0}A^{2} \\
&\leq &\mathcal{C}_{1}C_{1}A^{2}\Lambda \sum_{i=1}^{n}\int_{2\mathcal{R}}\xi
^{2}\left\vert \nabla _{\!\!\sqrt{\mathcal{A}},w}w_{i}\right\vert ^{2} +%
\mathcal{C}_{3}.
\end{eqnarray*}
{Estimating the first term on the right by Theorem \ref{extraintegrability},
we then get} 
\begin{equation}
\sum_{i,j=1}^{n}\int_{2\mathcal{R}}\zeta ^{2}\left\vert \nabla _{\!\! \sqrt{%
\mathcal{A}},w}\,w_{ij}\right\vert ^{2}\leq \mathcal{C}_{3}.  \label{tttd}
\end{equation}

{To finish the proof, we iterate (\ref{tttc}) in a similar fashion as in the
proof of Theorem \ref{extraintegrability}, using (\ref{tttd}) to start the
iteration. We omit the details. As a result, we obtain 
\begin{equation*}
\sum_{i,j=1}^{n}\int_{\Omega ^{\prime }}\left\vert \,w_{ij}\right\vert
^{n+1}\leq \mathcal{C}^{\ast }\left( M_0, n,B,\Lambda ,\vec{k},\left\Vert 
\mathcal{A}\right\Vert _{\mathcal{C}^{3}\left( \Gamma ^{\prime }\right)
},\left\Vert f\right\Vert _{\mathcal{C}^{2}\left( \Gamma ^{\prime }\right)
},\left\Vert \vec{\gamma}\right\Vert _{\mathcal{C}^{2}\left( \Gamma ^{\prime
}\right) }, \mathop{\rm dist}\left( \Omega ^{\prime },\partial \Omega
\right) ,\Omega \right).
\end{equation*}
From the Sobolev embedding theorem it follows that 
\begin{equation*}
\left\Vert \nabla w\right\Vert _{L^{\infty }\left( \Omega^{\prime} \right)
}\,\leq \mathcal{C}^{\ast },
\end{equation*}
as desired.}

It remains to prove that assumption (\ref{triple-xtra}) does not result in a
loss of generality.{\ This will be accomplished by a change of variables as
at the end of the proof of Theorem \ref{extraintegrability}. In fact,
letting }$u\left( x\right) =w\left( x\right)/N$, $u$ satisfies the equation 
\begin{equation}
\mathop{\rm div}\widetilde{\mathcal{A}}\left( x,u\right) \nabla u+\widetilde{%
\vec{\gamma}}\left( x,u\right) \cdot \nabla u+\widetilde{f}\left( x,u\right)
=0  \label{eq2}
\end{equation}
in $\Omega $, where 
\begin{equation*}
\widetilde{\mathcal{A}}\left( x,q\right) =\mathcal{A}\left( x,Nq\right)
,\qquad \widetilde{\vec{\gamma}}\left( x,q\right) =\vec{\gamma}\left(
x,Nq\right) ,\qquad \widetilde{f}\left( x,q\right) =\frac{1}{N}f\left(
x,Nq\right) .
\end{equation*}
Moreover, 
\begin{equation*}
\widetilde{M}_{0}=\left\Vert u\right\Vert _{L^{\infty }\left( \Omega
^{\prime }\right) }=\frac{M_{0}}{N}.
\end{equation*}
Hence, for $N$ big enough, the analogue of (\ref{triple-xtra}) holds for $u$%
. The result for $w$ then follows from the identity $w=Nu$.{\ 
\endproof%
}

\section{{\label{section-Proof}Proof of the Hypoellipticity Theorem}}

{In this section we prove our main results Theorems \ref{application} and %
\ref{DP}. To do so, we will apply Theorem 15.19 of \cite{GiTr}. For easy
reference we now state a special version of this theorem suitable for our
needs. }

\begin{theorem}[Theorem 15.19 in \protect\cite{GiTr}]
{\ \label{th15.19} Let $\Omega $ be a bounded domain in $\mathbb{R}^{n}$
satisfying an exterior sphere condition at each point of $\partial \Omega $.
Let $\mathcal{M}$ be a divergence structure operator, 
\begin{equation*}
\mathcal{M}\left( w\right) =\mathop{\rm div}\mathcal{S}\left( x,w,\nabla
w\right) +\mathcal{T}\left( x,w,\nabla w\right) ,
\end{equation*}
where $\mathcal{S}=\left( \mathcal{S}^{1},\mathcal{S}^{2},\dots ,\mathcal{S}
^{n}\right) $, $\mathcal{S}^{i}\in \mathcal{C}^{1+\delta }\left( \Omega
\times \mathbb{R}\times \mathbb{R}^{n}\right) $ for all $i$, $\mathcal{T} $ $%
\in \mathcal{C}^{\delta }\left( \Omega \times \mathbb{R}\times \mathbb{R}
^{n}\right) $ for some $0<\delta <1$, and where there exist positive
constants $a_{0}$, $b_{0}$, $b_{1}$, $c_{0}$ and $d_{0}$ such that for all $%
\xi \in \mathbb{R}^{n}$ and all $\left( x,z,p\right) \in \Omega \times 
\mathbb{R}\times \mathbb{R}^{n}$, } 
\begin{eqnarray}
\xi ^{t}\left\{ \nabla _{p}\mathcal{S}\left( x,z,p\right) \right\} \xi &\geq
&c_{0}\left\vert \xi \right\vert ^{2}  \label{15.59-1} \\
\left\vert \nabla _{p}\mathcal{S}\left( x,z,p\right) \right\vert &\leq &d_{0}
\label{15.64-1} \\
\left( 1+\left\vert p\right\vert \right) \left\vert \partial _{z}\mathcal{S}
\right\vert +\left\vert \nabla _{x}\mathcal{S}\right\vert +\left\vert 
\mathcal{T}\right\vert &\leq &d_{0}\left( 1+\left\vert p\right\vert \right)
^{2}  \label{15.66-1} \\
p\cdot \mathcal{S}\left( x,z,p\right) &\geq &a_{0}\left\vert p\right\vert
^{2}  \label{10.23-a} \\
\mathcal{T}\left( x,z,p\right) \mathop{\rm sign}z &\leq &b_{0}\left\vert
p\right\vert +b_{1}.  \label{10.23-b}
\end{eqnarray}
{Then for any function $\varphi \in \mathcal{C}^{0}\left( \partial \Omega
\right) $, there exists a solution $w\in \mathcal{C}^{2+\delta }\left(
\Omega \right) \bigcap \mathcal{C}^{0}\left( \overline{\Omega}\right) $ of
the Dirichlet problem 
\begin{eqnarray*}
\mathcal{M}\left( w\right) &=&0\qquad \text{in }\Omega \\
w &=&\varphi \qquad \text{on }\partial \Omega .
\end{eqnarray*}
}
\end{theorem}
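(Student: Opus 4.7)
The plan is to follow the Leray--Schauder continuity method as in Chapters 10, 11, 13, 14, 15 of \cite{GiTr}, producing the $C^{2+\delta}$ solution as a fixed point of a suitable operator and a limit of smooth approximations. The five structural hypotheses (\ref{15.59-1})--(\ref{10.23-b}) are precisely what is needed to produce a priori $C^{1,\alpha}$ estimates (interior and up to the boundary), and the exterior sphere condition supplies the barriers needed to control boundary behavior for merely continuous data $\varphi$.

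First I would reduce to the case of smooth boundary data: approximate $\varphi \in \mathcal{C}^{0}(\partial \Omega)$ uniformly by $\varphi_k \in \mathcal{C}^{\infty}(\partial \Omega)$. For each $\varphi_k$ I would solve the Dirichlet problem and then pass to the limit. To solve for fixed smooth $\varphi_k$, I would embed the problem in the homotopy $\mathcal{M}_t(w) = \mathop{\rm div} \mathcal{S}_t(x,w,\nabla w) + \mathcal{T}_t(x,w,\nabla w) = 0$ interpolating between a solved linear problem at $t=0$ and $\mathcal{M}(w)=0$ at $t=1$, preserving the structural bounds uniformly in $t$. The existence then reduces to uniform $C^{1,\alpha}(\overline{\Omega})$ a priori estimates, by Theorem 11.4 in \cite{GiTr}.

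The a priori estimates are assembled in four steps. Step one: the $L^{\infty}$ bound $\|w\|_{L^\infty(\Omega)} \le \max_{\partial \Omega} |\varphi_k| + C(a_0,b_0,b_1,\Omega)$ follows from the weak maximum principle of \cite{GiTr} Theorem 10.9, which uses precisely (\ref{10.23-a}) and (\ref{10.23-b}). Step two: the interior gradient estimate follows from the Bernstein/DiBenedetto--Trudinger method of \cite{GiTr} Chapter 15, where (\ref{15.59-1}) gives uniform ellipticity, (\ref{15.64-1}) controls the principal coefficients, and the natural growth condition (\ref{15.66-1}) allows the auxiliary function $|\nabla w|^2 + \lambda w^2$ to satisfy a differential inequality amenable to the maximum principle. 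Step three: the boundary gradient estimate uses the exterior sphere condition. For each $p \in \partial \Omega$, an exterior ball yields a $C^{2}$ barrier of the form $\beta(x) = C_1(1 - e^{-C_2 d(x)}) + \varphi_k(p)$ (where $d$ measures distance from the exterior sphere), and (\ref{15.66-1})--(\ref{15.59-1}) guarantee that $\beta$ is a supersolution majorizing $w$ near $p$, giving $|\nabla w(p)| \le C$ with $C$ depending only on the structural constants and a modulus of continuity of $\varphi_k$. Step four: once $\|\nabla w\|_{L^\infty(\Omega)}$ is bounded, the equation behaves as a quasilinear equation with coefficients controlled in $C^{\delta}$, and the Ladyzhenskaya--Uraltseva theory (\cite{GiTr} Theorem 13.7 and Chapter 14) yields a uniform $C^{1,\alpha}(\overline{\Omega})$ bound. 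Schauder estimates then upgrade the interior regularity to $C^{2+\delta}(\Omega)$.

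With these uniform estimates, the Leray--Schauder theorem yields a $C^{2+\delta}(\Omega) \cap C^{1,\alpha}(\overline{\Omega})$ solution $w_k$ for each $\varphi_k$. To pass to the limit for the original $\varphi \in C^0(\partial\Omega)$, I would use the same exterior-sphere barriers to show that $\{w_k\}$ is equicontinuous up to $\partial\Omega$ (the barrier depends only on the modulus of continuity of the boundary data, which is uniform along the approximating sequence), combined with the interior $C^{2+\delta}$ bounds to extract a subsequence converging in $C^2_{\text{loc}}(\Omega) \cap C^0(\overline{\Omega})$ to the desired solution. The main obstacle I anticipate is the gradient estimate in step two: the Bernstein calculation must carefully exploit the quadratic growth in (\ref{15.66-1}) paired with the uniform ellipticity (\ref{15.59-1}), and keeping track of the precise dependence on $c_0, d_0$ through the iteration is the most technically delicate part; the boundary gradient estimate via barriers is more routine once the structural inequalities are lined up against the exterior sphere geometry.
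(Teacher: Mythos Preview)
The paper does not prove this statement at all: it is quoted verbatim as Theorem 15.19 of \cite{GiTr} and used as a black box. The only additional content the paper supplies is the remark immediately following the statement, noting that \cite{GiTr} establishes the result for $a_{0}=1$ and that the extension to general $a_{0}>0$ is straightforward via the proof of Theorem 10.9 there.

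Your proposal is therefore not so much a different route as an attempt to reconstruct the textbook proof that the paper simply cites. As a sketch of the Gilbarg--Trudinger argument your outline is broadly accurate: the Leray--Schauder method of Chapter 11, the $L^{\infty}$ bound from Theorem 10.9 using (\ref{10.23-a})--(\ref{10.23-b}), interior and boundary gradient estimates from Chapters 14--15 driven by (\ref{15.59-1})--(\ref{15.66-1}) and the exterior sphere condition, the H\"older gradient estimate from Chapter 13, and finally passage from smooth to merely continuous boundary data via barriers. For the purposes of this paper, however, none of that is required; the correct ``proof'' here is a one-line citation.
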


\begin{remark}
{\ Theorem 15.19 in \cite{GiTr} is established only for $a_{0}=1$. Its
generalization to any positive constant $a_{0}$ is straightforward. See the
proof of Theorem 10.9 in \cite{GiTr} for details. }
\end{remark}

To prove Theorem \ref{DP}, we will apply Theorem \ref{th15.19} to a family
of truncations of the operator $\mathcal{Q}$ defined in (\ref{equation}).
For each $M>0$, let $\chi _{M}\in \mathcal{C}^{\infty }\left( \mathbb{R}%
\right) $ satisfy 
\begin{equation}
\chi _{M}\left( z\right) =\left\{ 
\begin{array}{ll}
z & \qquad \text{if }\left\vert z\right\vert \leq M \\ 
&  \\ 
\frac{3}{2}M & \qquad \text{if }z\geq 2M \\ 
&  \\ 
-\frac{3}{2}M & \qquad \text{if }z<2M%
\end{array}%
\right. ,\qquad \left\vert \frac{d}{dz}\chi _{M}\left( z\right) \right\vert
\leq 1.  \label{chi_M}
\end{equation}%
Define 
\begin{equation*}
\mathcal{A}^{M}\left( x,z\right) =\mathcal{A}\left( x,\chi _{M}\left(
z\right) \right) ,\quad \vec{\gamma}^{M}\left( x,z\right) =\vec{\gamma}%
\left( x,\chi _{M}\left( z\right) \right) ,\quad f^{M}\left( x,z\right)
=f\left( x,\chi _{M}\left( z\right) \right) ,
\end{equation*}%
and set 
\begin{equation*}
\mathcal{Q}^{M}w\left( x\right) =\mathop{\rm div}\mathcal{A}^{M}\left(
x,w\left( x\right) \right) \nabla w\left( x\right) +\vec{\gamma}^{M}\left(
x,w\left( x\right) \right) \cdot \nabla w\left( x\right) +f^{M}\left(
x,w\left( x\right) \right) .
\end{equation*}%
Note that if $\left\vert w\left( x\right) \right\vert \leq M$ then $\mathcal{%
Q}^{M}w\left( x\right) =\mathcal{Q}w\left( x\right) $.

\begin{proposition}
\label{proptouse} For each $\varepsilon ,M>0,${\ the} operators $\mathcal{Q}%
_{\varepsilon }^{M}=\mathcal{Q}^{M}+\varepsilon \mathbf{\Delta }$ satisfy
the hypothesis of Theorem \ref{th15.19} with{\ 
\begin{eqnarray*}
a_{0} &=&c_{0}=\varepsilon \\
b_{0} &=&\left\Vert \vec{\gamma}\right\Vert _{L^{\infty }\left( \tilde{\Gamma%
}\right) },\qquad b_{1}=\left\Vert f\right\Vert _{L^{\infty }\left( \tilde{%
\Gamma}\right) } \\
d_{0} &=&\left\Vert \nabla _{\left( x,z\right) }\mathcal{A}\right\Vert
_{L^{\infty }\left( \tilde{\Gamma}\right) }+\left\Vert \vec{\gamma}%
\right\Vert _{L^{\infty }\left( \tilde{\Gamma}\right) }+\left\Vert
f\right\Vert _{L^{\infty }\left( \tilde{\Gamma}\right) }+\varepsilon ,
\end{eqnarray*}%
where $\tilde{\Gamma}=\Omega \times \left[ -\frac{3}{2}M,\frac{3}{2}M\right]
.$ Moreover, since $\mathcal{A}^{M}$, $\vec{\gamma}^{M}$ and $f^{M}$ are
smooth functions, the value of ${\delta }$ in Theorem \ref{th15.19} can be
any value $0<\delta <1$}.
\end{proposition}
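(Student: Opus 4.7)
The proof is a direct verification of the five structural conditions of Theorem~\ref{th15.19}. First I would cast $\mathcal{Q}_\varepsilon^M$ in the form required there by setting
\[
\mathcal{S}(x,z,p)=\mathcal{A}^{M}(x,z)p+\varepsilon p,\qquad
\mathcal{T}(x,z,p)=\vec{\gamma}^{M}(x,z)\cdot p+f^{M}(x,z),
\]
so that $\mathcal{Q}_\varepsilon^M w=\mathop{\rm div}\mathcal{S}(\cdot,w,\nabla w)+\mathcal{T}(\cdot,w,\nabla w)$. Since $\chi_M\in\mathcal{C}^\infty(\mathbb{R})$ and $\mathcal{A},\vec{\gamma},f\in\mathcal{C}^\infty(\Gamma)$, the compositions $\mathcal{A}^M,\vec{\gamma}^M,f^M$ are smooth on $\Omega\times\mathbb{R}$; in particular each $\mathcal{S}^{i}\in\mathcal{C}^{1+\delta}$ and $\mathcal{T}\in\mathcal{C}^{\delta}$ for every $\delta\in(0,1)$, which is the last sentence of the proposition.

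The ellipticity (\ref{15.59-1}) and positivity (\ref{10.23-a}) both follow immediately from the positive semi-definiteness of $\mathcal{A}^M$, a consequence of the diagonal Condition~\ref{hellipp} and $\vec{k}\ge 0$: one has
\[
\xi^{t}\nabla_{p}\mathcal{S}\,\xi=\xi^{t}\mathcal{A}^{M}\xi+\varepsilon|\xi|^{2}\ge\varepsilon|\xi|^{2},\qquad p\cdot\mathcal{S}=p^{t}\mathcal{A}^{M}p+\varepsilon|p|^{2}\ge\varepsilon|p|^{2},
\]
yielding $c_0=a_0=\varepsilon$. Condition (\ref{10.23-b}) is handled by the crude bounds $|\vec{\gamma}^{M}\!\cdot p|\le\|\vec{\gamma}\|_{L^{\infty}(\tilde{\Gamma})}|p|$ and $|f^{M}|\le\|f\|_{L^{\infty}(\tilde{\Gamma})}$, giving the stated $b_0$ and $b_1$ (the sign hypothesis (\ref{appc07}) of Theorem~\ref{DP} is not needed for this step; it only sharpens $b_1$ to $0$).

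The remaining conditions (\ref{15.64-1}) and (\ref{15.66-1}) rest on the key observation that motivates the whole truncation device: since $\chi_M(z)\in[-\tfrac{3}{2}M,\tfrac{3}{2}M]$ for every $z\in\mathbb{R}$ and $|\chi_M'|\le 1$, the values of $\mathcal{A}^M,\vec{\gamma}^M,f^M$ and of their $(x,z)$-partials on the unbounded slab $\Omega\times\mathbb{R}$ are globally controlled by the values of $\mathcal{A},\vec{\gamma},f$ and their partials at points of the bounded set $\tilde{\Gamma}=\Omega\times[-\tfrac{3}{2}M,\tfrac{3}{2}M]$. Hence $\nabla_{p}\mathcal{S}=\mathcal{A}^{M}+\varepsilon I$ is bounded uniformly on $\Omega\times\mathbb{R}\times\mathbb{R}^{n}$ by the $L^{\infty}(\tilde{\Gamma})$ data plus $\varepsilon$, supplying (\ref{15.64-1}); and writing $\partial_z\mathcal{S}^i=(\partial_z a^M_{ij})p_j$, $\partial_{x_k}\mathcal{S}^i=(\partial_{x_k}a^M_{ij})p_j$, together with the triangle inequality, yields
\[
(1+|p|)|\partial_{z}\mathcal{S}|+|\nabla_{x}\mathcal{S}|+|\mathcal{T}|\le\bigl(\|\nabla_{(x,z)}\mathcal{A}\|_{L^{\infty}(\tilde{\Gamma})}+\|\vec{\gamma}\|_{L^{\infty}(\tilde{\Gamma})}+\|f\|_{L^{\infty}(\tilde{\Gamma})}\bigr)(1+|p|)^{2},
\]
which is (\ref{15.66-1}). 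No real obstacle appears; the proposition is essentially a bookkeeping exercise whose content is that the smooth cutoff $\chi_M$ converts the locally-defined structural data of the original (possibly infinitely degenerate) equation into globally-bounded data, at the price of adding $\varepsilon\Delta$ to restore uniform ellipticity, so that the classical quasilinear Dirichlet theory of Gilbarg--Trudinger becomes applicable.
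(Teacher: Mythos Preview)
Your proof is correct and follows essentially the same approach as the paper: write $\mathcal{S}=(\mathcal{A}^{M}+\varepsilon I)p$, $\mathcal{T}=\vec{\gamma}^{M}\cdot p+f^{M}$, and verify each of the five structural hypotheses directly, using positive semidefiniteness of $\mathcal{A}^{M}$ for (\ref{15.59-1}) and (\ref{10.23-a}) and the fact that $\chi_M$ takes values in $[-\tfrac{3}{2}M,\tfrac{3}{2}M]$ with $|\chi_M'|\le 1$ to control all data by $L^{\infty}(\tilde{\Gamma})$-norms. Your presentation groups the checks slightly differently and adds a helpful sentence of motivation, but the content is identical to the paper's proof.
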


\begin{proof}
With the notation of Theorem \ref{th15.19}, and $\mathcal{A}\left(
x,z\right) =\left\{ a^{ij}\left( x,z\right) \right\} _{i,j=1}^{n}$, we have 
\begin{equation*}
\mathcal{Q}_{\varepsilon}^{M}\left( w\right) = \mathcal{M}\left( w\right) =%
\mathop{\rm div}\mathcal{S}\left( x,w,\nabla w\right) +\mathcal{T}\left(
x,w,\nabla w\right)
\end{equation*}
with 
\begin{eqnarray}
\mathcal{S}^{i}\left( x,z,p\right) &=&\sum_{j=1}^{n}\left( a^{ij}\left(
x,\chi _{M}\left( z\right) \right) +\varepsilon \delta _{ij}\right)
p_{j},\qquad i=1,\dots ,n,  \label{Sform} \\
\mathcal{T}\left( x,z,p\right) &=&\vec{\gamma}\left( x,\chi _{M}\left(
z\right) \right) \cdot p+f\left( x,\chi _{M}\left( z\right) \right) .
\label{Tform}
\end{eqnarray}
Here $\delta _{ij}=1$ if $i=j$ and $\delta _{ij}=0$ otherwise. Let us now
verify (\ref{15.64-1})--(\ref{10.23-b}). \newline
\textbf{(\ref{15.59-1}).} By (\ref{Sform}), 
\begin{eqnarray*}
\xi ^{t}\left\{ \nabla _{p}\mathcal{S}\left( x,z,p\right) \right\} \xi
&=&\sum_{i,\ell =1}^{n}\partial _{p_{\ell }}\left( \sum_{j=1}^{n}\left(
a^{ij}\left( x,\chi _{M}\left( z\right) \right) + \varepsilon \delta _{ij}
\right) p_{j}\right) \xi _{\ell }\xi _{i} \\
&=&\sum_{i,\ell =1}^{n}\left( a^{i\ell }\left( x,\chi _{M}\left( z\right)
\right) + \varepsilon \delta _{i\ell }\right) \xi _{\ell }\xi _{i} \\
&=&\xi ^{t}\mathcal{A}\left( x,\chi _{M}\left( z\right) \right) \xi
+\varepsilon \left\vert \xi \right\vert ^{2} \\
&\geq &\varepsilon \left\vert \xi \right\vert ^{2}.
\end{eqnarray*}
Hence, (\ref{15.59-1}) holds with $c_{0}=\varepsilon $ . \newline
\textbf{(\ref{15.64-1}).} From (\ref{Sform}), 
\begin{eqnarray*}
\left\vert \partial _{p_{j}}\mathcal{S}^{i}\left( x,z,p\right) \right\vert
&=&\left\vert a^{ij}\left( x,\chi _{M}\left( z\right) \right) +\varepsilon
\delta _{ij}\right\vert \\
&\leq &\left\vert a^{ij}\left( x,\chi _{M}\left( z\right) \right)
\right\vert +\varepsilon \\
&\leq &\left\Vert \mathcal{A}\right\Vert _{L^{\infty }\left( \tilde{\Gamma}
\right) }+\varepsilon ,
\end{eqnarray*}
where $\tilde{\Gamma}=\Omega \times \left[ -\frac{3}{2}M,\frac{3}{2}M \right]
$. Thus, (\ref{15.64-1}) holds with $d_{0}=\left\Vert \mathcal{A}
\right\Vert _{L^{\infty }\left( \tilde{\Gamma}\right) }+\varepsilon $. 
\newline
\textbf{(\ref{15.66-1}).} From (\ref{Sform}) and (\ref{Tform}), 
\begin{eqnarray*}
&&\left( 1+\left\vert p\right\vert \right) \left\vert \partial _{z}\mathcal{%
S }\right\vert +\left\vert \nabla _{x}\mathcal{S}\right\vert +\left\vert 
\mathcal{T}\right\vert \\
&=&\left( 1+\left\vert p\right\vert \right) \left\vert \tfrac{d}{dz}\chi
_{M}\left( z\right) \right\vert \left\vert \mathcal{A}_{z}\left( x,\chi
_{M}\left( z\right) \right) p\right\vert +\left\vert \nabla _{x}\mathcal{A}
\left( x,\chi _{M}\left( z\right) \right)p \right\vert \\
&&+\left\vert \vec{\gamma}\left( x,\chi _{M}\left( z\right) \right) \cdot
p+f\left( x,\chi _{M}\left( z\right) \right) \right\vert \\
&\leq &\left( \left\Vert \nabla _{\left( x,z\right) }\mathcal{A}\right\Vert
_{L^{\infty }\left( \tilde{\Gamma}\right) }+\left\Vert \vec{\gamma}
\right\Vert _{L^{\infty }\left( \tilde{\Gamma}\right) }+\left\Vert
f\right\Vert _{L^{\infty }\left( \tilde{\Gamma}\right) }\right) \left(
1+\left\vert p\right\vert \right) ^{2},
\end{eqnarray*}
where we used that $\left\vert \frac{d}{dz}\chi _{M}\left( z\right)
\right\vert \leq 1$. Thus (\ref{15.66-1}) holds with 
\begin{equation*}
d_{0}=\left\Vert \nabla _{\left( x,z\right) }\mathcal{A}\right\Vert
_{L^{\infty }\left( \tilde{\Gamma}\right) }+\left\Vert \vec{\gamma}
\right\Vert _{L^{\infty }\left( \tilde{\Gamma}\right) }+\left\Vert
f\right\Vert _{L^{\infty }\left( \tilde{\Gamma}\right) }.
\end{equation*}
\textbf{(\ref{10.23-a}).} By (\ref{Sform}) it follows that 
\begin{equation*}
p\cdot \mathcal{S}\left( x,z,p\right) =p\cdot \left( \mathcal{A}\left(
x,\chi _{M}\left( z\right) \right) +\mathbf{I}\varepsilon \right) p\geq
\varepsilon \left\vert p\right\vert ^{2}.
\end{equation*}
Thus (\ref{10.23-a}) holds with $a_{0}=\varepsilon $.\newline
\textbf{(\ref{10.23-b}).} By (\ref{Tform}), 
\begin{eqnarray*}
\mathcal{T}\left( x,z,p\right) \,\mathop{\rm sign}z &=&\left( \vec{ \gamma}%
\left( x,\chi _{M}\left( z\right) \right) \cdot p\,+f\left( x,\chi
_{M}\left( z\right) \right) \right) \,\mathop{\rm sign}z \\
&\leq &\left\Vert \vec{\gamma}\right\Vert _{L^{\infty }\left( \tilde{\Gamma}
\right) }~\left\vert p\right\vert +\left\Vert f\right\Vert _{L^{\infty
}\left( \tilde{\Gamma}\right) }.
\end{eqnarray*}
\newline
Hence (\ref{10.23-b}) holds with $b_{0}=\left\Vert \vec{\gamma}\right\Vert
_{L^{\infty }\left( \tilde{\Gamma}\right) }$ and $b_{1}=\left\Vert
f\right\Vert _{L^{\infty }\left( \tilde{\Gamma}\right) }$.
\end{proof}

\subsection{Proof of Theorem \protect\ref{DP}}

Let $\Omega \Subset \widetilde{\Omega }$ be a strongly convex domain as in
the hypotheses of Theorem \ref{DP}. Then $\Omega $ satisfies an exterior {%
sphere condition at each point of $\partial \Omega $}. Given a continuous
function $\varphi $ on $\partial \Omega $, let $M_{0}=\sup_{\partial \Omega
}\left\vert \varphi \right\vert $. {By Theorem \ref{th15.19} and Proposition %
\ref{proptouse} with $M=M_{0}$, for all $\varepsilon >0$ there exists $%
w^{\varepsilon }\in \mathcal{C}^{2+\delta }\left( \Omega \right) \bigcap 
\mathcal{C}^{0}\left( \overline{{\Omega }}\right) $ , $0<\delta <1$, such
that $w^{\varepsilon }$ is a solution of the Dirichlet problem} 
\begin{equation*}
\left\{ 
\begin{array}{rcll}
\mathcal{Q}_{\varepsilon }^{M_{0}}w & = & 0\quad & \text{in }\Omega \\ 
w & = & \varphi & \text{on }\partial \Omega .%
\end{array}%
\right.
\end{equation*}%
{\ The solution $w^{\varepsilon }$ also depends on $M_{0}$, which is fixed.
The smoothness assumptions on $\mathcal{A}$, $\vec{\gamma}$ and $f$ and a
standard bootstrapping argument (see, e.g., Theorems 6.2 and 6.3 in \cite{Ev}%
) imply that }$w^{\varepsilon }${$\in \mathcal{\ C}^{\infty }\left( \Omega
\right) \bigcap \mathcal{C}^{0}\left( \overline{\Omega }\right) $. Since the
operators $\mathcal{Q}_{\varepsilon }^{M_{0}}$ satisfy the hypotheses of
Theorem \ref{maxp} in the Appendix (the maximum principle), we have 
\begin{equation}
\left\Vert w^{\varepsilon }\right\Vert _{L^{\infty }\left( \Omega \right)
}\leq \left\Vert w^{\varepsilon }\right\Vert _{L^{\infty }\left( \partial
\Omega \right) }=\left\Vert \varphi \right\Vert _{L^{\infty }\left( \partial
\Omega \right) }=M_{0}.  \label{mp000}
\end{equation}%
Thus the functions }$w^{\varepsilon }${\ are uniformly bounded by $M_{0}$ in 
}$\overline{{\Omega }}${\ for all $\varepsilon >0$. From the definition of }$%
\mathcal{Q}_{\varepsilon }^{M_{0}}$ it follows that{\ } $\mathcal{Q}%
w^{\varepsilon }+\varepsilon \triangle w^{\varepsilon }=\mathcal{Q}%
_{\varepsilon }^{M_{0}}w^{\varepsilon }=0$ in $\Omega $.

{By (\ref{hellip}), the coefficients of $\mathcal{Q}+\varepsilon \mathbf{I}$%
, namely the entries of $\mathcal{A}_{\varepsilon }=\mathcal{A+\varepsilon }%
\mathbf{I}$, satisfy 
\begin{equation*}
\sum_{i=1}^{n}\left( k^{i}\left( x,z\right) +\varepsilon \right) \xi
_{i}^{2}\leq \xi ^{t}\mathcal{A}_{\varepsilon }\left( x,z\right) \xi \leq
\Lambda \sum_{i=1}^{n}\left( k^{i}\left( x,z\right) +\varepsilon \right) \xi
_{i}^{2}
\end{equation*}%
for all $\xi \in \mathbb{R}^{n}$ and $\left( x,z\right) \in \Gamma =\Omega
\times \mathbb{R}$. That is, $\mathcal{A}_{\varepsilon }$} satisfies the
diagonal condition for diagonal entries $k^{i}+\varepsilon $. Next, by (\ref%
{wirtm}), {\ 
\begin{equation*}
\sum_{i=1}^{n}\left\vert \partial _{i}\mathcal{A}_{\varepsilon }\left(
x,z\right) \xi \right\vert ^{2}+\left\vert \partial _{z}\mathcal{A}%
_{\varepsilon }\left( x,z\right) \xi \right\vert ^{2}\leq B_{\mathcal{A}%
}^{2}~\xi ^{t}\mathcal{A}\xi \leq B_{\mathcal{A}}^{2}~\xi ^{t}\mathcal{A}%
_{\varepsilon }\xi
\end{equation*}%
for all $\xi \in \mathbb{R}^{n}$, $\left( x,z\right) \in \Gamma
_{M_{0}}^{\prime }$. Hence }$\mathcal{A}_{\varepsilon }$ is subordinate,
with the same constant\ $B_{\mathcal{A}}$ as for $\mathcal{A}$ in $\Gamma
_{M_{0}}^{\prime }$. We also have by (\ref{xtra}) and (\ref{xxtra}) that {\ 
\begin{eqnarray*}
\left\vert \partial _{z}\mathcal{A}_{\varepsilon }\left( x,z\right) \xi
\right\vert ^{2} &=&\left\vert \partial _{z}\mathcal{A}\left( x,z\right) \xi
\right\vert ^{2}\leq B_{\mathcal{A}}^{2}~k^{\ast }\left( x,z\right) \,\xi
^{t}\mathcal{A}\left( x,z\right) \xi \\
&\leq &CB_{\mathcal{A}}^{2}~\left( k^{\ast }\left( x,z\right) +\varepsilon
\right) \,\xi ^{t}\mathcal{A}_{\varepsilon }\left( x,z\right) \xi
\end{eqnarray*}%
and} 
\begin{eqnarray*}
\sum_{i=1}^{n}\left\vert \partial _{i}\partial _{z}\mathcal{A}_{\varepsilon
}\left( x,z\right) \xi \right\vert ^{2}+\left\vert \partial _{z}^{2}\mathcal{%
\ A}_{\varepsilon }\left( x,z\right) \xi \right\vert ^{2} &\leq &\left( B_{%
\mathcal{A}}^{\prime }\right) ^{2}\,\xi ^{t}\mathcal{A}\left( x,z\right) \xi
\\
&\leq &\left( B_{\mathcal{A}}^{\prime }\right) ^{2}\,\xi ^{t}\mathcal{A}%
_{\varepsilon }\left( x,z\right) \xi
\end{eqnarray*}%
{for all $\xi \in \mathbb{R}^{n}$, $\left( x,z\right) \in \Gamma
_{M_{0}}^{\prime }$.}

Thus $\mathcal{A}_{\varepsilon }$ satisfies the super subordination
condition with the same constants as $\mathcal{A}$. Hence $\mathcal{Q}%
+\varepsilon $ satisfies the hypotheses of Theorem \ref{apriorial} uniformly
in $\varepsilon $, $0\leq \varepsilon \leq 1${. Applying Theorem \ref%
{apriorial} to the solutions $w^{\varepsilon }$, it follows that for any
multi-index $\vec{\alpha}$ of nonnegative integers, the family $\left\{ D^{%
\vec{\alpha}}w^{\varepsilon },0<\varepsilon \leq 1\right\} $ is
equicontinuous and uniformly bounded in any subdomain }${\Omega }${$^{\prime
}\Subset $}${\Omega }${. By the Arzela-Ascoli theorem, there is a
subsequence $\left\{ w^{\varepsilon _{i}}\right\} $ with $\varepsilon
_{i}\rightarrow 0$ which converges in $\mathcal{C}^{\infty }\left( \Omega
\right) $ to a function $w^{0}\in \mathcal{C}^{\infty }\left( \Omega \right) 
$, i.e., $D^{\vec{\alpha}}w^{\varepsilon _{i}}\,$converges to $D^{\vec{\alpha%
}}w^{0}\,$uniformly on compact subsets of $\Omega $, for all multi-indexes $%
\vec{\alpha}$. We will show that $w^{0}$ is a solution of the Dirichlet
problem (\ref{dirchlet}). }

{Since $w^{0}$ and all its derivatives are uniform limits of $w^{\varepsilon
_{i}}$ and their corresponding derivatives in compact subsets of $\Omega $,
then $\left\vert \bigtriangleup w^{0}\right\vert <\infty $ in $\Omega $, and
for all $x\in \Omega $, 
\begin{eqnarray*}
\mathcal{Q}w^{0}\left( x\right) &=&\mathop{\rm div}\mathcal{A}\left(
x,w^{0}\right) \nabla w^{0}+\vec{\gamma}\left( x,w^{0}\right) \cdot \nabla
w^{0}+f\left( x,w^{0}\right) \\
&=&\lim_{i\rightarrow \infty }\mathop{\rm div}\mathcal{A}\left(
x,w^{\varepsilon _{i}}\right) \nabla w^{\varepsilon _{i}}+\vec{\gamma}\left(
x,w^{\varepsilon _{i}}\right) \cdot \nabla w^{\varepsilon _{i}}+f\left(
x,w^{\varepsilon _{i}}\right) \\
&=&\lim_{i\rightarrow \infty }\mathcal{Q}_{\varepsilon
^{i}}^{M_{0}}w^{\varepsilon _{i}}\left( x\right) -\varepsilon
_{i}\bigtriangleup w^{\varepsilon _{i}}\left( x\right) \\
&=&-\varepsilon _{i}\lim_{i\rightarrow \infty }\bigtriangleup w^{\varepsilon
_{i}}\left( x\right) =0.
\end{eqnarray*}%
Therefore }$w^{0}\in \mathcal{C}^{\infty }\left( \Omega \right) $ is a
strong solution of the differential equation in the Dirichlet problem (\ref%
{dirchlet}). Define $w^{0}=\varphi $ on $\partial \Omega $ and recall that $%
w^{\varepsilon }=\varphi $ on $\partial \Omega $ if $\varepsilon >0$. To
finish the proof of the theorem we must check that $w^{0}\in \mathcal{C}^{0}(%
\overline{\Omega })$.

Let $\omega \left( r\right) $ be the modulus of continuity of ${\varphi }$
on $\partial \Omega $: 
\begin{equation*}
\omega \left( r\right) =\sup_{x,y\in \partial \Omega ,\left\vert
x-y\right\vert \leq r}\left\vert {\varphi }\left( x\right) -{\varphi }\left(
y\right) \right\vert .
\end{equation*}%
Then $\omega $ is continuous and $\omega \left( 0\right) =0$. By Lemma \ref%
{concave-majorant}, taking a bigger $\omega $ if necessary, we may assume
that $\omega $ is also concave and strictly increasing in $\left[ 0,%
\mathop{\rm diam}\Omega \right] $, and $\mathcal{C}^{2}$ in $\left( 0,%
\mathop{\rm diam}\Omega \right] $.

By our hypothesis on $\gamma $, there exists $\eta _{0}>0$ such that $\gamma
\left( x,z\right) =0$ if $x\in \Omega $, $\mathop{\rm dist}\left( x,\partial
\Omega \right) <\eta _{0}$ and $\left\vert z\right\vert \leq M_{0}$. {Let $%
x_{0}$ be an arbitrary point on $\partial \Omega $, and let $h\left(
x\right) $ be the barrier function for $\omega $ at $x_{0}$ provided by
Lemma \ref{barrier} in the Appendix, with $\Phi $ and $\Omega $ there chosen
to be $\Omega $ and $\tilde{\Omega}$ respectively, and with $\nu =2M_{0}$, }$%
m_{0}=2M_{0}$, $\eta =\eta _{0}$ and $K=\left\Vert f\right\Vert _{L^{\infty
}\left( \tilde{\Gamma}\right) }$, where ${\tilde{\Gamma}}${$=\overline{%
\Omega }\times \left[ -2M_{0},2M_{0}\right] $. {Thus } there is a
neighborhood $\mathcal{N}$ of $x_{0}$ with $\mathcal{N}\subset
\{|x-x_{0}|<\eta _{0}\}$ and a function $h\in \mathcal{C}^{\infty }\left( 
\mathcal{N}\right) \bigcap \mathcal{C}^{0}\left( \overline{\mathcal{N}}%
\right) $ such that\ 
\begin{eqnarray}
h\left( x\right) &\leq &-\omega \left( \left\vert x-x_{0}\right\vert \right)
,  \label{mp001} \\
\mathop{\rm div}\mathcal{A}\left( x,h\left( x\right) +m\right) \nabla h
&\geq &\left\Vert f\right\Vert _{L^{\infty }\left( \tilde{\Gamma}\right) },
\label{mp002} \\
\bigtriangleup h &=&\sum_{i=1}^{n}\partial _{i}^{2}h>0  \label{mp003}
\end{eqnarray}%
for all $x\in \Omega \bigcap \mathcal{N}$ and }$\left\vert {m}\right\vert
\leq 2M_{0}${. Moreover, 
\begin{eqnarray}
h\left( x\right) &\leq &-2M_{0}\text{ \qquad if }x\in \partial \mathcal{N}%
\bigcap \Omega ,  \label{mp004} \\
h\left( x_{0}\right) &=&0.  \label{mp005}
\end{eqnarray}%
Now, by (\ref{mp001}) and the continuity of $h$ on $\overline{\mathcal{N}}$, 
\begin{equation}
h\left( x\right) \leq -\omega \left( \left\vert x-x_{0}\right\vert \right)
\leq {\varphi }\left( x\right) -{\varphi }\left( x_{0}\right)
=w^{\varepsilon }\left( x\right) -{\varphi }\left( x_{0}\right) \qquad \text{%
if }x\in \overline{\mathcal{N}}\bigcap \partial \Omega ,\varepsilon >0.
\label{out}
\end{equation}%
By (\ref{mp004}) and (\ref{mp000}), 
\begin{equation*}
h\left( x\right) \leq -2M_{0}\leq w^{\varepsilon }\left( x\right) -{\varphi }%
\left( x_{0}\right) ,\qquad \text{if }x\in \partial \mathcal{N}\bigcap
\Omega ,\varepsilon >0.
\end{equation*}%
Therefore, 
\begin{equation}
h\left( x\right) +\varphi \left( x_{0}\right) \leq w^{\varepsilon }\left(
x\right) \qquad \text{if }x\in \partial \mathcal{N},\varepsilon >0.
\label{bdryh}
\end{equation}%
\ On the other hand, since $w^{\varepsilon }$ is a solution of $\mathcal{Q}%
_{\varepsilon }^{M_{0}}w^{\varepsilon }=0$ and }$\mathcal{N}\subset \left\{
\left\vert x-x_{0}\right\vert <\eta _{0}\right\} $, {\ 
\begin{equation*}
\mathop{\rm div}\mathcal{A}^{\varepsilon }\left( x,w^{\varepsilon }\right)
\nabla w^{\varepsilon }=-f\left( x,w^{\varepsilon }\right) \leq \left\Vert
f\right\Vert _{L^{\infty }\left( \tilde{\Gamma}\right) }\qquad \text{in }%
\mathcal{N}\bigcap \Omega ,
\end{equation*}%
where the last inequality follows from (\ref{mp000}). Thus, letting $%
\mathcal{L}_{\varepsilon }$ be the quasilinear operator }$\mathcal{L}%
_{\varepsilon }=\mathop{\rm div}\mathcal{A}^{\varepsilon }\left( x,\cdot
\right) \nabla $, we have by (\ref{mp002}) and (\ref{bdryh}) that{\ 
\begin{eqnarray*}
\mathcal{L}_{\varepsilon }\left( h+\varphi \left( x_{0}\right) \right) &\geq
&\mathcal{L}_{\varepsilon }w^{\varepsilon }\qquad \text{in }\mathcal{N}%
\bigcap \Omega , \\
h\left( x\right) +\varphi \left( x_{0}\right) &<&w^{\varepsilon }\left(
x\right) \qquad \text{if }x\in \partial \left( \mathcal{N}\bigcap \Omega
\right) .
\end{eqnarray*}%
From the comparison principle Lemma \ref{comparison} applied to $\mathcal{L}%
_{\varepsilon }$ and the functions $w^{\varepsilon }$, $h+\varphi \left(
x_{0}\right) $ in $\mathcal{N}\bigcap \Omega $, we obtain 
\begin{equation}
h\left( x\right) \leq w^{\varepsilon }\left( x\right) -\varphi \left(
x_{0}\right) \qquad \text{if }x\in \mathcal{N}\bigcap \Omega .
\label{mpalleps}
\end{equation}%
Since $h$ is continuous in $\overline{\mathcal{N}}$ and $h\left(
x_{0}\right) =0$, given any $\sigma >0$, there exists $\delta _{0}>0$
independent of $\varepsilon $ such that 
\begin{equation*}
-\sigma <w^{\varepsilon }\left( x\right) -\varphi \left( x_{0}\right) \qquad 
\text{if }x\in \mathcal{N}\bigcap \left\{ \left\vert x-x_{0}\right\vert
<\delta _{0}\right\} \bigcap \Omega ,\quad \varepsilon >0.
\end{equation*}%
Proceeding in a similar fashion for the function $\varphi \left(
x_{0}\right) -w^{\varepsilon }\left( x\right) $, we obtain 
\begin{equation}
\left\vert w^{\varepsilon }\left( x\right) -\varphi \left( x_{0}\right)
\right\vert <\sigma \qquad \text{if }x\in \mathcal{N}\bigcap \left\{
\left\vert x-x_{0}\right\vert <\delta _{0}\right\} \bigcap \Omega ,\quad
\varepsilon >0.  \label{close}
\end{equation}%
}

Let us now show that $w^{0}$ is continuous on $\overline{\Omega }$. Suppose
not, and let $x_{0}$ be a point of discontinuity of $w^{0}$ in $\overline{%
\Omega }$. Since $w^{0}$ is smooth in $\Omega $, $x_{0}$ must lie on $%
\partial \Omega $. Since $w^{0}=\varphi $ on $\partial \Omega $ and $\varphi 
$ is continuous by hypothesis, there exist points $\{x_{k}\}_{k=1}^{\infty
}\subset \Omega $ and $\tilde{\sigma}>0$ such that $x_{k}\rightarrow x_{0}$
and $|w^{0}(x_{k})-\varphi (x_{0})|\geq \tilde{\sigma}$ for all $k$. By (\ref%
{close}) with $\sigma =\tilde{\sigma}/2$, there exists $\delta >0$
independent of $\varepsilon $ such that $|w^{\varepsilon }(x)-\varphi
(x_{0})|\leq \tilde{\sigma}/2$ if $x\in \Omega $ and $|x-x_{0}|<\delta $.
Choose $x=x_{k_{0}}$ for $k_{0}$ so large that $|x_{k_{0}}-x_{0}|<\delta $.
Then $|w^{\varepsilon }(x_{k_{0}})-\varphi (x_{0})|\leq \tilde{\sigma}/2$
for all $\varepsilon $. However, $w^{\varepsilon }(x_{k_{0}})\rightarrow
w^{0}(x_{k_{0}})$ as $\varepsilon \rightarrow 0$, so $|w^{0}(x_{k_{0}})-%
\varphi (x_{0})|\leq \tilde{\sigma}/2$, which is a contradiction. Hence $%
w^{0}\in \mathcal{C}^{\infty }\left( \Omega \right) \bigcap \mathcal{C}%
^{0}\left( \overline{\Omega }\right) $ and $w^{0}$ is a solution to the
Dirichlet problem (\ref{dirchlet}).

When $\gamma \equiv 0$, uniqueness follows by Lemma \ref{comparison} in the
Appendix. This finishes the proof of Theorem \ref{DP}. \endproof

\subsection{Proof of Theorem \protect\ref{application}}

Under the hypotheses of Theorem \ref{application}, let $w$ be a continuous
weak solution of 
\begin{equation*}
\mathop{\rm div}\mathcal{A}\left( x,w\right) \nabla w+f\left( x,w\right)
=0\qquad \text{in }\Omega .
\end{equation*}%
Given $\bar{x}\in \Omega $, let $\Phi $ be the ball centered at $\bar{x}$
with radius $r=\frac{1}{2}\mathop{\rm dist}\left( \bar{x},\partial \Omega
\right) $. Then $\Phi $ is strongly convex. By Theorem \ref{DP}, there is a
continuous strong solution $u$ of the Dirichlet problem 
\begin{equation*}
\left\{ 
\begin{array}{rcll}
\mathcal{Q}u & = & 0\quad & \text{in }\Phi \\ 
u & = & w & \text{on }\partial \Phi .%
\end{array}%
\right.
\end{equation*}%
{Moreover, }$u\in \mathcal{C}^{0}\left( \overline{\Phi }\right) \bigcap 
\mathcal{C}^{\infty }\left( \Phi \right) $. By restricting $\bar{x}$ to a
compact set $\Omega ^{\prime }\subset \Omega $, the convex character $%
\lambda _{0}$ of $\Phi $ is bounded below away from zero, the bound
depending on $\mathop{\rm dist}(\Omega ^{\prime },\partial \Omega )$, and
hence the constants $\mathcal{C}_{N}$ controlling the derivatives are
independent of $\lambda _{0}$. By the uniqueness part of the comparison
principle, Lemma \ref{comparison}, it follows that $u=w$ in $\overline{\Phi }
$ and therefore $w$ is smooth inside $\Phi $ with control on all its
derivatives in compact subsets of $\Phi $ . {\ This finishes the proof of
Theorem \ref{application}.\endproof}

\section{Appendix}

This Appendix is divided into four subsections in which we give some
technical details about facts that we used earlier: degenerate Sobolev
spaces and weak solutions, a maximum principle, a comparison principle, and
barriers for the Dirichlet problem.

\label{deg-sob-section}

\subsection{Degenerate Sobolev Spaces and Weak Solutions}

\subsubsection{The weak degenerate Sobolev space $H_{\mathcal{X}%
}^{1,2}(\Omega) \label{Section-Weak-Sol}$}

We first describe the degenerate Sobolev spaces used in the paper, beginning
with a standard definition.

\begin{definition}[Weak $X$ derivative]
Let $X$ be a locally Lipschitz vector field on $\Omega $, i.e., $X=\mathbf{v}%
\cdot \nabla $ with $\mathbf{v}\in \mathop{\rm Lip_{loc}}\left( \Omega
\right) $, the class of locally Lipschitz continuous $\mathbb{R}^{n}-$valued
functions on $\Omega $. $X$ is initially defined on real-valued functions $%
w\in \mathop{\rm Lip_{loc}}(\Omega )$ by $Xw=\mathbf{v}\cdot \nabla w$. We
say that a locally integrable function $g$ is the weak derivative $Xw$ of a
locally integrable function $w$ if 
\begin{equation}
\int_{\Omega }g\varphi =-\int_{\Omega }wX^{\prime }\varphi =-\int_{\Omega
}w\nabla \cdot \left( \mathbf{v}\varphi \right) \quad \text{for all $\varphi
\in \mathop{\rm Lip_{0}}\left( \Omega \right) $.}  \label{weakdef}
\end{equation}
\end{definition}

The weak derivative $Xw$ is clearly unique if it exists, and $Xw$ exists and
coincides with $\mathbf{v}\cdot \nabla w$ if $w\in \mathop{\rm Lip_{loc}}%
\left( \Omega \right) $.

\begin{definition}[Weak degenerate Sobolev space]
Let $\mathcal{X}=\left\{ X_{j}\right\} _{j=1}^{m}$ where $X_{j}=\mathbf{v}_j$
are $\mathop{\rm Lip_{loc}}(\Omega)$ vector fields on $\Omega \subset 
\mathbb{R}^{n}.$ The degenerate Sobolev space $H_{\mathcal{X}}^{1,2}\left(
\Omega \right) $ is defined as the inner product space consisting of all $%
w\in L^{2}\left( \Omega \right) $ whose weak derivatives $X_{j}w$ are also
in $L^{2}\left( \Omega \right) $. The inner product in $H_{\mathcal{X}%
}^{1,2}\left( \Omega \right) $ is defined by 
\begin{equation}
\left\langle w,v\right\rangle _{\mathcal{X}}=\int_{\Omega
}wv~dx+\int_{\Omega }\mathcal{X}w\cdot \mathcal{X}v~dx,  \label{innpdt}
\end{equation}
where we denote $\mathcal{X}w= \left(X_1w,\dots,X_mw\right)$, and the norm
is 
\begin{equation*}
\left\Vert w\right\Vert _{H_{\mathcal{X}}^{1,2}\left( \Omega \right)
}=\left\langle w,w\right\rangle _{\mathcal{X}}^{1/2} = \left(\left\Vert
w\right\Vert _{L^{2}\left( \Omega \right) }^{2}+\left\Vert \mathcal{X}%
w\right\Vert _{L^{2}\left( \Omega \right) }^{2}\right)^{1/2}.
\end{equation*}
\end{definition}

We now define what we mean by $\nabla w$ if $w \in H_{\mathcal{X}%
}^{1,2}\left( \Omega \right) $ for a collection $\mathcal{X} =
\{X_j\}_{j=1}^m = \{\mathbf{v}_j\cdot \nabla\}_{j=1}^m$ of $%
\mathop{\rm
Lip_{loc}}(\Omega)$ vector fields. For such $w$ and $\mathcal{X}$, there is
a sequence $\{w_k\}_{k=1}^\infty$ of $\mathop{\rm Lip}(\Omega)$ functions
and a vector $\vec{W}(x) \in \mathbb{R}^n$ satisfying $\mathbf{v}_j\cdot 
\vec{W} \in L^2(\Omega)$ for all $j$ and 
\begin{equation}  \label{density}
||w_k -w||_{L^2(\Omega)} + \sum_j ||X_jw_k -\mathbf{v}_j\cdot \vec{W}
||_{L^2(\Omega)} \rightarrow 0\quad \text{as $k \to \infty$}.
\end{equation}
This is proved in \cite{SaW3} (see also \cite{FSS}, \cite{GN}) in case all $%
\mathbf{v}_j \in \mathop{\rm Lip}(\Omega)$ but remains true if all $\mathbf{v%
}_j \in \mathop{\rm Lip_{loc}}(\Omega)$ by examining the proof in \cite{SaW3}%
.

Then 
\begin{equation}
\mathcal{X}w=\left( X_{1}w,\dots ,X_{m}w\right) =\left( \mathbf{v}_{1}\cdot 
\vec{W},\dots \mathbf{v}_{m}\cdot \vec{W}\right)  \label{Xform}
\end{equation}%
since for all $\varphi \in \mathop{\rm Lip_{0}}\left( \Omega \right) $, 
\begin{eqnarray*}
\int_{\Omega }w\,\nabla \cdot (\mathbf{v}_{j}\varphi ) &=&\lim_{k\rightarrow
\infty }\int_{\Omega }w_{k}\nabla \cdot (\mathbf{v}_{j}\varphi ) \\
&=&\lim_{k\rightarrow \infty }\int_{\Omega }(\mathbf{v}_{j}\cdot \nabla
w_{k})\varphi =\lim_{k\rightarrow \infty }\int_{\Omega }(X_{j}w_{k})\varphi
\\
&=&-\int_{\Omega }\left( \mathbf{v}_{j}\cdot \vec{W}\right) \varphi .
\end{eqnarray*}%
Moreover, if $\{w_{k}^{\prime }\}$ and $\vec{W^{\prime }}$ are another such
sequence and vector for the same $w$, it follows similarly that $%
\int_{\Omega }w\,\nabla \cdot (\mathbf{v}_{j}\varphi )=-\int_{\Omega }\left( 
\mathbf{v}_{j}\cdot \vec{W^{\prime }}\right) \varphi $. Hence 
\begin{equation*}
\int_{\Omega }\left( \mathbf{v}_{j}\cdot \vec{W}\right) \varphi
=\int_{\Omega }\left( \mathbf{v}_{j}\cdot \vec{W^{\prime }}\right) \varphi
\end{equation*}%
for all $\varphi \in \mathop{\rm Lip_{0}}\left( \Omega \right) $, so that 
\begin{equation}
\mathbf{v}_{j}\cdot \vec{W}=\mathbf{v}_{j}\cdot \vec{W^{\prime }}\quad \text{%
for all $j$}.  \label{uniquegrad}
\end{equation}%
In this sense, $\vec{W}$ is unique, i.e., $\vec{W}$ is uniquely determined
by $w$ up to its dot product with each vector $\mathbf{v}_{j}$. We will
often abuse notation by writing $\vec{W}=\nabla w$. Any particular $\vec{W}$
as above with be called a \emph{representative} of $\nabla w$. Then $X_{j}w=%
\mathbf{v}_{j}\cdot \nabla w,j=1,\dots ,m,$ for all $w\in H_{\mathcal{X}%
}^{1,2}\left( \Omega \right) .$ Furthermore, by (\ref{density}), the
sequence $\{w_{k}\}$ above satisfies 
\begin{equation*}
||w_{k}-w||_{H_{\mathcal{X}}^{1,2}\left( \Omega \right) }\rightarrow 0\quad 
\text{as $k\rightarrow \infty $.}
\end{equation*}

Suppose that $\Omega ^{\prime }\subset \Omega $ and $M>0$, and let $\mathcal{%
X}=\{X_{j}\}=\{\mathbf{v}_{j}\cdot \nabla \}$ and $H_{\mathcal{X}%
,0}^{1,2}\left( \Omega \right) $ be as above. We claim that if $\mathcal{A}%
(x,z)$ and $\vec{\gamma}(x,z)$ satisfy 
\begin{equation}
\xi \cdot \mathcal{A}(x,z)\xi \leq c\sum_{j}(\mathbf{v}_{j}(x)\cdot \xi
)^{2}\quad \text{and}\quad (\vec{\gamma}(x,z)\cdot \xi )^{2}\leq c\sum_{j}(%
\mathbf{v}_{j}(x)\cdot \xi )^{2}  \label{subu}
\end{equation}%
for all $(x,z,\xi )\in \Omega ^{\prime }\times (-M,M)\times \mathbb{R}^{n}$,
then $\sqrt{\mathcal{A}(x,z)}\nabla w$ and $\vec{\gamma}(x,z)\cdot \nabla w$
are well-defined for any $(x,z)\in \Omega ^{\prime }\times (-M,M)$ and any $%
w\in H_{\mathcal{X},0}^{1,2}\left( \Omega \right) ,$ i.e., that if $\vec{W}$
and $\vec{W^{\prime }}$ are any two representatives of $\nabla w$, then $%
\sqrt{\mathcal{A}(x,z)}\vec{W}(x)=\sqrt{\mathcal{A}(x,z)}\vec{W^{\prime }}%
(x) $ and $\vec{\gamma}(x,z)\cdot \vec{W}(x)=\vec{\gamma}(x,z)\cdot \vec{%
W^{\prime }}(x)$. In fact, since $\mathbf{v}_{j}\cdot (\vec{W}-\vec{%
W^{\prime }})=\mathbf{v}_{j}\cdot \vec{W}-\mathbf{v}_{j}\cdot \vec{W^{\prime
}}=0$ for all $j$ by (\ref{uniquegrad}), this follows immediately from (\ref%
{subu}) by choosing $\xi =\vec{W}(x)-\vec{W^{\prime }}(x)$. \newline

Let {$\vec{k}\left( x,z\right) =$}$\left( k^{i}\left( x,z\right) \right)
_{i=1,\cdots ,n}$ and $\mathcal{A}\left( x,z\right) $ be as in Theorem \ref%
{DP}, that is, with $\Gamma =\widetilde{\Omega} \times \mathbb{R}$,

\begin{enumerate}
\item {$\vec{k}\left( x,z\right) \in \mathcal{C}^2(\Gamma)$ and satisfies }%
the nondegeneracy Condition{\ \ref{hyp1} in $\Gamma$, }

\item {$\mathcal{A} \in \mathcal{C}^\infty(\Gamma)$ and satisfies the
diagonal }Condition{\ \ref{hellipp} in $\Gamma $, }

\item {$\mathcal{A}$ satisfies }the super subordination Condition{\ \ref%
{hyp2} in $\Gamma $.}
\end{enumerate}

The particular vector fields that we will use are 
\begin{equation}
\partial _{1},\sqrt{k^{2}(x,0)}\,\partial _{2},\dots ,\sqrt{k^{n}(x,0)}%
\,\partial _{n}.  \label{ourvf}
\end{equation}%
We claim that since $k^{2}(x,0),\dots k^{n}(x,0)\in \mathcal{C}^{2}(\Omega )$
and are nonnegative, the Wirtinger inequality (\ref{genWirt}) implies that
the vector fields (\ref{ourvf}) belong to $\mathop{\rm Lip_{loc}}(\Omega )$.
To see why, fix $i$ and denote $k^{i}(x,0)=k(x)$. For a Euclidean ball $%
D\Subset \Omega $, $\varepsilon >0$ and all $x_{1},x_{2}\in D$, we have 
\begin{eqnarray*}
\left\vert \sqrt{k(x_{1})+\varepsilon }-\sqrt{k(x_{2})+\varepsilon }%
\right\vert &\leq &\left\Vert \nabla \sqrt{k+\varepsilon }\right\Vert
_{L^{\infty }(D)}|x_{1}-x_{2}| \\
&=&\left\Vert \frac{\nabla k}{\sqrt{k+\varepsilon }}\right\Vert _{L^{\infty
}(D)}|x_{1}-x_{2}| \\
&\leq &C_{D}\left\Vert \frac{\sqrt{k}}{\sqrt{k+\varepsilon }}\right\Vert
_{L^{\infty }(D)}|x_{1}-x_{2}|\quad \mbox{by (\ref{genWirt}),}
\end{eqnarray*}%
where $C_{D}$ depends on $k$ and dist$(D,\partial \Omega )$. Hence 
\begin{equation*}
\left\vert \sqrt{k(x_{1})+\varepsilon }-\sqrt{k(x_{2})+\varepsilon }%
\right\vert \leq C_{D}|x_{1}-x_{2}|,\quad x_{1},x_{2}\in D,
\end{equation*}%
uniformly in $\varepsilon $. Letting $\varepsilon \rightarrow 0$ and using
the Heine-Borel theorem to cover any compact subset of $\Omega $ by a finite
number of balls proves our claim.

\subsubsection{$\mathcal{X}$-weak solutions of quasilinear equations\label%
{Subsubsection-weak-solution}}

Here we make precise the notion of a ``weak solution'' of the quasilinear
equation (\ref{equation}). For $k^{i}\left( x,z\right) $ as in the
hypotheses of our main results Theorems \ref{application} and \ref{DP}, we
let $\mathcal{X} =\left\{ X_{j}\right\} _{j=1}^{n}$ with $X_{j}=\sqrt{%
k^{i}\left( x,0\right) }\frac{\partial }{\partial x_{j}}$. An analogous
definition can be given for any collection of locally Lipschitz vector
fields.

\begin{definition}
\label{Xweak}A function $w\in H_{\mathcal{X}}^{1,2}\left( \Omega \right)
\bigcap L_{\mathop{\rm loc}}^{\infty }\left( \Omega \right) $ is a weak
solution of 
\begin{equation}
\mathcal{Q}w=\mathop{\rm div}\mathcal{A}\left( x,w\right) \nabla w+\vec{
\gamma}\left( x,w\right) \cdot \nabla w+f\left( x,w\right) =0\qquad \text{in 
}\Omega  \label{equation-weak}
\end{equation}
if for all $u\in \mathop{\rm Lip}_0(\Omega)$, 
\begin{equation}
\int_{\Omega }\left( \nabla u\right) ^{t}\mathcal{A}\left( x,w\right) \nabla
w~dx=\int_{\Omega }u~\vec{\gamma}\left( x,w\right) \cdot \nabla
w~dx+\int_{\Omega }u~f\left( x,w\right) ~dx.  \label{weak-eq}
\end{equation}
Given $w_{0}$, $w_{1}\in H_{\mathcal{X}}^{1,2}\left( \Omega \right) \bigcap
L_{\mathop{\rm loc}}^{\infty }\left( \Omega \right),$ we say that $\mathcal{Q%
}w_{1}\geq \mathcal{Q}w_{0}$ in $\Omega$ if 
\begin{eqnarray*}
&&\int_{\Omega }\left( \nabla u\right) ^{t}\mathcal{A}\left( x,w_{1}\right)
\nabla w_{1}~dx-\int_{\Omega }u~\vec{\gamma}\left( x,w_{1}\right) \cdot
\nabla w_{1}~dx-\int_{\Omega }u~f\left( x,w_{1}\right) ~dx \\
&\leq &\int_{\Omega }\left( \nabla u\right) ^{t}\mathcal{A}\left(
x,w_{0}\right) \nabla w_{0}~dx-\int_{\Omega }u~\vec{\gamma}\left(
x,w_{0}\right) \cdot \nabla w_{0}~dx-\int_{\Omega }u~f\left( x,w_{0}\right)
~dx
\end{eqnarray*}
for all $u\in \mathop{\rm Lip}_0(\Omega)$.
\end{definition}

To show that the integrals in (\ref{weak-eq}) converge absolutely, note that
if $w\in $ $L_{\mathop{\rm loc}}^{\infty }\left( \Omega \right) $, then by
Lemma \ref{admisall} and the diagonal condition (\ref{hellip}), we have that
for all $x\in \Omega ^{\prime }\Subset \Omega,$ 
\begin{equation*}
\xi ^{t}\mathcal{A}\left( x,w\left( x\right) \right) \xi \approx \xi ^{t} 
\mathcal{A}\left( x,0\right) \xi \approx \sum_{i=1}^{n}k^{i}\left(
x,0\right) \xi _{i}^{2},
\end{equation*}
with constants which depend on $\mathcal{A}$, $\Omega ^{\prime }$ and $%
M_{0}=\left\Vert w\right\Vert _{L^{\infty }\left( \Omega ^{\prime }\right) }$%
. Hence, since $w\in H_{\mathcal{X}}^{1,2}\left( \Omega \right) \bigcap L_{%
\mathop{\rm loc}}^{\infty }\left( \Omega \right) $, $u\in \mathop{\rm Lip}%
_0(\Omega)$, and $f\left(x,z\right) $ is continuous, it follows that 
\begin{equation*}
\nabla _{\!\!\sqrt{\mathcal{A}},w}w,~\nabla _{\!\!\sqrt{\mathcal{A}},w}u,~%
\vec{\gamma }\left( x,w\left( x\right) \right) \cdot \nabla w,~-f\left(
x,w\left( x\right) \right) \in L_{\mathop{\rm loc}}^{2}\left( \Omega \right)
.
\end{equation*}
Consequently, each integral in (\ref{weak-eq}) converges absolutely, and the
same is true for the integrals in Definition \ref{Xweak}.

Alternately, we can make sense of weak solutions $w\in H_{\mathcal{X}
}^{1,2}\left( \Omega \right) $ which are not necessarily locally bounded by
assuming that the coefficient matrix is bounded in $z$ locally in $x$, that $%
\vec{\gamma}(x,z)$ is of subunit type globally in $z$ locally in $x$, and
that $\sup_z |f(x,z)|$ is locally integrable.

\subsection{{\label{maxpsection}A Maximum Principle}}

We will now prove the following result.

\begin{theorem}
{\ \label{maxp}Let }$\mathcal{A}$ satisfy (\ref{hellip}), $\vec{\gamma}${\ }
be of subunit type with respect to $\mathcal{A}${\ in $\Omega \times \mathbb{%
R}$, } and $f\,$ be a continuous function on $\Omega \times \mathbb{R}^{n}$
which satisfies $f(x,0)=0$ for $x\in \Omega $ and 
\begin{eqnarray}
f\left( x,z\right) \mathop{\rm sign}z &\leq &0\qquad \text{in }\Omega \times 
\mathbb{R},  \label{eff} \\
f\left( x,z_{1}\right) -f\left( x,z_{2}\right) &\leq &0\qquad \text{if }x\in
\Omega \text{ and }z_{1}\geq z_{2}.  \label{fnc}
\end{eqnarray}%
{If $w$ is a smooth function in }$\Omega $ which is continuous on $\overline{%
\Omega }$ and satisfies 
\begin{equation}
\mathop{\rm div}\mathcal{A}\left( x,w\right) \nabla w+\vec{\gamma}\left(
x,w\right) \cdot \nabla w+f\left( x,w\right) \geq 0\qquad \text{in }\Omega
\label{subso}
\end{equation}%
in the weak sense, i.e., satisfies 
\begin{equation}
\int \nabla {\varphi }\cdot \mathcal{A}\left( x,w\right) \nabla w\leq \int {%
\varphi }\,\vec{\gamma}\left( x,w\right) \cdot \nabla w+\int {\varphi }%
\,f\left( x,w\right)  \label{subsso}
\end{equation}%
for all nonnegative ${\varphi \in }\mathop{\rm Lip_{0}}\left( \Omega \right) 
$, then 
\begin{equation*}
\sup_{\Omega }w\leq \sup_{\partial \Omega }w^{+}.
\end{equation*}%
On the other hand, if the opposite inequality holds in (\ref{subso}), i.e.,
if 
\begin{equation}
\mathop{\rm div}\mathcal{A}\left( x,w\right) \nabla w+\vec{\gamma}\left(
x,w\right) \cdot \nabla w+f\left( x,w\right) \leq 0\qquad \text{in }\Omega
\label{superso}
\end{equation}%
in the weak sense, then 
\begin{equation}
\inf_{\Omega }w\geq \inf_{\partial \Omega }-(w^{-})\quad \left(
\,=-\sup_{\partial \Omega }w^{-}\right) ,  \label{lowerbound}
\end{equation}%
where $w^{-}:=-w$ if $w\leq 0$ and $w^{-}:=0$ if $w>0$. In particular, if $w$
is a weak solution of $\mathop{\rm
div}\mathcal{A}\left( x,w\right) \nabla w+\vec{\gamma}\left( x,w\right)
\cdot \nabla w+f\left( x,w\right) =0$ in $\Omega $, then $\sup_{\Omega
}|w|\leq \sup_{\partial \Omega }|w|$.
\end{theorem}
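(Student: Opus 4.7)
The plan is to use a truncation test function together with an absorption argument exploiting the subunit hypothesis on $\vec{\gamma}$ and the sign condition on $f$. First I would set $M = \sup_{\partial\Omega} w^{+} \geq 0$ and, for each $\epsilon > 0$, take $\psi_\epsilon = (w - M - \epsilon)^{+}$. Since $w \in \mathcal{C}^{0}(\overline{\Omega})$ and $w \leq M$ on $\partial\Omega$, the set $\{w > M + \epsilon\}$ is compactly contained in $\Omega$, so $\psi_\epsilon \in \mathop{\rm Lip}_{0}(\Omega)$ and is admissible in (\ref{subsso}). On the support $\{\psi_\epsilon > 0\}$ we have $\nabla \psi_\epsilon = \nabla w$, so substituting $\varphi = \psi_\epsilon$ and using the diagonal ellipticity (\ref{hellip}) gives
$$\int |\nabla_{\!\!\sqrt{\mathcal{A}},w}\,\psi_\epsilon|^{2} \;\leq\; \int \psi_\epsilon\,\vec{\gamma}(x,w)\cdot\nabla\psi_\epsilon \;+\; \int \psi_\epsilon\,f(x,w).$$
On $\{\psi_\epsilon > 0\}$ we have $w > M+\epsilon > 0$, so by (\ref{eff}) the $f$-term is nonpositive and drops.

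Next I would absorb the $\vec{\gamma}$-term. The subunit hypothesis gives $|\vec{\gamma}(x,w)\cdot\nabla\psi_\epsilon| \leq B\,|\nabla_{\!\!\sqrt{\mathcal{A}},w}\,\psi_\epsilon|$ on the support, and Young's inequality produces
$$\int \psi_\epsilon\,|\vec{\gamma}\cdot\nabla\psi_\epsilon| \;\leq\; \tfrac{1}{2}\int|\nabla_{\!\!\sqrt{\mathcal{A}},w}\,\psi_\epsilon|^{2} + \tfrac{B^{2}}{2}\int\psi_\epsilon^{2}.$$
Absorbing into the left side yields the energy estimate $\int|\nabla_{\!\!\sqrt{\mathcal{A}},w}\,\psi_\epsilon|^{2} \leq B^{2}\int\psi_\epsilon^{2}$. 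To close the argument I would invoke the convention $k^{1} \equiv 1$ from (\ref{unifbdk1}), which yields $(\partial_{1}\psi_\epsilon)^{2} \leq |\nabla_{\!\!\sqrt{\mathcal{A}},w}\,\psi_\epsilon|^{2}$ pointwise. Combining this with the one-dimensional Poincar\'{e} inequality in the $x_{1}$-direction (available because $\psi_\epsilon$ has compact support in $\Omega$) gives $\int\psi_\epsilon^{2} \leq C d_{1}^{2}\int|\nabla_{\!\!\sqrt{\mathcal{A}},w}\,\psi_\epsilon|^{2}$, where $d_{1}$ is the $x_{1}$-diameter of $\Omega$. If $CB^{2}d_{1}^{2} < 1$, the two inequalities force $\nabla_{\!\!\sqrt{\mathcal{A}},w}\,\psi_\epsilon \equiv 0$ and hence $\psi_\epsilon \equiv 0$; otherwise I would slice $\Omega$ into sufficiently thin $x_{1}$-strips using smooth cutoffs $\chi(x_{1})$, apply the argument on each strip, and propagate the bound $w \leq M+\epsilon$ through the common interfaces inductively. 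Letting $\epsilon \to 0^{+}$ yields $\sup_{\Omega} w \leq M$.

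For the lower bound (\ref{lowerbound}), I would apply the same argument to $\widetilde{w} = -w$, which weakly satisfies $\mathop{\rm div}\widetilde{\mathcal{A}}(x,\widetilde{w})\nabla\widetilde{w} + \widetilde{\vec{\gamma}}(x,\widetilde{w})\cdot\nabla\widetilde{w} + \widetilde{f}(x,\widetilde{w}) \geq 0$ with $\widetilde{\mathcal{A}}(x,z)=\mathcal{A}(x,-z)$, $\widetilde{\vec{\gamma}}(x,z)=-\vec{\gamma}(x,-z)$, $\widetilde{f}(x,z)=-f(x,-z)$. Each of the hypotheses (\ref{hellip}), the subunit condition, $\widetilde{f}(x,0)=0$, (\ref{eff}), and (\ref{fnc}) is preserved under this reflection. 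The main technical obstacle is the absorption step in the degenerate regime: the subunit hypothesis makes Cauchy--Schwarz absorption work, but closing the loop requires a Poincar\'{e}-type inequality for $|\nabla_{\!\!\sqrt{\mathcal{A}},w}\cdot|$ that is not automatic for infinitely degenerate $\mathcal{A}$. The convention $k^{1}\equiv 1$ reduces this to one-dimensional Poincar\'{e} in $x_{1}$, and handling large $x_{1}$-diameter requires a slicing or iteration argument, which is the most delicate part of the proof.
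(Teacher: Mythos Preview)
Your argument is essentially the paper's: the truncated test function $(w-M-\epsilon)^{+}$, dropping the $f$-term via the sign condition, absorbing the drift via the subunit bound and Young's inequality, and closing with one-dimensional Poincar\'e in $x_{1}$ from $k^{1}\equiv 1$ all match. The one technical difference is the large-diameter case: rather than slicing in $x_{1}$ and propagating through interfaces, the paper rescales by setting $\widetilde{\Omega}=\Omega/N$, $\widetilde{w}(x)=w(Nx)$, $\widetilde{\mathcal{A}}(x,z)=N^{-2}\mathcal{A}(Nx,z)$, $\widetilde{\vec{\gamma}}(x,z)=N^{-1}\vec{\gamma}(Nx,z)$, $\widetilde{f}(x,z)=f(Nx,z)$; the subunit constant is invariant under this dilation while the diameter shrinks below $(CB_{\gamma})^{-1}$, so a single application suffices and no interface bookkeeping is needed. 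Your slicing can be made rigorous via a continuity/induction argument in the $x_{1}$-level, but the scaling is cleaner. One small correction in your reflection step: the transformed drift should be $\widetilde{\vec{\gamma}}(x,z)=\vec{\gamma}(x,-z)$ (no extra minus sign) for $\widetilde{w}=-w$ to satisfy the subsolution inequality, though the subunit check is unaffected either way.
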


{\ 
\proof%
Assume first that $w$ satisfies (\ref{subso}), and recall that $w$ is smooth
by assumption. Let $\omega _{0}=\sup_{\partial \Omega }w^{+}$ and 
\begin{equation*}
v_{\tau }\left( x\right) =\left( w(x)-\omega _{0}-\tau \right) ^{+},\qquad
\tau >0,\,x\in \Omega .
\end{equation*}%
Then $v_{\tau }$ is nonnegative and Lipschitz continuous in $\Omega $, and $%
v_{\tau }$ has compact support in $\Omega $ since $w$ is continuous on $%
\overline{\Omega }$ by hypothesis and $\tau >0$. Also, for any $x$, $v_{\tau
}(x)>0$ if and only if $w(x)>\omega _{0}+\tau $. Let $\Phi _{\tau }=\{x\in
\Omega :v_{\tau }(x)>0\}.$ Then $v_{\tau }=\chi _{\Phi _{\tau }}(w-\omega
_{0}-\tau )$ on $\Omega $ and $\nabla v_{\tau }=\chi _{\Phi _{\tau }}\nabla
w $ a.e. on $\Omega $. By choosing ${\varphi =}v_{\tau }$ in (\ref{subsso}),
we obtain 
\begin{equation*}
\int \nabla v_{\tau }\cdot \mathcal{A}\left( x,w\right) \nabla w\leq \int
v_{\tau }\vec{\gamma}\left( x,w\right) \cdot \nabla w+\int v_{\tau }f\left(
x,w\right) .
\end{equation*}%
In this inequality, the right-hand side satisfies 
\begin{equation*}
\int_{\Phi _{\tau }}v_{\tau }\vec{\gamma}\left( x,w\right) \cdot \nabla
w+\int_{\Phi _{\tau }}v_{\tau }f\left( x,w\right) =\int_{\Phi _{\tau
}}v_{\tau }\vec{\gamma}\left( x,w\right) \cdot \nabla v_{\tau }+\int_{\Phi
_{\tau }}v_{\tau }f\left( x,w\right) ,
\end{equation*}%
while the left-hand side is $\int_{\Phi _{\tau }}\nabla v_{\tau }\cdot 
\mathcal{A}\left( x,w\right) \nabla v_{\tau }.$ Hence 
\begin{equation}
\int_{\Phi _{\tau }}\nabla v_{\tau }\cdot \mathcal{A}\left( x,w\right)
\nabla v_{\tau }\leq \int_{\Phi _{\tau }}v_{\tau }\vec{\gamma}\left(
x,w\right) \cdot \nabla v_{\tau }+\int_{\Phi _{\tau }}v_{\tau }f\left(
x,w\right) =I+II.  \label{7.12}
\end{equation}%
We have 
\begin{equation*}
II=\int_{\Phi _{\tau }}v_{\tau }f\left( x,v_{\tau }\right) +\int_{\Phi
_{\tau }}v_{\tau }\big[f\left( x,w\right) -f\left( x,v_{\tau }\right) \big]%
\leq 0\,+\,0=0
\end{equation*}%
by (\ref{eff}) and (\ref{fnc}) since $v_{\tau }>0$ and $w>v_{\tau }$ on $%
\Phi _{\tau }$ (note that $\omega \geq 0$). Recalling that $w$ is assumed to
be continuous on $\overline{\Omega }$ and so is bounded there, we may choose 
$M$ with $M>w$ on $\Omega $. Since $\vec{\gamma}$ is of subunit type with
respect to $\mathcal{A}$, Schwarz's inequality implies that 
\begin{equation*}
I\leq \frac{1}{4}\int_{\Phi _{\tau }}\nabla v_{\tau }\cdot \mathcal{A}\left(
x,w\right) \nabla v_{\tau }+4B_{\gamma }^{2}\int_{\Phi _{\tau }}v_{\tau
}^{2},
\end{equation*}%
where $B_{\gamma }=B_{\gamma }\left( \Omega _{\tau },M\right) $. From (\ref%
{7.12}) and the estimates for $I$ and $II$, we obtain 
\begin{equation}
\int_{\Phi _{\tau }}\nabla v_{\tau }\cdot \mathcal{A}\left( x,w\right)
\nabla v_{\tau }\leq C^{2}B_{\gamma }^{2}\int_{\Phi _{\tau }}v_{\tau }^{2}.
\label{7.13}
\end{equation}%
By the one-dimensional Sobolev estimate, 
\begin{equation*}
\int_{\Phi _{\tau }}v_{\tau }^{2}\leq C^{2}R^{2}\int_{\Phi _{\tau
}}\left\vert \partial _{1}v_{\tau }\right\vert ^{2},\quad R=\mathop{\rm diam}%
(\Omega ).
\end{equation*}%
Combining this with (\ref{7.13}) gives 
\begin{equation*}
\int_{\Phi _{\tau }}v_{\tau }^{2}\leq C^{2}R^{2}B_{\gamma }^{2}\int_{\Phi
_{\tau }}v_{\tau }^{2}.
\end{equation*}%
Thus, assuming that $R<(CB_{\gamma })^{-1}$, we obtain $\int_{\Phi _{\tau
}}v_{\tau }^{2}=0$. Then $\Phi _{\tau }$ is empty, i.e., $v_{\tau }=0$ on $%
\Omega $ and therefore $w\leq \omega _{0}+\tau $ on $\Omega $. }

To drop the restriction that $R<\big(CB_{\gamma }(\Omega _{\tau },M)\big)%
^{-1}$, let $N=CB_{\gamma }\mathop{\rm diam}(\Omega )+1$ and 
\begin{equation*}
\widetilde{\Omega }=\frac{\Omega }{N}=\left\{ \frac{x}{N}:x\in \Omega
\right\} .
\end{equation*}%
Also, for $x\in \widetilde{\Omega }$, let 
\begin{eqnarray*}
\widetilde{w}\left( x\right) &=&w\left( Nx\right) ,\qquad \widetilde{%
\mathcal{A}}\left( x,z\right) =N^{-2}\mathcal{A}\left( Nx,z\right) , \\
\widetilde{\vec{\gamma}}\left( x,z\right) &=&N^{-1}\vec{\gamma}\left(
Nx,z\right) ,\qquad \widetilde{f}\left( x,z\right) =f\left( Nx,z\right) .
\end{eqnarray*}%
Then if $x\in \widetilde{\Omega }$, 
\begin{eqnarray*}
&&\mathop{\rm div}\widetilde{\mathcal{A}}\left( x,\widetilde{w}\right)
\nabla \widetilde{w}+\widetilde{\vec{\gamma}}\left( x,\widetilde{w}\right)
\cdot \nabla \widetilde{w}+\widetilde{f}\left( x,\widetilde{w}\right) \\
&=&N^{-2}\mathop{\rm div}\big[\mathcal{A}\left( Nx,w\left( Nx\right) \right)
\nabla \big(w\left( Nx\right) \big)\big]+N^{-1}\vec{\gamma}\left( Nx,w\left(
Nx\right) \right) \cdot \nabla \big(w\left( Nx\right) \big) \\
&&+f\left( Nx,w\left( Nx\right) \right) \\
&=&\mathop{\rm div}\mathcal{A}\left( y,w\left( y\right) \right) \nabla
w\left( y\right) +\vec{\gamma}\left( y,w\left( y\right) \right) \cdot \nabla
w\left( y\right) +f\left( y,w\left( y\right) \right) \geq 0
\end{eqnarray*}%
by (\ref{subso}), where $y=Nx\in \Omega $. Thus $\widetilde{w}$ satisfies an
analogue of (\ref{subso}) in $\tilde{\Omega}$. Moreover, since $\vec{\gamma}$
is of subunit type with respect to $\mathcal{A}$ in $\Omega \times \mathbb{R}
$ (Definition \ref{subunitt}), it follows that $\widetilde{\vec{\gamma}}$ is
of subunit type with respect to $\widetilde{\mathcal{A}}$ in $\tilde{\Omega}%
\times \mathbb{R}$ with constant $\widetilde{B_{\gamma }}=B_{\gamma }$:
indeed, 
\begin{eqnarray*}
\left\vert \widetilde{\vec{\gamma}}\left( x,z\right) \cdot \xi \right\vert
^{2} &=&\left\vert N^{-1}\vec{\gamma}\left( Nx,z\right) \cdot \xi
\right\vert ^{2}\leq N^{-2}B_{\gamma }^{2}\xi ^{t}\mathcal{A}\left(
Nx,z\right) \xi \\
&=&B_{\gamma }^{2}\xi ^{t}\widetilde{\mathcal{A}}\left( x,z\right) \xi .
\end{eqnarray*}%
Also, 
\begin{eqnarray*}
\widetilde{f}\left( x,z\right) \mathop{\rm sign}z &=&f\left( Nx,z\right) %
\mathop{\rm sign}z\leq 0\qquad \text{in }\widetilde{\Omega }\times \mathbb{R}%
, \\
\widetilde{f}\left( x,z_{1}\right) -\widetilde{f}\left( x,z_{2}\right)
&=&f\left( Nx,z_{1}\right) -f\left( Nx,z_{2}\right) \leq 0\quad \text{ if }%
z_{1}\geq z_{2},\,x\in \widetilde{\Omega }
\end{eqnarray*}%
and 
\begin{equation*}
\mathop{\rm diam}\widetilde{\Omega }=\dfrac{\mathop{\rm diam}\Omega }{N}=%
\dfrac{\mathop{\rm diam}\Omega }{CB_{\gamma }\mathop{\rm diam}\Omega +1}<%
\dfrac{1}{CB_{\gamma }}=\dfrac{1}{C\widetilde{B_{\gamma }}}.
\end{equation*}%
Then $\widetilde{w}\leq \sup_{\partial \widetilde{\Omega }}(\widetilde{w}%
)^{+}+\tau $ in $\widetilde{\Omega }$ for all $\tau >0$, i.e., $w\leq
\sup_{\partial \Omega }w^{+}+\tau $ in $\Omega $ for all $\tau >0$. Letting $%
\tau \rightarrow 0$, we obtain $w\leq \sup_{\partial \Omega }w^{+}$ as
desired.

To prove (\ref{lowerbound}), let (\ref{superso}) hold and define $\widetilde{%
\mathcal{A}}(x,z)=\mathcal{A}(x,-z)$, $\widetilde{\vec{\gamma}}(x,z)=\vec{%
\gamma}(x,-z)$ and $\widetilde{f}(x,z)=-f(x,z)$ for $x\in \Omega $. Since $%
\mathcal{A}$ satisfies (\ref{hellip}) and $\vec{\gamma}$ is subunit with
respect to $\mathcal{A}$ in $\Omega \times \mathbb{R}$, it follows that $%
\widetilde{\mathcal{A}}$ satisfies (\ref{hellip}) and $\widetilde{\vec{\gamma%
}}$ is subunit with respect to $\widetilde{\mathcal{A}}$ in $\Omega \times 
\mathbb{R}$. From (\ref{eff}) and (\ref{fnc}) for $f$, we obtain (\ref{eff})
and (\ref{fnc}) for $\widetilde{f}$: 
\begin{eqnarray*}
\widetilde{f}\left( x,z\right) \mathop{\rm sign}z &=&-f\left( x,-z\right) %
\mathop{\rm sign}z \\
&=&f(x,-z)\mathop{\rm sign}(-z)\leq 0\qquad \text{in }\Omega \times \mathbb{R%
}, \\
\widetilde{f}\left( x,z_{1}\right) -\widetilde{f}\left( x,z_{2}\right)
&=&f\left( x,-z_{2}\right) -f\left( x,z_{1}\right) \leq 0\quad \text{ if }%
z_{1}\geq z_{2},\,x\in \Omega .
\end{eqnarray*}%
Now let $\widetilde{w}(x)=-w(x)$ and note that 
\begin{equation*}
\mathop{\rm div}\widetilde{\mathcal{A}}\left( x,\widetilde{w}(x)\right)
\nabla \widetilde{w}(x)+\widetilde{\vec{\gamma}}\left( x,\widetilde{w}%
(x)\right) \cdot \nabla \widetilde{w}(x)+\widetilde{f}\left( x,\widetilde{w}%
(x)\right) \,=
\end{equation*}%
\begin{equation*}
-\left[ \mathop{\rm div}\mathcal{A}\left( x,w(x)\right) \nabla w(x)+\vec{%
\gamma}\left( x,w(x)\right) \cdot \nabla w(x)+f\left( x,w(x)\right) \right]
\,\geq 0\quad \text{in $\Omega $.}
\end{equation*}%
By the previous case, $\sup_{\Omega }\widetilde{w}\leq \sup_{\partial \Omega
}\widetilde{w}^{+}$. Equivalently, 
\begin{equation*}
\sup_{x\in \Omega }(-w(x))\leq \sup_{x\in \partial \Omega
}(-w(x))^{+},\qquad \text{or}\qquad \inf_{x\in \Omega }w(x)\geq \inf_{x\in
\partial \Omega }-(w(x)^{-}),
\end{equation*}
which completes the proof of Theorem \ref{maxp}.{%
\endproof%
}

\subsection{{A Comparison Principle\label{compsect}}}

\begin{lemma}
{\ \label{comparison}Suppose that }$\mathcal{A}\left( x,z\right) $ satisfies
(\ref{hellip}) and (\ref{xtra}), and that $f$ is nonincreasing in $z$, i.e., 
\begin{equation}
f_{z}\left( x,z\right) \leq 0,\qquad \left( x,z\right) \in \Gamma .
\label{finc}
\end{equation}%
{Let $w_{0}$, $w_{1}\in $}$\left( H_{\mathcal{X}}^{1,2}\left( \Omega \right)
\bigcup \mathcal{C}^{\infty }\left( \Omega \right) \right) \bigcap \,${$%
\mathcal{C}^{0}\left( \overline{\Omega }\right) $\ satisfy $w_{0}+\kappa
\geq w_{1}$ on $\partial \Omega $ for some constant }$\kappa \geq 0${\ and 
\begin{equation}
\mathcal{P}\left( w_{1}\right) \geq \mathcal{P}\left( w_{0}\right) \,\text{
in }\Omega  \label{O}
\end{equation}%
(in the sense of Definition \ref{Xweak}), where 
\begin{equation*}
\mathcal{P}\left( w\right) =\mathop{\rm div}\mathcal{A}\left( x,w\right)
\nabla w+f\left( x,w\right) .
\end{equation*}%
Then $w_{0}+\kappa \geq w_{1}$ in $\Omega $. In particular, if $\mathcal{P}%
w_{0}=\mathcal{P}w_{1}\,$ in $\Omega $ and $w_{0}=w_{1}\,$on $\partial
\Omega $, then $w_{0}=w_{1}$ in $\overline{\Omega }$. }
\end{lemma}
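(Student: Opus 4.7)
The natural approach is a Caccioppoli-type energy argument using $v = (w_1 - w_0 - \kappa)^+$ as test function. Since $w_0,w_1\in \mathcal{C}^0(\overline{\Omega})$ with $w_0+\kappa\geq w_1$ on $\partial\Omega$, the set $\Phi=\{v>0\}$ is compactly contained in $\Omega$, hence $v\in \mathop{\rm Lip}_0(\Omega)$ is admissible in Definition \ref{Xweak}. The monotonicity hypothesis $f_z\leq 0$ together with $w_1>w_0$ on $\Phi$ yields $f(x,w_1)-f(x,w_0)\leq 0$ there, eliminating the zero-order contribution from the right-hand side.

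Setting $u=w_1-w_0$ (so $\nabla v=\nabla u$ on $\Phi$) and decomposing
\[
\mathcal{A}(x,w_1)\nabla w_1-\mathcal{A}(x,w_0)\nabla w_0 = \mathcal{A}(x,w_1)\nabla u+[\mathcal{A}(x,w_1)-\mathcal{A}(x,w_0)]\nabla w_0,
\]
the weak inequality $\mathcal{P}(w_1)\geq \mathcal{P}(w_0)$ tested against $v$ becomes
\[
\int_\Phi |\nabla_{\!\!\sqrt{\mathcal{A}},w_1}u|^2\,dx\leq \int_\Phi |\nabla u|\cdot \bigl|[\mathcal{A}(x,w_1)-\mathcal{A}(x,w_0)]\nabla w_0\bigr|\,dx.
\]
The cross term is the crucial one: I would write
\[
\mathcal{A}(x,w_1)-\mathcal{A}(x,w_0)=u\int_0^1\partial_z\mathcal{A}(x,w_0+tu)\,dt,
\]
invoke the super-subordination hypothesis \eqref{xtra} uniformly in $t\in[0,1]$, and use Lemma \ref{admisall} to replace $\mathcal{A}(x,w_0+tu)$ and $k^*(x,w_0+tu)$ by their values at $w_1$ up to a constant. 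Together with the pointwise bound $\sqrt{k^*(x,w_1)}\,|\nabla u|\leq |\nabla_{\!\!\sqrt{\mathcal{A}},w_1}u|$ from \eqref{hellip}, Young's inequality with absorption delivers the Caccioppoli estimate
\[
\int_\Phi |\nabla_{\!\!\sqrt{\mathcal{A}},w_1}u|^2\,dx\leq C B_\mathcal{A}^2\int_\Phi u^2\,|\nabla_{\!\!\sqrt{\mathcal{A}},w_1}w_0|^2\,dx.
\]

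To conclude $v\equiv 0$, I would apply the one-dimensional Sobolev inequality in the nondegenerate $x_1$-direction (recall $k^1\equiv 1$ by \eqref{unifbdk1}) and then rescale $x$ (as in the proof of Theorem \ref{maxp}) to make the effective Poincaré constant times $CB_\mathcal{A}^2\|\nabla_{\!\!\sqrt{\mathcal{A}},w_1}w_0\|_{L^\infty}^2$ arbitrarily small. When $\kappa=0$ we have $u=v$ on $\Phi$ and the resulting absorption yields $\int v^2=0$ immediately. The principal obstacle is the case $\kappa>0$, where $u=v+\kappa$ leaves a residual $\kappa^2|\Phi|$ term on the right that does not vanish through naive absorption. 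I expect to handle this either by a De Giorgi / Moser iteration of the energy inequality (upgrading $L^2$ to $L^\infty$ control of $v$ and exploiting $u=\kappa$ on $\partial\Phi\cap\Omega$), or equivalently by recasting $u$ as a weak subsolution of the linearized divergence equation
\[
\mathop{\rm div}\bigl[\mathcal{A}(x,w_1)\nabla u+ u\,H(x)\nabla w_0\bigr]+ u\,g(x)\geq 0,
\]
where $H(x)=\int_0^1\partial_z\mathcal{A}(x,w_0+tu)\,dt$ and $g(x)=\int_0^1 f_z(x,w_0+tu)\,dt\leq 0$; by \eqref{xtra} the convective vector field $H(x)\nabla w_0$ is of subunit type with respect to $\mathcal{A}(x,w_1)$, so the maximum principle Theorem \ref{maxp} applies to $u-\kappa$ after regarding $\vec\gamma=H(x)\nabla w_0$ and the zero-order coefficient $g(x)$. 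The "in particular" statement then follows by applying the conclusion twice, once to $(w_0,w_1)$ and once to $(w_1,w_0)$, both with $\kappa=0$.
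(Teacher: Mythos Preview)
Your overall strategy is sound but there is a genuine gap in the closing step, and it is precisely the gap the paper's proof is designed to avoid. Your Caccioppoli inequality
\[
\int_\Phi |\nabla_{\!\!\sqrt{\mathcal{A}},w_1}u|^2\,dx\leq C B_\mathcal{A}^2\int_\Phi u^2\,|\nabla_{\!\!\sqrt{\mathcal{A}},w_1}w_0|^2\,dx
\]
carries the weight $|\nabla_{\!\!\sqrt{\mathcal{A}},w_1}w_0|^2$, which under the hypotheses is only in $L^1(\Omega)$, not in $L^\infty$. Your absorption via one-dimensional Sobolev plus rescaling explicitly invokes $\|\nabla_{\!\!\sqrt{\mathcal{A}},w_1}w_0\|_{L^\infty}^2$, which is unavailable: $w_0$ is merely in $H_{\mathcal{X}}^{1,2}\cap \mathcal{C}^0(\overline{\Omega})$. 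The same objection applies to your proposed fallback through Theorem~\ref{maxp}: for the drift $H(x)\nabla w_0$ to be of subunit type one again needs a pointwise bound on $\sqrt{k^*}\,|\nabla w_0|$. (A minor additional point: $v=(w_1-w_0-\kappa)^+$ is compactly supported and continuous but not in $\mathop{\rm Lip}_0(\Omega)$ in general, so a mollification step is needed before it can be used in Definition~\ref{Xweak}.)

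The paper resolves both the $\kappa>0$ difficulty and the lack of gradient bounds simultaneously by testing not with $u_\tau^+$ but with the \emph{logarithmic} function $u_\tau^+/(u_\tau^++\varepsilon)$ (mollified), where $u_\tau=w_1-w_0-\kappa-\tau$. Writing the difference via the interpolation $w_t=tw_1+(1-t)w_0$ as $(u+\kappa)\,\vec a(x)+\tilde{\mathbf A}(x)\nabla u$ with $\tilde{\mathbf A}=\int_0^1\mathcal{A}(x,w_t)\,dt$, the cross term after Young's inequality contains the factor $(u_\tau^++\tau)^2/(u_\tau^++\varepsilon)^2\leq 1$ (for $\tau\leq\varepsilon$) multiplying $\int_0^1|\nabla_{\!\!\sqrt{\mathcal{A}},w_t}w_t|^2\,dt$, which is then bounded by the fixed $H_{\mathcal{X}}^{1,2}$-norms of $w_0,w_1$ --- no $L^\infty$ gradient needed. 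This yields
\[
\int_\Omega \frac{\tilde{\mathbf A}\nabla u_\tau^+\cdot\nabla u_\tau^+}{(u_\tau^++\varepsilon)^2}\,dx\leq CB_{\mathcal{A}}^2
\]
uniformly in $0<\tau\leq\varepsilon$, which (using $k^1\equiv 1$) says $\int_\Omega|\partial_1\ln(1+u^+/\varepsilon)|^2\leq C$. The one-dimensional Sobolev inequality then bounds $\int_\Omega|\ln(1+u^+/\varepsilon)|^2$ uniformly in $\varepsilon$, and letting $\varepsilon\to 0$ forces $u^+\equiv 0$. The logarithmic test function is the missing idea: it converts the troublesome $u^2$-weighted gradient term into a bounded one and gives a quantity whose blow-up as $\varepsilon\to 0$ detects positivity of $u^+$.
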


{\ 
\proof%
First we will assume that $w_{0}$, $w_{1}\in $}$H_{\mathcal{X}}^{1,2}\left(
\Omega \right) \bigcap \,$$\mathcal{C}^{0}\left( \overline{\Omega }\right) $%
. {Given $\tau >0,$ {let }$u_{\tau }=w_{1}-w_{0}-\kappa -\tau $ and {\ }$%
u_{\tau }^{+}=\max \left\{ u_{\tau },0\right\} $. Then $u_{\tau }^{+}$ is a
nonnegative continuous function compactly supported in $\Omega $. Denote $K=%
\mathop{\rm supp}\left( u_{\tau }^{+}\right) $ and $\delta _{0}=%
\mathop{\rm
dist}\left( K,\partial \Omega \right) $. Let $\psi _{\delta }$ be a smooth
approximation of the identity with $\delta >0$, i.e., $\psi _{\delta }\left(
x\right) =\delta ^{-n}\psi \left( x/\delta \right) $ where $\psi \in 
\mathcal{C}_{0}^{\infty }\left( B_{1}\right) $ and $\int \psi dx=1$; here $%
B_{1}$ denotes the unit ball in $\mathbb{R}^{n}$. For $0<\varepsilon <1$ and 
$0<\delta <\delta _{0}/2$, set 
\begin{equation*}
{\varphi }_{\tau ,\varepsilon ,\delta }=\frac{u_{\tau }^{+}}{u_{\tau
}^{+}+\varepsilon }\ast \psi _{\delta }.
\end{equation*}%
Then $\varphi _{\tau ,\varepsilon ,\delta }$ is nonnegative, smooth and
compactly supported in $\Omega $. {From (\ref{O}), 
\begin{equation*}
\int_{\Omega }\left[ \mathcal{A}\left( x,w_{1}\right) \nabla w_{1}-\mathcal{A%
}\left( x,w_{0}\right) \nabla w_{0}\right] \,\nabla {\varphi }_{\tau
,\varepsilon ,\delta }-\int_{\Omega }\left[ f\left( x,w_{1}\right) -f\left(
x,w_{0}\right) \right] \,{\varphi }_{\tau ,\varepsilon ,\delta }\leq 0.
\end{equation*}%
}Since $w_{0},w_{1},\varphi _{\tau ,\varepsilon ,\delta }\in H_{\mathcal{X}%
}^{1,2}\left( \Omega \right) $ and $\varphi _{\tau ,\varepsilon ,\delta }$
is continuous and has compact support, all the integrals above are
absolutely convergent. By Proposition 1.2.2 in \cite{FSS}, ${\varphi }_{\tau
,\varepsilon ,\delta }\rightarrow u_{\tau }^{+}/$}$\left( {u_{\tau
}^{+}+\varepsilon }\right) ${\ in $H_{\mathcal{X}}^{1,2}\left( \Omega
^{\prime \prime }\right) $ as $\delta \rightarrow 0^{+}$ for any open $%
\Omega ^{\prime \prime }\Subset \Omega ^{\prime }$. Letting $\delta
\rightarrow 0^{+}$, we obtain{{\ 
\begin{eqnarray}
&&\varepsilon \int_{\Omega }\left[ \mathcal{A}\left( x,w_{1}\right) \nabla
w_{1}-\mathcal{A}\left( x,w_{0}\right) \nabla w_{0}\right] \,\frac{\nabla
u_{\tau }^{+}}{\left( u_{\tau }^{+}+\varepsilon \right) ^{2}}
\label{subsol2} \\
&&-\int_{\Omega }\left[ f\left( x,w_{1}\right) -f\left( x,w_{0}\right) %
\right] \,\frac{u_{\tau }^{+}}{u_{\tau }^{+}+\varepsilon }\leq 0,  \notag
\end{eqnarray}%
where we used that }}$\nabla \left( u_{\tau }^{+}/(u_{\tau }^{+}+\varepsilon
)\right) =\varepsilon \nabla u_{\tau }^{+}/$}$\left( u_{\tau
}^{+}+\varepsilon \right) ^{2}${. }

{Set {$u=w_{1}-w_{0}-\kappa $ and {$w_{t}=tw_{1}+\left( 1-t\right) w_{0}$, $%
0\leq t\leq 1$.} Then $\partial _{t}w_{t}=$}$u+\kappa $, and} {by (\ref{finc}%
), 
\begin{eqnarray}
\left[ f\left( x,w_{1}\right) -f\left( x,w_{0}\right) \right] \frac{u_{\tau
}^{+}}{u_{\tau }^{+}+\varepsilon } &=&\frac{u_{\tau }^{+}}{u_{\tau
}^{+}+\varepsilon }\int_{0}^{1}\partial _{t}\left[ f\left( x,w_{t}\right) %
\right] \,dt  \notag \\
&=&\frac{u_{\tau }^{+}\left( u+\kappa \right) }{u_{\tau }^{+}+\varepsilon }%
\int_{0}^{1}f_{z}\left( x,w_{t}\right) \,dt\leq 0,  \label{subsll022}
\end{eqnarray}%
where we used that }$u+\kappa \geq 0$ on the support of $u_{\tau }^{+}$;
recall that $\kappa \ge 0$ by hypothesis. Also,{{\ 
\begin{eqnarray}
&&\mathcal{A}\left( x,w_{1}\right) \nabla w_{1}-\mathcal{A}\left(
x,w_{0}\right) \nabla w_{0}=\int_{0}^{1}\partial _{t}\left\{ \mathcal{A}%
\left( x,w_{t}\right) \nabla w_{t}\right\} \,dt  \notag \\
&=&\left( u+\kappa \right) \,\left\{ \int_{0}^{1}\mathcal{A}_{z}\left(
x,w_{t}\right) \nabla w_{t}\,\,dt\right\} +\left\{ \int_{0}^{1}\mathcal{A}%
\left( x,w_{t}\right) \,dt\right\} \nabla u  \notag \\
&=&\left( u+\kappa \right) \,\vec{a}\left( x\right) +\mathbf{\mathbf{\tilde{A%
}}}\left( x\right) \nabla u,\qquad \text{where}  \label{subsll020}
\end{eqnarray}%
\begin{equation*}
\vec{a}\left( x\right) =\int_{0}^{1}\mathcal{A}_{z}\left( x,w_{t}\right)
\nabla w_{t}\,\,dt\qquad \text{and}\qquad \mathbf{\mathbf{\tilde{A}}}\left(
x\right) =\int_{0}^{1}\mathcal{A}\left( x,w_{t}\right) \,dt.
\end{equation*}%
Using (\ref{subsll020}) in (\ref{subsol2}),{\ using (\ref{subsll022}) to
omit the term in (\ref{subsol2}) which involves the difference of $f$
values, }and using the facts that $u=u_{\tau }^{+}+\tau $ and $\nabla
u=\nabla u_{\tau }^{+}$ on the support of $u_{\tau }^{+}$ yields}} 
\begin{equation}
\int_{\Omega }\mathbf{\mathbf{\tilde{A}}}\left( x\right) \nabla u_{\tau
}^{+}\cdot \frac{\nabla u_{\tau }^{+}}{\left( u_{\tau }^{+}+\varepsilon
\right) ^{2}}\leq -\int_{\Omega }\left[ \left( {{u_{\tau }^{+}+\tau }}%
\right) \,\vec{a}\left( x\right) \right] \,\frac{\nabla u_{\tau }^{+}}{%
\left( u_{\tau }^{+}+\varepsilon \right) ^{2}}.  \label{subsol05}
\end{equation}%
{{{\ } Now, by Schwarz's inequality and the definition of }}$\,\vec{a}\left(
x\right) ,$ 
\begin{eqnarray*}
&&\left\vert \int_{\Omega }\left[ \left( {{u_{\tau }^{+}+\tau }}\right) \,%
\vec{a}\left( x\right) \right] \,\frac{\nabla u_{\tau }^{+}}{\left( u_{\tau
}^{+}+\varepsilon \right) ^{2}}\right\vert \\
&=&\left\vert \int_{\Omega }\left[ \left( {{u_{\tau }^{+}+\tau }}\right)
\,\left\{ \int_{0}^{1}\nabla u_{\tau }^{+}\cdot \mathcal{A}_{z}\left(
x,w_{t}\right) \nabla w_{t}\,\,dt\right\} \right] \,\frac{1}{\left( u_{\tau
}^{+}+\varepsilon \right) ^{2}}\right\vert \\
&\leq &\alpha \int_{\Omega }\left\{ \int_{0}^{1}\dfrac{\left\vert \mathcal{A}%
_{z}\left( x,w_{t}\right) \nabla u_{\tau }^{+}\right\vert ^{2}}{k^{\ast
}\left( x,w_{t}\right) }\,dt\right\} \,\frac{1}{\left( u_{\tau
}^{+}+\varepsilon \right) ^{2}} \\
&&+\dfrac{C}{\alpha }\int_{\Omega }\,\left\{ \int_{0}^{1}k^{\ast }\left(
x,w_{t}\right) \left\vert \nabla w_{t}\right\vert ^{2}\,dt\right\} \,\frac{%
\left( {{u_{\tau }^{+}+\tau }}\right) ^{2}}{\left( u_{\tau }^{+}+\varepsilon
\right) ^{2}}.
\end{eqnarray*}%
{{In fact, in the last four integrations as well as those below, the domain
of integration can be restricted to the compact subset supp $u_{\tau }^{+}$
of $\Omega $. Then, since $k^{\ast }\left( x,w_{t}\right) \left\vert \nabla
w_{t}\right\vert ^{2}\leq \left\vert \nabla _{\!\!\sqrt{\mathcal{A}}%
,w_{t}}w_{t}\right\vert ^{2}$ due to (\ref{hellip}), by assuming that }}$%
\tau \leq \varepsilon ${\ and{\ applying{\ (\ref{xtra})} to the first term
on the right, we obtain }} 
\begin{eqnarray*}
&&\left\vert \int_{\Omega }\left[ \left( {{u_{\tau }^{+}+\tau }}\right) \,%
\vec{a}\left( x\right) \right] \,\frac{\nabla u_{\tau }^{+}}{\left( u_{\tau
}^{+}+\varepsilon \right) ^{2}}\right\vert \\
&\leq &\alpha B_{\mathcal{A}}^{2}\int_{\Omega }\dfrac{\left\{
\int_{0}^{1}\nabla u_{\tau }^{+}\cdot \mathcal{A}\left( x,w_{t}\right)
\nabla u_{\tau }^{+}\,\,dt\right\} }{\left( u_{\tau }^{+}+\varepsilon
\right) ^{2}}+\dfrac{C}{\alpha }\int_{\Omega }\left\{ \int_{0}^{1}\left\vert
\nabla _{\!\!\sqrt{\mathcal{A}},w_{t}}w_{t}\right\vert ^{2}\,\,dt\right\} \\
&\leq &\alpha B_{\mathcal{A}}^{2}\int_{\Omega }\dfrac{\nabla u_{\tau
}^{+}\cdot \mathbf{\mathbf{\tilde{A}}}\left( x\right) \nabla u_{\tau }^{+}}{%
\left( u_{\tau }^{+}+\varepsilon \right) ^{2}}+\frac{C}{\alpha },
\end{eqnarray*}%
{{where we used that $w_{t}\in $}}$H_{\mathcal{X}}^{1,2}\left( \Omega
\right) ${\ for $0\leq t\leq 1$, and hence the constant }$C$ is independent
of $\tau $ and $\varepsilon ${{. Taking $\alpha =1/2B_{\mathcal{A}^{2}}$,
combining with (\ref{subsol05}) and absorbing into the left gives 
\begin{equation}
\int_{\Omega }\mathbf{\mathbf{\tilde{A}}}\left( x\right) \nabla u_{\tau
}^{+}\cdot \frac{\nabla u_{\tau }^{+}}{\left( u_{\tau }^{+}+\varepsilon
\right) ^{2}}\leq CB_{\mathcal{A}}^{2},\qquad 0<\tau \leq \varepsilon .
\label{subsll23}
\end{equation}%
}}

From (\ref{hellip}) and the fact that $k^{1}(x,z)=1$, we have 
\begin{equation*}
\mathbf{\mathbf{\tilde{A}}}\left( x\right) \nabla u_{\tau }^{+}\cdot \nabla
u_{\tau }^{+}\geq \left( \partial _{1}u_{\tau }^{+}\right) ^{2}.
\end{equation*}%
Then, by (\ref{subsll23}) and the identity 
\begin{equation*}
\frac{\partial _{1}\left( u_{\tau }^{+}\right) }{u_{\tau }^{+}+\varepsilon }=%
\frac{\partial _{1}\left( \frac{u_{\tau }^{+}}{\varepsilon }+1\right) }{%
\frac{u_{\tau }^{+}}{\varepsilon }+1}=\partial _{1}\ln \left( \frac{u_{\tau
}^{+}}{\varepsilon }+1\right) ,
\end{equation*}%
it follows that 
\begin{equation*}
\int_{\Omega }\left\vert \partial _{1}\ln \left( \frac{u_{\tau }^{+}}{%
\varepsilon }+1\right) \right\vert ^{2}dx\leq CB_{\mathcal{A}}^{2},\qquad
0<\tau \leq \varepsilon .
\end{equation*}%
Applying the one-dimensional Sobolev inequality and letting $\tau
\rightarrow 0$ gives 
\begin{equation*}
\dfrac{1}{C\mathop{\rm diam}\Omega }\int_{\Omega }\left\vert \ln \left( 
\frac{u^{+}}{\varepsilon }+1\right) \right\vert ^{2}dx\leq \int_{\Omega
}\left\vert \partial _{1}\ln \left( \frac{u^{+}}{\varepsilon }+1\right)
\right\vert ^{2}dx\leq CB_{\mathcal{A}}^{2}
\end{equation*}%
{{uniformly in $\varepsilon >0$. Since $u$ is continuous in $\overline{%
\Omega }$, it follows that $u^{+}=0$ in $\Omega $, so that $u\leq 0$ in $%
\Omega $. Hence $w_{1}\leq w_{0}+\kappa $ in $\Omega $ as desired.}}

Now, for the general case, assume that {$w_{0}$, $w_{1}\in $}$\left( H_{%
\mathcal{X}}^{1,2}\left( \Omega \right) \bigcup \mathcal{C}^{\infty }\left(
\Omega \right) \right) \bigcap \,${$\mathcal{C}^{0}\left( \overline{\Omega }%
\right) $ and satisfy the hypotheses of the lemma. Consider a family} $%
\left\{ \Omega _{\varepsilon }\right\} _{\varepsilon >0}$ of open sets such
that $\Omega _{\varepsilon }\nearrow \Omega $ and%
\begin{equation*}
0<\inf \left\{ \mathop{\rm dist}\left( x,\partial \Omega \right) :x\in
\partial \Omega _{\varepsilon }\right\} <\varepsilon .
\end{equation*}%
Then $\Omega _{\varepsilon }\Subset \Omega $ for all $\varepsilon >0$, and
the function $\mu \left( \varepsilon \right) $ defined for $\varepsilon >0$
by $\mu \left( \varepsilon \right) =\max_{\partial \Omega _{\varepsilon
}}\left( w_{1}-w_{0}-\kappa \right) $ satisfies $\lim_{\epsilon \rightarrow
0}\mu \left( \epsilon \right) \leq 0$.

Since {$w_{0}$, $w_{1}\in $}$\left( H_{\mathcal{X}}^{1,2}\left( \Omega
\right) \bigcup \mathcal{C}^{\infty }\left( \Omega \right) \right) $ and $%
\Omega _{\varepsilon }\Subset \Omega $, it follows that {$w_{0}$, $w_{1}\in $%
}$H_{\mathcal{X}}^{1,2}\left( \Omega _{\varepsilon }\right) $ for each $%
\varepsilon >0$. Moreover, {$w_{0}+\kappa +\mu \left( \varepsilon \right)
\geq w_{1}$ on $\partial \Omega _{\varepsilon }$. By the previous case, 
\begin{equation*}
{w_{0}+\kappa +\mu \left( \varepsilon \right) \geq w_{1}\qquad }\text{in }%
\Omega _{\varepsilon }.
\end{equation*}%
The lemma now follows by letting $\varepsilon \rightarrow 0^{+}$.%
\endproof%
}

\subsection{Barriers for the Dirichlet problem\label{barriersect}}

In this section, we construct barrier functions for continuous weak
solutions of the Dirichlet problem in a smooth, strictly convex domain. An
interesting aspect of these barriers is that even though they are
specialized to a particular solution $w$, they depend only on the modulus
continuity of $w$.

\begin{lemma}
\label{barrier} Let $\Phi \Subset \Omega $ be a strongly convex domain. Let $%
\bar{r}=\mathop{\rm diam}\Phi $ and $\omega $ be a concave, strictly
increasing function with $\omega \in \mathcal{C}^{0}\left( \left[ 0,\bar{r}%
\right] \right) \bigcap \mathcal{C}^{2}\left( \left( 0,\bar{r}\right]
\right) $ and $\omega \left( 0\right) =0$. Suppose $\mathcal{A}$ satisfies (%
\ref{xtra}) and $\gamma $ is of subunit type with respect to $\mathcal{A}$.
For $m\in \mathbb{R}$, define a differential operator $\mathcal{L}_{m}$ by 
\begin{equation*}
\mathcal{L}_{m}h=\mathop{\rm div}\mathcal{A}\left( x,h\left( x\right)
+m\right) \nabla h.
\end{equation*}%
Then for every $x_{0}\in \partial \Phi $ and all positive constants $\eta $, 
$\nu $, $m_{0},K$, there exists a neighborhood $\mathcal{N}$of $x_{0}$ and a
function $h\in \mathcal{C}^{0}\left( \overline{\mathcal{N}}\right) \bigcap 
\mathcal{C}^{\infty }\left( \mathcal{N}\right) $ such that $\mathcal{N}%
\subset \left\{ \left\vert x-x_{0}\right\vert <\eta \right\} $ and 
\begin{equation*}
\left\{ 
\begin{array}{ll}
h\left( x_{0}\right) =0, &  \\ 
h\left( x\right) \leq -\omega \left( \left\vert x-x_{0}\right\vert \right) ,
& x\in \Phi \bigcap \mathcal{N}, \\ 
h\left( x\right) \leq -\nu , & x\in \Phi \bigcap \partial \mathcal{N}, \\ 
\mathcal{L}_{m}h\geq K, & x\in \Phi \bigcap \mathcal{N},\quad \left\vert
m\right\vert \leq m_{0}, \\ 
\bigtriangleup h=\sum_{i=1}^{n}\partial _{i}^{2}h>0, & x\in \Phi \bigcap 
\mathcal{N}.%
\end{array}%
\right.
\end{equation*}
\end{lemma}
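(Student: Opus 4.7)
My plan is to build $h$ as a concave, increasing profile of the excess distance to an exterior ball at $x_{0}$, supplemented by a quadratic correction in $x_{1}$ to exploit the uniform ellipticity in that direction (recall $k^{1}\equiv 1$ from the notational convention).

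\textbf{Geometric setup and ansatz.} Strong convexity (principal curvatures $\ge\lambda_{0}$) yields an exterior ball $B_{R}(y_{0})$ with $\overline{B_{R}(y_{0})}\cap\overline{\Phi}=\{x_{0}\}$ and $R\lesssim 1/\lambda_{0}$, centered at $y_{0}=x_{0}+R\vec{n}(x_{0})$. Setting $r(x)=|x-y_{0}|$ and $\rho(x)=r(x)-R\geq 0$ on $\overline{\Phi}$, a local representation of $\partial\Phi$ as a graph with Hessian $\ge\lambda_{0}I$ in the tangent directions, compared against the exterior sphere, gives the key quantitative bound
\[
\rho(x)\ \ge\ c_{0}\,|x-x_{0}|^{2},\qquad x\in\overline{\Phi}\cap U,
\]
for some neighborhood $U$ of $x_{0}$ and $c_{0}=c_{0}(\lambda_{0},R)>0$. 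I would then choose the concave increasing profile
\[
\Psi(u)\ =\ \omega\!\left(\sqrt{u/c_{0}}\right)\ +\ D\,u,\qquad u\in[0,T],
\]
which is smooth on $(0,T]$ with $\Psi(0)=0$ and $\Psi(c_{0}\tau^{2})\ge\omega(\tau)$, and set
\[
h(x)\ =\ -\Psi(\rho(x))\ +\ A\,(x_{1}-x_{0,1})^{2}.
\]
Then $h(x_{0})=0$, $h$ is $\mathcal{C}^{\infty}$ on $\Phi\cap\mathcal{N}$ (since $\rho>0$ there), and choosing $D\ge A/c_{0}$ gives $Dc_{0}|x-x_{0}|^{2}\ge A(x_{1}-x_{0,1})^{2}$, so that $h(x)\le -\omega(|x-x_{0}|)$ on $\overline{\Phi}\cap\mathcal{N}$. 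The bound $h\le -\nu$ on $\Phi\cap\partial\mathcal{N}$ follows once $\mathcal{N}$ is shrunk so that $\rho$ is bounded below on $\Phi\cap\partial\mathcal{N}$, making $\Psi(\rho)\ge\nu$ there.

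\textbf{PDE inequalities.} Direct computation gives $\nabla h=-\Psi'(\rho)\nabla r+2A(x_{1}-x_{0,1})e_{1}$ and
\[
\mathcal L_m h\ =\ -\Psi''(\rho)\,(\nabla r)^{T}\!\mathcal A\,\nabla r\ +\ 2A\,\mathcal A_{11}\ +\ E(x),
\]
with $\mathcal A=\mathcal A(x,h+m)$ and $E$ collecting $-\Psi'(\rho)\operatorname{tr}(\mathcal A D^{2}r)$, the $\operatorname{div}_{x}\mathcal A$ error $(\operatorname{div}_{x}\mathcal A)\cdot\nabla h$, and the quasilinear error $\nabla h^{T}(\partial_{z}\mathcal A)\nabla h$. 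Two structural facts close the estimate: the concavity of $\Psi$ makes the leading term nonnegative, and $\mathcal A_{11}\ge k^{1}=1$ by the diagonal condition (Notation section) gives the uniform contribution $2A\mathcal A_{11}\ge 2A$ independent of how badly $\mathcal A$ degenerates in the normal direction. The quasilinear error is controlled by hypothesis (\ref{xtra}): $|\partial_{z}\mathcal A\,\xi|^{2}\le B^{2}k^{*}\,\xi^{T}\mathcal A\,\xi$, which, via Young's inequality and boundedness of $k^{*}$, reduces it to a multiple of the leading term times $|\Psi''(\rho)|$. The explicit rate comparison
\[
|\Psi''(u)|\ \sim\ \Psi'(u)/u\qquad\text{as }u\to 0^{+},
\]
(obtained by differentiating the formula for $\Psi$) then lets the leading nonnegative term absorb all $\Psi'$-size errors for $\rho$ small, and likewise gives $\Delta h=-\Psi''(\rho)-\Psi'(\rho)(n-1)/r+2A>0$ on $\Phi\cap\mathcal N$ for $\mathcal N$ small. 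Fixing $A$ large enough to beat the remaining $A$-independent pieces of $E$ finally yields $\mathcal L_m h\ge K$ uniformly in $|m|\le m_{0}$.

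\textbf{Main obstacle.} The hard step is the case where $\mathcal A$ is degenerate along the outward normal $\vec n(x_{0})$: $(\nabla r)^{T}\mathcal A(x_{0})\nabla r=\vec n^{T}\mathcal A\vec n$ is only bounded below by $n_{1}(x_{0})^{2}$, which collapses when $n_{1}(x_{0})=0$. Without the $A$-correction the leading quadratic form can vanish at $x_{0}$ and cannot absorb the $\Psi'$-blow-up of $E$; this is precisely why the $A(x_{1}-x_{0,1})^{2}$ term is indispensable, trading a uniform $A$-contribution through $\mathcal A_{11}$ for a controlled loss in the dominance $h\le-\omega$, which is recovered by enlarging $D$. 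The uniformity in $|m|\le m_{0}$ then follows by shrinking $\mathcal N$ so that $|h|+|m|$ stays inside a fixed compact cylinder on which the constants $B_{\mathcal A}$, $B_{\gamma}$, and $\|\mathcal A\|_{\mathcal C^{1}}$ are bounded.
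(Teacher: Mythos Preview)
Your approach has a genuine gap precisely in the case you flag as the main obstacle. The curvature error $-\Psi'(\rho)\,\mathrm{tr}\big(\mathcal{A}\,D^{2}r\big)$ is not absorbed. Since $D^{2}r=\tfrac{1}{r}(I-\nabla r\otimes\nabla r)$, one has $\mathrm{tr}(\mathcal{A}\,D^{2}r)=\tfrac{1}{r}\sum_{\alpha}v_{\alpha}^{T}\mathcal{A}\,v_{\alpha}$ for an orthonormal basis $\{v_{\alpha}\}$ of $(\nabla r)^{\perp}$. When $n_{1}(x_{0})=0$ the direction $e_{1}$ is tangential to the exterior sphere, so one of these quadratic forms is at least $k^{1}=1$, and the error is $\le -\Psi'(\rho)/R\to -\infty$ as $\rho\to 0$. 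Your leading positive term $-\Psi''(\rho)(\nabla r)^{T}\mathcal{A}\,\nabla r$ lives in the \emph{orthogonal} direction $\nabla r$ and can vanish identically: take $n=2$, $\mathcal{A}=\mathrm{diag}(1,k(x_{1}))$ with $k(0)=0$, $x_{0}=0$ and $\vec n(x_{0})=e_{2}$; along $x_{1}=0$ inside $\Phi$ one gets $(\nabla r)^{T}\mathcal{A}\,\nabla r=k(0)=0$ while the curvature error equals $-\Psi'(x_{2})/(x_{2}+R)$. Your quadratic correction contributes only the bounded constant $2A\,a_{11}=2A$, which cannot compensate a term diverging to $-\infty$. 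The rate comparison $|\Psi''|\sim\Psi'/u$ is of no help here because the positive second-order piece and the negative curvature piece carry quadratic forms in \emph{different}, orthogonal directions.

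The paper's construction avoids this entirely by building the profile on the \emph{flat} coordinate $y_{n}$ (distance to the supporting hyperplane after a rotation $\Theta$): since $D^{2}y_{n}=0$, no curvature error appears. All remaining error terms come from differentiating $\mathcal{A}$ and, via the subordination bound (\ref{wirtm}), take the form $cB_{\mathcal A}^{2}+\tfrac14 h_{r}^{2}\,\vec\theta_{r}\mathcal{A}\,\vec\theta_{r}^{\,t}$ --- a bounded constant plus a term paired with the same quadratic form as a second-order piece. The specific choice $\psi=\sqrt{\omega}$ yields $-\psi''\ge(\psi')^{2}/\psi$, hence $h_{r}^{2}\le o(1)\,h_{rr}$ as $y_{n}\to 0$, which absorbs all such terms and leaves $\mathcal{L}_{m}h\ge -cB_{\mathcal A}^{2}+\tfrac12 h_{\ell\ell}\,\vec\theta_{\ell}\mathcal{A}\,\vec\theta_{\ell}^{\,t}\ge -cB_{\mathcal A}^{2}+\tfrac12 c_{1}m_{1}$. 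If you replace your radial $\rho$ by the linear $y_{n}$ and compose with $\sqrt{\omega}(\sqrt{\cdot})$ rather than $\omega(\sqrt{\cdot})$, your outline becomes essentially the paper's proof.
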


\noindent {\bf Proof }%
Let $x_{0}\in \partial \Phi $. By translation, we may assume that $x_{0}=0.$
By using a rotation $\Theta =\left( \theta _{ij}\right) _{i,j=1}^{n}$, we
may represent $\partial \Phi $ locally as $y=\Theta x$ so that for $%
y^{\prime }=\left( y_{1},\dots ,y_{n-1}\right) $ and positive $\kappa _{0}$, 
$r_{0}$ depending on $\partial \Phi $, we have 
\begin{equation}
\kappa _{0}\left\vert y^{\prime }\right\vert ^{2}\leq y_{n},\qquad \left(
y^{\prime },y_{n}\right) \in \partial \Phi ,\qquad \left\vert y^{\prime
}\right\vert <r_{0}.  \label{bdry}
\end{equation}%
By hypothesis, $\omega \,$satisfies 
\begin{eqnarray}
\omega \left( r\right) &\geq &a_{0}\,r\qquad \text{if $r\in \left[ 0,\bar{r}%
\right] $, where $a_{0}=\frac{\omega \left( \bar{r}\right) }{\bar{r}}$},
\label{tang} \\
\liminf_{r\rightarrow 0^{+}}\omega ^{\prime }\left( r\right) &\geq &a_{0}>0,
\label{concav} \\
\omega ^{\prime \prime }\left( r\right) &\leq &0\qquad \text{if $r\in \left(
0,\bar{r}\right] .$}  \label{conse}
\end{eqnarray}%
For fixed $\alpha _{0}\in (0,1]$, set 
\begin{equation}
r_{0}=\left\{ 
\begin{array}{ll}
\omega ^{-1}\left( \alpha _{0}\right) & \qquad \text{if }\omega \left( \bar{r%
}\right) >\alpha _{0} \\ 
\bar{r} & \qquad \text{otherwise,}%
\end{array}%
\right.  \label{barrr0}
\end{equation}%
i.e., $r_{0}$ is the largest $r\in (0,\bar{r}]$ with $\omega \left( r\right)
\leq \alpha _{0}$. Letting $\psi \left( r\right) =\sqrt{\omega \left(
r\right) }$ for $0<r\leq r_{0}$, we have by (\ref{tang})--(\ref{conse}) and
since $\lim_{r\rightarrow 0^{+}}\omega \left( r\right) =0$ that 
\begin{eqnarray}
1 &\geq &\sqrt{\alpha _{0}}\geq \psi \left( r\right) \geq \omega \left(
r\right) ,  \label{psione} \\
-\psi ^{\prime \prime } &=&-\dfrac{2\omega ^{\prime \prime }\omega -\left(
\omega ^{\prime }\right) ^{2}}{4\omega ^{3/2}}\geq \frac{\left( \omega
^{\prime }\right) ^{2}}{4\omega ^{3/2}}=\frac{\left( \psi ^{\prime }\right)
^{2}}{\psi }>0,  \label{psign} \\
\psi \left( r\right) &\geq &a_{1}\sqrt{r},\qquad \text{where $a_{1}=\sqrt{%
a_{0}}$,}  \label{psiline} \\
\lim_{r\rightarrow 0+}\psi ^{\prime }\left( r\right) &=&\lim_{r\rightarrow
0+}\frac{\omega ^{\prime }\left( r\right) }{2\sqrt{\omega \left( r\right) }}%
=+\infty \text{.}  \label{blow}
\end{eqnarray}%
For $t>0$, let $\mathcal{N}_{t}=\left\{ y_{n}<t\right\} \bigcap \left\{
|y|<r_{0}\right\} $. Because of (\ref{hellip}) (recall that $k^{1}=1$) and
continuity of $\mathcal{A}$, there exist $1\leq \ell \leq n$ and $c_{1}>0$
such that for all small $t$, 
\begin{equation}
\vec{\theta}_{\ell }\mathcal{A}\left( y,z\right) \left( \vec{\theta}_{\ell
}\right) ^{\prime }\geq c_{1}>0,\qquad y\in \partial \Phi \bigcap \mathcal{N}%
_{t},\quad \quad \left\vert z\right\vert \leq 2m_{0}.  \label{ne}
\end{equation}%
Here $\vec{\theta}_{\ell }^{\prime }$ is the $\ell ^{\text{th}}$ column of $%
\Theta $. For $m_{1}>0$ and $0<t_{1}\leq 1$ to be determined, define 
\begin{equation}
h\left( y\right) =-2\psi \left( \sqrt{\rho y_{n}}\right) +m_{1}\frac{y_{\ell
}^{2}}{2}+\frac{1}{\ln y_{n}},\qquad y\in \mathcal{N}_{t_{1}},  \label{barrh}
\end{equation}%
where $\rho =\left( \kappa _{0}^{-\frac{1}{2}}+1\right) ^{2}$. For $t_{1}$
small enough $h$ is well-defined in $\mathcal{N}_{t_{1}}$ and extends
continuously to $\overline{\mathcal{N}_{t_{1}}}$ with 
\begin{equation}
h\in \mathcal{C}^{\infty }\left( \mathcal{N}_{t_{1}}\right) \bigcap \mathcal{%
\ C}^{0}\left( \overline{\mathcal{N}_{t_{1}}}\right) ,\qquad h\left(
0\right) =0.  \label{barr99}
\end{equation}%
From (\ref{bdry}), 
\begin{eqnarray}
\sqrt{\rho y_{n}} &=&\left( \kappa _{0}^{-\frac{1}{2}}+1\right) \sqrt{y_{n}}=%
\sqrt{\frac{y_{n}}{\kappa _{0}}}+\sqrt{y_{n}}  \notag \\
&\geq &\left\vert y^{\prime }\right\vert +y_{n}\geq \left\vert y\right\vert ,
\label{rho-y}
\end{eqnarray}%
where we used that $y_{n}\leq 1$ in $\mathcal{N}_{t_{1}}$.

Also, by taking $t_{1}$ small enough depending on $\kappa _{0}$, $a_{1}$, $%
m_{1} $, we obtain from (\ref{bdry}) and (\ref{psiline}) that 
\begin{eqnarray*}
m_{1}\frac{y_{\ell }^{2}}{2}-\psi \left( \sqrt{\rho y_{n}}\right) &\leq
&m_{1}\frac{y_{n}}{2\kappa _{0}}-a_{1}\sqrt[4]{\rho y_{n}} \\
&\leq &m_{1}\frac{\sqrt[4]{y_{n}}}{2\kappa _{0}}\left( t_{1}^{3/4}-\frac{
2\kappa _{0}a_{1}\sqrt{\kappa _{0}^{-\frac{1}{2}}+1}}{m_{1}}\right) \leq 0.
\end{eqnarray*}
Then, using (\ref{rho-y}), the fact that $\psi $ is increasing, the
definition of $\psi$ and (\ref{psione}), we get 
\begin{eqnarray}
h\left( y\right) +\omega \left( \left\vert y\right\vert \right) &=&-2\psi
\left( \sqrt{\rho y_{n}}\right) +m_{1}\frac{y_{\ell }^{2}}{2}+\frac{1}{\ln
y_{n}}+\omega \left( \left\vert y\right\vert \right)  \notag \\
&\leq &\omega \left( \left\vert y\right\vert \right) -\psi \left( \sqrt{\rho
y_{n}}\right)  \notag \\
&\leq &\psi ^{2}\left( \left\vert y\right\vert \right) -\psi \left(
\left\vert y\right\vert \right) <0.  \label{bigger}
\end{eqnarray}

Now set 
\begin{equation*}
G_{n}\left( y\right) =\left( \frac{\rho }{y_{n}}\right) ^{\frac{1}{2}}\psi
^{\prime }\left( \sqrt{\rho y_{n}}\right) +\frac{1}{\left( \ln y_{n}\right)
^{2}y_{n}}>0,
\end{equation*}
and write $h_{j}=\frac{\partial h}{\partial y_{i}}$, $h_{ij}=\frac{ \partial
^{2}h}{\partial y_{i}\partial y_{j}}$. If $\delta _{ij}$ denotes the
Kronecker delta, we have 
\begin{eqnarray}
h_{n}\left( y\right) &=&-G_{n}+\delta _{\ell n}m_{1}y_n,\qquad h_{\ell
}\left( y\right) =m_{1}y_{\ell }-\delta _{\ell n}G_{n}  \label{derh} \\
h_{i}\left( y\right) &=&0\qquad \text{for }i\neq n,\ell  \label{derhz} \\
h_{\ell \ell }\left( y\right) &=& m_{1}+ \delta_{\ell n} \dfrac{1}{2y_{n}}%
\left[ G_{n}\left( y\right) -\rho \psi ^{\prime \prime }\left( \sqrt{\rho
y_{n}}\right) \right] \\
&& +\delta_{\ell n}\frac{1}{\left( \ln y_{n}\right) ^{2}y_{n}^{2}}\left( 
\dfrac{2}{\ln y_{n}}+\dfrac{1}{2}\right)  \label{h11} \\
h_{nn}\left( y\right) &=&\dfrac{1}{2y_{n}}\left[ G_{n}\left( y\right) -\rho
\psi ^{\prime \prime }\left( \sqrt{\rho y_{n}}\right) \right]  \label{hnn} \\
&&+\frac{1}{\left( \ln y_{n}\right) ^{2}y_{n}^{2}}\left( \dfrac{2}{\ln y_{n}}
+\dfrac{1}{2}\right) +\delta _{\ell n}m_{1}  \notag \\
h_{ii}\left( y\right) &=&0 \quad \text{for }i\neq n,\ell ,\qquad
h_{ij}\left( y\right) =0 \quad \text{for }i\neq j.  \label{derhnn}
\end{eqnarray}
In particular, for $t_1$ small enough, 
\begin{equation}
\begin{array}{lll}
\bigtriangleup h\left( y\right) & = & \dfrac{1}{2y_{n}}\left[ G_{n}\left(
y\right) -\rho \psi ^{\prime \prime }\left( \sqrt{\rho y_{n}}\right) \right]
+\dfrac{1}{\left( \ln y_{n}\right) ^{2}y_{n}^{2}}\left( \dfrac{2}{\ln y_{n}}%
+ \dfrac{1}{2}\right) +m_{1}>0%
\end{array}
\label{lapp}
\end{equation}
in $\mathcal{N}_{t_{1}}$ because of (\ref{psign}). Moreover, by (\ref{psign}%
) and the formulas for derivatives of $h$, 
\begin{eqnarray}
\left\vert h_{\ell }\left( y\right) \right\vert ^{2} &\leq
&2m_{1}^{2}y_{\ell }^{2}+2\delta _{\ell n}G_{n}^{2}  \notag \\
&\leq & c\max \left\{ m_{1}y_{\ell }^{2}, \, \delta _{\ell n}\psi \left( 
\sqrt{ \rho y_{n}}\right) ,\frac{\delta _{\ell n}}{\left( \ln y_{n}\right)
^{2}} \right\} h_{\ell \ell }\left( y\right),  \label{hl-s}
\end{eqnarray}
\begin{eqnarray}
\left\vert h_{n}\left( y\right) \right\vert ^{2} &\leq &2G_{n}^{2}+2\delta
_{\ell n}m_{1}^{2}y_n^{2}  \notag \\
&\leq & c\max \left\{ \psi \left( \sqrt{\rho y_{n}}\right) ,\frac{1}{\left(
\ln y_{n}\right) ^{2}}, \delta _{\ell n}m_{1}y_n^{2}\right\} h_{nn}\left(
y\right) .  \label{hn-s}
\end{eqnarray}

Let $H(x)= h(\Theta x)$ and $\tilde{H}(x) = H(x) +m = h(\Theta x) +m$ for
fixed $m\in [-m_0,m_0]$. Using $\Theta^t$ to denote the transpose of $\Theta$%
, we then have $\nabla H(x)= \Theta^t (\nabla h)(\Theta x)$, $H_j(x) =
\sum_k \theta_{jk}h_k(\Theta x)$ and 
\begin{equation*}
H_{ij}(x) = \sum_k \theta_{jk} \sum_\lambda \theta_{i\lambda}
h_{k\lambda}(\Theta x) = \sum_k \theta_{jk} \theta_{ik} h_{kk}(\Theta x)
\end{equation*}
\begin{equation*}
= \theta_{j\ell}\theta_{i\ell} h_{\ell \ell}(\Theta x) +
\theta_{jn}\theta_{in} h_{nn}(\Theta x),
\end{equation*}
where only one of the last two terms appears in case $\ell =n$.

Setting $\mathcal{A}(x,\tilde{H}(x))=(a_{ij}(x))$ and letting $\left[ %
\mathop{\rm div}\mathcal{A}(x,\tilde{H}(x))\right] =\left[ \mathop{\rm div}%
(a_{ij}(x))\right] $ denote the vector whose components are the divergence
of the columns of $\big(a_{ij}(x)\big)$, we obtain 
\begin{eqnarray*}
\mathcal{L}_{m}h &=&\mathop{\rm div}\left[ \mathcal{A}\left( x,\tilde{H}%
(x)\right) \nabla H(x)\right] =\mathop{\rm div}\left(
\sum_{i}a_{ij}(x)H_{i}(x)\right) _{j=1,\dots ,n} \\
&=&\left[ \mathop{\rm div}(a_{ij}(x))\right] \cdot \nabla
H(x)+\sum_{i,j}a_{ij}(x)H_{ij}(x)
\end{eqnarray*}%
\begin{equation*}
=\left[ \mathop{\rm div}(a_{ij}(x))\right] \cdot \Theta ^{t}(\nabla
h)(\Theta x)+h_{\ell \ell }(\Theta x)\sum_{i,j}a_{ij}(x)\theta _{j\ell
}\theta _{i\ell }+h_{nn}(\Theta x)\sum_{i,j}a_{ij}(x)\theta _{jn}\theta _{in}
\end{equation*}%
\begin{equation*}
=\left[ \mathop{\rm div}(a_{ij}(x))\right] \cdot \Theta ^{t}(\nabla
h)(\Theta x)+h_{\ell \ell }(\Theta x)\vec{\theta}_{\ell }\mathcal{A}(x,%
\tilde{H}(x))\vec{\theta}_{\ell }^{~t}+h_{nn}(\Theta x)\vec{\theta}_{n}%
\mathcal{A}(x,\tilde{H}(x))\vec{\theta}_{n}^{~t}.
\end{equation*}%
Here $\vec{\theta}_{j}^{t}$ denotes the $j^{\text{th}}$ column of $\Theta $
and $\vec{\theta}_{j}$ denotes the $j^{\text{th}}$ column of $\Theta $
rewritten as a row vector, and in the last two equalities, only one of the
second and third terms on the right appears in case $\ell =n$.

Now recall that $h_{i}=0$ if $i\neq \ell ,n$. Then the first term on the
right of the last equality equals 
\begin{equation*}
h_{\ell }(\Theta x)\sum_{k}\left( \sum_{i}\frac{\partial }{\partial x_{i}}%
a_{ik}(x)\right) \theta _{k\ell }+h_{n}(\Theta x)\sum_{k}\left( \sum_{i}%
\frac{\partial }{\partial x_{i}}a_{ik}(x)\right) \theta _{kn}
\end{equation*}%
\begin{equation*}
=h_{\ell }(\Theta x)\left[ \mathop{\rm div}\mathcal{A}(x,\tilde{H}(x))\right]
\cdot \vec{\theta}_{\ell }^{~t}+h_{n}(\Theta x)\left[ \mathop{\rm div}%
\mathcal{A}(x,\tilde{H}(x))\right] \cdot \vec{\theta}_{n}^{~t},
\end{equation*}%
where as usual only one of the two terms on the right appears in case $\ell
=n$.

Altogether, 
\begin{eqnarray*}
\mathcal{L}_{m}H(x) &=&h_{\ell }(\Theta x)\left[ \mathop{\rm div}\mathcal{A}%
\left( x,\tilde{H}(x)\right) \right] \cdot \vec{\theta}_{\ell
}^{~t}+h_{n}(\Theta x)\left[ \mathop{\rm div}\mathcal{A}\left( x,\tilde{H}%
(x)\right) \right] \cdot \vec{\theta}_{n}^{~t} \\
&&+h_{\ell \ell }(\Theta x)\vec{\theta}_{\ell }\mathcal{A}\left( x,\tilde{H}%
(x)\right) \vec{\theta}_{\ell }^{~t}+h_{nn}(\Theta x)\vec{\theta}_{n}%
\mathcal{A}\left( x,\tilde{H}(x)\right) \vec{\theta}_{n}^{~t},
\end{eqnarray*}%
where on the right side, only one of the first two terms and one of the
second two terms appears in case $\ell =n$.

By direct computation, we have 
\begin{eqnarray*}
\left[ \mathop{\rm div}\mathcal{A}(x,\tilde{H}(x))\right] \cdot \vec{\theta}%
_{r}^{~t} &=&\sum_{i=1}^{n}\mathcal{A}_{i}^{i}(x,\tilde{H}(x))\cdot \vec{%
\theta}_{r}^{~t}+h_{\ell }(\Theta x)\vec{\theta}_{\ell }\mathcal{A}_{z}(x,%
\tilde{H}(x))\vec{\theta}_{r}^{~t} \\
&+&h_{n}(\Theta x)\vec{\theta}_{n}\mathcal{A}_{z}(x,\tilde{H}(x))\vec{\theta}%
_{r}^{~t},
\end{eqnarray*}%
where $\mathcal{A}^{i}$ denotes the $i^{th}$-column of $\mathcal{A}$ and $%
\mathcal{A}_{i}^{i}=\partial _{i}\mathcal{A}^{i}$, and where only one of the
last two terms on the right side appears in case $\ell =n$. Substituting
this in the formula above for $\mathcal{L}_{m}H$ gives 
\begin{eqnarray*}
\mathcal{L}_{m}H(x) &=&\mathcal{L}_{m}[h(\Theta x)]=h_{\ell }(\Theta
x)\sum_{i=1}^{n}\mathcal{A}_{i}^{i}(x,\tilde{H}(x))\cdot \vec{\theta}_{\ell
}^{~t}+h_{\ell }(\Theta x)^{2}\vec{\theta}_{\ell }\mathcal{A}_{z}(x,\tilde{H}%
(x))\vec{\theta}_{\ell }^{~t} \\
&&+h_{\ell }(\Theta x)h_{n}(\Theta x)\vec{\theta}_{n}\mathcal{A}_{z}(x,%
\tilde{H}(x))\vec{\theta}_{\ell }^{~t}+h_{\ell }(\Theta x)h_{n}(\Theta x)%
\vec{\theta}_{\ell }\mathcal{A}_{z}(x,\tilde{H}(x))\vec{\theta}_{n}^{~t} \\
&&+h_{n}(\Theta x)\sum_{i=1}^{n}\mathcal{A}_{i}^{i}(x,\tilde{H}(x))\cdot 
\vec{\theta}_{n}^{~t}+h_{n}(\Theta x)^{2}\vec{\theta}_{n}\mathcal{A}_{z}(x,%
\tilde{H}(x))\vec{\theta}_{n}^{~t} \\
&&+h_{\ell \ell }(\Theta x)\vec{\theta}_{\ell }\mathcal{A}\left( x,\tilde{H}%
(x)\right) \vec{\theta}_{\ell }^{~t}+h_{nn}(\Theta x)\vec{\theta}_{n}%
\mathcal{A}\left( x,\tilde{H}(x)\right) \vec{\theta}_{n}^{~t},
\end{eqnarray*}%
and in case $\ell =n$, only the first, second and seventh terms on the right
appear. By (\ref{wirtm}), the sum of the first and fifth terms on the right
is at most 
\begin{equation*}
cB_{\mathcal{A}}^{2}+\dfrac{h_{\ell }(\Theta x)^{2}}{4}\vec{\theta}_{\ell }%
\mathcal{A}(x,\tilde{H}(x))\vec{\theta}_{\ell }^{~t}+\dfrac{h_{n}(\Theta
x)^{2}}{4}\vec{\theta}_{n}\mathcal{A}(x,\tilde{H}(x))\vec{\theta}_{n}^{~t},
\end{equation*}%
and the sum of the second, third, fourth and sixth terms is bounded by 
\begin{equation*}
cB_{\mathcal{A}}\left( h_{\ell }(\Theta x)^{2}\vec{\theta}_{\ell }\mathcal{A}%
(x,\tilde{H}(x))\vec{\theta}_{\ell }^{~t}+h_{\ell }(\Theta x)^{2}\vec{\theta}%
_{n}\mathcal{A}(x,\tilde{H}(x))\vec{\theta}_{n}^{~t}\right) ,
\end{equation*}%
where {$B_{\mathcal{A}}=B_{\mathcal{A}}\left( \Phi ,m_{0}\right) $. }Using
these estimates, we obtain 
\begin{eqnarray*}
\mathcal{L}_{m}H(x) &\geq &-cB_{\mathcal{A}}^{2}+\left[ h_{\ell \ell
}(\Theta x)-\left( cB_{\mathcal{A}}+\dfrac{1}{4}\right) h_{\ell }(\Theta
x)^{2}\right] ~\vec{\theta}_{\ell }\mathcal{A}\left( x,\tilde{H}(x)\right) 
\vec{\theta}_{\ell }^{~t} \\
&&+\left[ h_{nn}(\Theta x)-\left( cB_{\mathcal{A}}+\dfrac{1}{4}\right)
h_{n}(\Theta x)^{2}\right] ~\vec{\theta}_{n}\mathcal{A}\left( x,\tilde{H}%
(x)\right) \vec{\theta}_{n}^{~t}.
\end{eqnarray*}%
From (\ref{hl-s}) and (\ref{hn-s}), by taking $t_{1}$ small enough
(depending on $\kappa _{0}$, $m_{1}$, and $B_{\mathcal{A}}$), we obtain 
\begin{equation*}
\mathcal{L}_{m}H(x)\geq -cB_{\mathcal{A}}^{2}+\dfrac{h_{\ell \ell }(\Theta x)%
}{2}~\vec{\theta}_{\ell }\mathcal{A}\left( x,\tilde{H}(x)\right) \vec{\theta}%
_{\ell }^{~t}.
\end{equation*}%
Then from (\ref{ne}) and the fact that $h_{\ell \ell }\geq m_{1},$ 
\begin{equation}
\mathcal{L}_{m}h\geq -cB_{\mathcal{A}}^{2}+\dfrac{c_{1}m_{1}}{2}>K
\label{PK}
\end{equation}%
by taking $m_{1}$ large enough.

Finally, note that $y_{\ell }^{2}\leq y_{n}/\kappa _{0}$ by (\ref{bdry}).
Then if $x\in \Phi \bigcap \partial \mathcal{N} _{t_{1}} $, 
\begin{eqnarray}
h\left( y\right) &=&-2\psi \left( \sqrt{\rho y_{n}}\right) +m_{1}\frac{
y_{\ell }^{2}}{2}+\frac{1}{\ln y_{n}}  \notag \\
&\leq &\frac{m_{1}}{2\kappa _{0}}t_{1}^{2}+\frac{1}{\ln t_{1}}\leq -\nu
\label{Pnu}
\end{eqnarray}
by taking $t_{1}$ small enough. The condition $\mathcal{N}\subset \left\{
\left\vert x-x_{0}\right\vert <\eta \right\} $ can then be met by taking $%
t_{1}$ even smaller. Lemma \ref{barrier} now follows from (\ref{barr99}), (%
\ref{bigger}), (\ref{lapp}), (\ref{PK}) and (\ref{Pnu}).{\ 
\endproof%
}

\end{document}